\def\A{{\bf A}}
\def\C{{\bf C}}
\def\L{{\bf L}}
\def\b{{\bf b}}
\def\d{{\bf d}}
\def\e{{\bf e}}
\def\n{{\bf n}}
\def\p{{\bf p}}
\def\r{{\bf r}}
\def\s{{\bf s}}
\def\t{{\bf t}}
\def\z{{\bf z}}
\def\q{{\bf q}}
\def\x{{\bf x}}
\def\y{{\bf y}}
\newcommand{\ZZ}{{\mathbb Z}}
\newcommand{\RR}{\mathbb{R}}
\newcommand{\NN}{\mathbb{N}}
\newcommand{\field}[1]{\mathbb{#1}}
\newcommand{\R}{\field{R}}
\newcommand\ti{\mathrm{i}}
\def\bnu{\mbox{\boldmath $\nu$}}
\def\bmu{\mbox{\boldmath $\mu$}}
\def\brho{\mbox{\boldmath $\rho$}}
\def\btau{\mbox{\boldmath $\tau$}}
\def\bchi{\mbox{\boldmath $\chi$}}
\def\bzeta{\mbox{\boldmath $\zeta$}}
\newcommand{\be}{\begin{equation}}
\newcommand{\ee}{\end{equation}}
\newcommand{\ba}{\begin{eqnarray}}
\newcommand{\ea}{\end{eqnarray}}
\newcommand{\bi}{\begin{itemize}}
\newcommand{\ei}{\end{itemize}}
\newtheorem{dfn}{Definition}
\newtheorem{rmk}{Remark}
\newtheorem{crl}{Corollary}
\newtheorem{prn}{Proposition}
\newtheorem{prb}{Problem}
\newtheorem{exm}{Example}
\begin{document}

\begin{frontmatter}

\title{Pythagorean-Hodograph B-Spline Curves}

\author[label1]{Gudrun Albrecht\corref{cor1}}
\ead{gudrun.albrecht@univ-valenciennes.fr}

\author[label2]{Carolina Vittoria Beccari}
\ead{carolina.beccari2@unibo.it}

\author[label1]{Jean-Charles Canonne}
\ead{jean-charles.canonne@univ-valenciennes.fr}

\author[label3]{Lucia Romani}
\ead{lucia.romani@unimib.it}

\cortext[cor1]{Corresponding author.}

\address[label1]{Univ Lille Nord de France, UVHC, LAMAV, FR CNRS 2956, F-59313
Valenciennes, France.}

\address[label2]{Department of Mathematics, University of Bologna, P.zza Porta San Donato 5, 40127 Bologna, Italy}

\address[label3]{Department of Mathematics and Applications, University of Milano-Bicocca, Via R. Cozzi 55, 20125 Milano, Italy}

\begin{abstract}
We introduce the new class of planar Pythagorean-Hodograph (PH)
B-Spline curves. They can be seen as a generalization of the
well-known class of planar Pythagorean-Hodograph (PH) B\'ezier
curves, presented by R. Farouki and T. Sakkalis in 1990, including
the latter ones as special cases. Pythagorean-Hodograph B-Spline
curves are non-uniform parametric B-Spline curves whose arc-length
is a B-Spline function as well. An important consequence of this
special property is that the offsets of Pythagorean-Hodograph
B-Spline curves are non-uniform rational B-Spline (NURBS) curves.
Thus, although Pythagorean-Hodograph B-Spline curves have fewer
degrees of freedom than general B-Spline curves of the same degree,
they offer unique advantages for computer-aided design and
manufacturing, robotics, motion control, path planning, computer
graphics, animation, and related fields. After providing a general
definition for this new class of planar parametric curves, we
present useful formulae for their construction, discuss their
remarkable attractive properties and give some examples of their
practical use.
\end{abstract}

\begin{keyword}
Plane curve; Non-uniform B-Spline; Pythagorean-Hodograph;
Arc-length; Offset; $G^2/C^1$ Hermite Interpolation
\end{keyword}

\end{frontmatter}

\section{Introduction}

The purpose of the present article is to introduce the general
concept of Pythagorean--Hodograph (PH) B--Spline curves.
On the one hand, B--Spline curves, since their introduction by
Schoenberg \cite{schoenberg} in 1946 have become the standard for
curve representation in all areas where curve design is an issue,
see, e.g., \cite{deboor, gallier, hoschek}.
On the other hand, the concept of polynomial PH curves has widely
been studied since its introduction by Farouki and Sakkalis in
\cite{faroukisakkalis}. The essential characteristic of these curves
is that the Euclidean norm of their hodograph is also polynomial,
thus yielding the useful properties of admitting a closed--form
polynomial representation of their arc--length as well as exact
rational parameterizations of their offset curves. These polynomial
curves are defined over the space of polynomials using its Bernstein
basis thus yielding a control point or so--called B\'ezier
representation for them. Rational and spatial counterparts of
polynomial PH curves have as well been proposed, and most recently
an algebraic--trigonometric counterpart, so--called
Algebraic--Trigonometric Pythagorean--Hodograph (ATPH) curves have
been introduced in \cite{phtrig13}.

So far a general theory for {\it B--Spline curves} having the {\it
PH property} is missing. To the best of the authors' knowledge the
only partial attempt in this direction has been made in
\cite{faroukiphbsplines}, where the problem of determining a
B--Spline form of a $C^2$ PH quintic spline curve interpolating
given points is addressed. Prior to this, based on \cite{ph,
faroukial2001} in \cite{pelosial2007} a relation between a planar
$C^2$ PH quintic spline curve and the control polygon of a related
$C^2$ cubic B-Spline curve is presented.

The present article shows how to construct a general PH B--Spline
curve of arbitrary degree, over an arbitrary knot sequence. To this
end, we start by defining the complex variable model of a B--Spline
curve $\z(t)$ of degree $n$, defined over a knot partition $\bmu$.
We then square $\z(t)$ by using results for the product of
normalized B--Spline basis functions from \cite{cinesi, morken}.
Here, the determination of the required coefficients involves the
solution of linear systems of equations. Finally, the result is
integrated in order to obtain the general expression of the PH
B--Spline curve, i.e., its B--Spline control points and its knot
partition $\brho$. General formulae are derived also for the
parametric speed, the arc length and the offsets of the resulting
curves. The interesting subclasses of \emph{clamped} and
\emph{closed} PH B--spline curves are discussed in great detail.
When the degree is 3 and 5, explicit expressions of their control
points are given together with the B-Spline representation of the
associated arc-length and the rational B-Spline representation of
their offsets. Finally, clamped quintic PH B--Spline curves are used
to solve a second order Hermite interpolation problem.

The remainder of the paper is organized as follows. In section
\ref{sec2} we recall the basic definition of B--Spline curves as
well as the Pythagorean Hodograph (PH) concept, thus defining the
notion of a PH B--Spline curve. In section \ref{sec3}, the
general construction of PH B--Spline curves is developed (section
\ref{sec3.1}), and then adapted to the important particular
cases of clamped and closed PH B--Spline curves (section
\ref{sec3.2}). In section \ref{sec:properties}, general formulae for their parametric speed, arc length and offsets are
given.
 Section \ref{sec5}
is devoted to presenting the explicit expressions regarding clamped
and closed PH B--Spline curves of degree $3$ and $5$. Finally, in
section \ref{sec6} we solve a second order Hermite interpolation
problem by clamped PH B--Spline curves of degree $5$. Conclusions
are drawn in section \ref{sec7}.

\section{Preliminary notions and notation}
\label{sec2}

While a B\'ezier curve is univocally identified by its degree, a
B-Spline curve involves more information, namely an arbitrary number
of control points, a knot vector and a degree, which are related by
the formula {\it number of knots - number of control points = degree
+1}. For readers not familiar with B-Spline curves, we first recall
the definition of normalized B-Spline basis functions and
successively the one of planar B-Spline curve (see, e.g.,
\cite{hoschek}).

\begin{dfn}\label{defbspline}
Let $\bmu = \{t_i \in \RR \; | \; t_i \le t_{i+1} \}_{i \in \ZZ}$ be a sequence of non-decreasing real numbers called \emph{knots}, and let $n \in \NN$.
The $i$-th \emph{normalized B-spline basis function} of degree $n$ defined over the \emph{knot partition} $\bmu$ is
the function $N_{i,\bmu}^n(t)$ having support $[t_i, t_{i+n+1}]$ and defined recursively as
\begin{equation*}
N_{i,\bmu}^n(t) = \frac{t-t_i}{t_{i+n} - t_i} N_{i,\bmu}^{n-1}(t) +
\frac{t_{i+n+1}-t}{t_{i+n+1} - t_{i+1}} N_{i+1,\bmu}^{n-1}(t),
\end{equation*}
where
\begin{equation*}
N_{i,\bmu}^0(t) = \left\{ \begin{array}{l} 1\, , \quad \hbox{if} \ t \in [t_i, t_{i+1}) \\
 0\, , \quad \hbox{otherwise}
 \end{array}\right.
\end{equation*}
and $``\frac{0}{0} = 0"$.
\end{dfn}

\begin{dfn}\label{defbspline_curve}
Let $m,n \in \NN$ with $m \ge n$, $\bmu =\{t_{i}\}_{i=0,...,m+n+1}$
be a finite knot partition, and $\s_0, \ldots, \s_m \in \RR^2 $.
Then, the planar parametric curve
\begin{equation*}
\s(t) = \sum_{i=0}^{m} \s_i \, N_{i,\bmu}^n(t)\, , \quad t \in [t_n,
t_{m+1}],
\end{equation*}
is called a planar \emph{B--Spline curve} (of degree $n$ associated
with the knot partition $\bmu$) with de Boor points or control
points $\s_0, \ldots, \s_m$.
\end{dfn}

\begin{rmk}
If $M_i$ denotes the multiplicity of the knot $t_i$, then $\s(t)$ is
of continuity class $C^{n-max_i(M_i)}(t_n,t_{m+1})$.
\end{rmk}

\smallskip
If the knot vector $\bmu$ does not have any particular structure,
the B-Spline curve $\s(t)$ will not pass through the first and last
control points neither will be tangent to the first and last legs of
the control polygon. In this case $\s(t)$ is simply called
\emph{open} B-spline curve. In order to clamp $\s(t)$ so that it is
tangent to the first and the last legs at the first and last control
points, respectively (as a B\'ezier curve does), the multiplicity of
the first and the last knot must be adapted. For later use, the
precise conditions we use for identifying a clamped B-Spline curve
are the following.

\begin{rmk}\label{rembspline_curve_a}
If $t_{0} = t_{1} = \ldots = t_n$ and $t_{m+1} = \ldots = t_{m+n+1}$,
then the B-Spline curve $\s(t)$ given in Definition \ref{defbspline_curve} verifies
$$\s(t_n) = \s_0, \quad \s(t_{m+1})=\s_m$$
as well as
$$
\s'(t_n)=\frac{n}{t_{n+1}-t_1} \, (\s_1-\s_0), \quad
\s'(t_{m+1})=\frac{n}{t_{m+n}-t_m} \, (\s_m-\s_{m-1}),
$$
i.e., $\s(t)$ is a \emph{clamped} B--Spline curve. Moreover, if all
the knots $t_{n+1}, \, \ldots, \, t_{m}$ are simple, then $\s(t) \in
C^{n-1}(t_n, t_{m+1})$.
\end{rmk}

\smallskip
On the other hand, to make the B-Spline curve $\s(t)$ closed, some knot intervals and control points must be repeated
such that the start and the end of the generated curve join together forming a closed loop. The precise conditions to be satisfied by knot intervals and control points in order to get a closed B-Spline curve are recalled in the following.

\begin{rmk}\label{rembspline_curve_b}
If, in Definition \ref{defbspline_curve} we replace $m$ by $m+n$,
consider the knot partition $\bmu = \{t_{i}\}_{i=0,...,m+2n+1}$ with
$t_{m+1+k}-t_{m+k} = t_k - t_{k-1}$ for $k=2, \ldots, 2n-1$,  assume
$\s_0, \ldots, \s_m \in \RR^2 $ to be distinct control points and
$\s_{m+1}=\s_0, \ldots, \s_{m+n}=\s_{n-1}$, then the B-Spline curve
\begin{equation*}
\s(t) = \sum_{i=0}^{m+n} \s_i N_{i,\bmu}^n(t)\, , \quad  t \in [t_n,
t_{m+n+1}],
\end{equation*}
has the additional property
$$\s(t_n) = \s(t_{m+n+1}),$$
i.e., $\s(t)$ is a \emph{closed} B--Spline curve. Moreover, if all
the knots $t_{n}, \, \ldots, \, t_{m+n+1}$ are simple, then $\s(t)
\in C^{n-1}[t_{n}, t_{m+n+1}]$.
\end{rmk}

At this point, we have all the required preliminary notions to generalize the definition of Pythagorean-Hodograph B\'ezier curves
(see \cite{faroukisakkalis}) to \emph{Pythagorean-Hodograph B-Spline curves}.

\begin{dfn}\label{ph}
For $p,n \in \NN, p \ge n$, let $u(t)$, $v(t)$ and $w(t)$ be non-zero degree-$n$ spline functions
over the knot partition $\bmu =\{t_{i}\}_{i=0,...,p+n+1}$, i.e., let
$$
u(t)=\sum_{i=0}^p u_i N_{i,\bmu}^n(t), \quad v(t)=\sum_{i=0}^p v_i N_{i,\bmu}^n(t), \quad w(t)=\sum_{i=0}^p w_i N_{i,\bmu}^n(t), \qquad t \in [t_n, t_{p+1}],
$$
with $u_i, v_i, w_i \in \RR$ for all $i=0, \ldots, p$, such that
$u(t)$ and $v(t)$ are non-constant and do not have a non--constant
spline function over the partition $\bmu$ as common factor. Then,
the planar parametric curve $(x(t), \, y(t))$ whose coordinate
components have first derivatives of the form \be x'(t)=w(t) \Big(
u^2(t)-v^2(t) \Big) \qquad \hbox{and} \qquad y'(t)=2 w(t)u(t)v(t),
\label{xyprime} \ee is called a \emph{planar Pythagorean-Hodograph
B-Spline curve} or \emph{a planar PH B-Spline curve} of degree $2n +
1$.
\end{dfn}

\smallskip
Indeed, as in the case of PH polynomial B\'ezier curves
\cite{faroukisakkalis,farouki95}, the parametric speed of the plane curve $(x(t), \, y(t))$ is
given by \be \label{sigmareal} \sigma(t) :=
\sqrt{(x'(t))^2+(y'(t))^2} = w(t)\, (u^2(t) + v^2(t)), \ee and
its unit tangent, unit normal and (signed) curvature are given
respectively by \be \label{tnk} \t=\frac{(u^2-v^2,2uv)}{u^2+v^2},
\quad \n=\frac{(2uv,v^2-u^2)}{u^2+v^2}, \quad
\kappa=\frac{2(uv'-u'v)}{w(u^2+v^2)^2}, \ee where, for
conciseness, in (\ref{tnk}) the parameter $t$ is omitted.

\smallskip
In the following we will restrict our attention to the so-called \emph{primitive} case
$w(t)=1$. Since in this case equation \eqref{sigmareal} simplifies as $\sigma(t)=u^2(t)+v^2(t)$, the primitive case coincides with the \emph{regular} case.
In this case, the representation (\ref{xyprime}) may be
obtained by squaring the complex function $\z(t) = u(t) + \ti v(t)$
yielding $\z^2(t) = u^2(t)-v^2(t) + \ti 2 u(t) v(t)$. The coordinate components $x'(t), y'(t)$ of the hodograph $\r'(t)$ of
the parametric curve $\r(t)=(x(t), y(t))$ are thus given by the real and imaginary part of
$\z^2(t)$, respectively. In the remainder of the paper we will exclusively use
this complex notation, and we will thus write
\be \label{xyprimecomplex}
\r'(t) = x'(t) + \ti y'(t) = u^2(t)-v^2(t) + \ti 2 u(t) v(t) =
\z^2(t)\,,
\ee
as also previously done for planar PH quintics \cite{faroukisakkalis,farouki95}.
Since, by construction, $\r'(t)$ is a degree-$2n$ B-spline curve, then the PH B-Spline
curve  $\r(t) = \int \r'(t) dt$ has degree $2n+1$. In the next section we will construct the corresponding knot vector and thus know the continuity class of the
resulting PH B-Spline curve $\r(t)$.

\section{Construction of Pythagorean--Hodograph B--Spline curves}
\label{sec3}

\subsection{The general approach}
\label{sec3.1}

We start with a knot partition of the form
\be \label{mu} \bmu =
\{t_{i}\}_{i=0,...,p+n+1} \ee
over which a degree-$n$ B-Spline curve
\[\z(t) = u(t) + \ti
v(t)
\]
is defined for $t \in [t_n, t_{p+1}]$. Thus, according to Definition
\ref{defbspline_curve}, the planar parametric curve $\z(t)$ can be
written as \be \label{zbs} \z(t) = \sum_{i=0}^p \z_i
N_{i,\bmu}^n(t)\,, \quad  t \in [t_n, t_{p+1}]\,, \ee where
$\z_i=u_i + \ti v_i$, $i=0, \ldots, p$. To express the product
$\z^2(t)$ as a B-Spline curve, according to \cite{morken, cinesi},
we have to augment the multiplicity of each single knot $t_i$ to $n
+ 1$. We thus obtain the knot partition \be\label{nu} \bnu =
\{s_i\}_{i=0,...,(p+n+2)(n+1)-1} = \{<t_i>^{n+1}\}_{i=0,...,p+n+1}\,
, \ee where $<t_i>^{k}$ denotes a knot $t_i$ of multiplicity $k$.
The product $\mathbf{z} ^{2}(t)$ is thus a degree-$2n$ B-Spline
curve over the knot partition $\bnu$, which can be written in the
form
\begin{equation}
\label{zsquare}\mathbf{p}(t)=\mathbf{z}^{2}(t)=\sum_{i=0}^{p}\sum_{j=0}^{p}\mathbf{z}_{i}\mathbf{z}_{j}N^{n}_{i,\bmu}(t)N^{n}_{j,\bmu}(t)
= \sum_{k=0}^q \p_k N_{k,\bnu}^{2n}(t)\, ,
\end{equation}
with $q=(n+1)(p+n)$, according to Definition \ref{defbspline_curve}.
Our goal is thus to obtain the explicit expressions of the
coefficients $\p_k$, for $k=0,\ldots,q$. To this end we set
$f_{i,j}(t) := N^{n}_{i,\bmu}(t) N^{n}_{j,\bmu}(t)$ and look for the
unknown coefficients $\bchi^{i,j}:=(\chi_{0}^{i,j}, \chi_{1}^{i,j},
..., \chi_{q}^{i,j})^T$, $i,j=0,...,p$ such that
\begin{equation}\label{prod}
f_{i,j}(t) = \sum_{k=0}^q \chi_k^{i,j} N_{k,\bnu}^{2n}(t)\,.
\end{equation}
For accomplishing this we apply the method from \cite{cinesi} as follows. Let
$<\cdot,\cdot>$ be an inner product of the linear space of B-splines
of degree $2n$ with knot vector $\bnu$. According to \cite{cinesi},
for any pair of functions $a(t)$, $b(t)$ defined over the interval $[t_0,t_{p+n+1}]$,
we use $<a(t),b(t)> = \int_{t_0}^{t_{p+n+1}} a(t) b(t) \, dt$ to construct the $(q+1) \times (q+1)$
linear equation system
\begin{equation}\label{lgschi}
\A \bchi^{i,j}=\b^{i,j},
\end{equation}
with
$$
\A=(a_{k,l})_{k,l=0,...,q}, \quad
a_{k,l}:=\langle N_{k,\bnu}^{2n}, N_{l,\bnu}^{2n} \rangle=\int_{t_0}^{t_{p+n+1}} N_{k,\bnu}^{2n}(t) \, N_{l,\bnu}^{2n}(t) \, dt
$$
and
$$\b^{i,j}=(b^{i,j}_l)_{l=0,...,q}, \quad b^{i,j}_l:=\langle f_{i,j}, N_{l,\bnu}^{2n} \rangle=\int_{t_0}^{t_{p+n+1}}  f_{i,j}(t) \, N_{l,\bnu}^{2n}(t) \, dt.
$$
Since $\{ N^{2n}_{k,\bnu}(t)\}_{k=0,...,q}$ are linearly independent, the matrix $\A$ is a Gramian and therefore nonsingular.
This allows us to work out the unknown coefficients $\{ \chi_{k}^{i,j} \}_{k=0,...,q}$
by solving the linear system in \eqref{lgschi}.

\begin{rmk}
Since $\b^{i,j}=\b^{j,i}$ for all $i,j=0,...,p$, then $\bchi^{i,j}=\bchi^{j,i}$ for all $i,j=0,...,p$.
Therefore, the unknown vectors to be obtained from \eqref{lgschi} are indeed $\bchi^{i,j}$, $i=0,...,p$, $j=0,...,i$.
Being $\A$ a non-singular Gramian matrix, all the corresponding linear systems always have a unique solution.
Moreover, since $a_{h,k}=0$ if $|h-k|>2n$, $\A$ is not only symmetric and positive definite, but also of band form.
Thus, by applying the Cholesky decomposition algorithm one can compute the factorization $\A=\L \L^T$, where
$\L$ is a lower triangular matrix of the same band form of $\A$ (i.e. such that $l_{h,k}=0$ if $h-k>2n$).
The non-zero elements of $\L$ may be determined row by row by the formulas
$$
\begin{array}{lll}
l_{h,k}&=&\left(a_{h,k}-\sum_{s=h-2n}^{h-1} l_{h,s} \, l_{k,s}\right)/l_{k,k}, \quad k=h-2n,...,h-1 \smallskip \\
l_{h,h}&=&\left(a_{h,h}-\sum_{s=h-2n}^{h-1} l_{h,s}^2 \right)^{\frac{1}{2}},
\end{array}
$$
with the convention that $l_{r,c}=0$ if $c \leq 0$ or $c>r$.
Hence, the solution of each linear system in \eqref{lgschi} can be easily obtained by solving the two triangular linear systems
$\L \y^{i,j} =\b^{i,j}$ and $\L^T \bchi^{i,j} = \y^{i,j}$
via the formulas
$$
\begin{array}{lll}
y^{i,j}_h&=&\left(b_h-\sum_{k=h-2n}^{h-1} l_{h,k} \, y^{i,j}_k \right)/l_{h,h}, \qquad h=0,...,q \smallskip \\
{\chi}^{i,j}_h&=&\left(y^{i,j}_h- \sum_{k=h+1}^{h+2n} l_{k,h} \, {\chi}^{i,j}_k \right)/l_{h,h}, \qquad h=0,...,q
\end{array}
$$
where a similar convention as above is adopted with respect to suffices outside the permitted ranges (see \cite{martin}).
\end{rmk}

\smallskip
From the computed expressions of $\chi_{k}^{i,j}$, $k=0,...,q$, $0 \le i,j \le p$,  we thus get
\begin{equation}
\label{pks}
\mathbf{p}(t)=\mathbf{z}^{2}(t)=\sum_{k=0}^{q}\sum_{i=0}^{p}\sum_{j=0}^{p}\chi_{k}^{i,j}\mathbf{z}_{i}\mathbf{z}_{j}N^{2n}_{k,\bnu}(t)
\qquad \hbox{and} \qquad \mathbf{p}_{k}=\sum_{i=0}^{p}\sum_{j=0}^{p}\chi_{k}^{i,j}\mathbf{z}_{i}\mathbf{z}_{j}\,, \quad k=0, \ldots, q.
\end{equation}
The resulting PH B-Spline curve $\r(t)$ is now obtained by
integrating $\p(t)$ as: \be \label{rt} \r(t) = \int \p(t) dt =
\sum_{i=0}^{q+1} \r_i N_{i,\brho}^{2n+1}(t)\, , \; \; t \in [t_n,
t_{p+1}]\, ,\ee where
$\brho = \{s'_i\}_{i=0,...,(p+n+2)(n+1)+1}$ with
$s'_i = s_{i-1}$ for $i=1, \ldots, (p+n+2)(n+1)$, $t_{-1}=s'_0\le
s'_1$ and $s'_{(p+n+2)(n+1)} \le s'_{(p+n+2)(n+1)+1} = t_{p+n+2}$,
i.e., \be \label{rho} \brho =\{t_{-1}, \
\{<t_k>^{n+1}\}_{k=0,...,p+n+1}, \ t_{p+n+2}\} \ee with the additional
knots $t_{-1}, t_{p+n+2}$, as well as \be \label{rs} \r_{i+1} = \r_i
+ \frac{s'_{i+2n+2} - s'_{i+1}}{2n+1}\, \p_i = \r_i +
\frac{s_{i+2n+1} - s_{i}}{2n+1}\, \p_i\,, \ee for $i=0,\ldots,q$ and
arbitrary $\r_0$.

\begin{rmk}
Note that, by construction, $s_{2n}=s'_{2n+1}=t_n$ as well as $s_{q+1}=s'_{q+2}=t_{p+1}$, namely the B-spline curves $\z(t)$, $\p(t)$ and $\r(t)$ are defined on the same domain.
If the knot partition $\bmu$ contains simple inner
knots $t_{n+1}, \ldots, t_p$, then the degree-$n$ spline $\z(t) \in C^{n-1}(t_n, t_{p+1})$. As a consequence, the degree-$2n$ spline $\r'(t) \in C^{n-1}(t_n, t_{p+1})$
and the degree-$(2n+1)$ spline
$\r(t) \in C^{n}(t_n, t_{p+1})$.
\end{rmk}

\subsection{Construction of clamped and closed PH B-Spline curves}
\label{sec3.2}

We now consider the conditions for obtaining a clamped, respectively
closed, PH B-Spline curve $\r(t)$.

\begin{prn} \label{prop1}
Let $\r(t)$ be the PH B-Spline curve in (\ref{rt}) defined over the
knot partition $\brho$ in (\ref{rho}), where for a clamped,
respectively, closed PH B-Spline curve $\r(t)$ we assume $p=m$,
respectively, $p=m+n$.
\begin{enumerate}
\item[a)]
For $\r(t)$ to be clamped, i.e., satisfying
\be
\label{condclamped}
\begin{array}{c}
\r(t_n)=\r_0\;, \quad \r(t_{m+1})=\r_{q+1}, \smallskip\\
\r'(t_n)=\frac{2n+1}{s'_{2n+2}-s'_1} \, (\r_1-\r_0), \quad
\r'(t_{m+1})=\frac{2n+1}{s'_{q+2n+2}-s'_{q+1} \, (\r_{q+1}-\r_{q})},
\end{array}
\ee the following conditions have to be fulfilled \be \label{cond1}
\sum_{k=0}^n \r_{(n-1)(n+1)+k+1}\, B_k^n(\alpha) = \r_0\; \;
\mbox{and} \;\; \sum_{k=0}^{n-1} \p_{(n-1)(n+1)+k+1}\,
B_k^{n-1}(\alpha) = \p_0\; \;\mbox{with} \;\;\; \alpha = \frac{t_n -
t_{n-1}}{t_{n+1} - t_{n-1}}\,, \ee \be \label{cond2} \sum_{k=0}^n
\r_{m(n+1)+k+1}\, B_k^n(\beta) = \r_{q+1}\; \; \mbox{and} \;\;
\sum_{k=0}^{n-1} \p_{m(n+1)+k+1}\, B_k^{n-1}(\beta) = \p_{q}\;
\;\mbox{with} \;\;\; \beta = \frac{t_{m+1} - t_{m}}{t_{m+2} -
t_{m}}\,, \ee where $B_k^n(t)= {n \choose i} t^i (1-t)^{n-i}$, $k=0,...,n$ denote
the Bernstein polynomials of degree $n$.
\item[b)]
For $\r(t)$ to be closed and of continuity class $C^n$ at the
junction point $\r(t_n)=\r(t_{m+n+1})$, we require the fulfillment
of the conditions \be \label{condclosed2}
\sum_{j=n(n+1)-k}^{(m+n+1)(n+1)-k-1} (s_{j+2n+1} - s_j)\; \p_j =
{\bf 0} \,, \quad \hbox{for} \quad k=0, \ldots, n, \ee and \be
\label{condclosed3} t_{m+1+k}-t_{m+k}  = t_k - t_{k-1}\,, \quad
\hbox{for} \quad k= n, n+1. \ee
\end{enumerate}
\end{prn}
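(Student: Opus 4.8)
The plan is to obtain both parts from a single ingredient: explicit formulas for the value and for the first derivative (the hodograph) of a B-Spline curve evaluated \emph{at a knot}, read off the very regular structure of the partition $\brho$ in \eqref{rho}, where — apart from the two auxiliary knots $t_{-1}$ and $t_{p+n+2}$ — every knot value $t_k$ occurs with multiplicity exactly $n+1$.

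For part a), note first that $t_n$ is a knot of multiplicity $n+1$ of the degree-$(2n+1)$ curve $\r$, so exactly $2n+1-(n+1)+1=n+1$ of the basis functions $N_{i,\brho}^{2n+1}$ are nonzero there, namely those with $i=(n-1)(n+1)+1,\dots,n(n+1)$. Running the de Boor recursion at $t_n$ — equivalently, inserting $t_n$ the $n$ additional times needed to raise its multiplicity to $2n+1$ — and using that the flanking knot values $t_{n-1}$ and $t_{n+1}$ also have multiplicity $n+1$, one checks that every interpolation ratio that occurs equals $\alpha=\frac{t_n-t_{n-1}}{t_{n+1}-t_{n-1}}$; repeated affine interpolation with a constant parameter is de Casteljau's algorithm, hence $\r(t_n)=\sum_{k=0}^{n}\r_{(n-1)(n+1)+k+1}\,B^{n}_{k}(\alpha)$. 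The identical computation for the degree-$2n$ hodograph $\r'=\p$ over $\bnu$ (at $t_n$, again a knot of multiplicity $n+1$) gives $\r'(t_n)=\sum_{k=0}^{n-1}\p_{(n-1)(n+1)+k+1}B^{n-1}_{k}(\alpha)$, and the mirror-image argument at the right end yields the analogous formulas at $t_{m+1}$ with $\beta=\frac{t_{m+1}-t_m}{t_{m+2}-t_m}$. It then only remains to recognise, from \eqref{rs} specialised to $i=0$ and $i=q$, that $\r_1-\r_0=\frac{s'_{2n+2}-s'_{1}}{2n+1}\p_0$ and $\r_{q+1}-\r_q=\frac{s'_{q+2n+2}-s'_{q+1}}{2n+1}\p_q$, so that the two tangent requirements in \eqref{condclamped} are equivalent to $\r'(t_n)=\p_0$ and $\r'(t_{m+1})=\p_q$. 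Substituting the four evaluation formulas into the four conditions of \eqref{condclamped} turns them verbatim into \eqref{cond1} and \eqref{cond2}.

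For part b), the key step is to telescope \eqref{rs}: since $(s_{j+2n+1}-s_j)\p_j=(2n+1)(\r_{j+1}-\r_j)$, each sum in \eqref{condclosed2} collapses, for $k=0,\dots,n$, to $(2n+1)\bigl(\r_{(m+n+1)(n+1)-k}-\r_{n(n+1)-k}\bigr)$; hence \eqref{condclosed2} is equivalent to $\r_{n(n+1)-k}=\r_{(m+n+1)(n+1)-k}$ for $k=0,\dots,n$, i.e.\ the $n+1$ de Boor points of $\r$ straddling the left end of the domain coincide with the $n+1$ straddling the right end. Condition \eqref{condclosed3} makes the two knot intervals adjacent to the junction periodic, while the remaining periodicity of the knots of $\brho$ — and the wrap-around of the de Boor points of $\z$, and through \eqref{zsquare}, \eqref{pks} and \eqref{rs} of $\p$ and of $\r$ — is inherited from the closed structure carried by the choice $p=m+n$ (cf.\ Remark \ref{rembspline_curve_b}). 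These are exactly the hypotheses of the standard criterion for a B-Spline curve, now of degree $2n+1$, to close up with $C^n$ continuity at $\r(t_n)=\r(t_{m+n+1})$, and invoking it concludes the argument.

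The genuinely delicate point throughout is index bookkeeping rather than analysis: verifying that the nonzero basis functions at $t_n$, at $t_{m+1}$, and across the junction carry precisely the index ranges written in \eqref{cond1}, \eqref{cond2} and \eqref{condclosed2}, given both the multiplicity-$(n+1)$ pattern of $\brho$ and the shift caused by the extra knot $t_{-1}$; and, in part b), pinning down which periodicity conditions are genuinely new (namely \eqref{condclosed3}) as opposed to already forced by the closed data. The analytic facts invoked — that de Boor with a constant interpolation ratio reproduces de Casteljau, and the telescoping of \eqref{rs} — are elementary and follow by a short induction on the degree.
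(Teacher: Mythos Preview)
Your proposal is correct and follows essentially the same approach as the paper's proof: both identify the $n+1$ relevant control points at $t_n$ (resp.\ $t_{m+1}$), observe that every de Boor interpolation ratio collapses to the single constant $\alpha$ (resp.\ $\beta$) so that de Boor degenerates to de Casteljau, reduce the tangent conditions to positional conditions on $\p$, and in part b) telescope \eqref{rs} to turn \eqref{condclosed2} into the control-point coincidences $\r_{n(n+1)-k}=\r_{(m+n+1)(n+1)-k}$. The only cosmetic difference is that the paper phrases the identification of the relevant control points and the constancy of the interpolation ratio in the language of blossoms (writing $\r_i=R(t_{i+1},\ldots,t_{i+2n+1})$ and checking $(1-\alpha)t_{n-1}+\alpha t_{n+1}=t_n$), whereas you argue via support of basis functions and knot insertion; the underlying computation is identical.
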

\begin{proof}
According to \cite{gallier} for every degree-$n$ B-Spline curve $\x(u) =
\sum_{i=0}^{q}\mathbf{x}_{i}N^{n}_{i,\bmu}(u)\in \RR^d$
over the knot partition $\bmu = \{t_i\}_{i=0,...,n+q+1}$ there exists a
unique multi-affine, symmetric application or {\it blossom} $X: \RR^n
\longrightarrow \RR^d, (u_1, \ldots, u_n) \mapsto X(u_1,\ldots, u_n)$
such that $ \x(u) = X(u,\ldots,u) = X(<u>^n)$.
Its control points are $\x_i = X(t_{i+1}, \ldots, t_{n+i})$, $i=0,\ldots,q$.
Thus, denoting $P(u_1, \ldots, u_{2n})$ the blossom of the curve
$\p(u)$ from (\ref{zsquare}) over the knot partition $\bnu$ from
(\ref{nu}), the control points $\p_i$ may be written as:
$$
\begin{array}{ll}
\p_{k-1} = P(<t_0>^{n+1-k}, <t_{1}>^{\min\{n-1+k,n+1\}},
<t_{2}>^{\max\{k-2,0\}}), & \ \hbox{for} \ \ k=1, \ldots, n,\\
\p_{j(n+1)+k-1} = P(<t_j>^{n+1-k}, <t_{j+1}>^{\min\{n-1+k,n+1\}},
<t_{j+2}>^{\max\{k-2,0\}}), & \ \hbox{for} \ \ j= 1, \ldots, p+n-1 \ \hbox{and} \ k=0, \ldots, n,\\
\p_{(p+n)(n+1)+k-1} = P(<t_{p+n}>^{n+1-k},<t_{p+n+1}>^{\min\{n-1+k,n+1\}}), & \ \hbox{for} \ \ k=0,1.
\end{array}
$$
Analogously, denoting $R(u_1, \ldots, u_{2n+1})$ the blossom of the
curve $\r(u)$ from (\ref{rt}) over the knot partition $\brho$ from
(\ref{rho}), the control points $\r_i$ may be written as
$$
\begin{array}{ll}
\r_{j(n+1)+k} = R(<t_j>^{n+1-k}, <t_{j+1}>^{\min\{n+k,n+1\}}, <t_{j+2}>^{\max\{k-1,0\}}), \ & \ \hbox{for} \ \ j=0, \ldots, p+n-1 \ \hbox{and} \ k=0, \ldots, n,\\
\r_{(p+n)(n+1)+k} = R(<t_{p+n}>^{n+1-k},
<t_{p+n+1}>^{\min\{n+k,n+1\}}), \ & \ \hbox{for} \ \ k=0,1.
\end{array}
$$
Recalling de Boor's algorithm and the properties of blossoms, the
control points involved for calculating a point $\r(t_j) =
R(<t_j>^{2n+1})$ are the following: \be \label{cpinv} R(<t_{j-1}>^n,
<t_j>^{n+1}) = \r_{j(n+1)-n}, \ldots, R(<t_{j}>^{n+1},
<t_{j+1}>^{n}) = \r_{j(n+1)} \ee
\begin{enumerate}
\item[a)]
We wish to obtain a clamped curve satisfying conditions
(\ref{condclamped}). In order to satisfy the positional constraints
we thus apply de Boor's algorithm for calculating $\r(t_n)$,
respectively, $\r(t_{m+1})$ to the control points (\ref{cpinv}) for
$j=n$, respectively, $j=m+1$. For $j=n$ we obtain \be
\begin{array}{l}
R(<t_{n-1}>^{n-l}, <t_n>^{n+1+k}, <t_{n+1}>^{l-k}) =  (1-\alpha)\, R(<t_{n-1}>^{n+1-l}, <t_n>^{n+k}, <t_{n+1}>^{l-k}) \\
+ \alpha\, R(<t_{n-1}>^{n-l}, <t_n>^{n+k}, <t_{n+1}>^{l+1-k}) \quad  \hbox{for} \quad k,l=1,\ldots, n.
\end{array}
\ee

This yields the following condition for $\alpha$, which results thus
to be independent of the indices $k,l$:
\[
(1-\alpha)\, t_{n-1} + \alpha \,t_{n+1} = t_n.
\]
De Boor's algorithm thus degenerates to de Casteljau's algorithm
yielding the first equation of condition (\ref{cond1}). The first
equation of condition (\ref{cond2}) is obtained analogously.

In order to satisfy the tangential constraints of
(\ref{condclamped}), we first note that they are equivalent to the
following positional constraints for $\p(t)$: $\p(t_n) = \p_0$,  $\p(t_{m+1})=\p_q$.
We thus apply the same reasoning as above to $\p(t)$, and obtain the
second equations in (\ref{cond1}) and (\ref{cond2}).

\item[b)]
We wish to obtain a closed curve $\r(t)$ with \be
\label{condclosed0} \r(t_n) = \r(t_{m+n+1})\,.\ee Recalling de
Boor's algorithm and the properties of blossoms the control points
involved for calculating a point $\r(t_j) = R(<t_j>^{2n+1})$ are the
following:
$$
R(<t_{j-1}>^n, <t_j>^{n+1}) = \r_{j(n+1)-n}, \ldots,
R(<t_{j}>^{n+1}, <t_{j+1}>^{n}) = \r_{j(n+1)}
$$
In order for condition (\ref{condclosed0}) to hold the following
points and their corresponding knot intervals thus have to coincide:
\be \label{condclosed1}
\r_{n(n+1)-k} = \r_{(m+n+1)(n+1)-k}\,, \quad \hbox{for} \quad k=0,\ldots,n,
\ee
as well as
\be \label{condclosedknots}
t_{m+1+k}-t_{m+k} = t_k -t_{k-1}\,, \quad \hbox{for} \quad k=n,n+1.
\ee
Setting $ f_j
= \frac{s_{j+2n+1} - s_j}{2n+1}$ for $j=0,\ldots,q$ and considering
condition (\ref{rs}), condition (\ref{condclosed1}) is equivalent to
\[
\sum_{j=0}^{n(n+1)-k-1} f_j \p_j = \sum_{j=0}^{(m+n+1)(n+1)-k-1}
f_j \p_j, \quad \hbox{for} \quad k=0, \ldots, n,
\]
or equivalently
\[ \sum_{j=n(n+1)-k}^{(m+n+1)(n+1)-k-1} f_j \p_j ={\bf 0}, \quad \hbox{for} \quad k=0, \ldots, n.
\]
These conditions also guarantee the maximum possible continuity
class at the junction point.
\end{enumerate}
\end{proof}

\smallskip
In the clamped case of the above proposition we notice that if
$t_{n-1}=t_n$ then $\alpha =0$ which yields $ \r(t_n) =
\r_{n(n+1)-n}$ and $\p(t_n) = \p_{n(n+1)-n}$, and if
$t_{m+1}=t_{m+2}$ then $\beta =1$ which yields $ \r(t_{m+1}) =
\r_{(m+1)(n+1)}$ and $\p(t_{m+1}) = \p_{(m+1)(n+1)-1}$. If in the
knot partition $\bmu$ from (\ref{mu}) we have $t_0=\ldots=t_n$ and
$t_{m+1}=\ldots=t_{m+n+1}$, i.e., if $\z(t)$ from (\ref{zbs}) is a
clamped curve itself, the first respectively last $(n-1)(n+1)+2$
control points of $\r(t)$ and $\p(t)$ coincide, i.e., \be
\label{cpmult} \r_0 = \ldots = \r_{n(n+1)-n} \; \mbox{and} \;\;
\r_{(m+1)(n+1)} = \ldots = \r_{(m+n)(n+1)+1}\,, \ee as well as \be
\label{cpmult2} \p_0 = \ldots = \p_{n(n+1)-n} \; \mbox{and} \;\;
\p_{(m+1)(n+1)-1} = \ldots = \p_{(m+n)(n+1)}\,. \ee

In this case condition (\ref{condclamped}) from Proposition
\ref{prop1} a) is automatically satisfied yielding a more intuitive
way of obtaining a clamped PH B-Spline curve. In order to simplify
the notation we remove redundant knots in the knot partition $\bnu$
from (\ref{nu}) together with the control point multiplicities from
(\ref{cpmult}) and summarize the result in the following Corollary.
\begin{crl} \label{clampedcase}
Let $\z(t) = \sum_{i=0}^{m} \z_i N_{i,\bmu}^n(t)\, , \; t \in [t_n,
t_{m+1}]$ be a clamped B-Spline curve over the knot partition \be
\label{muclamped} \bmu =\{<t_n>^{n+1}, \ \{t_{i}\}_{i=n+1,...,m}, \ <t_{m+1}>^{n+1}\} \ee
as in Remark \ref{rembspline_curve_a}. Then,
\begin{equation}
\label{zsquareclamped}\mathbf{p}(t)=\mathbf{z}^{2}(t) = \sum_{k=0}^q
\p_k N_{k,\bnu}^{2n}(t)\, ,
\end{equation}
where $q=2n +(n+1) (m-n)$ and $$\bnu =
\{s_i\}_{i=0,...,4n+1+(n+1)(m-n)} = \{<t_n>^{2n+1}, \
\{<t_{i}>^{n+1}\}_{i=n+1,...,m}, \ <t_{m+1}>^{2n+1}\}\,,$$ as well as
$$\r(t) = \int \p(t) dt = \sum_{i=0}^{q+1} \r_i
N_{i,\brho}^{2n+1}(t)\, , \; \; t \in [t_n, t_{m+1}]$$ where $\brho
= \{s'_i\}_{i=0,...,4n+3+(n+1)(m-n)}$ with $s'_i = s_{i-1}$ for $i=1,
\ldots, 4n+2+(n+1)(m-n)$, $s'_0=s'_1$ and $s'_{4n+3+(n+1)(m-n)} =
s'_{4n+2+(n+1)(m-n)}$, i.e., \be \label{rhoclamped} \brho =
\{<t_n>^{2n+2}, \ \{<t_{i}>^{n+1}\}_{i=n+1,...,m}, \ <t_{m+1}>^{2n+2}\}\,,\ee and the control points $\r_i$ satisfy
(\ref{rs}).
\end{crl}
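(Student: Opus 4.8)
The plan is to read off the statement as the specialization of the general construction of Section~\ref{sec3.1} to the clamped knot partition \eqref{muclamped}: there $p=m$ and, in the notation of \eqref{mu}, $t_0=\dots=t_n$ and $t_{m+1}=\dots=t_{m+n+1}$, so that $\z(t)$ is itself a clamped B-spline curve in the sense of Remark~\ref{rembspline_curve_a}. Everything then reduces to bookkeeping. Write $\bnu_0$ for the full partition \eqref{nu} associated with \eqref{muclamped}: since the $n+1$ leading blocks $\langle t_0\rangle^{n+1},\dots,\langle t_n\rangle^{n+1}$ all carry the value $t_n$, that value occurs in $\bnu_0$ with multiplicity $(n+1)^2$, and symmetrically $t_{m+1}$ occurs with multiplicity $(n+1)^2$. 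By Definition~\ref{defbspline} together with the convention ``$0/0=0$'', a degree-$2n$ basis function $N^{2n}_{k,\bnu_0}$ whose support $[s_k,s_{k+2n+1}]$ collapses to a single point is identically zero; this happens precisely when $s_k=s_{k+2n+1}=t_n$, i.e.\ for $k=0,\dots,n^2-1$, and symmetrically for the last $n^2$ indices. Hence $n^2$ basis functions are redundant at each end --- consistently with the coincidences \eqref{cpmult2} of the $\p_k$ --- and deleting the corresponding $2n^2$ knots lowers the multiplicity of $t_n$ and of $t_{m+1}$ from $(n+1)^2$ to $(n+1)^2-n^2=2n+1$, leaving exactly the partition $\bnu$ written in the statement; moreover $q=(n+1)(p+n)-2n^2=2n+(n+1)(m-n)$.

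It remains to check that the retained basis functions are unchanged by the knot removal. A normalized B-spline of degree $2n$ depends only on the block of $2n+2$ consecutive knots carrying it, and for each retained index $k\in\{n^2,\dots,q+n^2\}$ this block lies in the region where $\bnu_0$, shifted by $n^2$, coincides with $\bnu$ (the superfluous boundary copies of $t_n$ and $t_{m+1}$ are never seen by a retained function, whose support is contained in $[t_n,t_{m+1}]$); therefore $N^{2n}_{k,\bnu_0}=N^{2n}_{k-n^2,\bnu}$. Consequently $\p(t)=\sum_k\p_k N^{2n}_{k,\bnu_0}=\sum_{j=0}^{q}\p_{j+n^2}N^{2n}_{j,\bnu}$, which after renaming $\p_j:=\p_{j+n^2}$ is exactly \eqref{zsquareclamped}, with the formulas \eqref{pks} carried over verbatim for the retained indices.

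Finally, the part concerning $\r(t)$ follows by applying \eqref{rt}--\eqref{rho} and running the same reduction one degree higher. The partition $\brho$ of \eqref{rho} prepends to $\bnu_0$ one knot $t_{-1}\le t_n$ and appends one knot $t_{p+n+2}\ge t_{m+1}$; in the clamped case I would choose $t_{-1}=t_n$ and $t_{p+n+2}=t_{m+1}$, so that $t_n$ and $t_{m+1}$ there have multiplicity $(n+1)^2+1$. The degree-$(2n+1)$ basis functions with one-point support are again those indexed $0,\dots,n^2-1$ at each end (matching the coincidences \eqref{cpmult} of the $\r_i$), and removing the corresponding $n^2$ knots per side brings the multiplicity down to $(n+1)^2+1-n^2=2n+2$, which is the partition \eqref{rhoclamped}. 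The recurrence \eqref{rs} survives the relabelling, because for the reduced partition $\bnu$ the knot difference $s_{i+2n+1}-s_i$ equals the corresponding difference in $\bnu_0$ shifted by $n^2$ (for $i$ below that range the difference vanishes, which is consistent with the relabelling $\r_0\leftarrow\r_{n^2}$); hence \eqref{rs} holds in the new indexing, completing the argument. The corollary has no genuine obstacle --- it merely transcribes Section~\ref{sec3.1} --- the one step to carry out carefully being the elementary count that exactly $n^2$ basis functions, equivalently $2n^2$ knots, are redundant at each end.
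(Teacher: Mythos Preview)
Your argument is correct and follows the paper's approach: specialize the general construction of Section~\ref{sec3.1} to the clamped partition \eqref{muclamped} and then strip away the redundancies at each end. The paper's own proof is a two-sentence sketch that invokes the control-point coincidences \eqref{cpmult}--\eqref{cpmult2} to justify removing $(n-1)(n+1)+1=n^2$ control points together with $n^2$ knots at each end; your route via identically-zero basis functions (collapsed support) is a slightly more direct and self-contained justification for the same removal, and your knot/index bookkeeping agrees with the paper's counts. One small slip: in your closing sentence ``$n^2$ basis functions, equivalently $2n^2$ knots, are redundant at each end'' the $2n^2$ should read $n^2$ (the $2n^2$ is the total over both ends, as you state correctly earlier).
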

\begin{proof}
The result is obtained by removing $(n-1)(n+1)+1$ of the multiple
control points from (\ref{cpmult}) together with $n^2$ of the
multiple knots of $\bnu$ from (\ref{nu}) at the beginning and at the
end. The same result is obtained by proceeding with the general
construction of the PH B-Spline curve starting with a clamped
B-Spline curve over the knot partition $\bmu$ from (\ref{muclamped}).
\end{proof}

\smallskip
In this way, in both cases (the clamped and the closed one), the
resulting PH B-Spline curve is of degree $2n+1$ and of continuity
class $C^n$. For example, for $n=1$ we obtain PH B-Spline curves of
degree $3$ and continuity class $C^1$, while for $n=2$ we have PH
B-Spline curves of degree $5$ and continuity class $C^2$.

\section{Parametric speed, arc length and offsets}\label{sec:properties}

According to (\ref{sigmareal}), the parametric speed of the regular
PH curve $\mathbf{r}(t)=x(t) + \ti y(t)$ is given by
\[\sigma(t) = |\r'(t)| = |\z^2(t)| = \z(t)\,
\bar{\z}(t)\, .
\]
Exploiting (\ref{zbs}) we thus obtain \be \label{sigma2} \sigma(t) =
\sum_{i=0}^p \sum_{j=0}^p \z_i \bar{\z}_j N_{i,\bmu}^n(t)
N_{j,\bmu}^n(t) = \sum_{k=0}^q \sigma_k N_{k,\bnu}^{2n}(t)\, ,\ee
where \be \label{sigmak} \sigma_k = \sum_{i=0}^p \sum_{j=0}^p
\chi_{k}^{i,j} \z_i \bar{\z}_j \ee in analogy to (\ref{pks}).

\subsection{Arc-length}
The arc length of the PH B-Spline curve is thus obtained as \be
\label{arclength} \int \sigma(t) dt = \sum_{i=0}^{q+1} l_i
N_{i,\brho}^{2n+1}(t)\, , \; \; t \in [t_n, t_{p+1}]\, ,\ee where
\be \label{ls} l_{i+1} = l_i + \frac{s'_{i+2n+2} - s'_{i+1}}{2n+1}\,
\sigma_i = l_i + \frac{s_{i+2n+1} - s_{i}}{2n+1}\, \sigma_i\,, \ee
with $l_0=0$. The cumulative arc length is given by
$$
\mathlarger{\mathlarger{\ell}}(\xi)=\int_{t_{n}}^{\xi} \sigma(t) dt = \sum_{i=0}^{q+1} l_i
\left(N_{i,\brho}^{2n+1}(\xi)-N_{i,\brho}^{2n+1}(t_{n})\right)
$$
and the curve's total arc length thus is \be
\label{totallength}
L=\mathlarger{\mathlarger{\ell}}(t_{p+1})=\int_{t_{n}}^{t_{p+1}} \sigma(t) dt =
\sum_{i=0}^{q+1} l_i \left(N_{i,\brho}^{2n+1}(t_{p+1}) -
N_{i,\brho}^{2n+1}(t_{n})\right) \, . \ee This general formula simplifies
in the clamped and closed cases as follows. In the clamped case for
the knot partition $\brho$ from (\ref{rhoclamped}) we notice that
\[
N_{i,\brho}^{2n+1}(t_n) = \left\{ \begin{array}{cc} 1\, , &
\mbox{if}\;\; \; i=0 \\
0\, , & \mbox{else}
\end{array} \right.
\]
and
\[
N_{i,\brho}^{2n+1}(t_{m+1}) = \left\{ \begin{array}{cc} 1\, , &
\mbox{if}\;\; \; i=2n+1+(n+1)(m-n) \\
0\, , & \mbox{else.}
\end{array} \right.
\]
Considering $l_0=0$ and recalling Corollary \ref{clampedcase}, in
the clamped case the total arc length $L$ in (\ref{totallength})
thus becomes \be \label{totallengthclamped} L =
l_{2n+1+(n+1)(m-n)}\,. \ee Due to the structure of the knot
partition $\brho$ in (\ref{rho}), we notice that
\[
N_{i,\brho}^{2n+1}(t_n) \left\{ \begin{array}{cc} \ne 0\, , &
\mbox{if}\;\; \; (n-1)(n+1) < i < n(n+1)+1\\
= 0\, , & \mbox{else}
\end{array} \right.
\]
and
\[
N_{i,\brho}^{2n+1}(t_{p+1}) \left\{ \begin{array}{cc} \ne 0\, , &
\mbox{if}\;\; \; p(n+1) < i < (p+1)(n+1)+1 \\
= 0\, , & \mbox{else.}
\end{array} \right.
\]
In this case the total arc length $L$ from (\ref{totallength})  thus
becomes
\ba \label{totallengthgen} L &=&
\sum_{i=p(n+1)+1}^{(p+1)(n+1)} l_i N_{i,\brho}^{2n+1}(t_{p+1}) -
\sum_{i=(n-1)(n+1)+1}^{n(n+1)} l_i N_{i,\brho}^{2n+1}(t_{n})
\nonumber \\
&=& \sum_{k=0}^{n} \Big(l_{p(n+1)+1+k}\,
N_{p(n+1)+1+k,\brho}^{2n+1}(t_{p+1}) - l_{(n-1)(n+1)+1+k}\,
N_{(n-1)(n+1)+1+k,\brho}^{2n+1}(t_{n}) \Big)\,. \ea In the case of a
closed curve from Proposition \ref{prop1} b) we have $p=m+n$ and
conditions (\ref{condclosed3}). On the knot partition $\brho$ from
(\ref{rho}) the normalized B--Spline basis functions having as
support $[t_{n-1}, t_{n+1}]$ are $N_{i,\brho}^{2n+1}(t)$ for
$i=(n-1)(n+1)+1, \ldots, n(n+1)$, and those having as support
$[t_{m+n}, t_{m+n+2}]$ are $N_{i,\brho}^{2n+1}(t)$ for
$i=(m+n)(n+1)+1, \ldots, (m+n+1)(n+1)$. With conditions
(\ref{condclosed3}) this means 
\[
N_{(n-1)(n+1)+1+k,\brho}^{2n+1}(t_n) =
N_{(m+n)(n+1)+1+k,\brho}^{2n+1}(t_{m+n+1})\,, \;\; \mbox{for}\;\; \;
k=0,\ldots,n\,.
\]
In the case of a closed curve its total arc
length $L$ from (\ref{totallengthgen}) thus reads \be
\label{Lclosed1} L = \sum_{k=0}^{n} \Big(l_{p(n+1)+1+k} -
l_{(n-1)(n+1)+1+k}\Big) \, N_{(n-1)(n+1)+1+k,\brho}^{2n+1}(t_{n})\,,
\ee which, by taking into account (\ref{ls}), becomes \be
\label{totallengthclosed} L = \sum_{k=0}^{n}
\left(\sum_{j=(n-1)(n+1)+k+1}^{(m+n)(n+1)+k}
\frac{s_{j+2n+1}-s_j}{2n+1} \, \sigma_j\right) \,
N_{(n-1)(n+1)+1+k,\brho}^{2n+1}(t_{n})\,. \ee

\subsection{Offsets}
The offset curve $\r_h(t)$ at (signed) distance $h$ of a PH B-Spline curve
$\r(t)$ is the locus defined by
\[\r_h(t) = \r(t) + h\, \n(t)
\]
where
\[\n(t) = \frac{(y'(t),-x'(t))}{\sqrt{(x'(t))^2+(y'(t))^2}} =
\frac{-\ti \r'(t)}{\sigma(t)}
= \frac{-\ti\z^2(t)}{\sigma(t)}\,.
\]
(Note that, since we are dealing with the regular case, $\sigma(t)\neq0$ and the offset curve is always well defined.)
Thus
\[\r_h(t) = \frac{\sigma(t) \r(t) - \ti \, h \,  \z^2(t)}{\sigma(t)}\,.
\]
Herein the product $\sigma(t) \r(t)$ reads as
\[\sigma(t) \r(t) = \sum_{i=0}^{q+1} \sum_{j=0}^q \sigma_j \r_i
N_{i,\brho}^{2n+1}(t) N_{j,\bnu}^{2n}(t)\,.
\]
Again, according to \cite{morken, cinesi}, we can write
\begin{equation}\label{function_gij}
N_{i,\brho}^{2n+1}(t) N_{j,\bnu}^{2n}(t) = \sum_{k=0}^w
\zeta_{k}^{i,j} N_{k,\btau}^{4n+1}(t)
\end{equation}
with the knot partition \be\label{tau} \btau = \{<t_{-1}>^{2n+1}, \ \{
<t_k>^{3n+2}\}_{k=0,...,p+n+1}, \ <t_{p+n+2}>^{2n+1}\} \ee and \be
\label{w} w=(3n+2)(p+n+2)-1\,. \ee

\begin{rmk}
In the clamped case, from Corollary
\ref{clampedcase} we obtain \be\label{tauclamped} \btau =
\{<t_n>^{4n+2}, \ \{<t_{k}>^{3n+2}\}_{k=n+1,...,m}, \ <t_{m+1}>^{4n+2}\}
\ee and \be \label{wclamped} w=4n+1+(m-n)(3n+2)\,. \ee
Differently, in the closed case, we have
\be\label{tauclosed}
\btau =\{
<t_{-1}>^{2n+1}, \ \{<t_{k}>^{3n+2}\}_{k=0,...,m+2n+1}, \ <t_{m+2n+2}>^{2n+1} \}
\ee
and
\be \label{wclosed} w=(3n+2)(m+2n+2)-1 \,. \ee
\end{rmk}

\smallskip
To work out the unknown coefficients $\bzeta^{i,j}:=(\zeta_0^{i,j}, \zeta_1^{i,j}, ..., \zeta_w^{i,j})^T$ in \eqref{function_gij} we solve the linear system
\begin{equation}\label{zetas}
\C \bzeta^{i,j}=\e^{i,j},
\end{equation}
with
$$
\C=(c_{k,h})_{k,h=0,...,w}, \quad \ c_{k,h}:=\langle
N_{k,\btau}^{4n+1}, N_{h,\btau}^{4n+1}
\rangle=\int_{t_0}^{t_{p+n+1}} \hspace{-0.2cm} N_{k,\btau}^{4n+1}(t)
\, N_{h,\btau}^{4n+1}(t) \, dt
$$
and
$$
\e^{i,j}=(e^{i,j}_h)_{h=0,...,w}, \quad \  e^{i,j}_h:=\langle
g_{i,j}, N_{h,\btau}^{4n+1}
\rangle=\int_{t_0}^{t_{p+n+1}}  g_{i,j}(t) \, N_{h,\btau}^{4n+1}(t) \, dt \quad \hbox{where} \quad
g_{i,j}(t):=N_{i,\brho}^{2n+1}(t) \, N_{j,\bnu}^{2n}(t).
$$
Like in the previous case, $\C$ is a banded Gramian, and thus nonsingular.
This guarantees that each of the linear systems in \eqref{zetas}  has a unique solution that can be efficiently computed by means of the
Cholesky decomposition algorithm for symmetric positive definite band matrices.\\
The computed expressions of $\zeta_k^{i,j}$, $k=0,...,w$, $i=0,...,q+1$, $j=0,...,q$ thus yield
\[\sigma(t) \r(t) =
\sum_{k=0}^{w}\sum_{i=0}^{q+1} \sum_{j=0}^q \zeta_k^{i,j} \sigma_j
\r_i N_{k,\btau}^{4n+1}(t)\,.
\]
By writing
\[\p(t) =
\p(t)\cdot 1 = \Big(\sum_{j=0}^q \p_j N_{j,\bnu}^{2n}(t) \Big)
\Big(\sum_{i=0}^{q+1} N_{i,\brho}^{2n+1}(t)\Big)
\]
and
\[\sigma(t) = \sigma(t)\cdot 1 = \Big(\sum_{j=0}^q \sigma_j
N_{j,\bnu}^{2n}(t)\Big) \Big(\sum_{i=0}^{q+1} N_{i,\brho}^{2n+1}(t)\Big), \]
we thus obtain
\[\p(t)  = \sum_{k=0}^{w}
\sum_{i=0}^{q+1} \sum_{j=0}^q \zeta_k^{i,j} \p_j
N_{k,\btau}^{4n+1}(t)  \]
\[\sigma(t)
= \sum_{k=0}^{w} \sum_{i=0}^{q+1} \sum_{j=0}^q \zeta_k^{i,j}
\sigma_j N_{k,\btau}^{4n+1}(t)\,.
\]
The offset curve $\r_h(t)$ finally has the form \be \label{offset3}
\r_h(t) = \frac{\sum_{k=0}^{w} \q_k
N_{k,\btau}^{4n+1}(t)}{\sum_{k=0}^{w} \gamma_k
N_{k,\btau}^{4n+1}(t)}\,, \ee
where, for $k=0,\ldots,w$,
\be
\label{qk} \q_k = \sum_{i=0}^{q+1} \sum_{j=0}^q (\sigma_j \r_i -
\ti\, h \,\p_j)\zeta_k^{i,j} \ee and \be \label{gammak} \gamma_k =
\sum_{i=0}^{q+1} \sum_{j=0}^q \sigma_j \zeta_k^{i,j}\,. \ee

\section{General explicit formulas for the cubic and quintic case}
\label{sec5}

\subsection{Clamped cubic PH B-Splines ($n=1$)}
\label{sec5n1clamped}

Let $m \in \NN$, $m \geq 1$. For a general knot vector
$\bmu =\{ \langle 0\rangle^2 < t_2 < \ldots < t_m  < \langle t_{m+1} \rangle^2 \}$
satisfying the constraints $t_0 =t_1 =0$ and $t_{m+1} =t_{m+2}$ (see Figure \ref{fig_knots_clamped_n1} first row),
by applying the above method we construct the knot partitions
$$
\begin{array}{l}
\bnu= \{\langle 0 \rangle^3 < \langle t_2 \rangle^2 < ... < \langle t_m \rangle^2 < \langle t_{m+1} \rangle^3 \}, \smallskip \\
\brho = \{\langle 0 \rangle^4 < \langle t_2 \rangle^2 < ... < \langle t_m \rangle^2 < \langle t_{m+1} \rangle^4 \}, \smallskip \\
\btau = \{\langle 0 \rangle^6 < \langle t_2 \rangle^5 < ... <
\langle t_m \rangle^5 < \langle t_{m+1} \rangle^6 \},
\end{array}
$$
illustrated in Figure \ref{fig_knots_clamped_n1}. Then, by solving the linear systems \eqref{lgschi} we calculate
the coefficients $\chi_{k}^{i,j}$, $0\le i,j \le m$, $0 \le {k} \le
2 m$. All of them turn out to be zero with the exception of \be
\label{chisn1clamped}
\begin{array}{l}
\chi_{2k}^{k,k} = 1\,, \quad k = 0, \ldots, m\,, \smallskip \\
\chi_{2k+1}^{k,k+1} = \chi_{2k+1}^{k+1,k} =\frac{1}{2}\,, \quad k = 0, \ldots, m-1.
\end{array}
\ee
In addition, we compute the coefficients $\zeta_k^{i,j}$, $0
\leq i \leq 2m+1$, $0 \leq j \leq 2m$, $0 \leq k \leq 5m$ as the
solutions to the linear systems \eqref{zetas}. All of them
turn out to be zero with the exception of \be \label{zetasn1clamped}
\begin{array}{l}
\zeta_{5k}^{2k,2k} = \frac{d_{k+1}}{D_{k}}\, , \;
\zeta_{5k}^{2k+1,2k} = \frac{d_{k}}{D_{k}}\, , \;
k=0,\ldots,m\, , \smallskip  \\
\zeta_{5k+1}^{2k,2k+1} = \frac{2d_{k+1}}{5 D_{k}}\, , \;
\zeta_{5k+1}^{2k+1,2k} = \frac{3}{5}\, , \; \zeta_{5k+1}^{2k+1,2k+1}
= \frac{2d_{k}}{5 D_{k}}\, , \;
k=0,\ldots,m-1\, , \smallskip  \\
\zeta_{5k+2}^{2k,2k+2} = \frac{d_{k+1}}{10 D_{k}}\, , \;
\zeta_{5k+2}^{2k+1,2k+1} = \frac{3}{5}\, , \;
\zeta_{5k+2}^{2k+1,2k+2} = \frac{d_{k}}{10 D_{k}}\, , \;
\zeta_{5k+2}^{2k+2,2k} = \frac{3}{10}\, , \;
k=0,\ldots,m-1\, , \smallskip  \\
\zeta_{5k+3}^{2k+1,2k+2} = \frac{3}{10}\, , \;
\zeta_{5k+3}^{2k+2,2k} = \frac{d_{k+2}}{10 D_{k+1}}\, , \;
\zeta_{5k+3}^{2k+2,2k+1} = \frac{3}{5}\, , \; \zeta_{5k+3}^{2k+3,2k}
= \frac{d_{k+1}}{10 D_{k+1}}\, , \;
\;k=0,\ldots,m-1\, , \smallskip  \\
\zeta_{5k+4}^{2k+2,2k+1} = \frac{2d_{k+2}}{5 D_{k+1}}\, , \;
\zeta_{5k+4}^{2k+2,2k+2} = \frac{3}{5}\, , \;
\zeta_{5k+4}^{2k+3,2k+1} = \frac{2d_{k+1}}{5 D_{k+1}}\, , \;
k=0,\ldots,m-1\, ,
\end{array}
\ee where $D_k:=d_k+d_{k+1}$, $k=0,...,m$ and $d_0=d_{m+1}:=0$.

\begin{figure}[h!]
\centering
\resizebox{8.0cm}{!}{\includegraphics{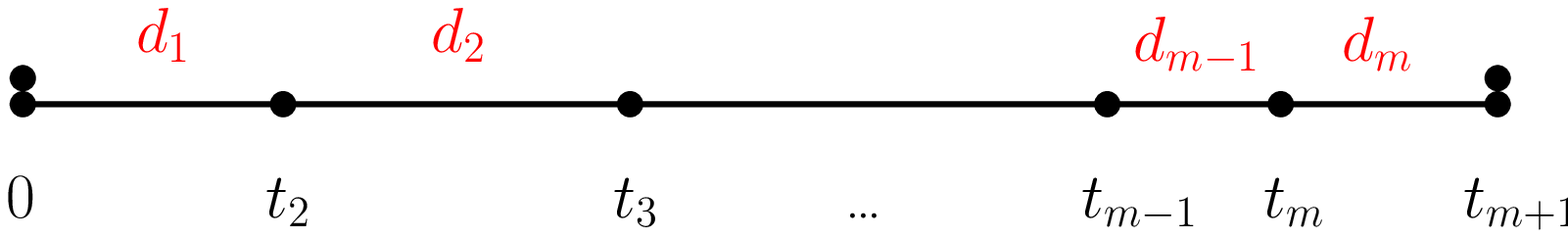}}\\
\resizebox{8.0cm}{!}{\includegraphics{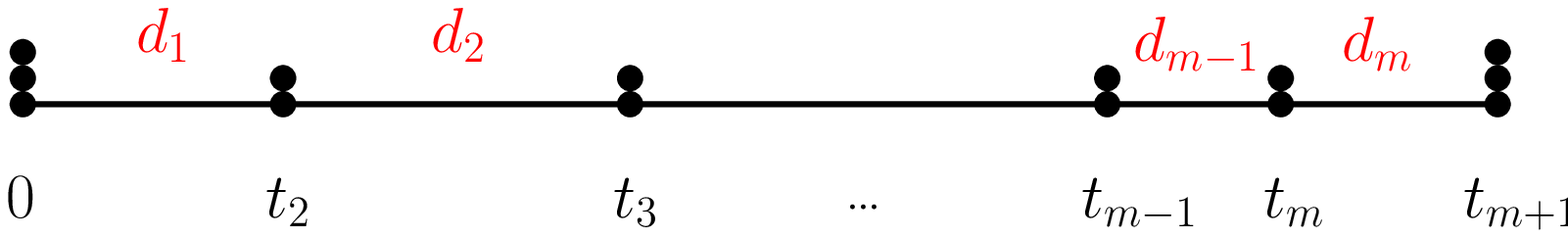}}\\
\resizebox{8.0cm}{!}{\includegraphics{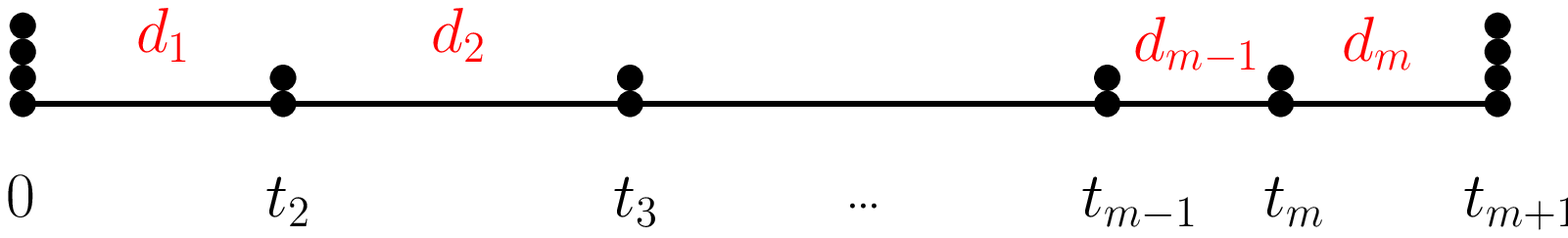}}\\
\resizebox{8.0cm}{!}{\includegraphics{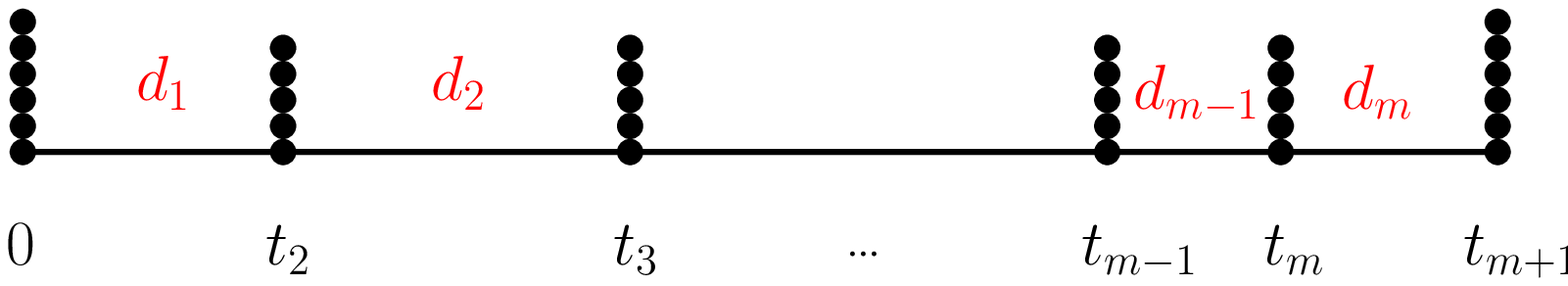}}
\caption{Knot partitions for the clamped case $n=1$. From top to
bottom: $\bmu$, $\bnu$, $\brho$, $\btau$.}
\label{fig_knots_clamped_n1}
\end{figure}

\smallskip
By means of the computed coefficients
$\{\chi_{k}^{i,j}\}^{0\le i,j \le m}_{0 \le {k} \le 2 m}$ we can thus
shortly write the control points of $\r'(t)$ as
$$
\begin{array}{l}
\p_{2k}=\z_k^2, \quad k=0,...,m\,,  \\
\p_{2k+1}=\z_k \z_{k+1}, \quad k=0,...,m-1\,,
\end{array}
$$
and the coefficients of the
parametric speed $\sigma(t)$ as
$$
\begin{array}{l}
\sigma_{2k}=\z_k \bar{\z}_k, \quad k=0,...,m\,,  \\
\sigma_{2k+1}=\frac12 \big( \z_k \bar{\z}_{k+1} + \z_{k+1} \bar{\z}_{k} \big), \quad k=0,...,m-1.
\end{array}
$$
Thus, according to (\ref{rt}), the clamped cubic PH B-Spline curve defined
over the knot partition $\brho$ is given by
$$
\r(t) = \sum_{i=0}^{2m+1} \r_i N_{i,\brho}^{3}(t)\,, \quad t \in [t_1, t_{m+1}] \qquad (t_1=0),
$$
with control points
$$
\begin{array}{l}
\r_{1}=\r_{0} + \frac{d_{1}}{3} \, \z_0^2, \\
\r_{2i+2}=\r_{2i+1} + \frac{d_{i+1}}{3} \, \z_i \z_{i+1}, \quad i=0,...,m-1\,, \\
\r_{2i+3}=\r_{2i+2} + \frac{d_{i+1}+d_{i+2}}{3} \, \z_{i+1}^2, \quad i=0,...,m-2\,,  \\
\r_{2m+1}=\r_{2m} + \frac{d_{m}}{3} \, \z_m^2,
\end{array}
$$
and arbitrary $\r_0$.

\begin{rmk}
Note that, when $m=1$ and $t_2=t_3=1$, the expressions of the
control points coincide with those of Farouki's PH B\'ezier cubic
from \cite{faroukisakkalis, farouki94}.
\end{rmk}

\smallskip
According to (\ref{totallengthclamped}) the total arc length of the
clamped PH B-Spline curve of degree $3$ is given by $L=l_{2m+1}$,
where
$$
\begin{array}{l}
l_0=0, \\
l_{1}= l_0 + \frac{d_{1}}{3} \, \z_0 \bar{\z}_0,  \\
l_{2i+2}=l_{2i+1} + \frac{d_{i+1}}{6} \, \big( \z_i \bar{\z}_{i+1} + \z_{i+1} \bar{\z}_{i} \big), \quad i=0,...,m-1\,, \\
l_{2i+3}=l_{2i+2} + \frac{d_{i+1}+d_{i+2}}{3} \,  \z_{i+1} \bar{\z}_{i+1}, \quad  i=0,...,m-2\,,  \\
l_{2m+1}=l_{2m} + \frac{d_{m}}{3} \, \z_m \bar{\z}_m.
\end{array}
$$
Over the knot partition $\btau$, the offset curve $\r_h(t)$ has the rational B-Spline form \be
\label{offsetn1clamped} \r_h(t) = \frac{\sum_{k=0}^{5m} \q_k
N_{k,\btau}^{5}(t)}{\sum_{k=0}^{5m} \gamma_k N_{k,\btau}^{5}(t)}\,,
\quad t \in [t_1, t_{m+1}], \ee where, by exploiting the explicit
expressions of the coefficients $\{\zeta_k^{i,j}\}^{0 \leq i \leq
2m+1, \, 0 \leq j \leq 2m}_{0 \leq k \leq 5m}$ from
(\ref{zetasn1clamped}), weights and control points can easily be
obtained; they are reported in the Appendix.

Some examples of clamped cubic PH B-Spline curves are shown in Figure
\ref{fig:clamped_n1}, and  their offsets are displayed in Figure
\ref{fig:offsets_clamped_n1}.

\begin{figure}[h!]
\centering
\includegraphics[height=0.25\textwidth,valign=t]{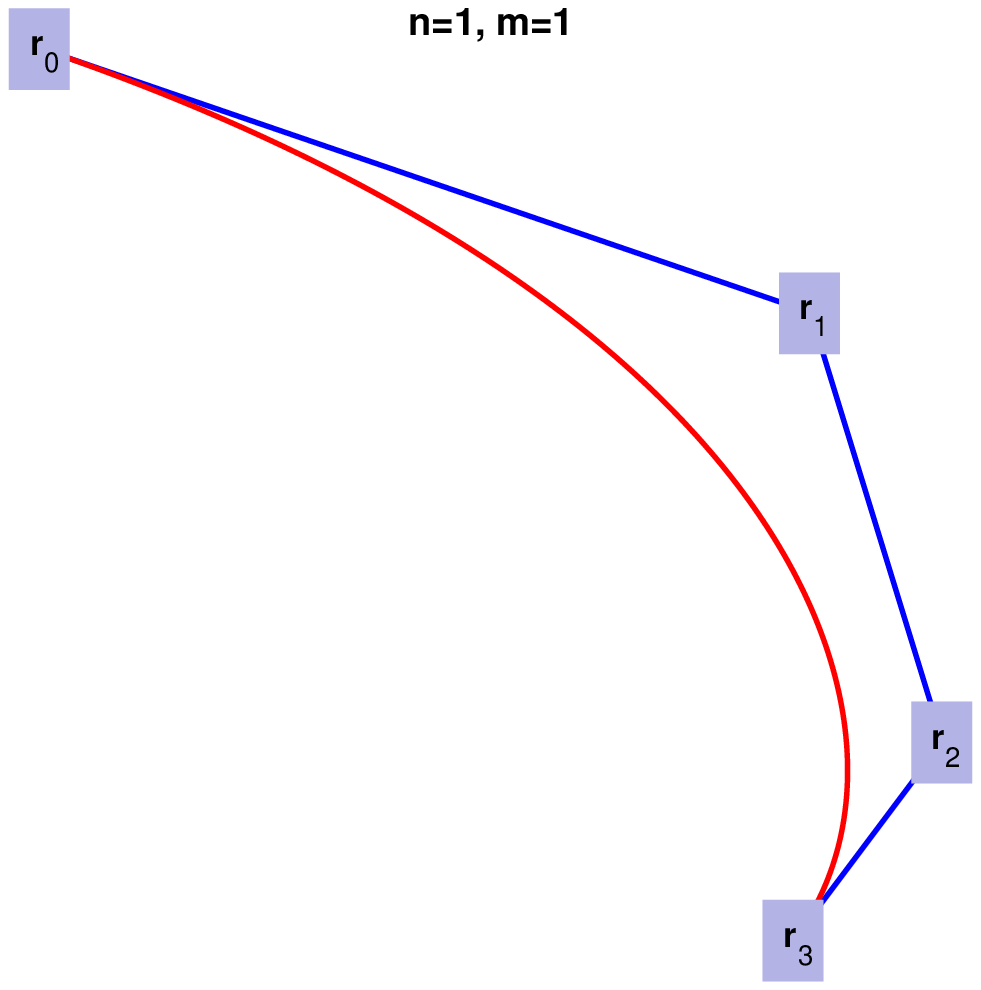}\hspace{-0.3cm}
\includegraphics[height=0.25\textwidth,valign=t]{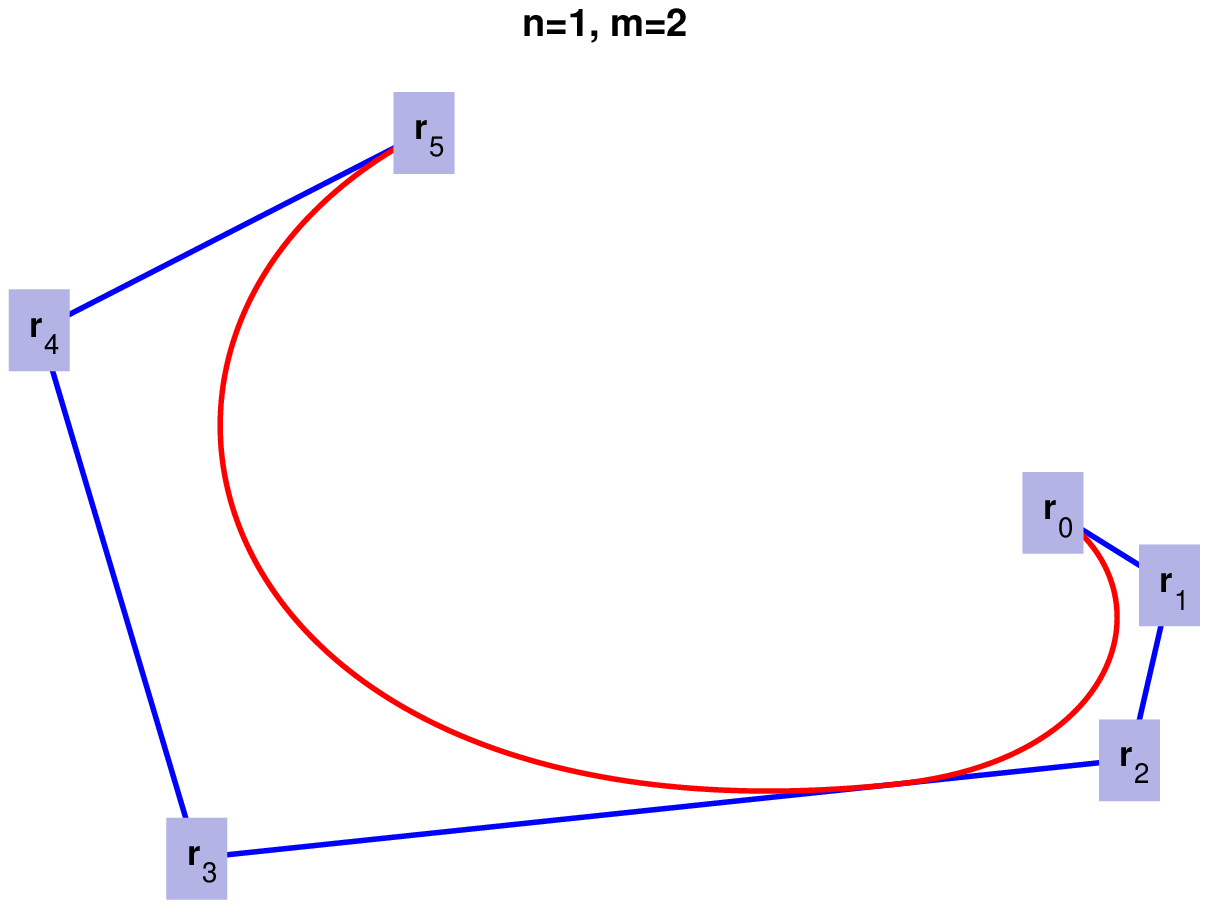}\hspace{-0.3cm}
\includegraphics[height=0.25\textwidth,valign=t]{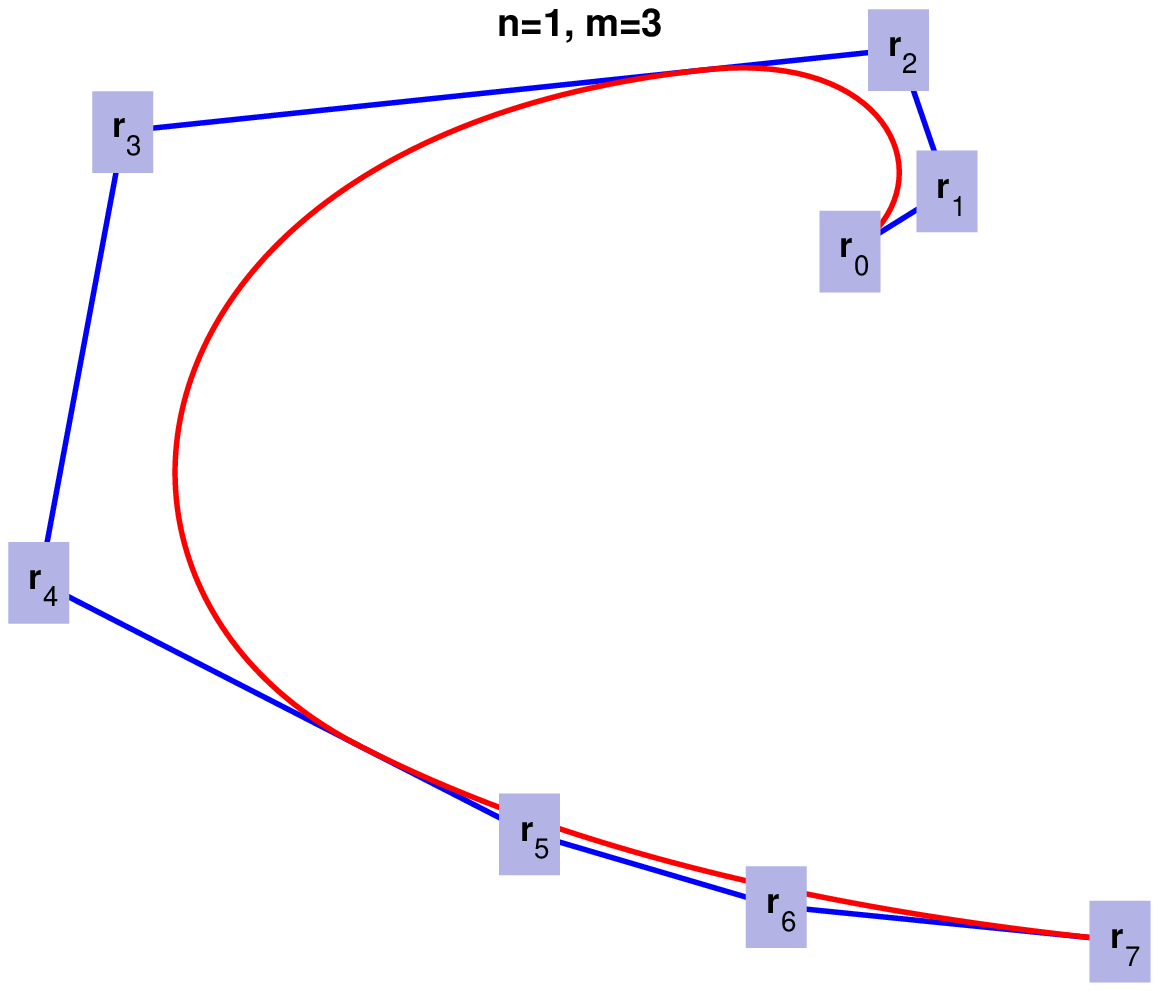}
\caption{Clamped cubic PH B-Spline curves with: $m=1$ (left), $m=2$ (center) and $m=3$ (right).}
\label{fig:clamped_n1}
\end{figure}

\begin{figure}[h!]
\centering
\includegraphics[height=0.25\textwidth,valign=t]{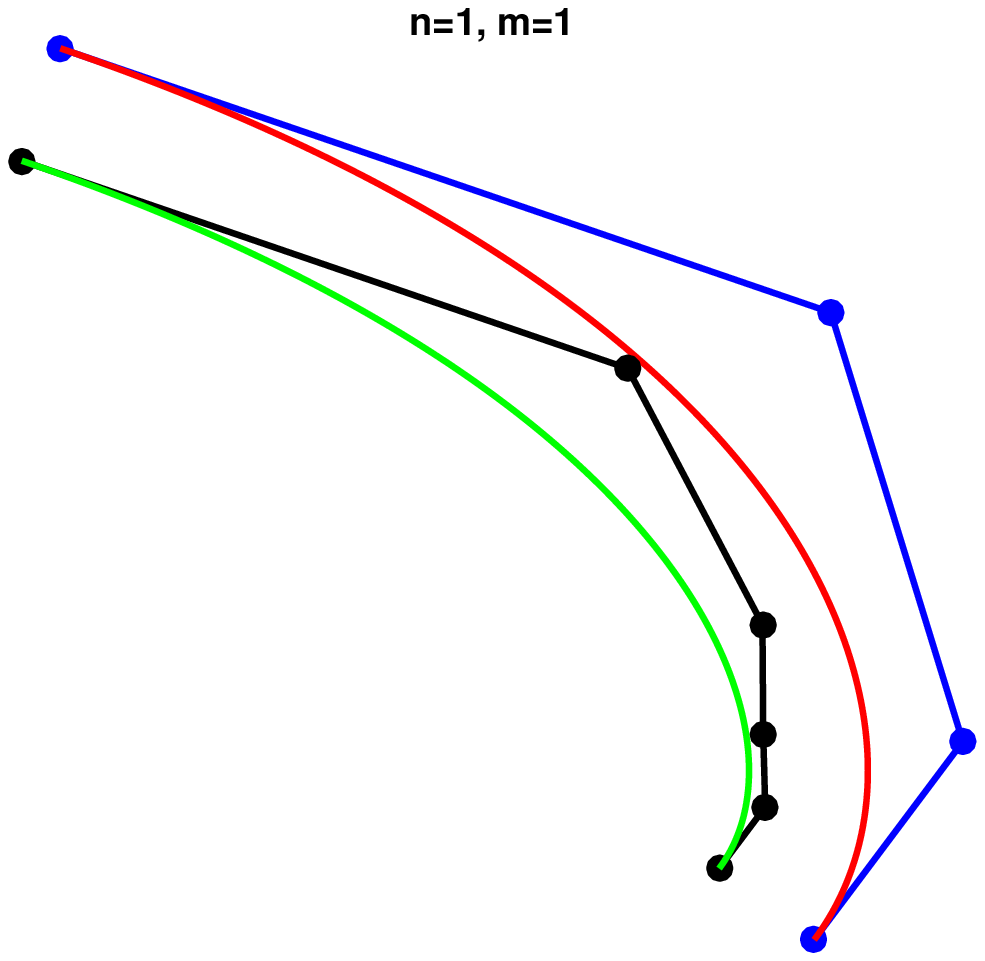}\hspace{-0.3cm}
\includegraphics[height=0.25\textwidth,valign=t]{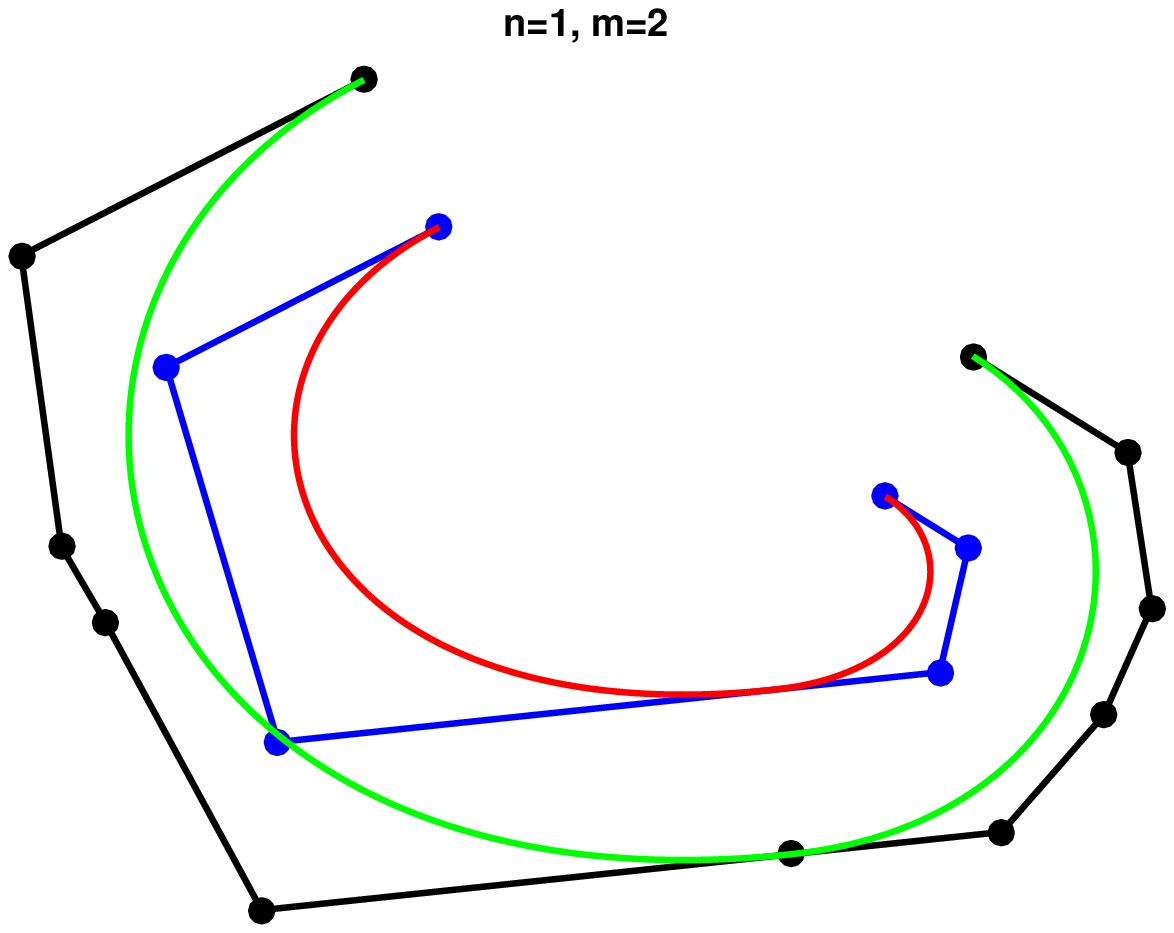}\hspace{-0.3cm}
\includegraphics[height=0.25\textwidth,valign=t]{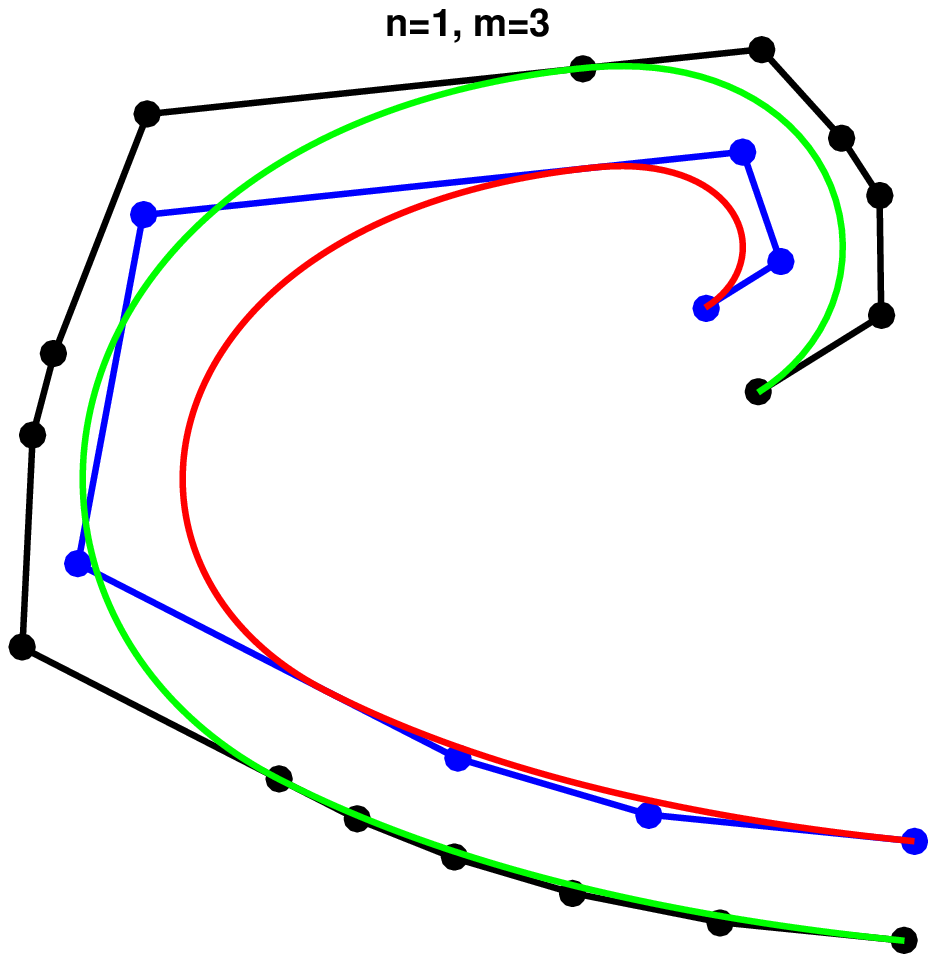}\\
\includegraphics[height=0.25\textwidth,valign=t]{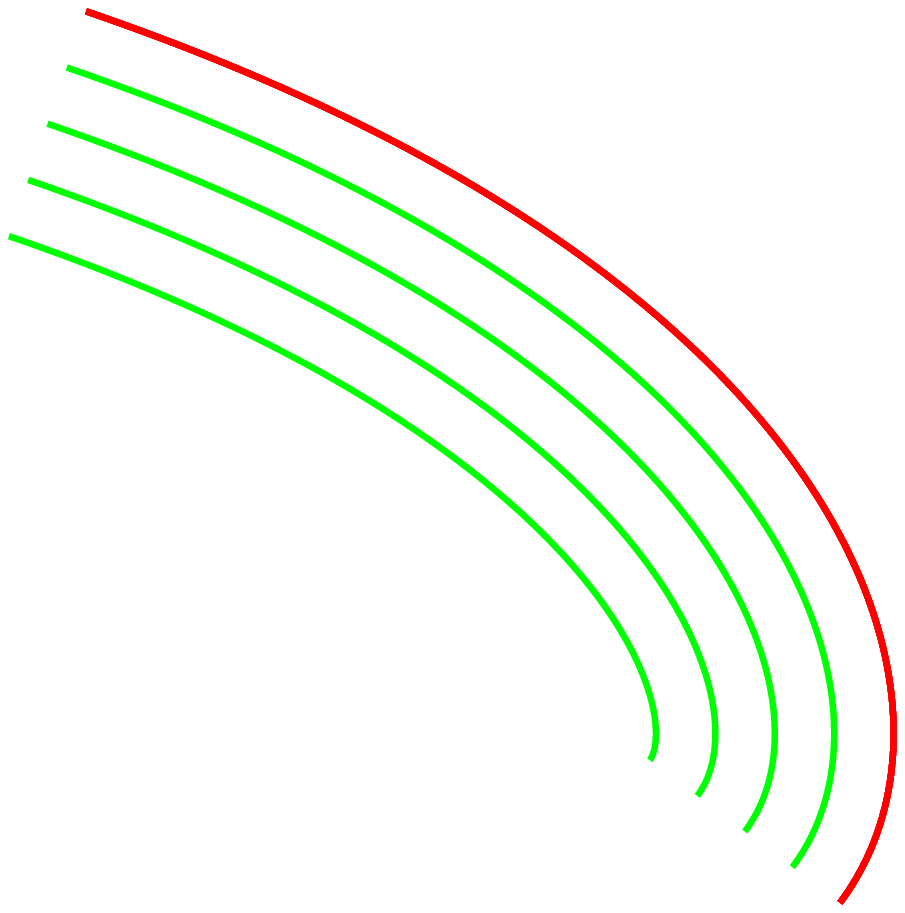}\hspace{-0.3cm}
\includegraphics[height=0.25\textwidth,valign=t]{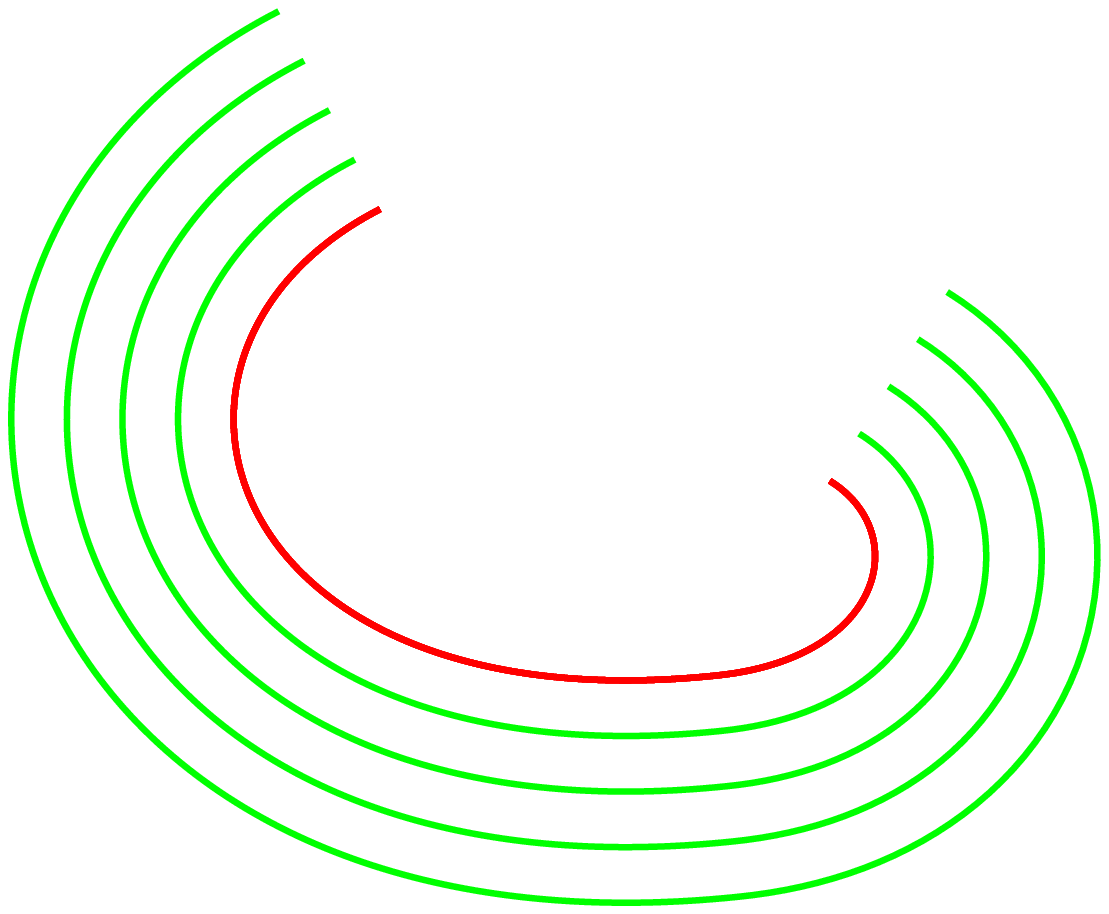}\hspace{-0.3cm}
\includegraphics[height=0.25\textwidth,valign=t]{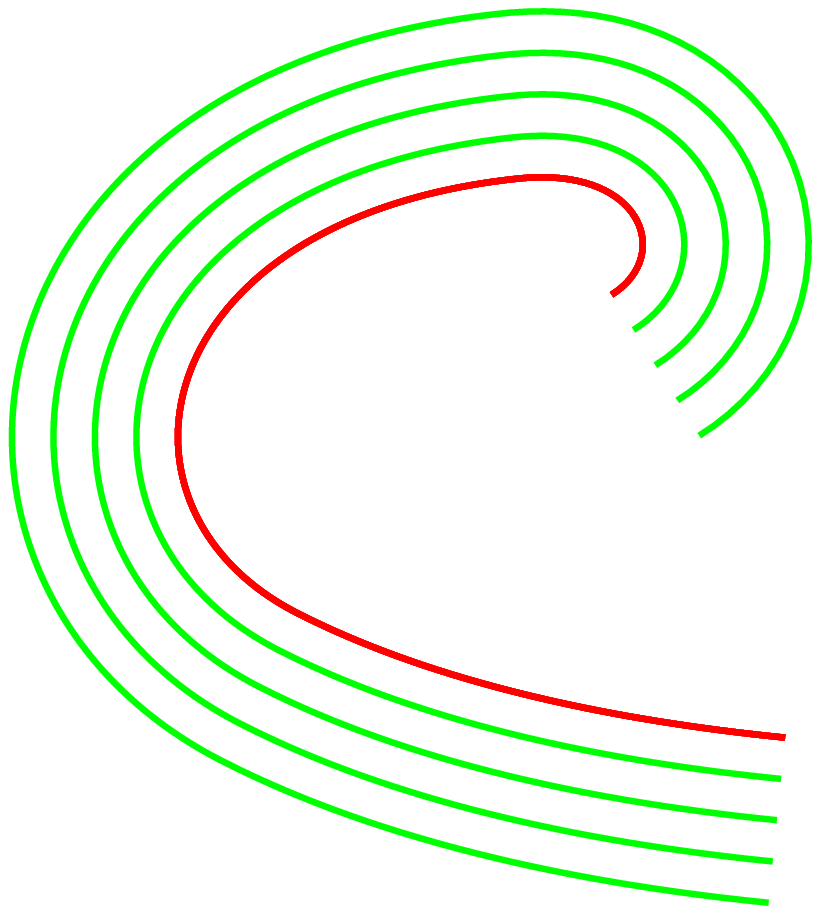}
\caption{Offsets of clamped cubic PH B-Spline curves from Figure
\ref{fig:clamped_n1} with (1. row) and without (2. row) control
polygon where: $m=1$ (left column), $m=2$ (center column) and $m=3$
(right column).} \label{fig:offsets_clamped_n1}
\end{figure}

\subsection{Clamped quintic PH B-Splines ($n=2$)}
\label{sec5n2clamped}

Let $m \in \NN$, $m \geq 2$. For a general knot vector
$\bmu =\{\langle 0\rangle^3  < t_3 < \ldots  < t_m < \langle t_{m+1}\rangle^3\}$
satisfying the constraints $t_0 = t_1 = t_2 = 0$ and $t_{m+1} =
t_{m+2} = t_{m+3}$ (see Figure \ref{fig_knots_clamped_n2} first
row), by applying the above method we construct the knot partitions
\be \label{partitionsn2clamped}
\begin{array}{l}
\bnu=\{\langle 0 \rangle^5 < \langle t_3 \rangle^3 < ... < \langle t_m \rangle^3 < \langle t_{m+1} \rangle^5 \}, \smallskip \\
\brho =\{\langle 0 \rangle^6 < \langle t_3 \rangle^3 < ... < \langle t_m \rangle^3 < \langle t_{m+1} \rangle^6 \}, \smallskip \\
\btau =\{\langle 0 \rangle^{10} < \langle t_3 \rangle^8 < ... <
\langle t_m \rangle^8 < \langle t_{m+1} \rangle^{10} \},
\end{array}
\ee
illustrated in Figure \ref{fig_knots_clamped_n2}. Then, by solving the linear systems \eqref{lgschi} we calculate
the coefficients $\chi_{k}^{i,j}$, $0\le i,j \le m$, $0 \le {k} \le
3 m - 2$. All of them turn out to be zero with the exception of
$$
\begin{array}{l}
\chi_0^{0,0}=1, \smallskip\\
\chi_1^{0,1}= \chi_1^{1,0} = \frac{1}{2}, \smallskip\\
\chi_{3k-1}^{k-1,k}= \chi_{3k-1}^{k,k-1} = \frac{1}{6} \, \frac{d_k \, d_{k+1}}{(d_{k-1}+d_k) \, (d_k + d_{k+1})}, \quad k=1,...,m-1\,,  \smallskip\\
\chi_{3k-1}^{k-1,k+1}= \chi_{3k-1}^{k+1,k-1} = \frac{1}{6} \, \frac{(d_k)^2}{(d_{k-1}+d_k)(d_k+d_{k+1})}, \quad k=1,...,m-1\,,  \smallskip\\
\chi_{3k-1}^{k,k}=\frac{2}{3} + \frac{1}{3} \, \frac{d_{k-1} \, d_{k+1}}{(d_{k-1}+d_k) \, (d_k + d_{k+1})}, \quad k=1,...,m-1\,,  \smallskip\\
\chi_{3k-1}^{k,k+1}= \chi_{3k-1}^{k+1,k} = \frac{1}{6} \, \frac{d_{k-1} \, d_k}{(d_{k-1}+d_k)\, (d_k + d_{k+1})}, \quad k=1,...,m-1\,,  \smallskip\\
\chi_{3k}^{k,k}= \frac{d_{k+1}}{d_k + d_{k+1}},   \quad k=1,...,m-2\,,  \smallskip\\
\chi_{3k}^{k,k+1}= \chi_{3k}^{k+1,k} = \frac{1}{2}\,\frac{d_k}{d_k + d_{k+1}},   \quad k=1,...,m-2\,,  \smallskip\\
\chi_{3k+1}^{k,k+1}=\chi_{3k+1}^{k+1,k} = \frac{1}{2}\, \frac{d_{k+1}}{d_k+d_{k+1}},   \quad k=1,...,m-2\,,  \smallskip\\
\chi_{3k+1}^{k+1,k+1}= \frac{d_k}{d_k +d_{k+1}},   \quad k=1,...,m-2\,,  \smallskip\\
\chi_{3m-3}^{m-1,m}=\chi_{3m-3}^{m,m-1}= \frac{1}{2}, \smallskip\\
\chi_{3m-2}^{m,m}= 1,
\end{array}
$$
where $d_{k}=t_{k+2}-t_{k+1}$, $k=1, ..., m-1$ and $d_0=d_{m}=0$.\\
In addition, we compute the coefficients $\zeta_k^{i,j}$, $0 \leq i
\leq 3m-1$, $0 \leq j \leq 3m-2$, $0 \leq k \leq 8m-7$ as the
solutions to the linear systems \eqref{zetas}. All of them
turn out to be zero with the exception of \be \label{zetasn2clamped}
\begin{array}{l}
\zeta_{8k}^{3k,3k} = \frac{d_{k+1}^2}{D_{k}^2}\, , \;
\zeta_{8k}^{3k+1,3k} = \frac{13 d_{k}d_{k+1}}{9 D_{k}^2}\, , \;
\zeta_{8k}^{3k+2,3k} = \frac{4 d_{k}^2}{9 D_{k}^2}\, , \;
\zeta_{8k}^{3k,3k+1} = \frac{5 d_{k}d_{k+1}}{9 D_{k}^2}\, , \;
\zeta_{8k}^{3k+1,3k+1} = \frac{5 d_{k}^2}{9 D_{k}^2}\, , \;
k=0,\ldots,m-1\, , \smallskip \\
\zeta_{8k+1}^{3k+1,3k} = \frac{5 d_{k+1}^2}{9 D_{k}^2}\, , \;
\zeta_{8k+1}^{3k+2,3k} = \frac{5 d_{k}d_{k+1}}{9 D_{k}^2}\, , \;
\zeta_{8k+1}^{3k+1,3k+1} = \frac{13 d_{k}d_{k+1}}{9 D_{k}^2}\, , \;
\zeta_{8k+1}^{3k+2,3k+1} = \frac{d_{k}^2}{D_{k}^2}\, , \;
\zeta_{8k+1}^{3k,3k+1} = \frac{4 d_{k+1}^2}{9 D_{k}^2}\, , \;
k=0,\ldots,m-1\, , \smallskip \\
\zeta_{8k+2}^{3k,3k+2} = \frac{d_{k+1}^2}{6 D_{k}^2}\, , \;
\zeta_{8k+2}^{3k+1,3k+2} = \frac{d_{k}d_{k+1}}{3 D_{k}^2}\, , \;
\zeta_{8k+2}^{3k+2,3k+2} = \frac{d_{k}^2}{6 D_{k}^2}\, , \;
\zeta_{8k+2}^{3k+1,3k+1} = \frac{5 d_{k+1}}{9 D_{k}}\, , \;
\smallskip \\
\zeta_{8k+2}^{3k+2,3k+1} = \frac{15 d_{k}}{18 D_{k}}\, , \;
\zeta_{8k+2}^{3k+2,3k} = \frac{5 d_{k+1}}{18 D_{k}}\, , \;
k=0,\ldots,m-2\, , \smallskip \\
\zeta_{8k+3}^{3k,3k+3} = \frac{d_{k+1}^2}{21 D_{k}^2}\, , \;
\zeta_{8k+3}^{3k+1,3k+3} = \frac{2 d_{k}d_{k+1}}{21 D_{k}^2}\, , \;
\zeta_{8k+3}^{3k+2,3k+3} = \frac{d_{k}^2}{21 D_{k}^2}\, , \;
\zeta_{8k+3}^{3k+1,3k+2} = \frac{5 d_{k+1}}{14 D_{k}}\, , \;
\zeta_{8k+3}^{3k+2,3k+2} = \frac{5 d_{k}}{14 D_{k}}\, , \;
\smallskip \\
\zeta_{8k+3}^{3k+2,3k+1} = \frac{10}{21}\, , \;
\zeta_{8k+3}^{3k+3,3k+1} = \frac{5 d_{k}}{42 D_{k}}\, , \;
\zeta_{8k+3}^{3k+3,3k} = \frac{5 d_{k+1}}{42 D_{k}}\, , \;
k=0,\ldots,m-2\, , \smallskip \\
\zeta_{8k+4}^{3k,3k+4} = \frac{d_{k+1}^3}{126 D_{k}^2 D_{k+1}}\, ,\;
\zeta_{8k+4}^{3k+1,3k+4} = \frac{d_{k} d_{k+1}^2}{63 D_{k}^2
D_{k+1}}\, ,\; \zeta_{8k+4}^{3k+2,3k+4} = \frac{d_{k}^2 d_{k+1}}{126
D_{k}^2 D_{k+1}}\, ,\; \zeta_{8k+4}^{3k,3k+3} = \frac{d_{k+1}^2
d_{k+2}}{126 D_{k}^2 D_{k+1}}\, ,\;
\smallskip \\
\zeta_{8k+4}^{3k+1,3k+3} = \frac{d_k d_{k+1} d_{k+2}}{63 D_{k}^2
D_{k+1}} + \frac{10 d_{k+1}}{63 D_{k}}\, ,\;
\zeta_{8k+4}^{3k+2,3k+3} = \frac{d_k^2
 d_{k+2}}{126 D_{k}^2 D_{k+1}} + \frac{10 d_{k}}{63 D_{k}}\,
,\; \zeta_{8k+4}^{3k+2,3k+2} = \frac{10}{21}\, , \;
\zeta_{8k+4}^{3k+3,3k+1} = \frac{5 d_k d_{k+2}}{126 D_{k} D_{k+1}} +
\frac{20}{63}\, ,\;
\smallskip \\
\zeta_{8k+4}^{3k+4,3k+1} = \frac{5 d_k d_{k+1}}{126 D_{k} D_{k+1}}\,
,\; \zeta_{8k+4}^{3k+3,3k} = \frac{5 d_{k+1} d_{k+2}}{126 D_{k}
D_{k+1}}\, ,\; \zeta_{8k+4}^{3k+4,3k} = \frac{5 d_{k+1}^2}{126 D_{k}
D_{k+1}}\, ,\;
k=0,\ldots,m-2\, , \smallskip \\
\zeta_{8k+5}^{3k+1,3k+4} = \frac{5 d_{k+1}^2}{126 D_{k} D_{k+1}}\,
,\; \zeta_{8k+5}^{3k+2,3k+4} = \frac{5 d_{k} d_{k+1}}{126 D_{k}
D_{k+1}}\, ,\; \zeta_{8k+5}^{3k+1,3k+3} = \frac{5 d_{k+1}
d_{k+2}}{126 D_{k} D_{k+1}}\, ,\; \zeta_{8k+5}^{3k+2,3k+3} = \frac{5
d_{k} d_{k+2}}{126 D_{k} D_{k+1}} + \frac{20}{63}\, ,\;
\smallskip \\
\zeta_{8k+5}^{3k+3,3k+2} = \frac{10}{21}\, , \;
\zeta_{8k+5}^{3k+3,3k+1} = \frac{d_k
 d_{k+2}^2}{126 D_{k} D_{k+1}^2} + \frac{10 d_{k+2}}{63 D_{k+1}}\,
,\; \zeta_{8k+5}^{3k+4,3k+1} = \frac{d_k d_{k+1}
 d_{k+2}}{63 D_{k} D_{k+1}^2} + \frac{10 d_{k+1}}{63 D_{k+1}}\, ,\;
\smallskip \\
\zeta_{8k+5}^{3k+5,3k+1} = \frac{d_k
 d_{k+1}^2}{126 D_{k} D_{k+1}^2} \,
,\; \zeta_{8k+5}^{3k+3,3k} = \frac{d_{k+1}
 d_{k+2}^2}{126 D_{k} D_{k+1}^2} \,
,\; \zeta_{8k+5}^{3k+4,3k} = \frac{d_{k+1}^2
 d_{k+2}}{63 D_{k} D_{k+1}^2} \,
,\; \zeta_{8k+5}^{3k+5,3k} = \frac{d_{k+1}^3}{126 D_{k} D_{k+1}^2}
\, ,\;
k=0,\ldots,m-2\, , \smallskip \\
\zeta_{8k+6}^{3k+2,3k+4} = \frac{5 d_{k+1}}{42 D_{k+1}}\, , \;
\zeta_{8k+6}^{3k+2,3k+3} = \frac{5 d_{k+2}}{42 D_{k+1}}\, , \;
\zeta_{8k+6}^{3k+3,3k+3} = \frac{10}{21}\, , \;
\zeta_{8k+6}^{3k+4,3k+2} = \frac{5 d_{k+1}}{14 D_{k+1}}\, , \;
\zeta_{8k+6}^{3k+3,3k+2} = \frac{5 d_{k+2}}{14 D_{k+1}}\, , \;
\smallskip \\
\zeta_{8k+6}^{3k+3,3k+1} = \frac{d_{k+2}^2}{21 D_{k+1}^2}\, , \;
\zeta_{8k+6}^{3k+4,3k+1} = \frac{2 d_{k+1}d_{k+2}}{21 D_{k+1}^2}\, ,
\; \zeta_{8k+6}^{3k+5,3k+1} = \frac{d_{k+1}^2}{21 D_{k+1}^2}\, , \;
k=0,\ldots,m-2\, , \smallskip \\
\zeta_{8k+7}^{3k+3,3k+4} = \frac{5 d_{k+1}}{18 D_{k+1}}\, , \;
\zeta_{8k+7}^{3k+3,3k+3} = \frac{15 d_{k+2}}{18 D_{k+1}}\, , \;
\zeta_{8k+7}^{3k+4,3k+3} = \frac{5 d_{k+1}}{9 D_{k+1}}\, , \;
\zeta_{8k+7}^{3k+3,3k+2} = \frac{d_{k+2}^2}{6 D_{k+1}^2}\, , \;
\smallskip \\
\zeta_{8k+7}^{3k+4,3k+2} = \frac{d_{k+1}d_{k+2}}{3 D_{k+1}^2}\, , \;
\zeta_{8k+7}^{3k+5,3k+2} = \frac{d_{k+1}^2}{6 D_{k+1}^2}\, , \;
k=0,\ldots,m-2\, ,
\end{array}
\ee where $D_k:=d_k+d_{k+1}$, $k=0,...,m-1$ with $d_0=d_{m}:=0$.

\begin{figure}[h!]
\centering
\resizebox{8.0cm}{!}{\includegraphics{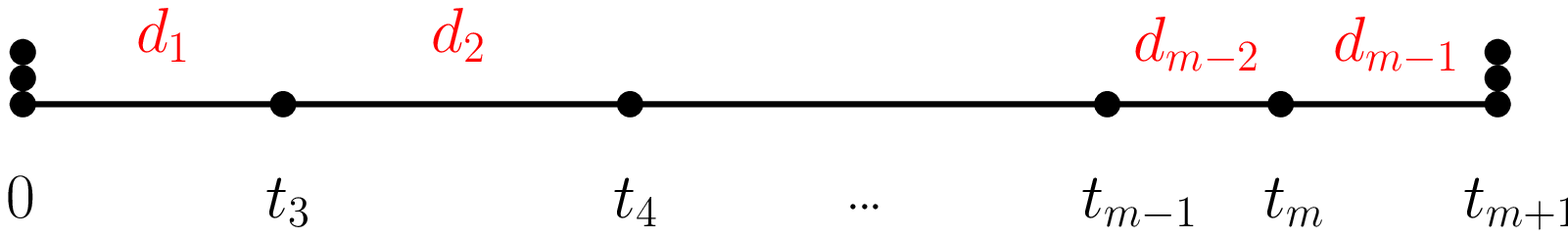}}\\
\resizebox{8.0cm}{!}{\includegraphics{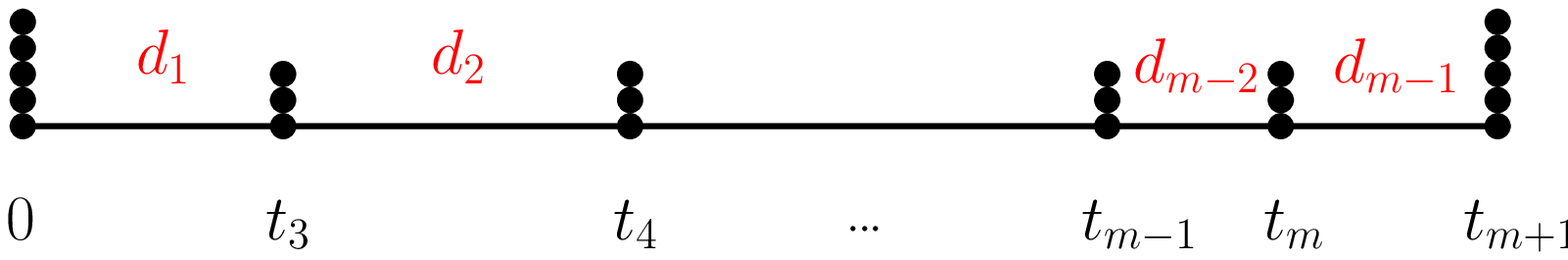}}\\
\resizebox{8.0cm}{!}{\includegraphics{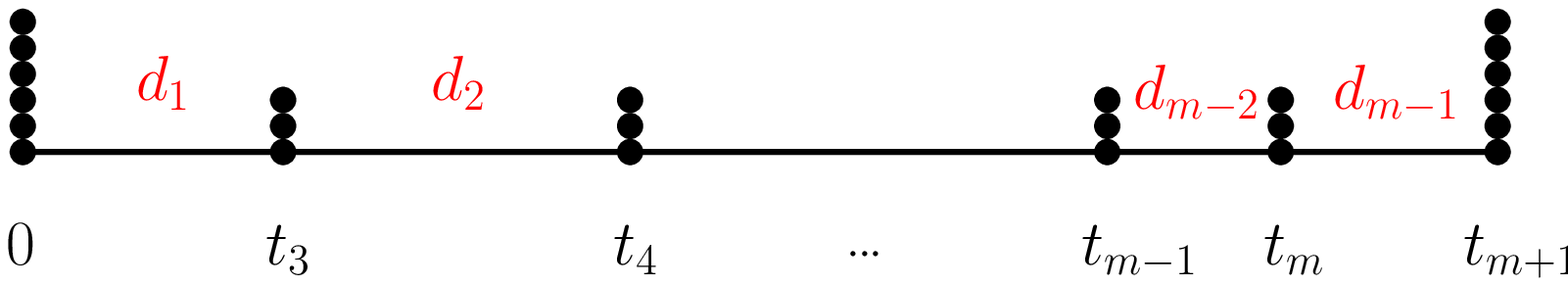}}\\
\resizebox{8.0cm}{!}{\includegraphics{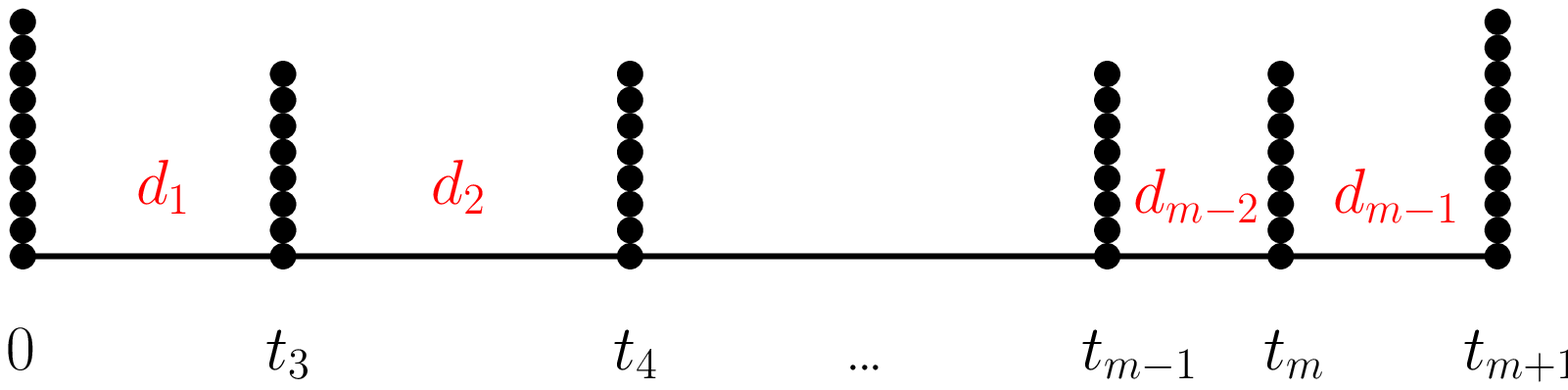}}
\caption{Knot partitions for the clamped case $n=2$. From top to bottom: $\bmu$, $\bnu$, $\brho$, $\btau$.}
\label{fig_knots_clamped_n2}
\end{figure}

By means of the computed
coefficients $\{\chi_{k}^{i,j}\}^{0\le i,j \le m}_{0 \le {k} \le 3 m
- 2}$ we can thus shortly write the control points of $\r'(t)$ as
$$
\begin{array}{l}
\p_0=\z_0^2,\smallskip\\
\p_1=\z_0 \z_1,\smallskip\\
\p_{3k-1}=\frac{2}{3} \z_k^2 + \frac{1}{3} \, \left(\frac{d_k \z_{k-1}+d_{k-1} \z_k}{d_{k-1}+d_{k}}\right) \, \left( \frac{d_{k+1} \z_k+d_k \z_{k+1}}{d_{k}+d_{k+1}} \right), \quad k=1,...,m-1\,,  \smallskip\\
\p_{3k}= \z_k \, \frac{d_{k+1} \z_k+d_k \z_{k+1}}{d_{k}+d_{k+1}},   \quad k=1,...,m-2\,,  \smallskip\\
\p_{3k+1}= \z_{k+1} \, \frac{d_{k+1} \z_k+d_k \z_{k+1}}{d_{k}+d_{k+1}},   \quad k=1,...,m-2\,,  \smallskip\\
\p_{3m-3}= \z_{m-1} \z_m, \smallskip\\
\p_{3m-2}= \z_m^2,
\end{array}
$$
and the coefficients of the
parametric speed $\sigma(t)$ as
$$
\begin{array}{l}
\sigma_{0}= \z_0 \bar{\z}_0,  \smallskip  \\
\sigma_{1}= \frac12 (\z_0 \bar{\z}_1 + \z_1 \bar{\z}_0), \smallskip  \\
\sigma_{3k-1}= \frac{1}{6} \, \frac{d_k \, d_{k+1}}{(d_{k-1}+d_{k})\, (d_k+d_{k+1})}
(\z_{k-1} \bar{\z}_{k} + \z_{k} \bar{\z}_{k-1})+
\frac{1}{6} \, \frac{(d_k)^2}{(d_{k-1}+d_{k})(d_k+d_{k+1})}
(\z_{k-1} \bar{\z}_{k+1} + \z_{k+1} \bar{\z}_{k-1}) \smallskip  \\
 \qquad \quad +\left( \frac{2}{3} + \frac{1}{3} \, \frac{d_{k-1} \, d_{k+1}}{(d_{k-1}+d_{k})\, (d_k+d_{k+1})} \right)
\z_k \bar{\z}_k +
\frac{1}{6} \, \frac{d_{k-1} \, d_{k}}{(d_{k-1}+d_{k})\, (d_k+d_{k+1})}
(\z_{k} \bar{\z}_{k+1} + \z_{k+1} \bar{\z}_{k}), \quad  k=1,...,m-1\,,  \smallskip \\
\sigma_{3k}= \frac{d_{k+1}}{d_{k}+d_{k+1}} \z_k \bar{\z}_k +
\frac{1}{2}\,\frac{d_{k}}{d_{k}+d_{k+1}} (\z_{k} \bar{\z}_{k+1} + \z_{k+1} \bar{\z}_{k}), \quad  k=1,...,m-2\,,  \smallskip\\
\sigma_{3k+1}= \frac{1}{2}\, \frac{d_{k+1}}{d_{k}+d_{k+1}} (\z_{k} \bar{\z}_{k+1} + \z_{k+1} \bar{\z}_{k}) +
\frac{d_{k}}{d_{k}+d_{k+1}} \z_{k+1} \bar{\z}_{k+1}, \quad  k=1,...,m-2\,,  \smallskip \\
\sigma_{3m-3}= \frac12 (\z_{m-1} \bar{\z}_m + \z_m \bar{\z}_{m-1}), \smallskip   \\
\sigma_{3m-2}= \z_m \bar{\z}_m,
\end{array}
$$
with $d_0:=0$ and $d_{m}:=0$.
Thus, according to (\ref{rt}), the clamped quintic PH B-Spline curve defined
over the knot partition $\brho$ is given by
$$
\r(t) = \sum_{i=0}^{3m-1} \r_i N_{i,\brho}^{5}(t)\,, \quad t \in [t_2, t_{m+1}] \qquad  (t_2=0),
$$
with control points \be \label{rs_clampedn2}
\begin{array}{l}
\r_{1} = \r_0 + \frac{d_{1}}{5}\, \z_0^2, \quad \vspace{0.04cm}\\
\r_{2} = \r_1 + \frac{d_{1}}{5}\, \z_0 \z_1, \quad \vspace{0.04cm}\\
\r_{3i} = \r_{3i-1} + \frac{d_{i}}{5}\, \left( \frac{2}{3} \z_i^2 + \frac{1}{3} \, \left(\frac{d_{i}\z_{i-1}+d_{i-1}\z_i}{d_{i-1}+d_{i}}\right)
\left( \frac{d_{i+1} \z_i+d_{i} \z_{i+1}}{d_{i}+d_{i+1}} \right)
\right), \quad i=1,...,m-1\,,  \vspace{0.04cm}\\
\r_{3i+1} = \r_{3i} + \frac{\z_i}{5} \, \left(d_{i+1} \z_i+d_{i}\z_{i+1}\right), \quad i=1,...,m-2\,,  \vspace{0.04cm}\\
\r_{3i+2} = \r_{3i+1} + \frac{\z_{i+1}}{5} \, \left(d_{i+1} \z_i+d_{i}\z_{i+1}\right), \quad i=1,...,m-2\,,  \vspace{0.04cm}\\
\r_{3m-2} = \r_{3m-3} + \frac{d_{m-1}}{5}\, \z_{m-1} \z_m,  \vspace{0.04cm}\\
\r_{3m-1} = \r_{3m-2} + \frac{d_{m-1}}{5}\, \z_m^2\,,
\end{array}
\ee and arbitrary $\r_0$.

\begin{rmk}
Note that, when $m=2$ and $t_3=t_4=t_5=1$, the expressions of the
control points coincide with those of Farouki's PH B\'ezier quintic
from \cite{faroukisakkalis, farouki94}.
\end{rmk}

\smallskip
According to (\ref{totallengthclamped}) the total arc length of the
clamped PH B-Spline curve of degree $5$ is given by $L=l_{3m-1}$,
where
$$
\begin{array}{l}
l_0=0, \vspace{0.04cm}\\
l_{1} = l_0 + \frac{d_{1}}{5}\, \z_0 \bar{\z}_0,   \vspace{0.04cm}\\
l_{2} = l_1 + \frac{d_{1}}{10} \, (\z_0 \bar{\z}_1 + \z_1 \bar{\z}_0),  \vspace{0.04cm}\\
l_{3i} = l_{3i-1} + \frac{2 d_i}{15} \z_i \bar{\z}_i + \frac{d_i}{15 (d_{i-1}+d_{i})(d_{i}+d_{i+1})} \Big( d_{i-1}d_{i+1} \z_i \bar{\z}_i + \frac{d_i^2}{2} (\z_{i-1} \bar{\z}_{i+1} + \z_{i+1} \bar{\z}_{i-1}) \vspace{0.04cm}\\
 \qquad + \frac{d_{i} d_{i+1}}{2} \, (\z_{i-1} \bar{\z}_{i} + \z_{i} \bar{\z}_{i-1}) + \frac{d_{i-1} d_{i}}{2} \, (\z_{i} \bar{\z}_{i+1} + \z_{i+1} \bar{\z}_{i}) \Big), \quad  i=1,...,m-1\,,  \vspace{0.04cm}\\
l_{3i+1} = l_{3i} + \frac{1}{5} \, \Big( d_{i+1} \z_i \bar{\z}_i +
\frac{d_{i}}{2} (\z_{i} \bar{\z}_{i+1} + \z_{i+1} \bar{\z}_{i}) \Big), \quad   i=1,...,m-2\,,  \vspace{0.04cm}\\
l_{3i+2} = l_{3i+1} + \frac{1}{5} \, \Big( d_{i}  \z_{i+1} \bar{\z}_{i+1} +
\frac{d_{i+1}}{2}  (\z_{i} \bar{\z}_{i+1} + \z_{i+1} \bar{\z}_{i}) \Big), \quad  i=1,...,m-2\,, \vspace{0.04cm}\\
l_{3m-2} = l_{3m-3} + \frac{d_{m-1}}{10}\, (\z_{m-1} \bar{\z}_m + \z_m \bar{\z}_{m-1}),  \vspace{0.04cm}\\
l_{3m-1} = l_{3m-2} + \frac{d_{m-1}}{5} \,  \z_m \bar{\z}_m.
\end{array}
$$
Over the knot partition $\btau$, the offset curve $\r_h(t)$ has the rational
B-Spline form \be \label{offsetn2clamped} \r_h(t) =
\frac{\displaystyle \sum_{k=0}^{8m-7} \q_k
N_{k,\btau}^{9}(t)}{\displaystyle \sum_{k=0}^{8m-7} \gamma_k
N_{k,\btau}^{9}(t)}\,, \quad t \in [t_2, t_{m+1}], \ee where, by
exploiting the explicit expressions of the coefficients
$\{\zeta_k^{i,j}\}^{0 \leq i \leq 3m-1, \, 0 \leq j \leq 3m-2}_{0
\leq k \leq 8m-7}$ from (\ref{zetasn2clamped}), weights and control
points are easily obtained; the explicit formulae are reported in the
Appendix.

\begin{figure}[h!]
\centering
\includegraphics[height=0.25\textwidth,valign=t]{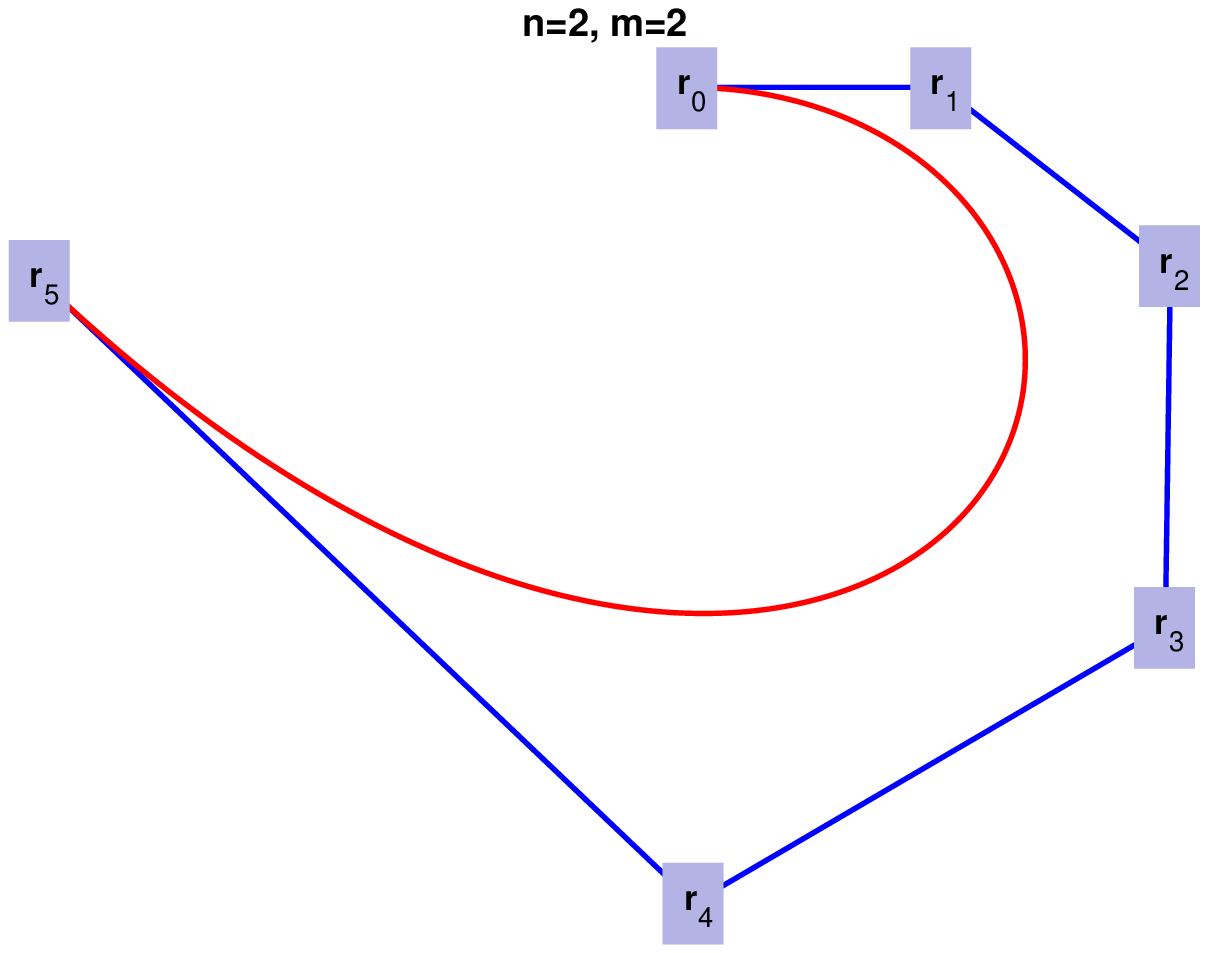}\hspace{-0.3cm}
\includegraphics[height=0.25\textwidth,valign=t]{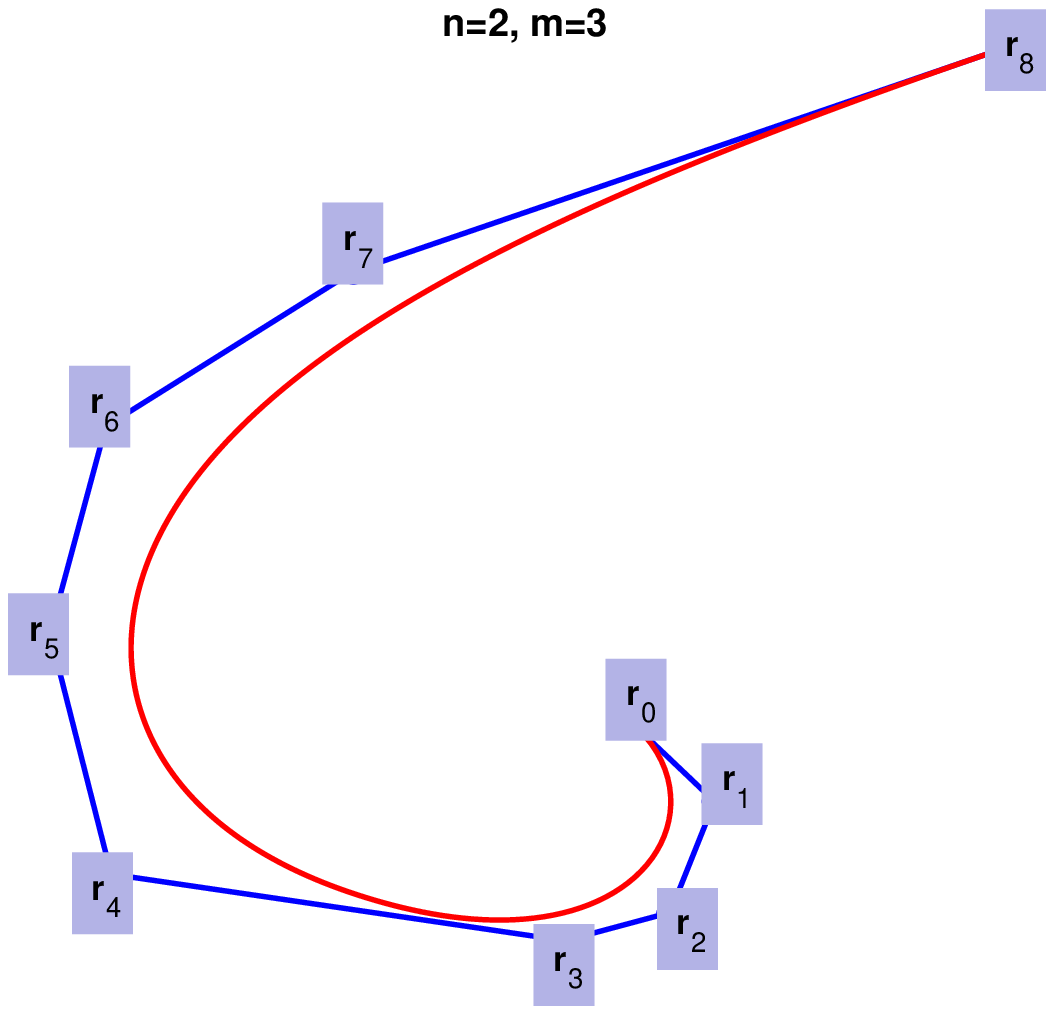}\hspace{-0.3cm}
\includegraphics[height=0.25\textwidth,valign=t]{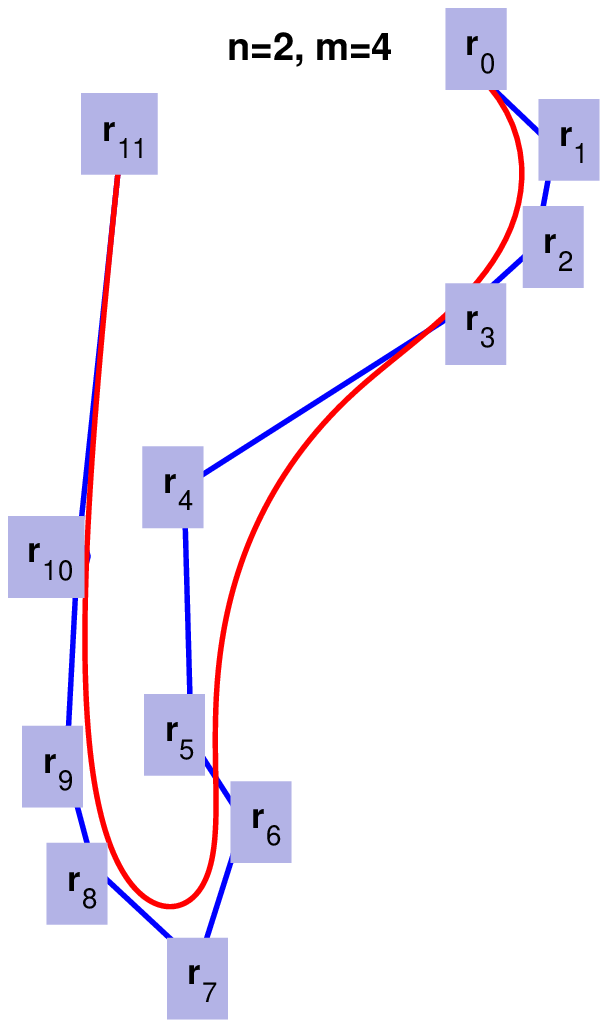}
\caption{Clamped quintic PH B-Spline curves with: $m=2$ (left), $m=3$ (center) and $m=4$ (right).}
\label{fig:clamped_n2m2}
\end{figure}

\begin{figure}[h!]
\centering
\includegraphics[height=0.25\textwidth,valign=t]{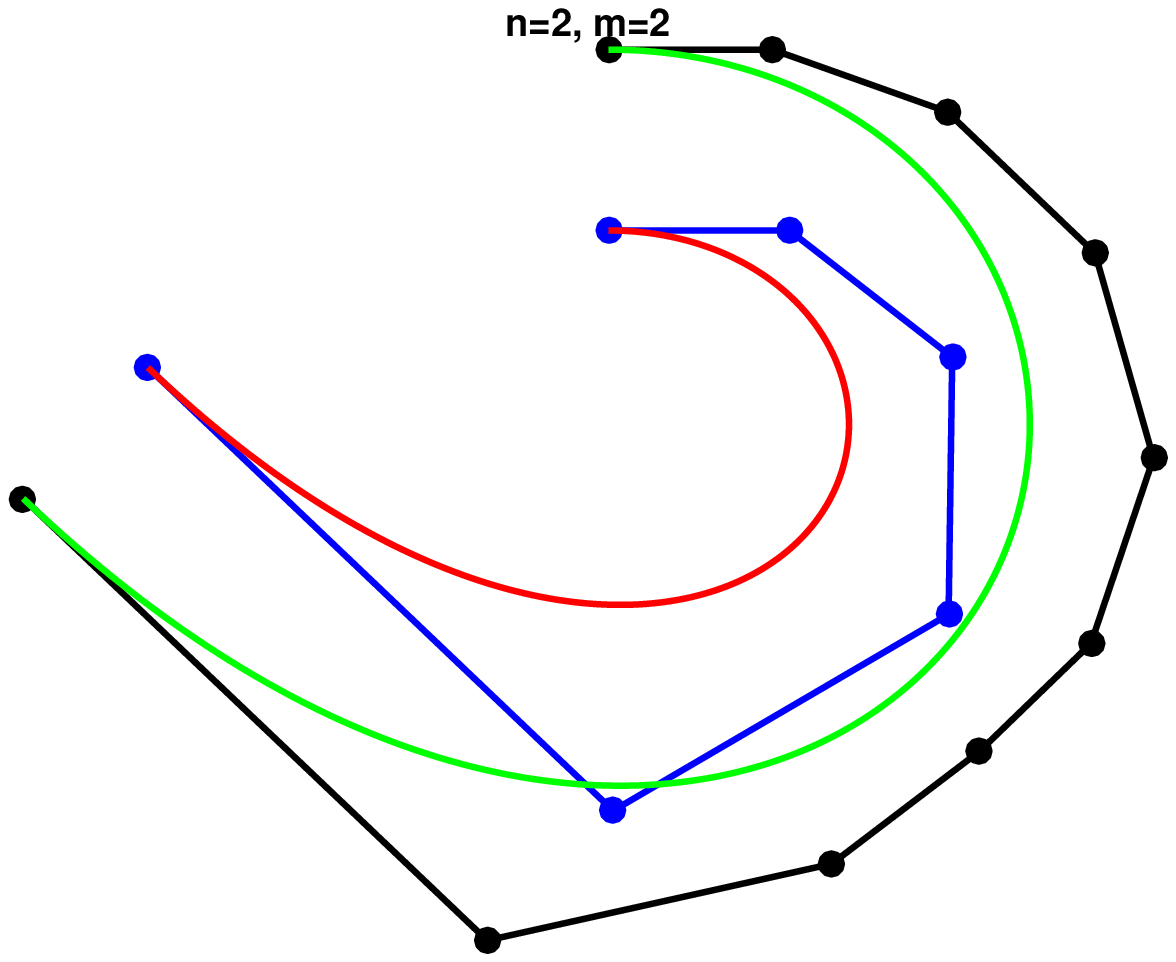}\hspace{-0.3cm}
\includegraphics[height=0.25\textwidth,valign=t]{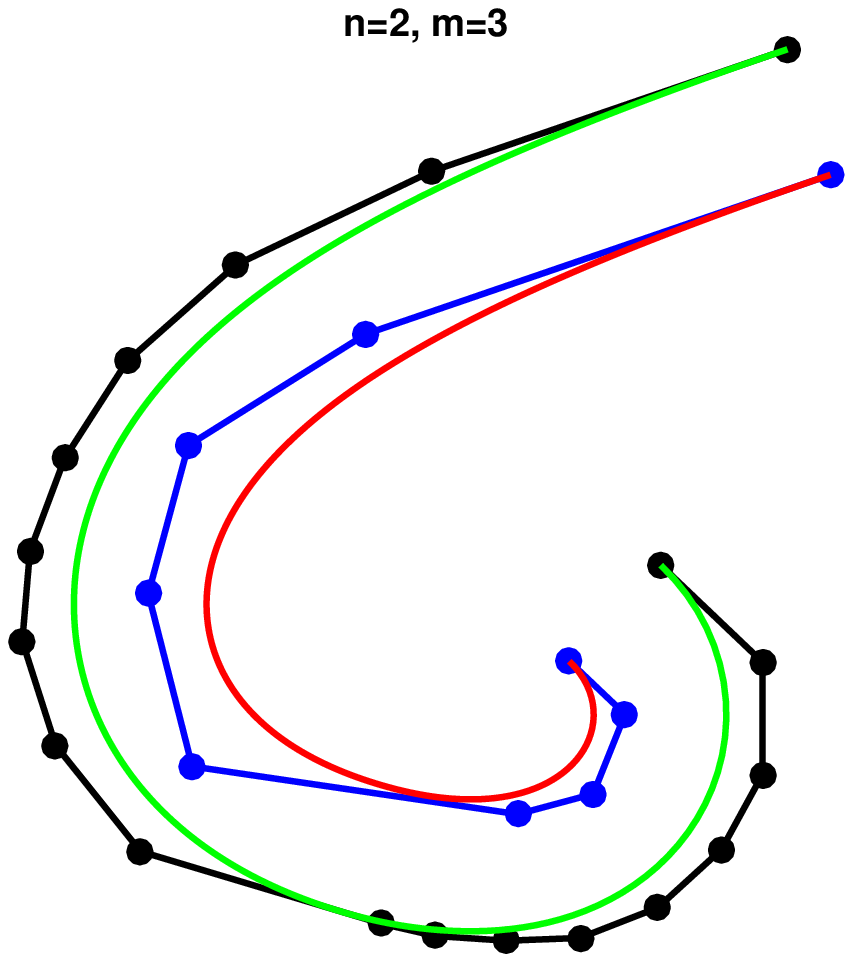}\hspace{-0.3cm}
\includegraphics[height=0.25\textwidth,valign=t]{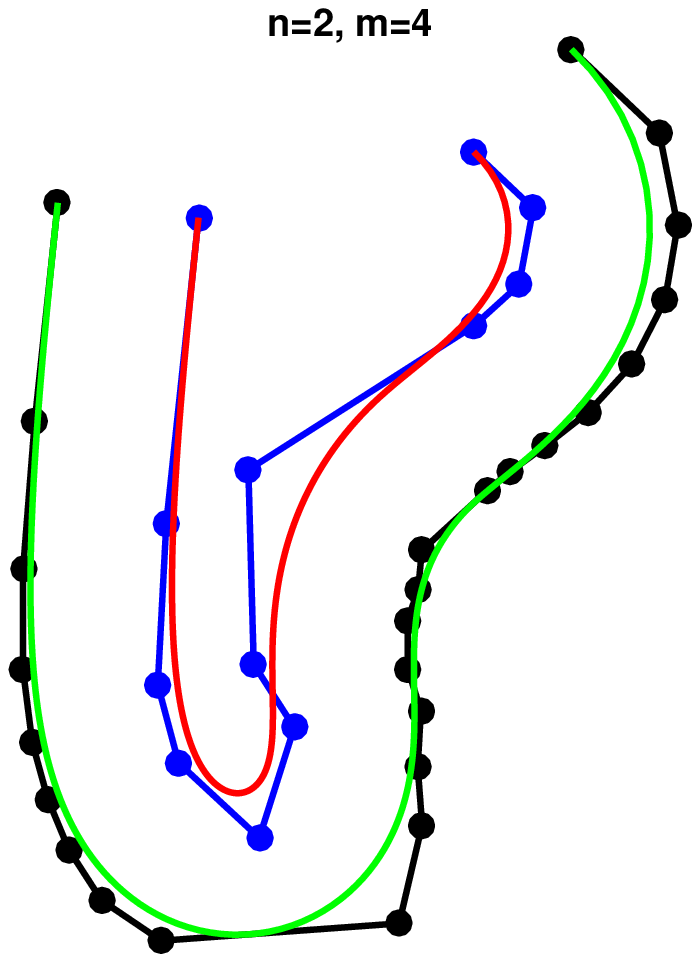}\\
\includegraphics[height=0.25\textwidth,valign=t]{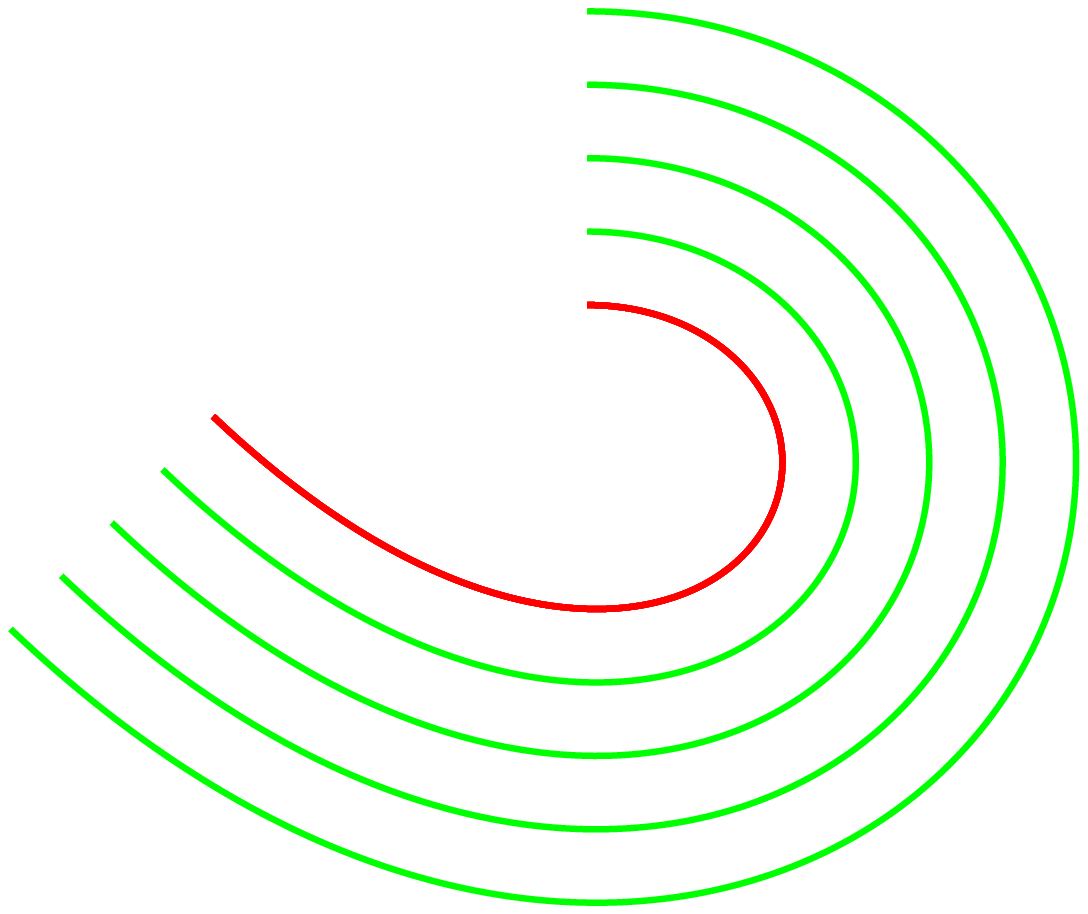}\hspace{-0.3cm}
\includegraphics[height=0.25\textwidth,valign=t]{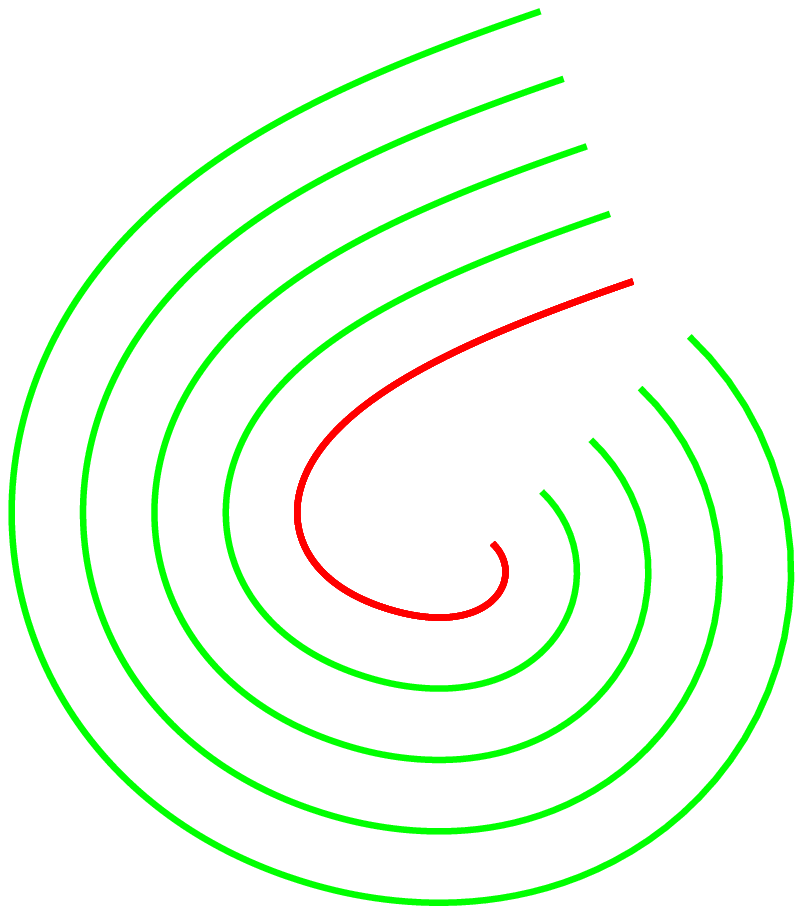}\hspace{-0.3cm}
\includegraphics[height=0.25\textwidth,valign=t]{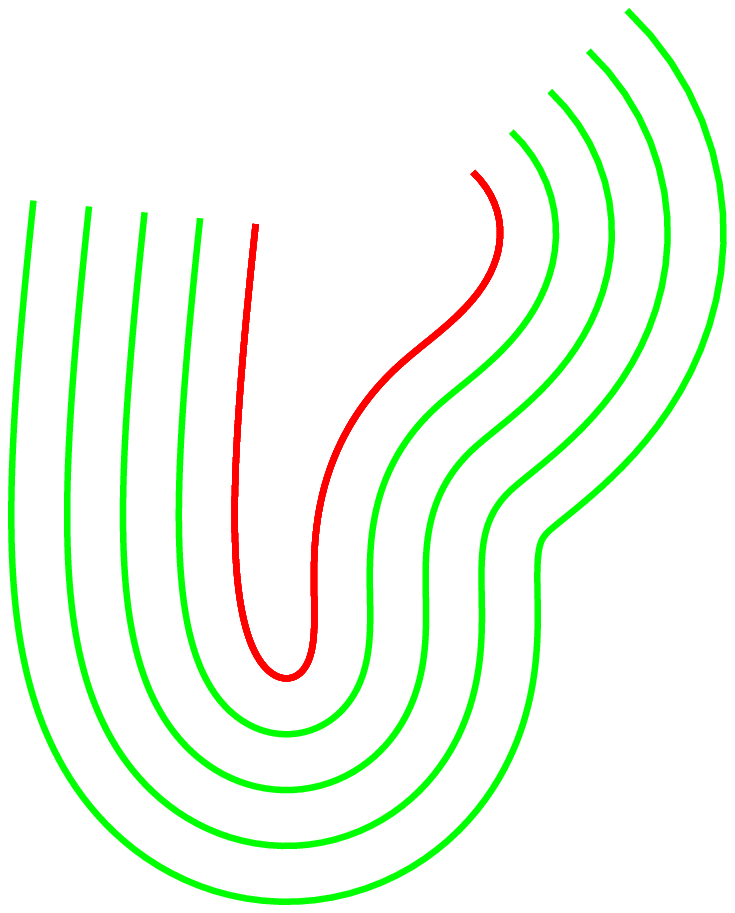}
\caption{Offsets of clamped quintic PH B-Spline curves from Figure
\ref{fig:clamped_n2m2} with (1. row) and without (2. row) control
polygon where: $m=2$ (left column), $m=3$ (center column) and $m=4$
(right column).} \label{fig:clamped_n2m3}
\end{figure}

Some examples of clamped quintic PH B-Spline curves are shown in Figure
\ref{fig:clamped_n2m2}, and their offsets are displayed in Figure
\ref{fig:clamped_n2m3}.

\subsection{Closed cubic PH B-Splines ($n=1$)}
\label{sec5n1closed}

Let $m \in \NN$, $m \geq 1$. For a general knot vector
$\bmu =\{ 0=t_0 < t_1 < ... < t_{m+3} \}$
(see Figure \ref{fig_knots_closed_n1} first row),
by applying the above method we construct the knot partitions
$$
\begin{array}{l}
\bnu= \{\langle 0 \rangle^2 < \langle t_1 \rangle^2 < ... < \langle t_{m+3} \rangle^2\}, \smallskip \\
\brho = \{t_{-1} < \langle 0 \rangle^2 < \langle t_1 \rangle^2 < ... < \langle t_{m+3} \rangle^2 < t_{m+4} \}, \smallskip \\
\btau = \{ \langle t_{-1} \rangle^3 < \langle 0 \rangle^5 < \langle
t_1 \rangle^5 < ... < \langle t_{m+3} \rangle^5 < \langle t_{m+4}
\rangle^3 \},
\end{array}
$$
illustrated in Figure \ref{fig_knots_closed_n1}. Then, by solving the linear systems \eqref{lgschi} we
calculate the coefficients $\chi_{k}^{i,j}$, $0\le
i,j \le m+1$, $0 \le {k} \le 2m+4$.
All of them turn out to be zero with the exception of
$$
\begin{array}{l}
\chi_{2k+1}^{k,k} = 1\,, \quad k = 0, \ldots, m+1\,, \smallskip \\
\chi_{2k+2}^{k,k+1} = \chi_{2k+2}^{k+1,k} =\frac{1}{2}\,, \quad k = 0, \ldots, m.
\end{array}
$$

\begin{figure}[h!]
\centering
\hspace{-0.1cm}\resizebox{9.0cm}{!}{\includegraphics{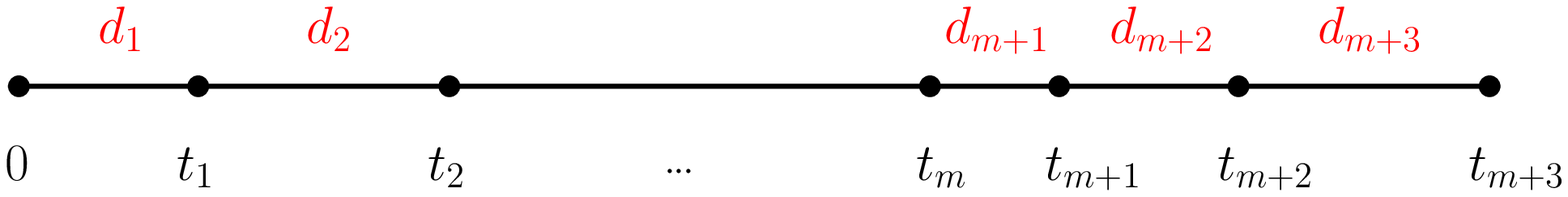}}\\
\hspace{-0.1cm}\resizebox{9.0cm}{!}{\includegraphics{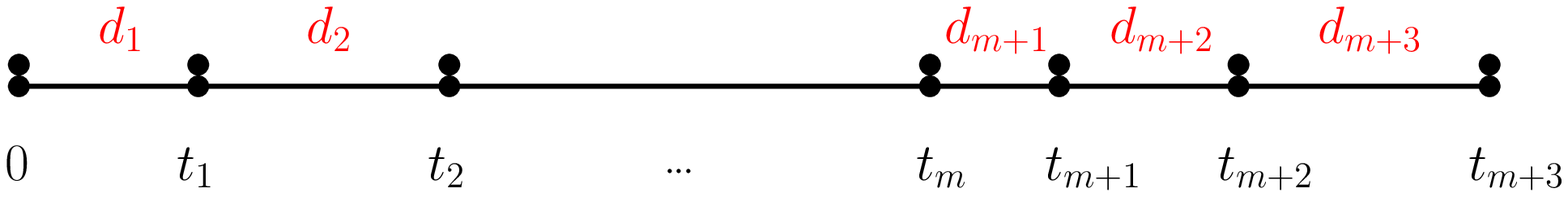}}\\
\resizebox{10.8cm}{!}{\includegraphics{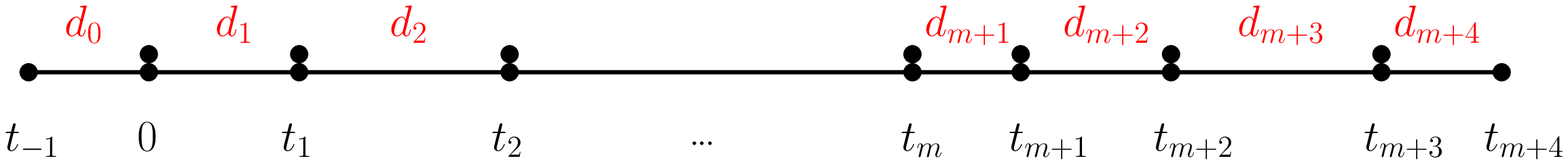}}\\
\resizebox{10.8cm}{!}{\includegraphics{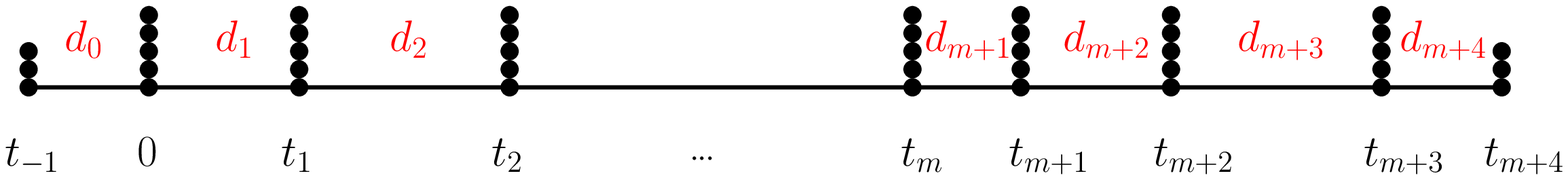}}
\caption{Knot partitions for the closed case $n=1$. From top to bottom: $\bmu$, $\bnu$, $\brho$, $\btau$.}
\label{fig_knots_closed_n1}
\end{figure}

In addition, we compute the coefficients $\zeta_k^{i,j}$, $0 \leq i
\leq 2m+5$, $0 \leq j \leq 2m+4$, $0 \leq k \leq 5m+19$ as the
solutions to the linear systems \eqref{zetas}. All of them
turn out to be zero with the exception of \be \label{zetasn1closed}
\begin{array}{l}
\zeta_{3}^{0,0} = \frac{2d_{0}}{5 D_{0}}\, , \; \zeta_{4}^{0,0} =
\frac{3}{5}\, , \; \zeta_{4}^{0,1} = \frac{d_{0}}{10 D_{0}}\, , \;
\smallskip \\
 \zeta_{5}^{0,1} = \frac{3}{10}\, , \; \zeta_{5}^{1,0}
= \frac{3}{5}\, , \; \zeta_{6}^{1,0} = \frac{2d_{2}}{5 D_{1}}\, , \;
\zeta_{6}^{1,1} = \frac{3}{5}\, , \; \zeta_{6}^{2,0} =
\frac{2d_{1}}{5 D_{1}}\, , \;
\smallskip \\
\zeta_{5k+7}^{2k+1,2k+1} = \frac{d_{k+2}}{D_{k+1}}\, , \;
\zeta_{5k+7}^{2k+2,2k+1} = \frac{d_{k+1}}{D_{k+1}}\, , \;
k=0,\ldots,m+1\, , \smallskip \\
\zeta_{5k+8}^{2k+1,2k+2} = \frac{2d_{k+2}}{5 D_{k+1}}\, , \;
\zeta_{5k+8}^{2k+2,2k+1} = \frac{3}{5}\, , \;
\zeta_{5k+8}^{2k+2,2k+2} = \frac{2d_{k+1}}{5 D_{k+1}}\, , \;
k=0,\ldots,m\, , \smallskip \\
\zeta_{5k+9}^{2k+1,2k+3} = \frac{d_{k+2}}{10 D_{k+1}}\, , \;
\zeta_{5k+9}^{2k+2,2k+2} = \frac{3}{5}\, , \;
\zeta_{5k+9}^{2k+2,2k+3} = \frac{d_{k+1}}{10 D_{k+1}}\, , \;
\zeta_{5k+9}^{2k+3,2k+1} = \frac{3}{10}\, , \;
k=0,\ldots,m\, , \smallskip \\
\zeta_{5k+10}^{2k+2,2k+3} = \frac{3}{10}\, , \;
\zeta_{5k+10}^{2k+3,2k+1}= \frac{d_{k+3}}{10 D_{k+2}}\, , \;
\zeta_{5k+10}^{2k+3,2k+2} = \frac{3}{5}\, , \;
\zeta_{5k+10}^{2k+4,2k+1} = \frac{d_{k+2}}{10 D_{k+2}}\, , \;
k=0,\ldots,m\, , \smallskip \\
\zeta_{5k+11}^{2k+3,2k+2} = \frac{2d_{k+3}}{5 D_{k+2}}\, , \;
\zeta_{5k+11}^{2k+3,2k+3} = \frac{3}{5}\, , \;
\zeta_{5k+11}^{2k+4,2k+2} = \frac{2d_{k+2}}{5 D_{k+2}}\, , \;
k=0,\ldots,m\, , \smallskip \\
 \zeta_{5m+13}^{2m+3,2m+4} = \frac{2d_{m+3}}{5
D_{m+2}}\, , \; \zeta_{5m+13}^{2m+4,2m+3} = \frac{3}{5}\, , \;
\zeta_{5m+13}^{2m+4,2m+4} = \frac{2d_{m+2}}{5 D_{m+2}}\, , \;
\zeta_{5m+14}^{2m+4,2m+4} = \frac{3}{5}\, , \;
\zeta_{5m+14}^{2m+5,2m+3} = \frac{3}{10}\, , \;
\smallskip \\
 \zeta_{5m+15}^{2m+5,2m+3} = \frac{d_{m+4}}{10
D_{m+3}}\, , \; \zeta_{5m+15}^{2m+5,2m+4} = \frac{3}{5}\, , \;
\zeta_{5m+16}^{2m+5,2m+4} = \frac{2d_{m+4}}{5 D_{m+3}}\, , \;
\end{array}
\ee where $D_k:=d_k+d_{k+1}$, $k=0,...,m+3$.

By means of the computed coefficients $\{\chi_{k}^{i,j}\}^{0\le i,j \le m+1}_{0 \le
{k} \le 2m+4}$ we can thus shortly write the control points of $\r'(t)$
as
$$
\begin{array}{l}
\p_0=0, \\
\p_{2k+1}= \z_k^2, \quad k=0,...,m+1\,,  \\
\p_{2k+2}= \z_k \z_{k+1}, \quad k=0,...,m\,,  \\
\p_{2m+4}=0,
\end{array}
$$
and the coefficients of the
parametric speed $\sigma(t)$ as
$$
\begin{array}{l}
\sigma_0=0, \\
\sigma_{2k+1}= \z_k \bar{\z}_k, \quad k=0,...,m+1\,,  \\
\sigma_{2k+2}= \frac12 \big( \z_k \bar{\z}_{k+1} + \z_{k+1} \bar{\z}_{k} \big), \quad k=0,...,m\,,  \\
\sigma_{2m+4}=0.
\end{array}
$$
Thus, according to (\ref{rt}), the closed cubic PH B-Spline curve defined
over the knot partition $\brho$ is given by
$$
\r(t) = \sum_{i=0}^{2m+5} \r_i N_{i,\brho}^{3}(t), \quad t \in  [t_1, t_{m+2}] \quad (t_0=0),
$$
with control points
$$
\begin{array}{l}
\r_1=\r_0, \\
\r_{2i+2}=\r_{2i+1} + \frac{d_{i+1}+d_{i+2}}{3}\, \z_i^2, \quad i=0,...,m+1\,,  \\
\r_{2i+3}=\r_{2i+2} + \frac{d_{i+2}}{3}\,  \z_i \z_{i+1}, \quad i=0,...,m\,,  \\
\r_{2m+5}=\r_{2m+4}\,,
\end{array}
$$
and arbitrary $\r_0$. Note that, due to condition
\eqref{condclosed3}, $d_{m+2}=d_1$, $d_{m+3}=d_{2}$. Moreover,
$\z_{m}$ and $\z_{m+1}$ must be suitably fixed in order to satisfy
condition \eqref{condclosed2}. According to (\ref{Lclosed1}) and
using the partition of unity $N_{1,\brho}^3(t_1) +
N_{2,\brho}^3(t_1)=1$, the total arc length of the closed PH B-Spline
curve of degree $3$ is given by
$$L= l_{2m+3} + (l_{2m+4}-l_{2m+3}-l_2) \, N_{2,\brho}^{3}(t_{1}),$$
where
$$
\begin{array}{l}
l_1=l_0=0, \\
l_{2i+2}=l_{2i+1} + \frac{d_{i+1}+d_{i+2}}{3}\, \z_{i} \bar{\z}_{i}, \quad  i=0,...,m+1\,,  \\
l_{2i+3}=l_{2i+2} + \frac{d_{i+2}}{6}\, \big( \z_i \bar{\z}_{i+1} + \z_{i+1} \bar{\z}_{i} \big), \quad  i=0,...,m\,,  \\
l_{2m+5}=l_{2m+4}.
\end{array}
$$
Over the knot partition $\btau$ the offset curve $\r_h(t)$
has the rational B-Spline form \be \label{offsetn1closed} \r_h(t) =
\frac{\sum_{k=0}^{5m+19} \q_k N_{k,\btau}^{5}(t)}{\sum_{k=0}^{5m+19}
\gamma_k N_{k,\btau}^{5}(t)}\,,\quad t \in [t_1, t_{m+2}],\ee where,
by exploiting the explicit expressions of the coefficients
$\{\zeta_k^{i,j}\}^{0 \leq i \leq 2m+5, \, 0 \leq j \leq 2m+4}_{0
\leq k \leq 5m+19}$ from (\ref{zetasn1closed}), weights and control
points can easily be obtained; their explicit formulae are reported
in the Appendix.

We now illustrate some closed degree $3$ PH B-Spline curves. To this
end we define
$$
r_\pm=-(4 d_1 d_2 +4 d_1 d_3 +4 d_2 d_3 +3 d_1^2) \z_0^2 -(4 d_1 d_2
\pm 2 d_1 d_3 +4 d_2 d_3) \z_0 \z_1 -(4 d_1 d_2 +4 d_1 d_3 +4 d_2
d_3 +3 d_3^2) \z_1^2\, ,
$$
$$
\begin{array}{lll}
R_\pm &=&-(4 d_1 d_2+4 d_1 d_4+4 d_2 d_4+3 d_1^2) \z_0^2 -(4 d_1 d_2+4 d_2 d_4) \z_0 \z_1 \pm 2 d_1 d_4 \z_0 \z_2 -(4 d_1 d_2+4 d_1 d_3+4 d_2 d_4+4 d_3 d_4) \z_1^2\\
&-&(4 d_1 d_3+4 d_3 d_4) \z_1 \z_2 -(4 d_1 d_3+4 d_1 d_4+4 d_3 d_4+3
d_4^2) \z_2^2\, .
\end{array}
$$
Figures \ref{fig:closed_n1_zclosed} and \ref{fig:closed_n1_zopen}
contain examples of cubic PH B-Spline curves obtained for the data
in Table \ref{table_datan1_closed}. Figure
\ref{fig:offsets_closed_n1} shows the offsets of the PH B-Spline
curves of Figure \ref{fig:closed_n1_zopen}.

\begin{table}
\centering
\begin{tabular}{|c|c|c|c|}
\hline
 $m$ & $\z(t)$ open/closed & Conditions on $\z(t)$ in order to
satisfy (\ref{condclosed2}) & Illustration \\
\hline $1$ & closed & $\z_1=\frac{-1-\sqrt{3} \ti}{2} \, \z_0,
\qquad \z_2=\z_0$ & Figure \ref{fig:closed_n1_zclosed}, first
column\\
\hline $2$ & closed & $ \z_2=-\frac{d_1 \z_0 + d_3
\z_1+\sqrt{r_-}}{2(d_1 + d_3)}, \qquad \z_3=\z_0, $ & Figure \ref{fig:closed_n1_zclosed}, second column\\
\hline $3$ & closed & $ \z_3=-\frac{d_1 \z_0 + d_4 \z_2 +
\sqrt{R_+}}{2(d_1 + d_4)}, \qquad \z_4=\z_0$ & Figure \ref{fig:closed_n1_zclosed}, third column\\
\hline $1$ & open &  $ \z_1=\frac{d_1-d_2+\sqrt{(d_1 + 3 d_2) (3 d_1
+ d_2)} \ti}{2(d_1 + d_2)} \, \z_0, \qquad \z_2=-\z_0$ & Figure \ref{fig:closed_n1_zopen}, first column\\
\hline $2$ & open &   $ \z_2=\frac{d_1 \z_0 - d_3 \z_1
+\sqrt{r_+}}{2(d_1 + d_3)}, \qquad \z_3=-\z_0,
$ & Figure \ref{fig:closed_n1_zopen}, second column\\
\hline $3$ & open &   $\z_3=\frac{d_1 \z_0 -d_4 \z_2
+\sqrt{R_-}}{2(d_1 + d_4)}, \qquad \z_4=-\z_0$ & Figure \ref{fig:closed_n1_zopen}, third column\\
\hline
\end{tabular}
 \caption{Data for the examples in the
closed case for $n=1$.} \label{table_datan1_closed}
\end{table}

\begin{figure}[h!]
\centering
\includegraphics[height=0.24\textwidth,valign=t]{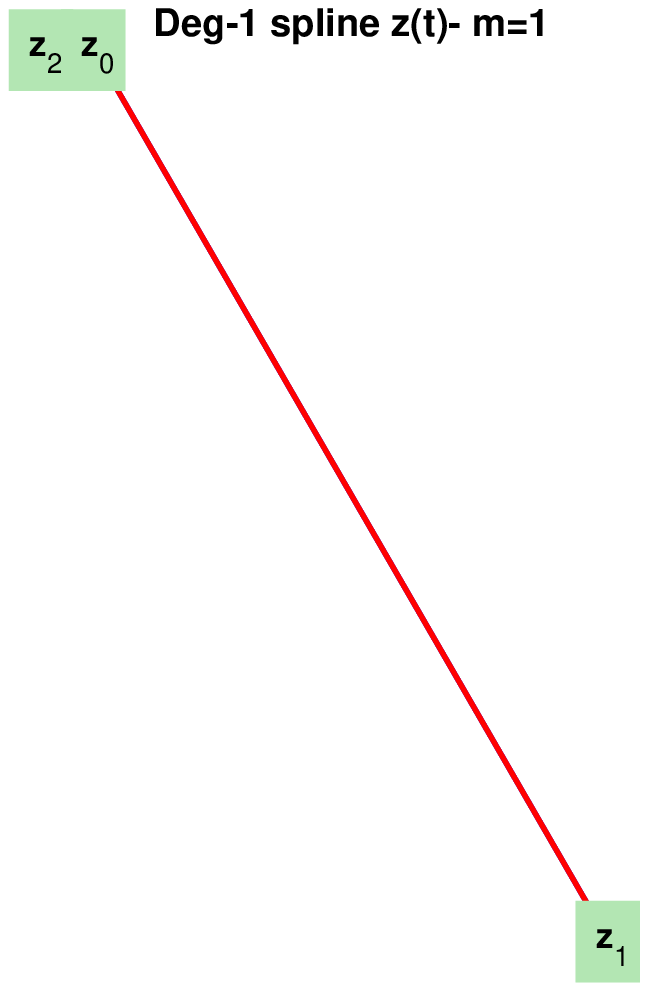}\hspace{-0.2cm}
\includegraphics[height=0.24\textwidth,valign=t]{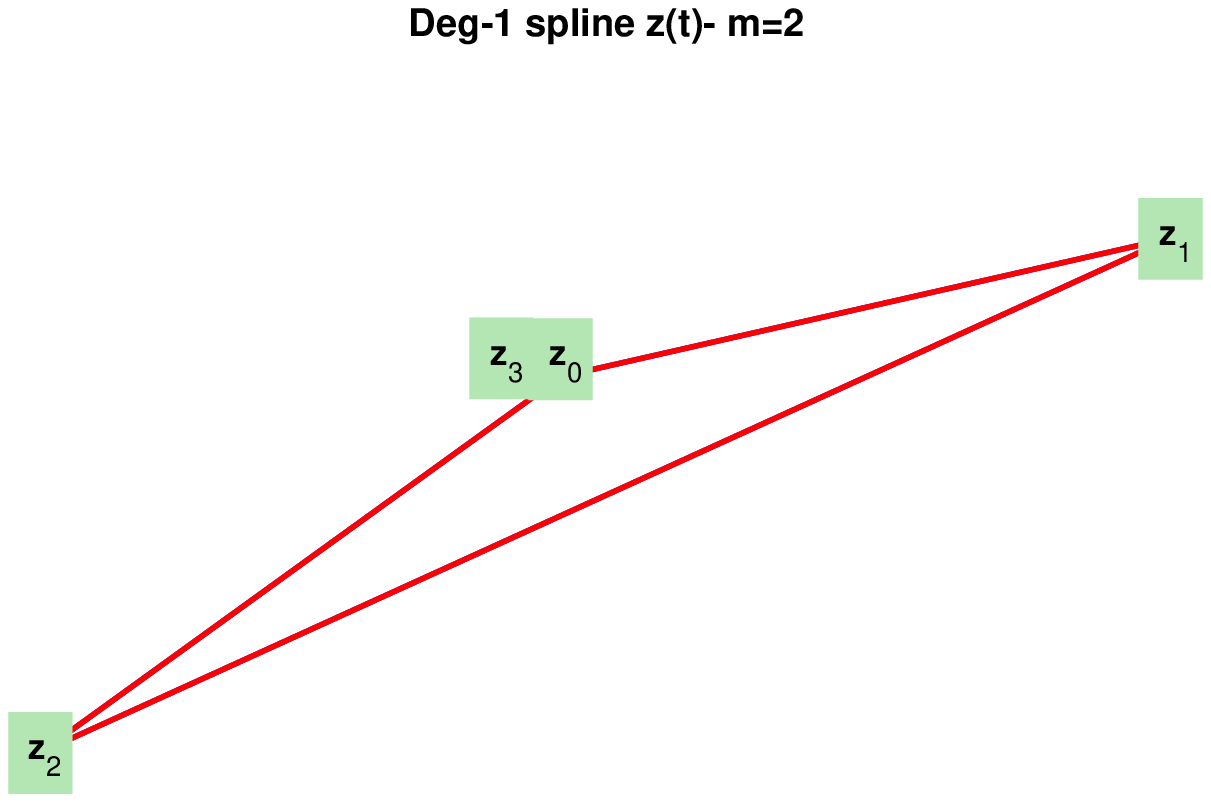}\hspace{-0.2cm}
\includegraphics[height=0.24\textwidth,valign=t]{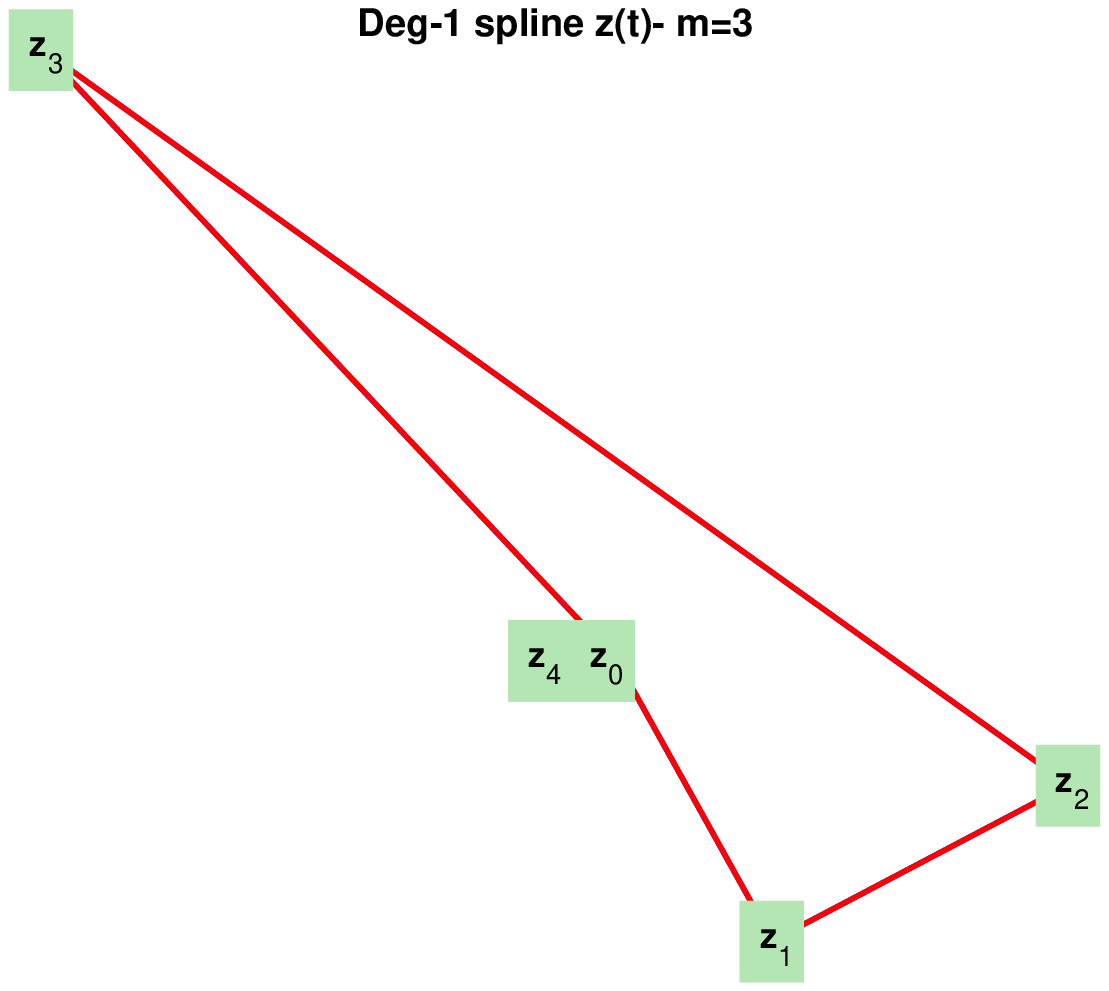}\\
\includegraphics[height=0.24\textwidth,valign=t]{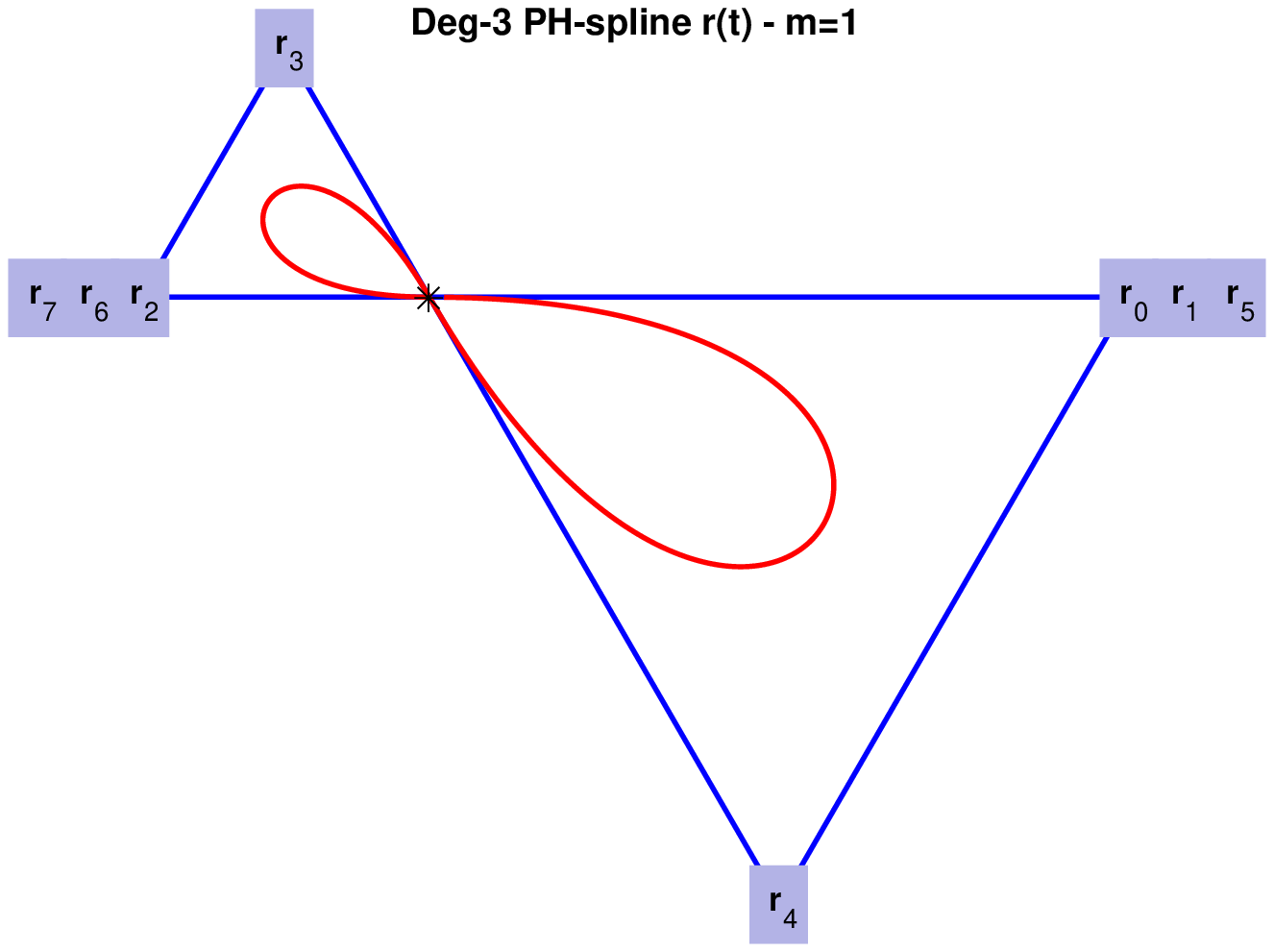}\hspace{-0.2cm}
\includegraphics[height=0.24\textwidth,valign=t]{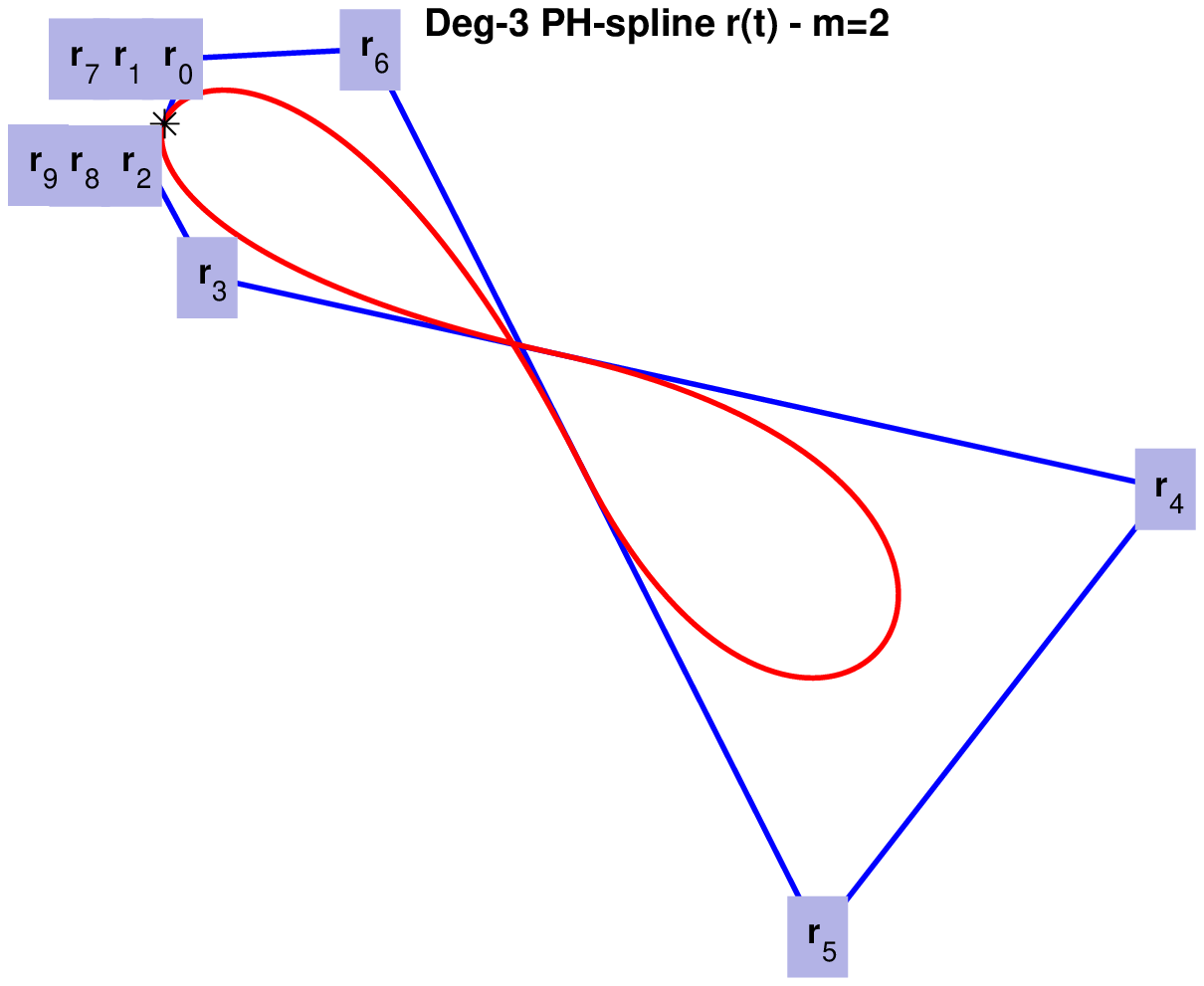}\hspace{-0.2cm}
\includegraphics[height=0.24\textwidth,valign=t]{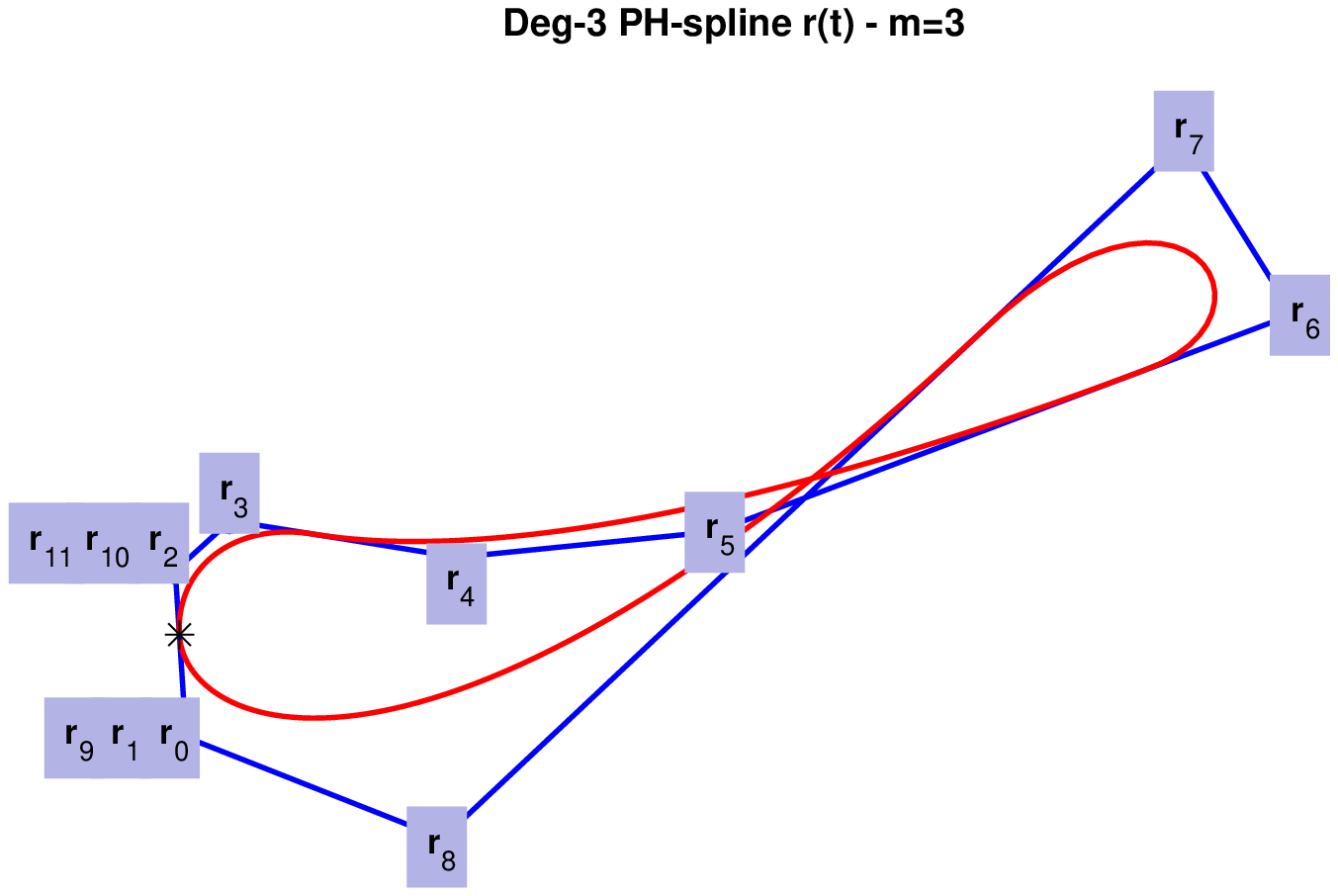}
\caption{Closed cubic PH B-Spline curves with $m=1$ (left), $m=2$ (center), $m=3$ (right) originated from closed degree-1 splines $\z(t)$.}
\label{fig:closed_n1_zclosed}
\end{figure}

\begin{figure}[h!]
\centering
\includegraphics[height=0.25\textwidth,valign=t]{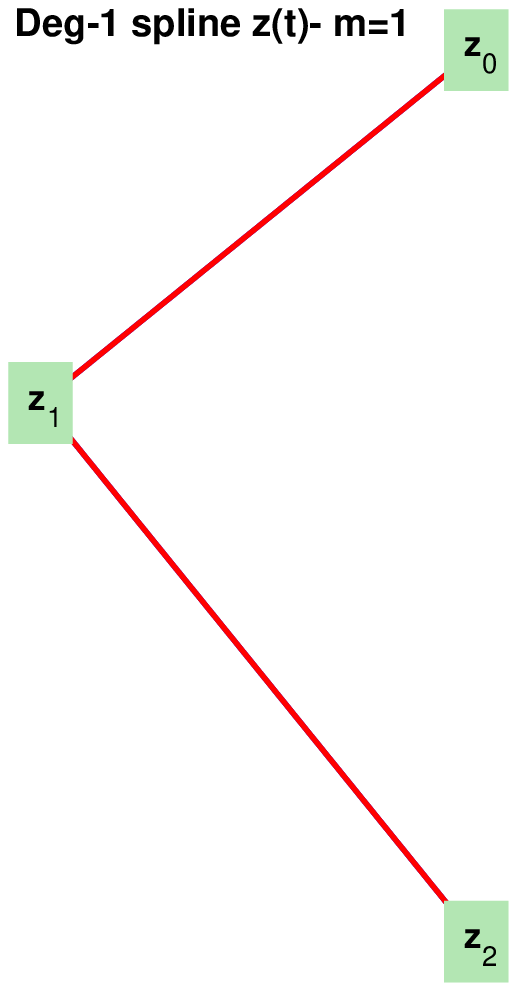}\hfill
\includegraphics[height=0.25\textwidth,valign=t]{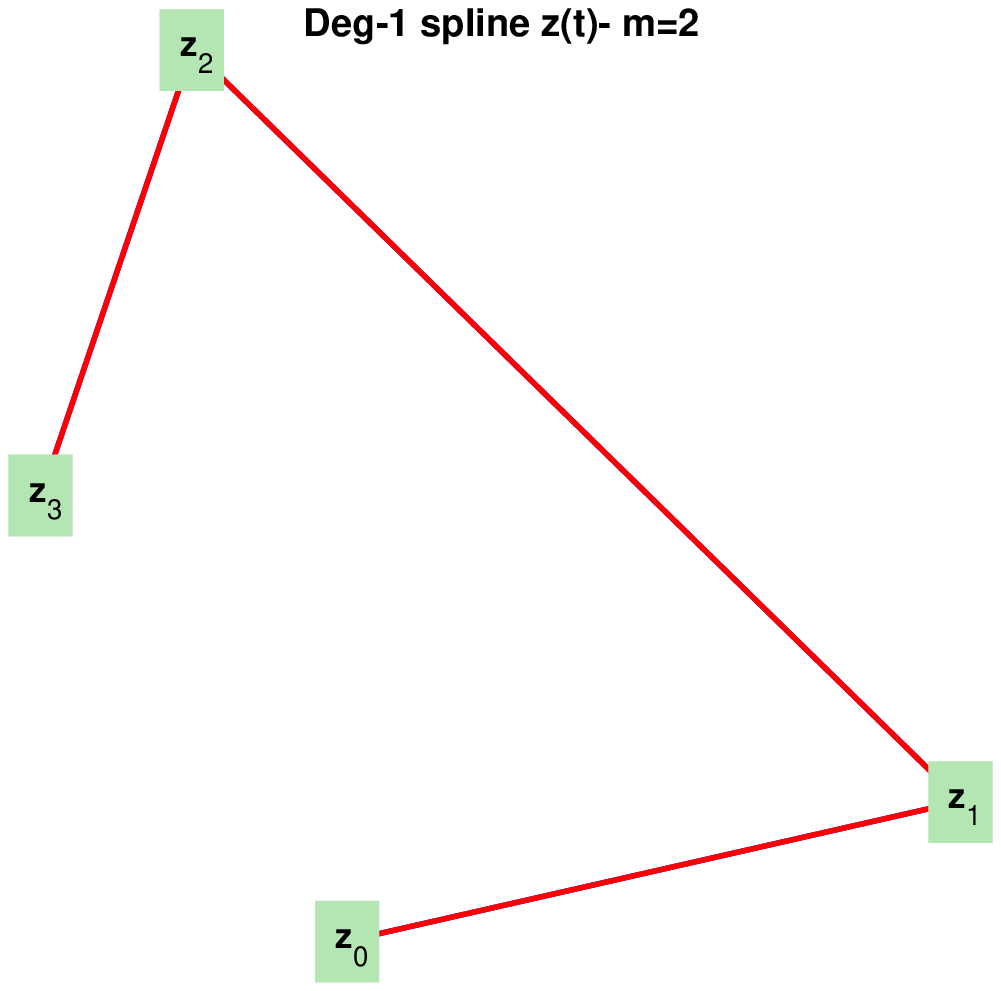}\hfill
\includegraphics[height=0.25\textwidth,valign=t]{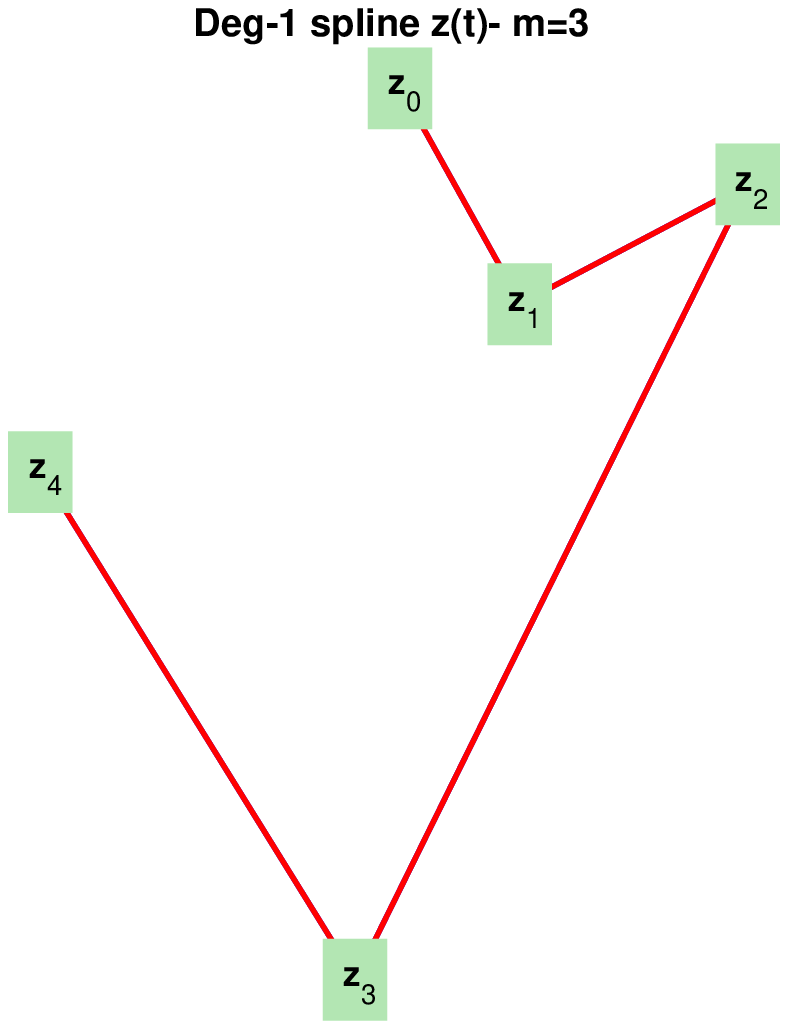}\\
\includegraphics[height=0.25\textwidth,valign=t]{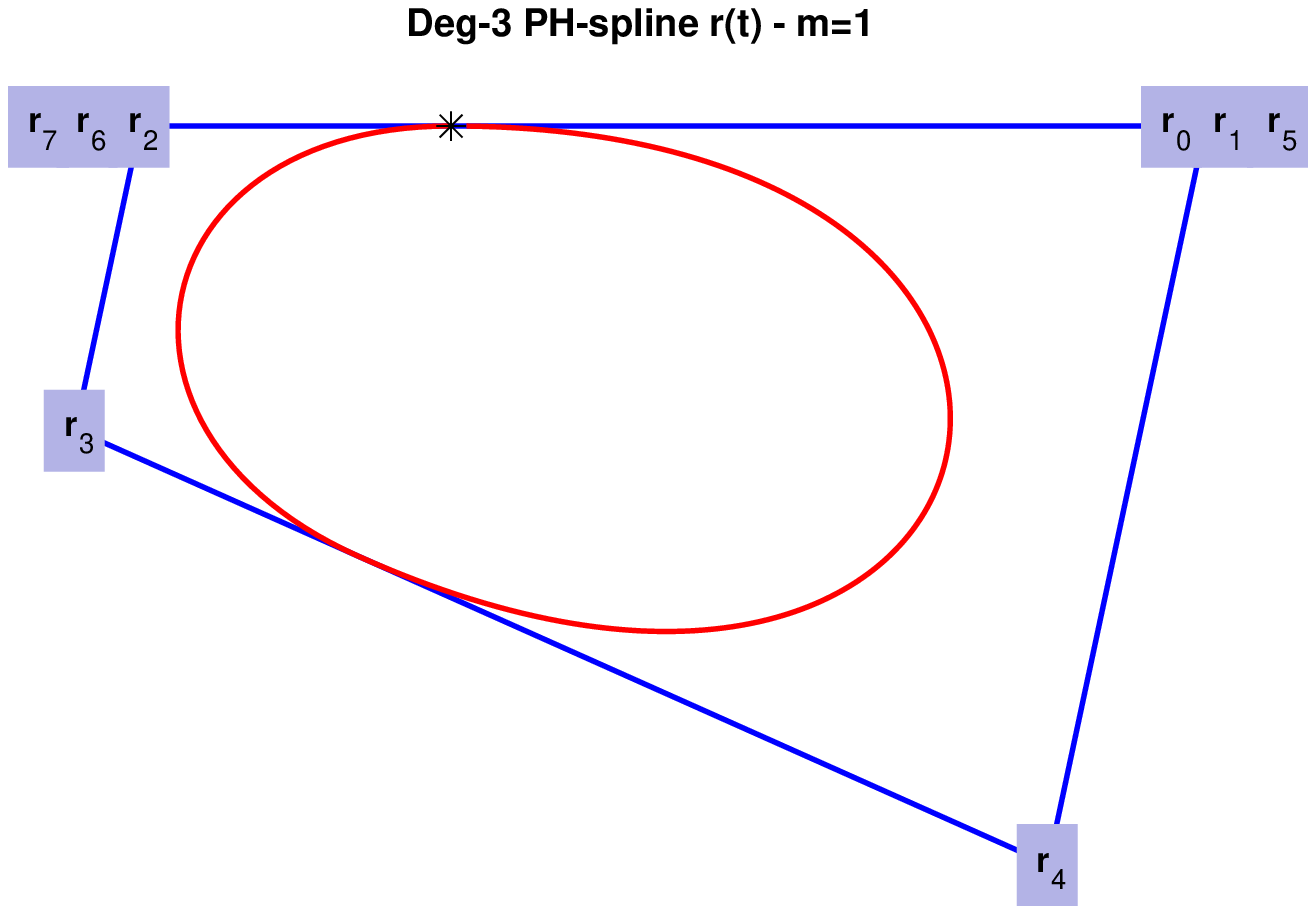}\hfill
\includegraphics[height=0.25\textwidth,valign=t]{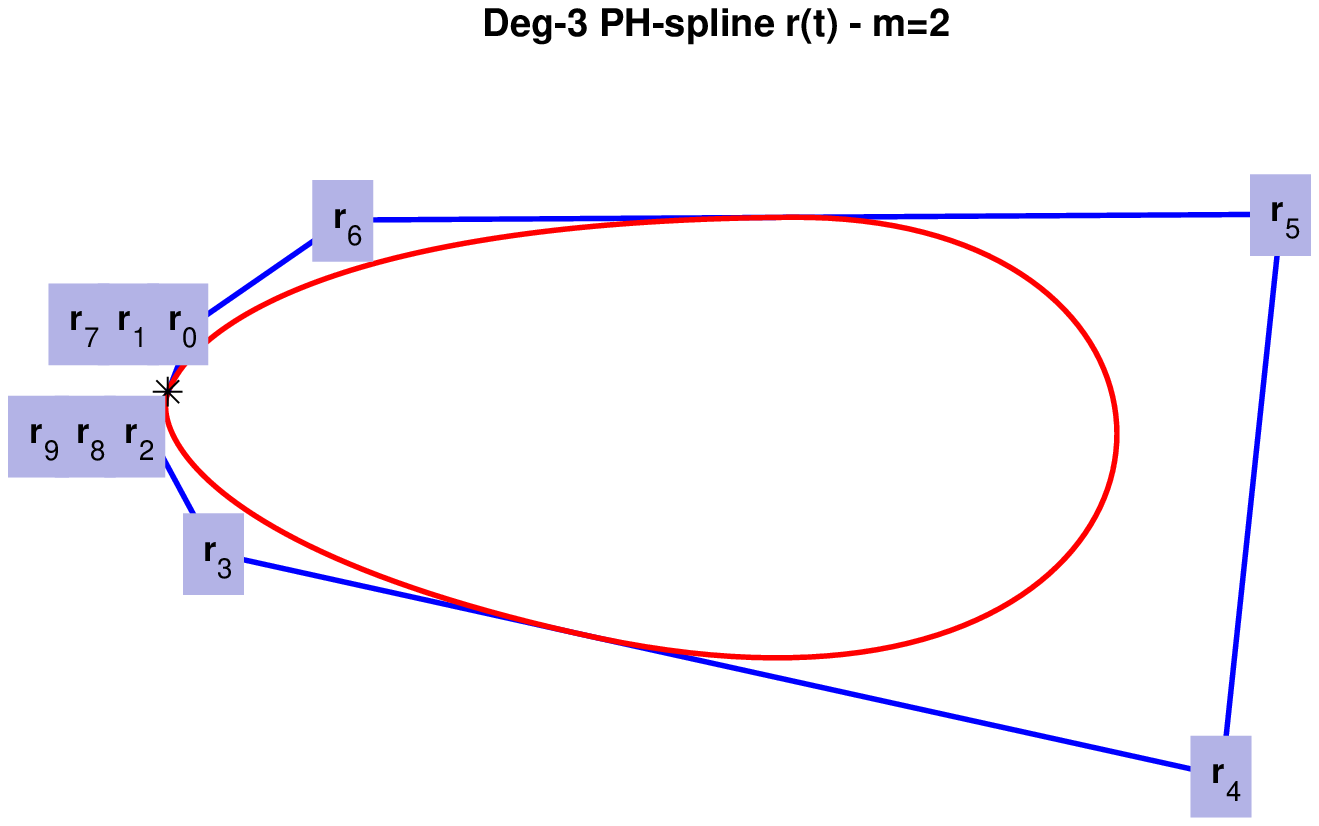}\hfill
\includegraphics[height=0.25\textwidth,valign=t]{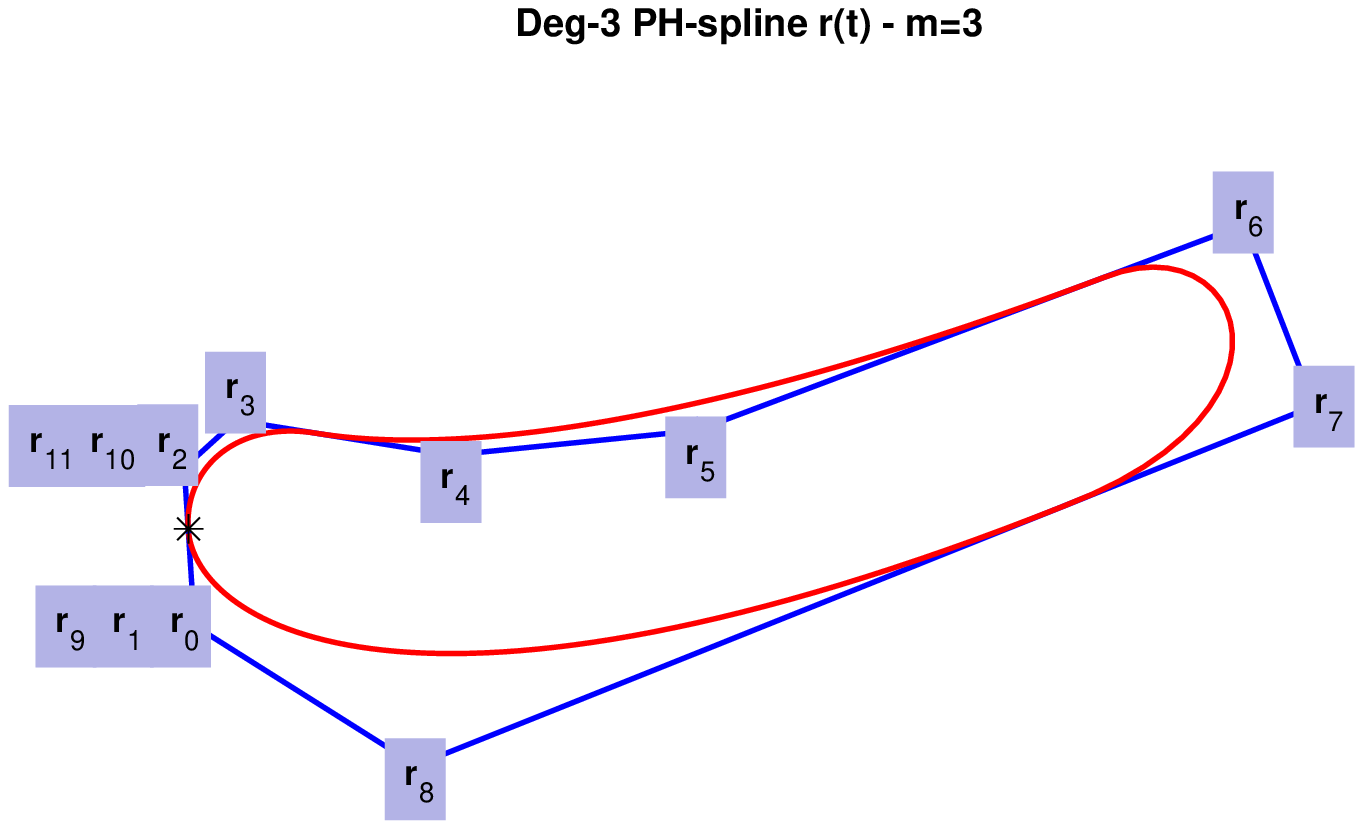}
\caption{Closed cubic PH B-Spline curves with $m=1$ (left), $m=2$ (center), $m=3$ (right) originated from open degree-1 splines $\z(t)$.}
\label{fig:closed_n1_zopen}
\end{figure}

\smallskip
\begin{figure}[h!]
\centering
\resizebox{4.95cm}{!}{\includegraphics{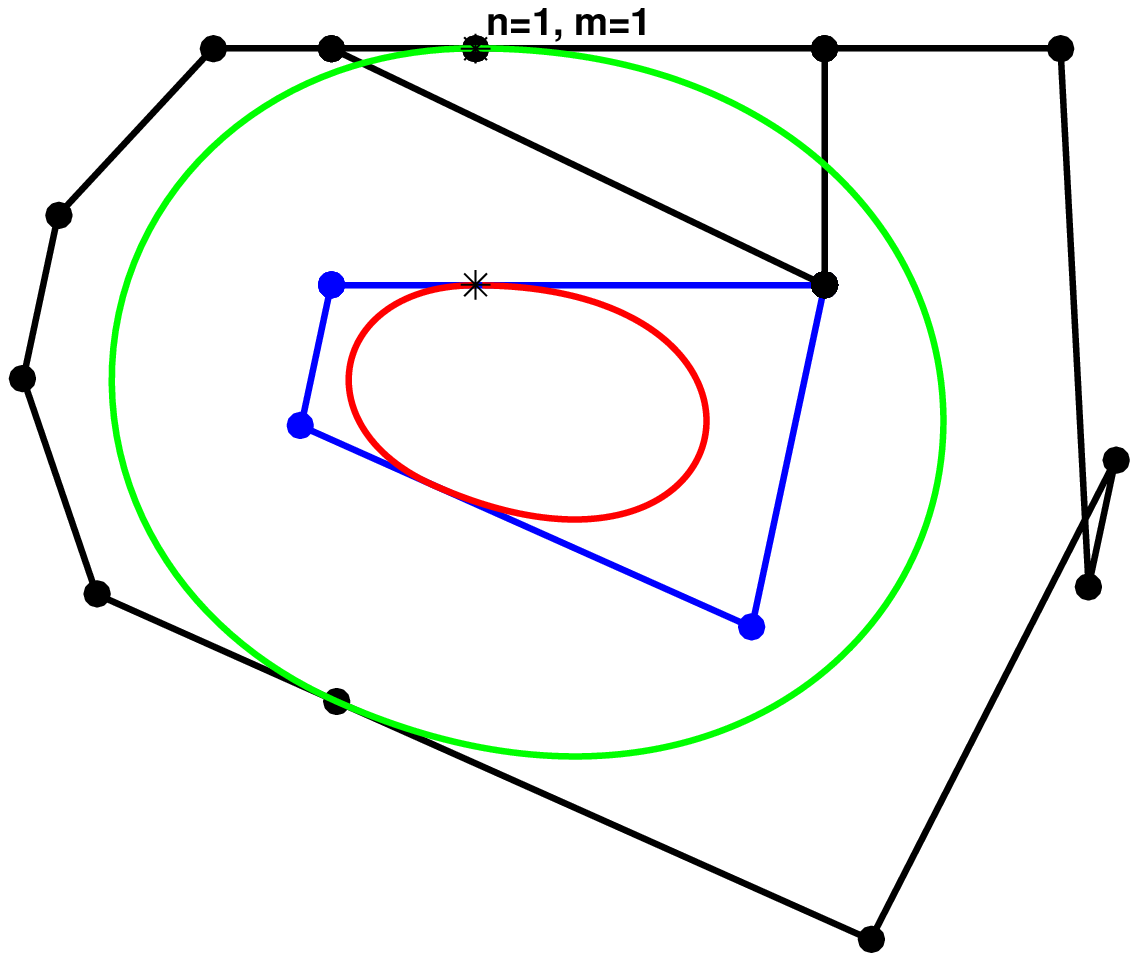}}
\resizebox{4.95cm}{!}{\includegraphics{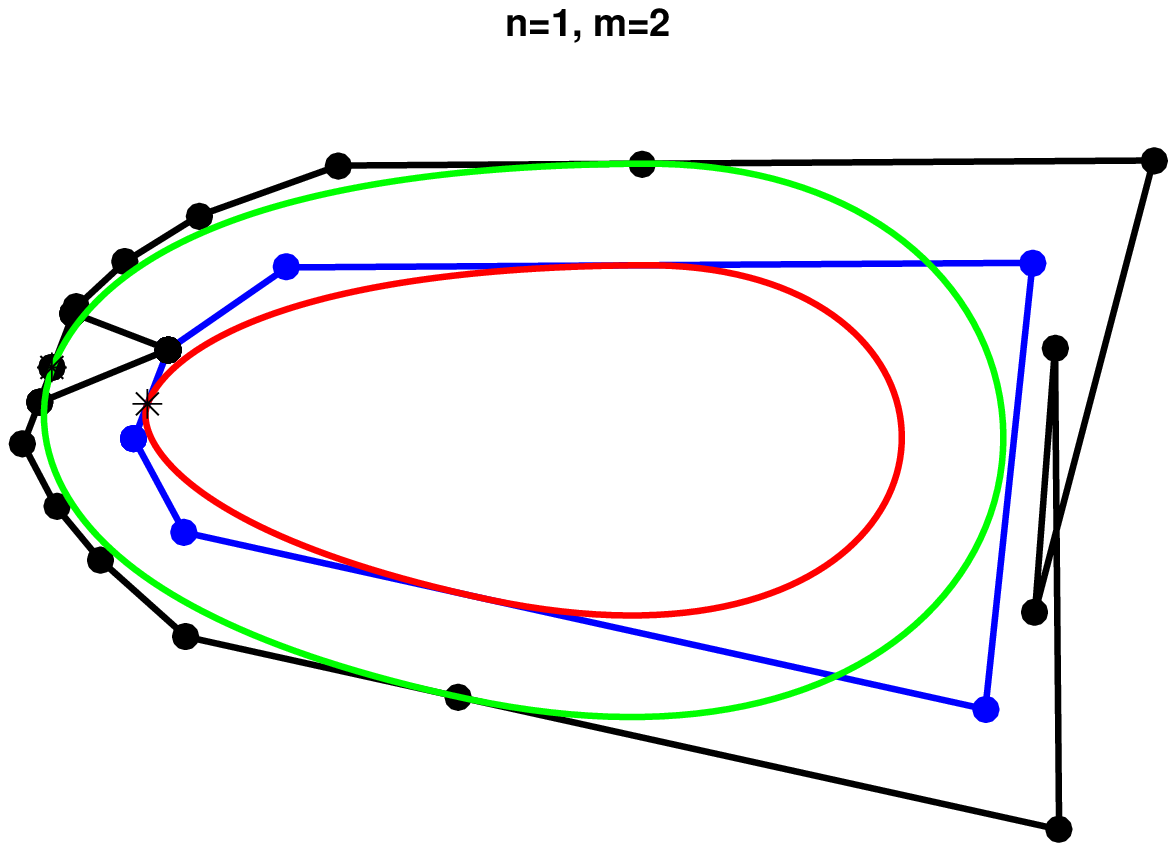}}
\resizebox{4.95cm}{!}{\includegraphics{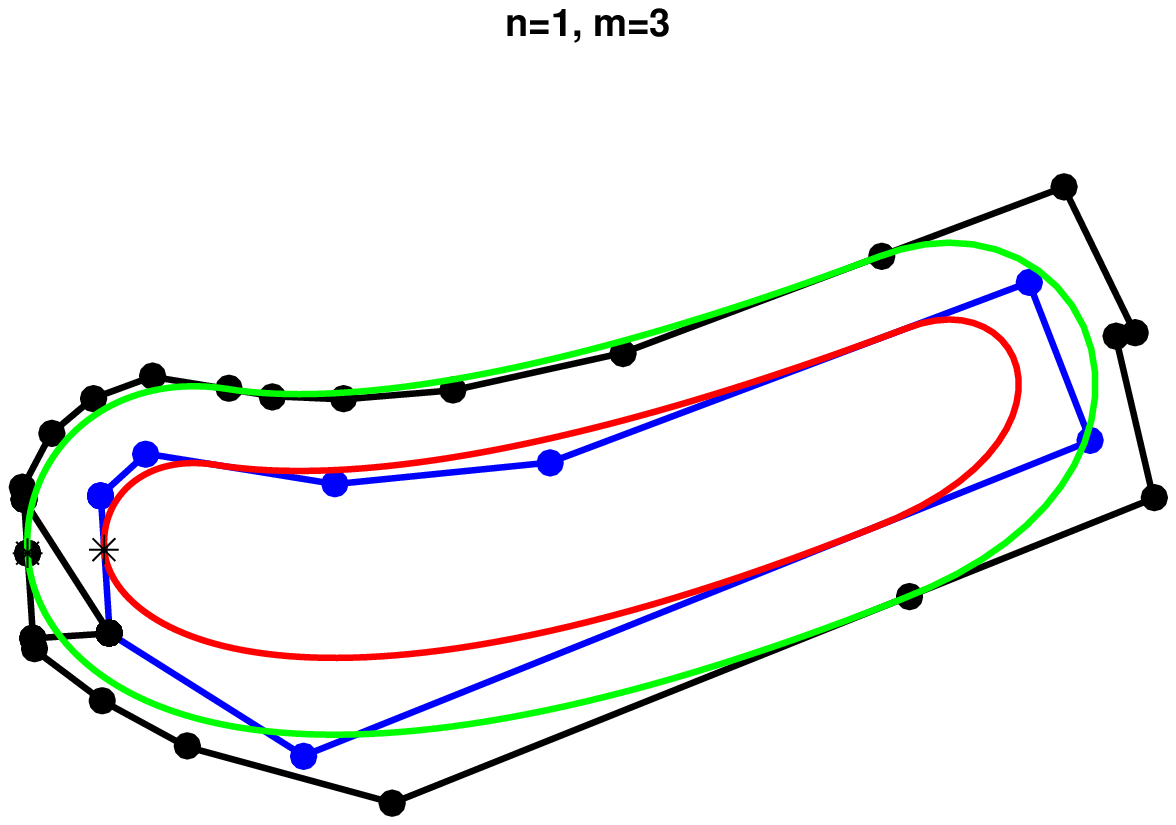}}\vspace{-0.1cm}
\resizebox{4.95cm}{!}{\includegraphics{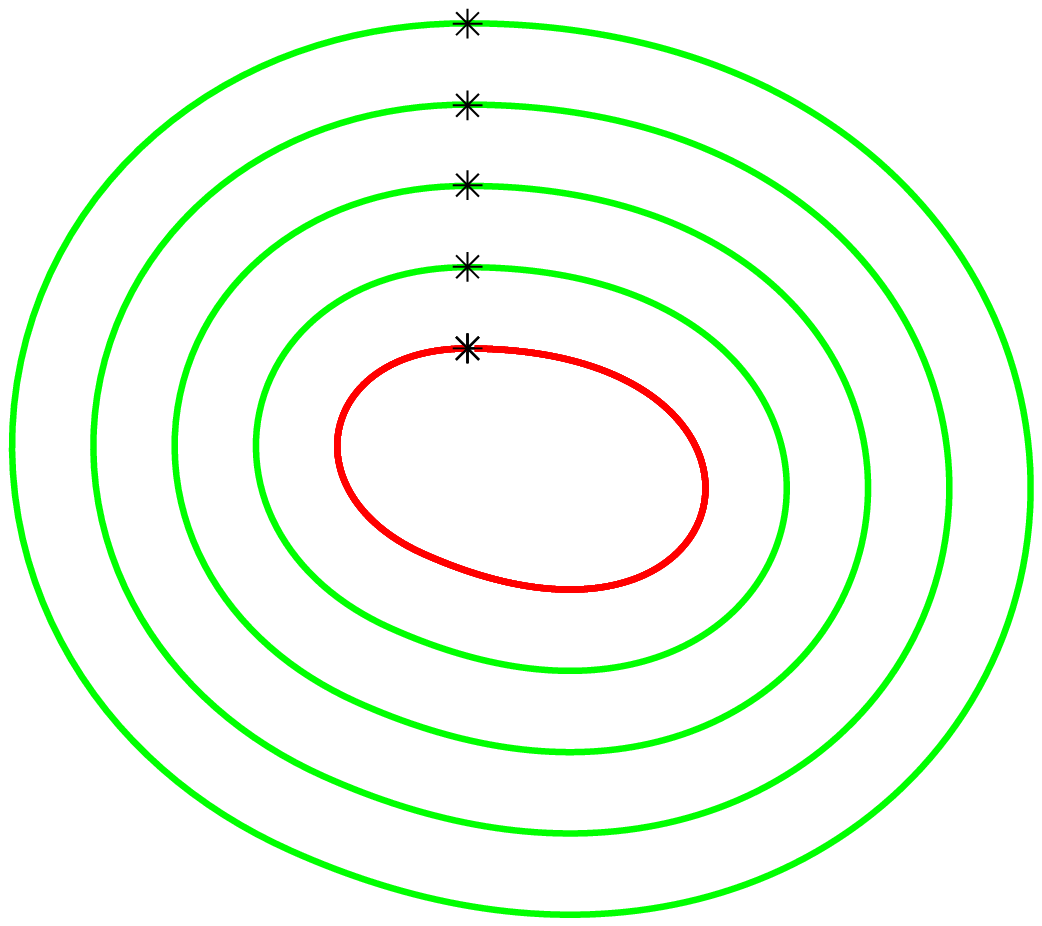}}
\resizebox{4.95cm}{!}{\includegraphics{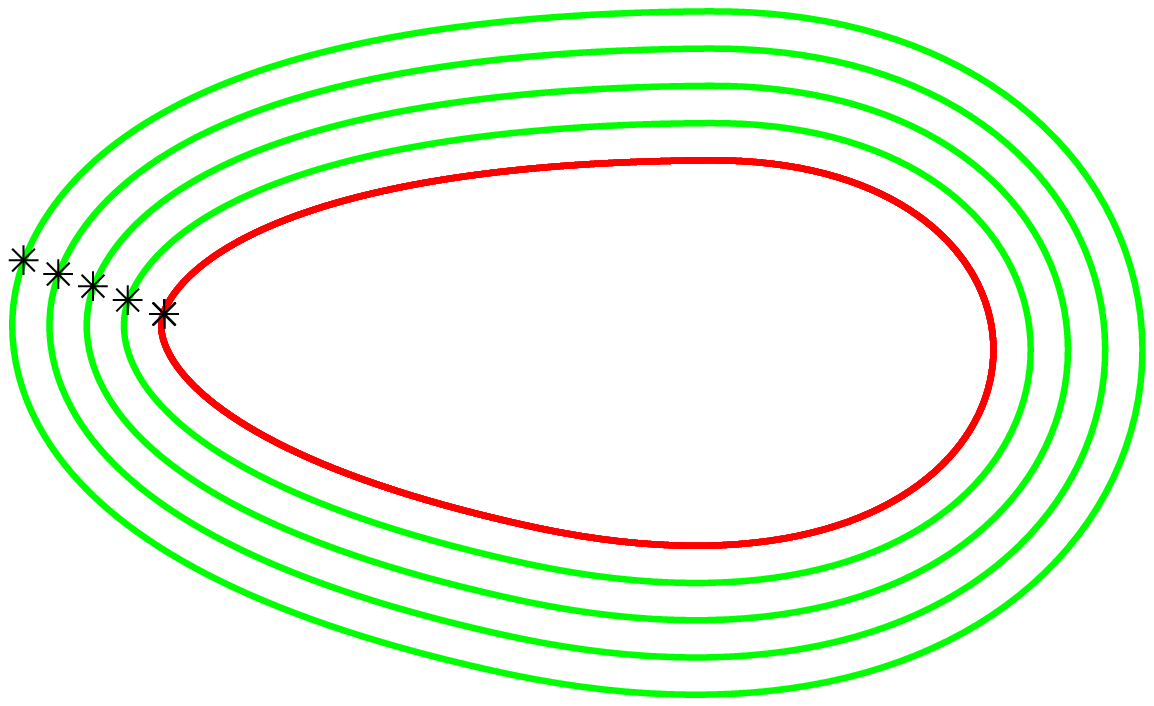}}
\resizebox{4.95cm}{!}{\includegraphics{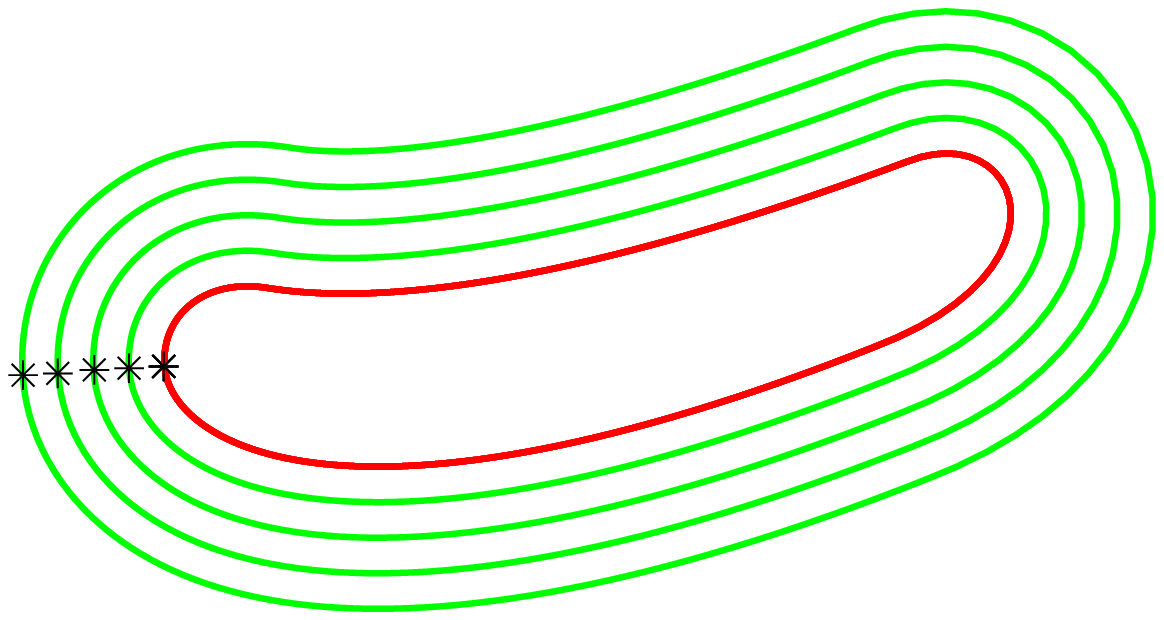}}
\caption{Offsets of closed cubic PH B-Spline curves with: $m=1$ (left), $m=2$ (center), $m=3$ (right).}
\label{fig:offsets_closed_n1}
\end{figure}

\subsection{Closed quintic PH B-Splines ($n=2$)}
\label{sec5n2closed}

Let $m \in \NN$, $m \geq 2$. For a general knot vector
$\bmu =\{ 0=t_0 < t_1 < ... < t_{m+5} \}$
(see Figure \ref{fig_knots_closed_n2} first row),
by applying the above method we construct the knot partitions
$$
\begin{array}{l}
\bnu=\{ \langle 0 \rangle^3 < \langle t_1 \rangle^3 < ... < \langle t_{m+5} \rangle^3 \}, \smallskip \\
\brho =\{ t_{-1} < \langle 0 \rangle^3 < \langle t_1 \rangle^3 < ... < \langle t_{m+5} \rangle^3 <  t_{m+6}  \}, \smallskip \\
\btau =\{ \langle t_{-1} \rangle^5 < \langle 0 \rangle^8 < \langle
t_1 \rangle^8 < ... < \langle t_{m+5} \rangle^8 < \langle t_{m+6}
\rangle^5 \},
\end{array}
$$
illustrated in Figure \ref{fig_knots_closed_n2}. Then, by solving the linear systems \eqref{lgschi} we
calculate the coefficients
$\chi_{k}^{i,j}$, $0\le i,j \le m+2$, $0 \le {k} \le 3m+12$. All of them turn out to be zero with the exception of
$$
\begin{array}{l}
\chi_2^{0,0}= \frac{d_1}{d_1+d_2}, \smallskip\\
\chi_{3k}^{k-2,k-1}= \chi_{3k}^{k-1,k-2} = \frac{1}{6} \, \frac{d_{k+1} \, d_{k+2}}{(d_{k}+d_{k+1})\, (d_{k+1} + d_{k+2})}, \quad k=1,...,m+3\,,  \smallskip\\
\chi_{3k}^{k-2,k}= \chi_{3k}^{k,k-2} = \frac{1}{6} \, \frac{(d_{k+1})^2}{(d_{k}+d_{k+1})(d_{k+1}+d_{k+2})}, \quad k=1,...,m+3\,,  \smallskip\\
\chi_{3k}^{k-1,k-1}=\frac{2}{3} + \frac{1}{3} \, \frac{d_{k} \, d_{k+2}}{(d_{k}+d_{k+1}) \, (d_{k+1} + d_{k+2})}, \quad k=1,...,m+3\,,  \smallskip\\
\chi_{3k}^{k-1,k}= \chi_{3k}^{k,k-1} = \frac{1}{6} \, \frac{d_k \, d_{k+1}}{(d_{k}+d_{k+1}) \, (d_{k+1} + d_{k+2})}, \quad k=1,...,m+3\,,
\end{array}
$$
$$
\begin{array}{l}
\chi_{3k+1}^{k-1,k-1}= \frac{d_{k+2}}{d_{k+1} + d_{k+2}},   \quad k=1,...,m+2\,,  \smallskip\\
\chi_{3k+1}^{k-1,k}= \chi_{3k+1}^{k,k-1} = \frac{1}{2}\,\frac{d_{k+1}}{d_{k+1} + d_{k+2}},   \quad k=1,...,m+2\,,  \smallskip\\
\chi_{3k+2}^{k-1,k}=\chi_{3k+2}^{k,k-1} = \frac{1}{2}\, \frac{d_{k+2}}{d_{k+1}+d_{k+2}},   \quad k=1,...,m+2\,,  \smallskip\\
\chi_{3k+2}^{k,k}= \frac{d_{k+1}}{d_{k+1} +d_{k+2}},   \quad k=1,...,m+2\,,  \smallskip\\
\chi_{3m+10}^{m+2,m+2}= \frac{d_{m+5}}{d_{m+4}+d_{m+5}},
\end{array}
$$
\smallskip
where $d_{k}=t_{k}-t_{k-1}$, $k=1, ..., m+5$.

\begin{figure}[h!]
\centering
\hspace{-0.4cm}\resizebox{9.3cm}{!}{\includegraphics{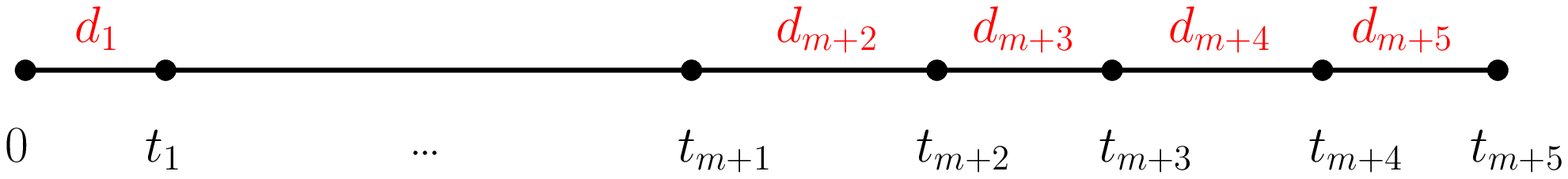}}\\
\hspace{-0.4cm}\resizebox{9.3cm}{!}{\includegraphics{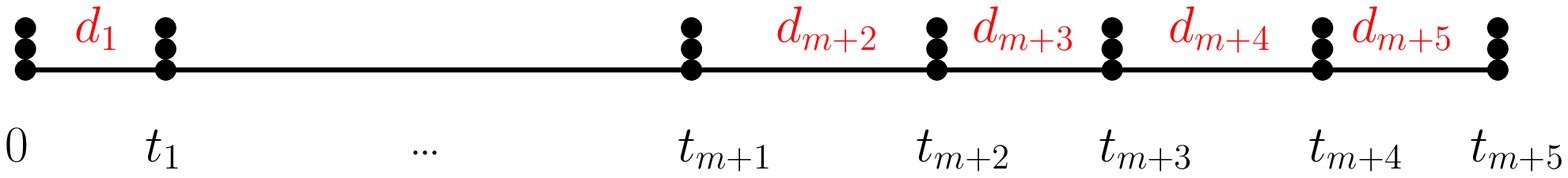}}\\
\resizebox{10.9cm}{!}{\includegraphics{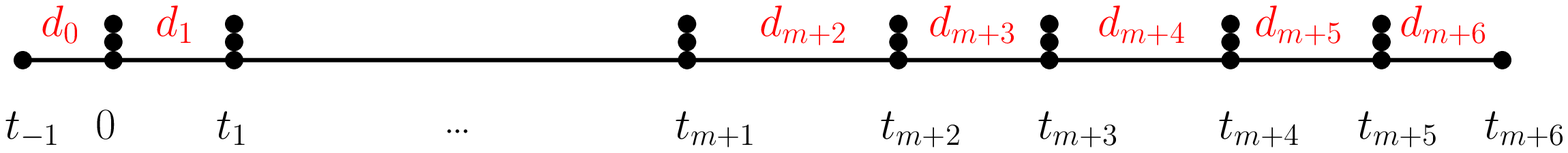}}\\
\resizebox{10.9cm}{!}{\includegraphics{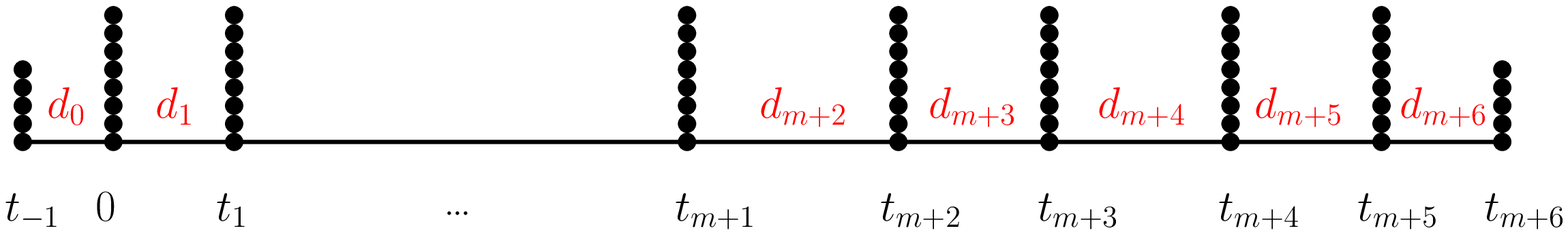}}
\caption{Knot partitions for the closed case $n=2$. From top to bottom: $\bmu$, $\bnu$, $\brho$, $\btau$.}
\label{fig_knots_closed_n2}
\end{figure}

In addition, we compute the coefficients $\zeta_k^{i,j}$, $0 \leq i
\leq 3m+13$, $0 \leq j \leq 3m+12$, $0 \leq k \leq 8m+47$ as the
solutions to the linear systems \eqref{zetas}. All of them
turn out to be zero with the exception of
\begin{equation} \label{zetasn2closed}
\begin{array}{l}
 \zeta_{5}^{0,0} = \frac{d_{0}^2}{6 D_{0}^2}\, , \;
\zeta_{6}^{0,0} = \frac{5 d_{0}}{14 D_{0}}\, , \; \zeta_{6}^{0,1} =
\frac{d_{0}^2}{21 D_{0}^2}\, , \;
\smallskip \\
 \zeta_{7}^{0,0} = \frac{10}{21}\, , \; \zeta_{7}^{0,1}
= \frac{d_{0}^2 d_{2}}{126 D_{0}^2 D_{1}} + \frac{10 d_{0}}{63
D_{0}}\, ,\; \zeta_{7}^{0,2} = \frac{d_{0}^2 d_{1}}{126 D_{0}^2
D_{1}}\, ,\;
\smallskip \\
 \zeta_{8}^{0,1} = \frac{5 d_{0} d_{2}}{126 D_{0}
D_{1}} + \frac{20}{63}\, ,\; \zeta_{8}^{0,2} = \frac{5 d_{0}
d_{1}}{126 D_{0} D_{1}}\, ,\; \zeta_{8}^{1,0} = \frac{10}{21}\, , \;
\smallskip \\
 \zeta_{9}^{0,1} = \frac{5 d_{2}}{42 D_{1}}\, , \;
\zeta_{9}^{0,2} = \frac{5 d_{1}}{42 D_{1}}\, , \; \zeta_{9}^{1,0} =
\frac{5 d_{2}}{14 D_{1}}\, , \; \zeta_{9}^{1,1} = \frac{10}{21}\, ,
\; \zeta_{9}^{2,0} = \frac{5 d_{1}}{14 D_{1}}\, , \;
\smallskip \\
 \zeta_{10}^{1,0} = \frac{d_{2}^2}{6 D_{1}^2}\, , \;
\zeta_{10}^{1,1} = \frac{15 d_{2}}{18 D_{1}}\, , \; \zeta_{10}^{1,2}
= \frac{5 d_{1}}{18 D_{1}}\, , \;
\smallskip \\
 \zeta_{10}^{2,0} = \frac{d_{1}d_{2}}{3 D_{1}^2}\, , \;
\zeta_{10}^{2,1} = \frac{5 d_{1}}{9 D_{1}}\, , \; \zeta_{10}^{3,0} =
\frac{d_{1}^2}{6 D_{1}^2}\, , \;
\smallskip \\
\zeta_{8k+11}^{3k+1,3k+1} = \frac{d_{k+2}^2}{D_{k+1}^2}\, , \;
\zeta_{8k+11}^{3k+1,3k+2} = \frac{5 d_{k+1}d_{k+2}}{9 D_{k+1}^2}\,
,\; \zeta_{8k+11}^{3k+2,3k+1} = \frac{13 d_{k+1}d_{k+2}}{9
D_{k+1}^2}\, , \; \zeta_{8k+11}^{3k+2,3k+2} = \frac{5 d_{k+1}^2}{9
D_{k+1}^2}\, , \; \zeta_{8k+11}^{3k+3,3k+1} = \frac{4 d_{k+1}^2}{9
D_{k+1}^2}\, , \;
k=0,\ldots,m+3\, , \smallskip \\
\zeta_{8k+12}^{3k+1,3k+2} = \frac{4 d_{k+2}^2}{9 D_{k+1}^2}\, , \;
\zeta_{8k+12}^{3k+2,3k+1} = \frac{5 d_{k+2}^2}{9 D_{k+1}^2}\, , \;
\zeta_{8k+12}^{3k+2,3k+2} = \frac{13 d_{k+1}d_{k+2}}{9 D_{k+1}^2}\,
, \; \zeta_{8k+12}^{3k+3,3k+1} = \frac{5 d_{k+1}d_{k+2}}{9
D_{k+1}^2}\, , \; \zeta_{8k+12}^{3k+3,3k+2} =
\frac{d_{k+1}^2}{D_{k+1}^2}\, , \;
k=0,\ldots,m+3\, , \smallskip \\
\zeta_{8k+13}^{3k+1,3k+3} = \frac{d_{k+2}^2}{6 D_{k+1}^2}\, , \;
\zeta_{8k+13}^{3k+2,3k+2} = \frac{5 d_{k+2}}{9 D_{k+1}}\, , \;
\zeta_{8k+13}^{3k+2,3k+3} = \frac{d_{k+1}d_{k+2}}{3 D_{k+1}^2}\, ,\;
\zeta_{8k+13}^{3k+3,3k+1} = \frac{5 d_{k+2}}{18 D_{k+1}}\, , \;
\smallskip \\
\zeta_{8k+13}^{3k+3,3k+2} = \frac{15 d_{k+1}}{18 D_{k+1}}\, , \;
\zeta_{8k+13}^{3k+3,3k+3} = \frac{d_{k+1}^2}{6 D_{k+1}^2}\, , \;
k=0,\ldots,m+2\, , \smallskip \\
\zeta_{8k+14}^{3k+1,3k+4} = \frac{d_{k+2}^2}{21 D_{k+1}^2}\, , \;
\zeta_{8k+14}^{3k+2,3k+3} = \frac{5 d_{k+2}}{14 D_{k+1}}\, , \;
\zeta_{8k+14}^{3k+2,3k+4} = \frac{2 d_{k+1}d_{k+2}}{21 D_{k+1}^2}\,
, \; \zeta_{8k+14}^{3k+3,3k+2} = \frac{10}{21}\, , \;
\zeta_{8k+14}^{3k+3,3k+3} = \frac{5 d_{k+1}}{14 D_{k+1}}\, , \;
\smallskip \\
\zeta_{8k+14}^{3k+3,3k+4} = \frac{d_{k+1}^2}{21 D_{k+1}^2}\, , \;
\zeta_{8k+14}^{3k+4,3k+1} = \frac{5 d_{k+2}}{42 D_{k+1}}\, , \;
\zeta_{8k+14}^{3k+4,3k+2} = \frac{5 d_{k+1}}{42 D_{k+1}}\, , \;
k=0,\ldots,m+2\, , \smallskip  \\
\zeta_{8k+15}^{3k+1,3k+4} = \frac{d_{k+2}^2 d_{k+3}}{126 D_{k+1}^2
D_{k+2}}\, ,\; \zeta_{8k+15}^{3k+1,3k+5} = \frac{d_{k+2}^3}{126
D_{k+1}^2 D_{k+2}}\, ,\; \zeta_{8k+15}^{3k+2,3k+4} = \frac{d_{k+1}
d_{k+2} d_{k+3}}{63 D_{k+1}^2 D_{k+2}} + \frac{10 d_{k+2}}{63
D_{k+1}}\, ,\; \zeta_{8k+15}^{3k+2,3k+5} = \frac{d_{k+1}
d_{k+2}^2}{63 D_{k+1}^2 D_{k+2}}\, ,\;
\smallskip \\
\zeta_{8k+15}^{3k+3,3k+3} = \frac{10}{21}\, , \;
\zeta_{8k+15}^{3k+3,3k+4} = \frac{d_{k+1}^2 d_{k+3}}{126 D_{k+1}^2
D_{k+2}} + \frac{10 d_{k+1}}{63 D_{k+1}}\, ,\;
\zeta_{8k+15}^{3k+3,3k+5} = \frac{d_{k+1}^2 d_{k+2}}{126 D_{k+1}^2
D_{k+2}}\, ,\; \zeta_{8k+15}^{3k+4,3k+1} = \frac{5 d_{k+2}
d_{k+3}}{126 D_{k+1} D_{k+2}}\, ,\;
\smallskip \\
\zeta_{8k+15}^{3k+4,3k+2} = \frac{5 d_{k+1} d_{k+3}}{126 D_{k+1}
D_{k+2}} + \frac{20}{63}\, ,\; \zeta_{8k+15}^{3k+5,3k+1} = \frac{5
d_{k+2}^2}{126 D_{k+1} D_{k+2}}\, ,\; \zeta_{8k+15}^{3k+5,3k+2} =
\frac{5 d_{k+1} d_{k+2}}{126 D_{k+1} D_{k+2}}\, ,\;
k=0,\ldots,m+2\, , \smallskip \\
\end{array}
\end{equation}

\begin{equation*}
\begin{array}{l}
\zeta_{8k+16}^{3k+2,3k+4} = \frac{5 d_{k+2} d_{k+3}}{126 D_{k+1}
D_{k+2}}\, ,\; \zeta_{8k+16}^{3k+2,3k+5} = \frac{5 d_{k+2}^2}{126
D_{k+1} D_{k+2}}\, ,\; \zeta_{8k+16}^{3k+3,3k+4} = \frac{5 d_{k+1}
d_{k+3}}{126 D_{k+1} D_{k+2}} + \frac{20}{63}\, ,\;
\zeta_{8k+16}^{3k+3,3k+5} = \frac{5 d_{k+1} d_{k+2}}{126 D_{k+1}
D_{k+2}}\, ,\;
\smallskip \\
\zeta_{8k+16}^{3k+4,3k+1} = \frac{d_{k+2} d_{k+3}^2}{126 D_{k+1}
D_{k+2}^2} \, ,\; \zeta_{8k+16}^{3k+4,3k+2} = \frac{d_{k+1}
d_{k+3}^2}{126 D_{k+1} D_{k+2}^2} + \frac{10 d_{k+3}}{63 D_{k+2}}\,
,\; \zeta_{8k+16}^{3k+4,3k+3} = \frac{10}{21}\, , \;
\smallskip \\
\zeta_{8k+16}^{3k+5,3k+1} = \frac{d_{k+2}^2 d_{k+3}}{63 D_{k+1}
D_{k+2}^2} \, ,\; \zeta_{8k+16}^{3k+5,3k+2} = \frac{d_{k+1} d_{k+2}
d_{k+3}}{63 D_{k+1} D_{k+2}^2} + \frac{10 d_{k+2}}{63 D_{k+2}}\, ,\;
\zeta_{8k+16}^{3k+6,3k+1} = \frac{d_{k+2}^3}{126 D_{k+1} D_{k+2}^2}
\, ,\; \zeta_{8k+16}^{3k+6,3k+2} = \frac{d_{k+1} d_{k+2}^2}{126
D_{k+1} D_{k+2}^2} \, ,\;
k=0,\ldots,m+2\, , \smallskip \\
\zeta_{8k+17}^{3k+3,3k+4} = \frac{5 d_{k+3}}{42 D_{k+2}}\, , \;
\zeta_{8k+17}^{3k+3,3k+5} = \frac{5 d_{k+2}}{42 D_{k+2}}\, , \;
\zeta_{8k+17}^{3k+4,3k+2} = \frac{d_{k+3}^2}{21 D_{k+2}^2}\, , \;
\zeta_{8k+17}^{3k+4,3k+3} = \frac{5 d_{k+3}}{14 D_{k+2}}\, , \;
\zeta_{8k+17}^{3k+4,3k+4} = \frac{10}{21}\, , \;
\smallskip \\
\zeta_{8k+17}^{3k+5,3k+2} = \frac{2 d_{k+2}d_{k+3}}{21 D_{k+2}^2}\,
, \; \zeta_{8k+17}^{3k+5,3k+3} = \frac{5 d_{k+2}}{14 D_{k+2}}\, , \;
\zeta_{8k+17}^{3k+6,3k+2} = \frac{d_{k+2}^2}{21 D_{k+2}^2}\, , \;
k=0,\ldots,m+2\, , \smallskip \\
\zeta_{8k+18}^{3k+4,3k+3} = \frac{d_{k+3}^2}{6 D_{k+2}^2}\, , \;
\zeta_{8k+18}^{3k+4,3k+4} = \frac{15 d_{k+3}}{18 D_{k+2}}\, , \;
\zeta_{8k+18}^{3k+4,3k+5} = \frac{5 d_{k+2}}{18 D_{k+2}}\, , \;
\zeta_{8k+18}^{3k+5,3k+3} = \frac{d_{k+2}d_{k+3}}{3 D_{k+2}^2}\, ,
\;
\smallskip \\
\zeta_{8k+18}^{3k+5,3k+4} = \frac{5 d_{k+2}}{9 D_{k+2}}\, , \;
\zeta_{8k+18}^{3k+6,3k+3} = \frac{d_{k+2}^2}{6 D_{k+2}^2}\, , \;
k=0,\ldots,m+2\, , \smallskip \\
 \zeta_{8m+37}^{3m+10,3m+12} = \frac{d_{m+5}^2}{6
D_{m+4}^2}\, , \; \zeta_{8m+37}^{3m+11,3m+11} = \frac{5 d_{m+5}}{9
D_{m+4}}\, , \; \zeta_{8m+37}^{3m+11,3m+12} =
\frac{d_{m+4}d_{m+5}}{3 D_{m+4}^2}\, ,\;
\smallskip \\
 \zeta_{8m+37}^{3m+12,3m+10} = \frac{5 d_{m+5}}{18
D_{m+4}}\, , \; \zeta_{8m+37}^{3m+12,3m+11} = \frac{15 d_{m+4}}{18
D_{m+4}}\, , \; \zeta_{8m+37}^{3m+12,3m+12} = \frac{d_{m+4}^2}{6
D_{m+4}^2}\, , \;
\smallskip \\
 \zeta_{8m+38}^{3m+11,3m+12} = \frac{5 d_{m+5}}{14
D_{m+4}}\, , \; \zeta_{8m+38}^{3m+12,3m+11} = \frac{10}{21}\, , \;
\zeta_{8m+38}^{3m+12,3m+12} = \frac{5 d_{m+4}}{14 D_{m+4}}\, , \;
\zeta_{8m+38}^{3m+13,3m+10} = \frac{5 d_{m+5}}{42 D_{m+4}}\, , \;
\zeta_{8m+38}^{3m+13,3m+11} = \frac{5 d_{m+4}}{42 D_{m+4}}\, , \;
\smallskip \\
 \zeta_{8m+39}^{3m+12,3m+12} = \frac{10}{21}\, , \;
\zeta_{8m+39}^{3m+13,3m+10} = \frac{5 d_{m+5} d_{m+6}}{126 D_{m+4}
D_{m+5}}\, ,\; \zeta_{8m+39}^{3m+13,3m+11} = \frac{5 d_{m+4}
d_{m+6}}{126 D_{m+4} D_{m+5}} + \frac{20}{63}\, ,\;
\smallskip \\
 \zeta_{8m+40}^{3m+13,3m+10} = \frac{d_{m+5}
d_{m+6}^2}{126 D_{m+4} D_{m+5}^2} \, ,\; \zeta_{8m+40}^{3m+13,3m+11}
= \frac{d_{m+4} d_{m+6}^2}{126 D_{m+4} D_{m+5}^2} + \frac{10
d_{m+6}}{63 D_{m+5}}\, ,\; \zeta_{8m+40}^{3m+13,3m+12} =
\frac{10}{21}\, , \;
\smallskip \\
 \zeta_{8m+41}^{3m+13,3m+11} = \frac{d_{m+6}^2}{21
D_{m+5}^2}\, , \; \zeta_{8m+41}^{3m+13,3m+12} = \frac{5 d_{m+6}}{14
D_{m+5}}\, , \; \zeta_{8m+42}^{3m+13,3m+12} = \frac{d_{m+6}^2}{6
D_{m+5}^2}\, , \;
\end{array}
\end{equation*}
where $D_k:=d_k+d_{k+1}$, $k=0,...,m+5$.

By means of the computed coefficients $\{\chi_{k}^{i,j}\}^{0\le i,j \le m+2}_{0 \le
{k} \le 3m+12}$ we can thus shortly write the control points of $\r'(t)$
as
$$
\begin{array}{l}
\p_0=\p_1=0, \smallskip \\
\p_2= \frac{d_1}{d_1+d_2} \, \z_0^2,\smallskip\\
\p_{3k}=\frac{2}{3} \z_{k-1}^2 + \frac{1}{3} \, \left(\frac{d_{k+1} \z_{k-2}+d_{k} \z_{k-1}}{d_{k}+d_{k+1}}\right) \, \left( \frac{d_{k+2} \z_{k-1} + d_{k+1} \z_{k}}{d_{k+1}+d_{k+2}} \right),
\quad k=1,...,m+3\,,  \smallskip\\
\p_{3k+1}= \z_{k-1} \, \frac{d_{k+2} \z_{k-1}+d_{k+1} \z_{k}}{d_{k+1}+d_{k+2}},   \quad k=1,...,m+2\,,  \smallskip\\
\p_{3k+2}= \z_{k} \, \frac{d_{k+2} \z_{k-1} +d_{k+1} \z_{k}}{d_{k+1}+d_{k+2}},   \quad k=1,...,m+2\,,  \smallskip\\
\p_{3m+10}= \frac{d_{m+5}}{d_{m+4}+d_{m+5}} \, \z_{m+2}^2, \smallskip\\
\p_{3m+11}=\p_{3m+12}=0,
\end{array}
$$
and the coefficients of the
parametric speed $\sigma(t)$ as
$$
\begin{array}{l}
\sigma_{0}=\sigma_1=0, \smallskip \\
\sigma_2=\frac{d_1}{d_1+d_2} \,  \z_0 \bar{\z}_0,  \smallskip  \\
\sigma_{3k}=
\frac{1}{6} \, \frac{d_{k+1} \, d_{k+2}}{(d_{k}+d_{k+1})\, (d_{k+1}+d_{k+2})}
(\z_{k-2} \bar{\z}_{k-1} + \z_{k-1} \bar{\z}_{k-2})+
\frac{1}{6} \, \frac{(d_{k+1})^2}{(d_{k}+d_{k+1})(d_{k+1}+d_{k+2})}
(\z_{k-2} \bar{\z}_{k} + \z_{k} \bar{\z}_{k-2}) \smallskip  \\
\quad +\left( \frac{2}{3} + \frac{1}{3} \, \frac{d_{k} \, d_{k+2}}{(d_{k}+d_{k+1})\, (d_{k+1}+d_{k+2})} \right)
\z_{k-1} \bar{\z}_{k-1} +
\frac{1}{6} \, \frac{d_{k} \, d_{k+1}}{(d_{k}+d_{k+1})\, (d_{k+1}+d_{k+2})}
(\z_{k-1} \bar{\z}_{k} + \z_{k} \bar{\z}_{k-1}),
\quad  k=1,...,m+3\,,  \smallskip \\
\sigma_{3k+1}= \frac{d_{k+2}}{d_{k+1}+d_{k+2}} \z_{k-1} \bar{\z}_{k-1} +
\frac{1}{2}\,\frac{d_{k+1}}{d_{k+1}+d_{k+2}} (\z_{k-1} \bar{\z}_{k} + \z_{k} \bar{\z}_{k-1}), \quad  k=1,...,m+2\,,  \smallskip\\
\sigma_{3k+2}= \frac{1}{2}\, \frac{d_{k+2}}{d_{k+1}+d_{k+2}} (\z_{k-1} \bar{\z}_{k} + \z_{k} \bar{\z}_{k-1}) +
\frac{d_{k+1}}{d_{k+1}+d_{k+2}} \z_{k} \bar{\z}_{k}, \quad  k=1,...,m+2\,,  \smallskip \\
\sigma_{3m+10}= \frac{d_{m+5}}{d_{m+4}+d_{m+5}} \z_{m+2} \bar{\z}_{m+2}, \smallskip   \\
\sigma_{3m+11}=\sigma_{3m+12}=0.
\end{array}
$$

\begin{figure}[h!]
\centering
\includegraphics[height=0.26\textwidth,valign=t]{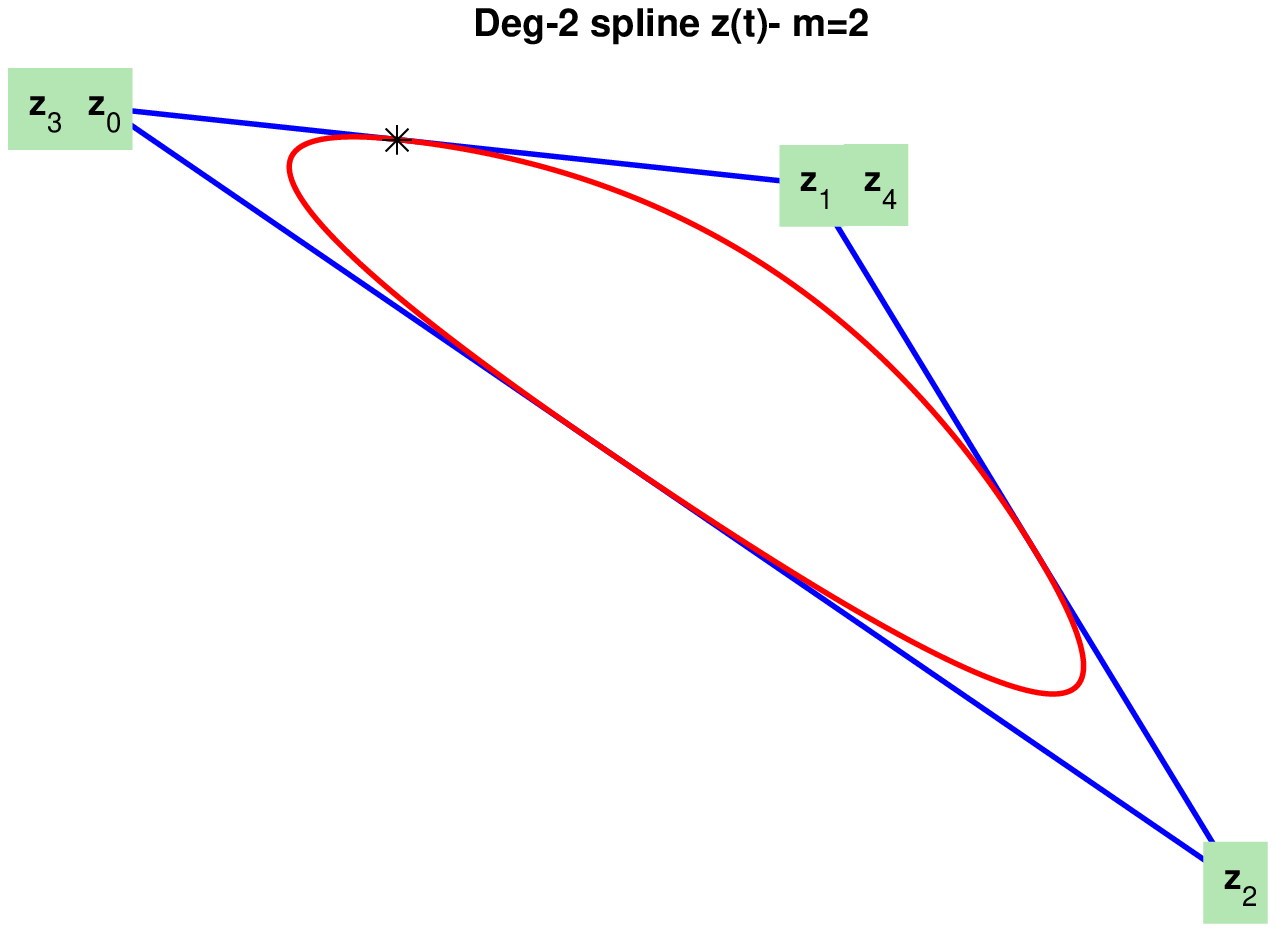}\hspace{-0.4cm}
\includegraphics[height=0.26\textwidth,valign=t]{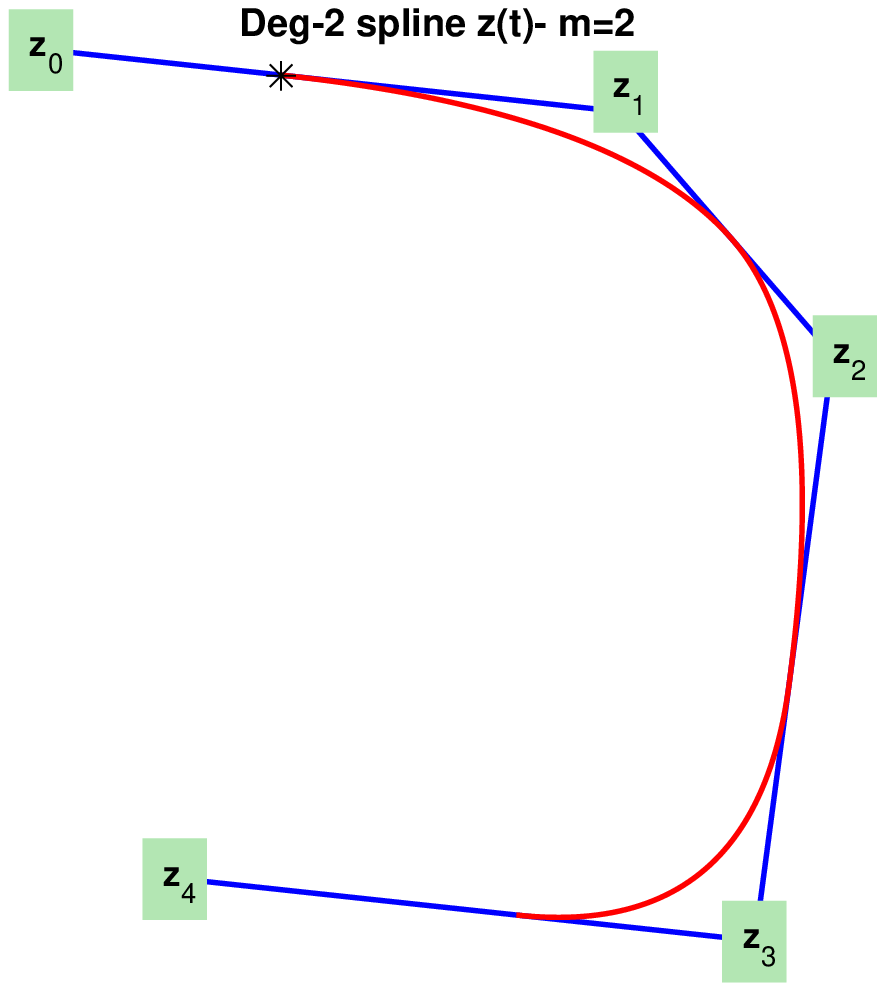} \hspace{-0.75cm}
\includegraphics[height=0.26\textwidth,valign=t]{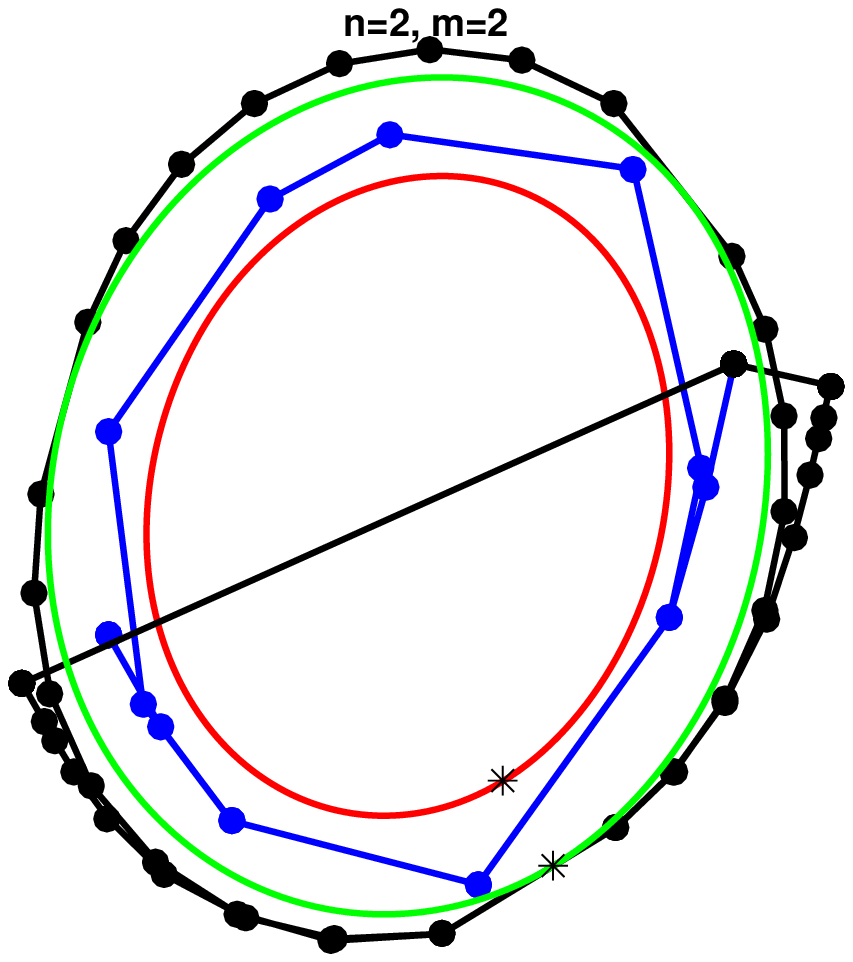}\vspace{0.1cm}
\includegraphics[height=0.26\textwidth,valign=t]{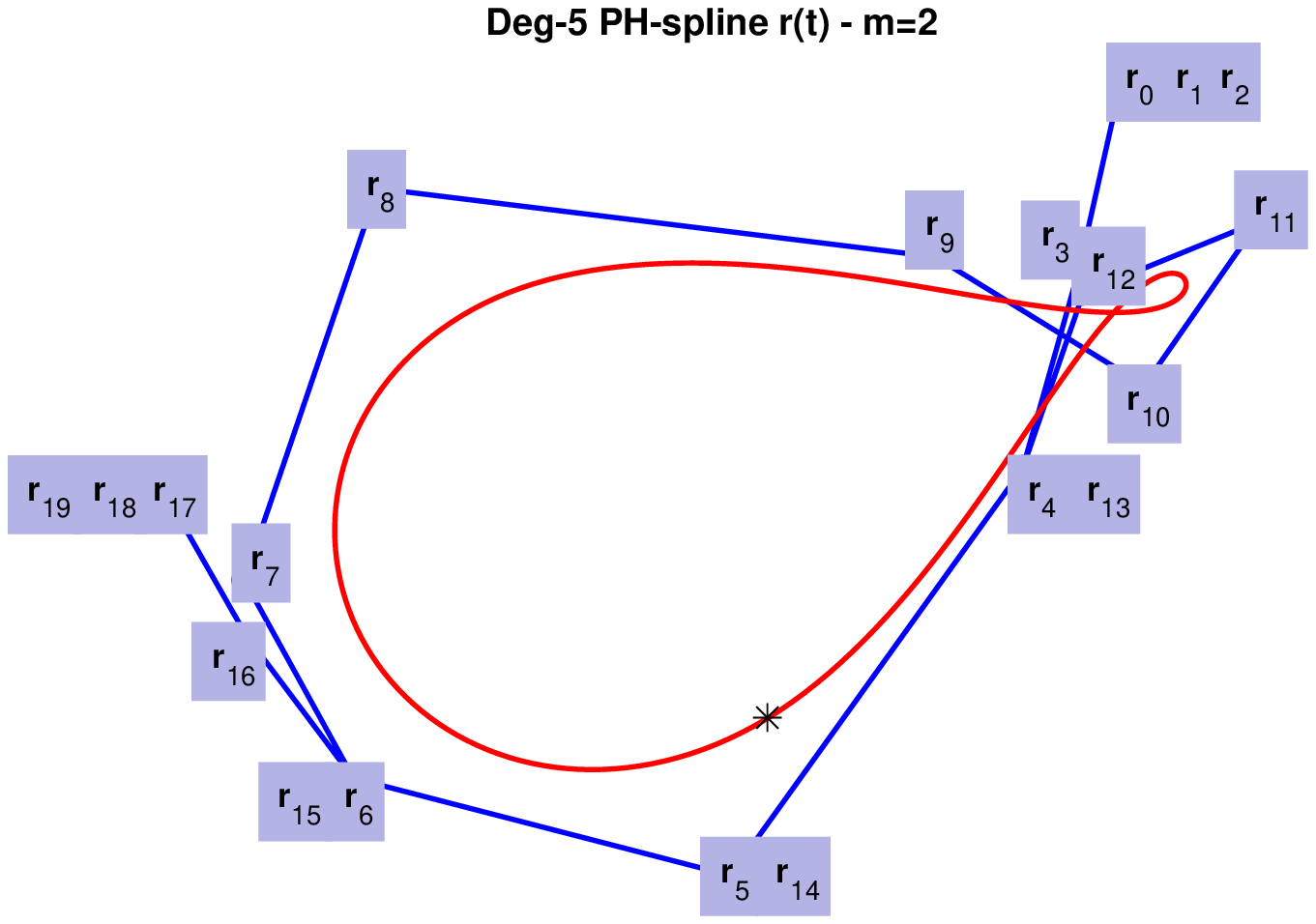}\hspace{-0.4cm}
\includegraphics[height=0.26\textwidth,valign=t]{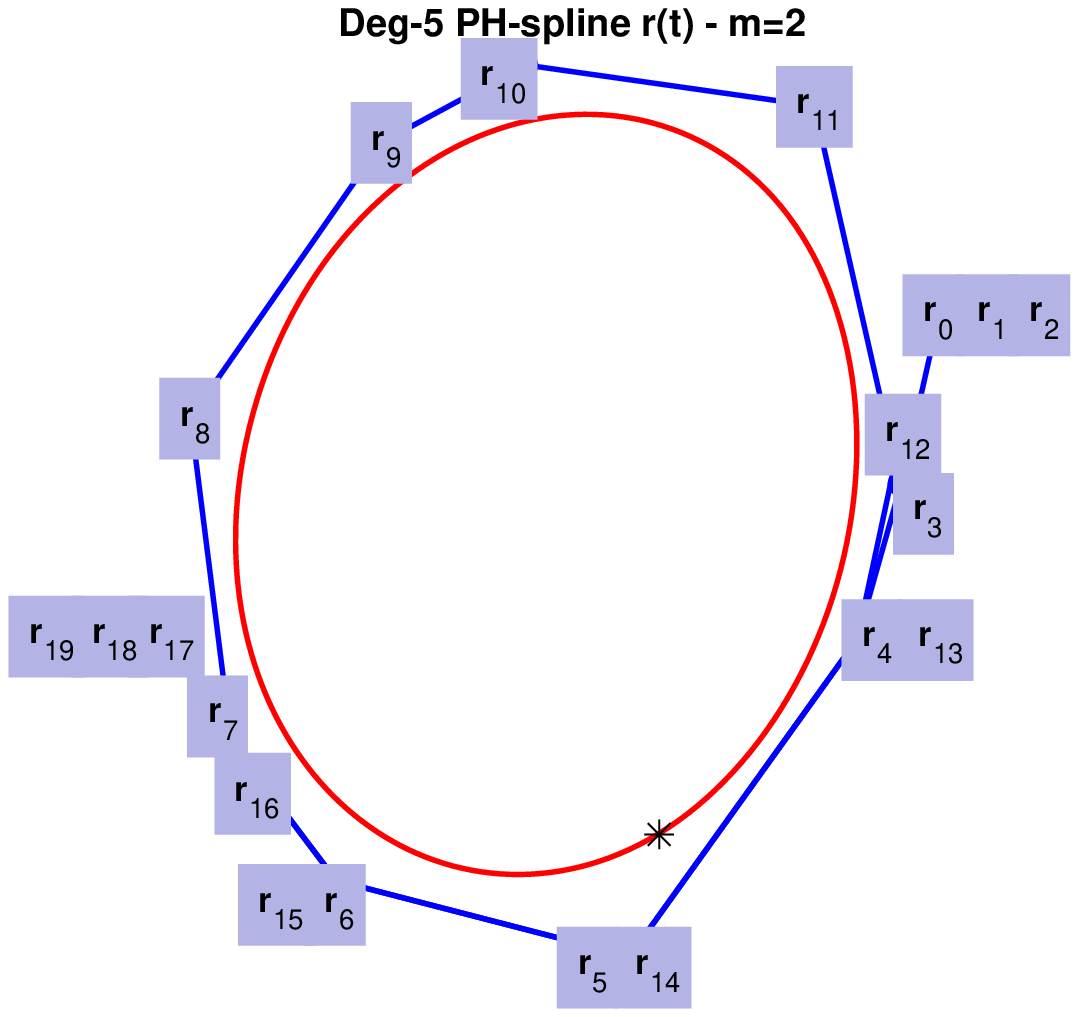}\hspace{-0.75cm}
\includegraphics[height=0.26\textwidth,valign=t]{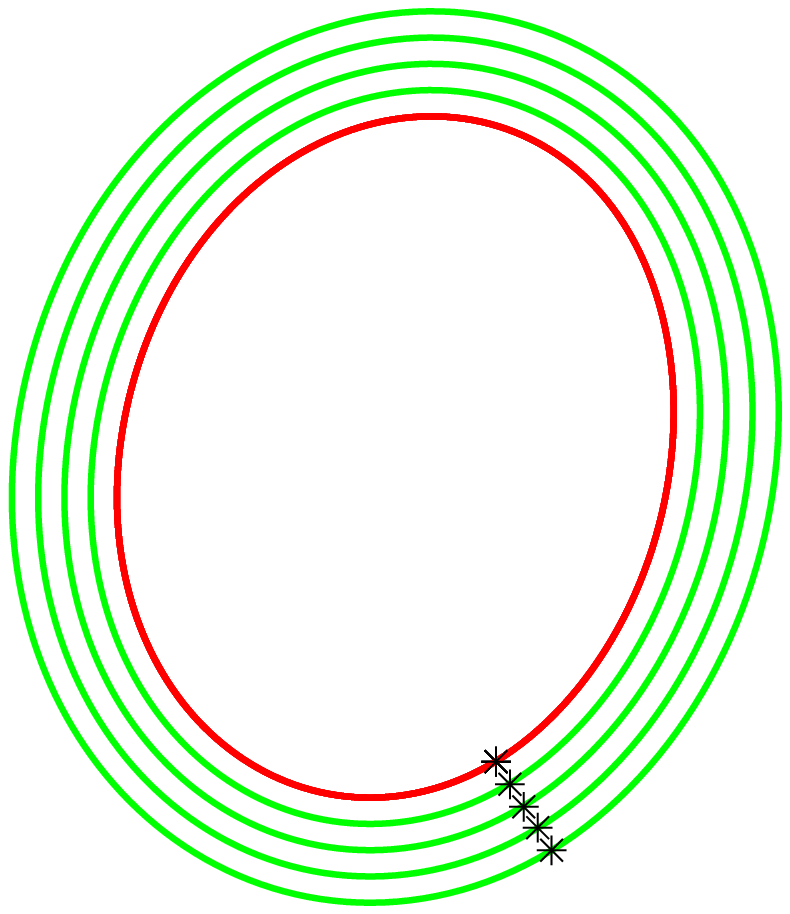}
\caption{Closed quintic PH B-Spline curve with $m=2$ originated from
a closed/open degree-2 spline $\z(t)$ (first/second column) and
offsets of the non self-intersecting curve with and without control
polygon (third column).} \label{fig:closed_n2_m2}
\end{figure}

Thus, according to (\ref{rt}), the closed quintic PH B-Spline curve defined
over the knot partition $\brho$ is given by
$$
\r(t) = \sum_{i=0}^{3m+13} \r_i N_{i,\brho}^{5}(t)\,, \quad t \in [t_2, t_{m+3}] \quad (t_0=0)
$$
with control points
$$
\begin{array}{l}
\r_2=\r_1=\r_0, \vspace{0.05cm}\\
\r_{3}=\r_2 +\frac{d_1}{5} \, \z_0^2, \vspace{0.05cm} \\
\r_{3i+1}=\r_{3i} + \frac{d_{i+1}}{5} \, \bigg( \frac23 \z_{i-1}^2 + \frac13 \, \left( \frac{d_{i+1} \z_{i-2}+d_i \z_{i-1}}{d_i+d_{i+1}} \right ) \,
\left( \frac{d_{i+2} \z_{i-1}+d_{i+1} \z_{i}}{d_{i+1}+d_{i+2}} \right ) \bigg),  \quad i=1,...,m+3\,,  \vspace{0.05cm} \\
\r_{3i+2}=\r_{3i+1} + \frac{\z_{i-1}}{5} \, \big( d_{i+2} \z_{i-1} + d_{i+1} \z_i  \big), \quad i=1,...,m+2\,,  \vspace{0.05cm} \\
\r_{3i+3}=\r_{3i+2} + \frac{\z_i}{5} \, \big( d_{i+2} \z_{i-1} + d_{i+1} \z_i \big), \quad i=1,...,m+2\,,  \vspace{0.05cm} \\
\r_{3m+11}=\r_{3m+10} + \frac{d_{m+5}}{5} \, \z_{m+2}^2, \vspace{0.05cm} \\
\r_{3m+13}=\r_{3m+12}=\r_{3m+11}\,,
\end{array}
$$
and arbitrary $\r_0$. Note that, due to condition
\eqref{condclosed3}, $d_{m+3}=d_2$, $d_{m+4}=d_{3}$. Moreover,
$\z_{m}$, $\z_{m+1}$, $\z_{m+2}$ must be suitably fixed in order to
satisfy condition \eqref{condclosed2}. According to (\ref{Lclosed1})
the total arc length of the closed PH B-Spline curve of degree $5$
is given by
$$
L=(l_{3m+7} - l_4) N_{4,\brho}^5(t_2) + (l_{3m+8} - l_5)
N_{5,\brho}^5(t_2) + (l_{3m+9} - l_6) N_{6,\brho}^5(t_2)\, ,
$$
where $N_{4,\brho}^5(t_2)+N_{5,\brho}^5(t_2)+N_{6,\brho}^5(t_2)=1$
and
$$
\begin{array}{l}
l_2=l_1=l_0=0, \\
l_{3}=l_2 +\frac{d_1}{5} \, \z_0 \bar{\z}_0,  \\
l_{3i+1}=l_{3i} + \frac{2 d_{i+1}}{15}  \z_{i-1} \bar{\z}_{i-1} + \frac{d_{i+1}}{15 (d_i+d_{i+1}) (d_{i+1}+d_{i+2})} \,
\Big( d_i d_{i+2} \z_{i-1} \bar{\z}_{i-1} + \, \frac{d_{i+1}^2}{2} \big( \z_{i-2} \bar{\z}_i + \z_i \bar{\z}_{i-2}   \big) \\
 \hspace{0.8cm} + \frac{d_{i+1} d_{i+2}}{2} \big( \z_{i-2} \bar{\z}_{i-1} + \z_{i-1} \bar{\z}_{i-2}  \big)
+ \frac{d_i d_{i+1}}{2} \big( \z_{i-1} \bar{\z}_i + \z_i \bar{\z}_{i-1} \big) \Big),  \quad i=1,...,m+3\,,  \\
l_{3i+2}=l_{3i+1} + \frac{1}{5} \, \Big( d_{i+2} \z_{i-1} \bar{\z}_{i-1} + \frac{d_{i+1}}{2} \big( \z_{i-1} \bar{\z}_i + \z_i \bar{\z}_{i-1} \Big), \quad i=1,...,m+2\,,   \\
l_{3i+3}=l_{3i+2} + \frac{1}{5} \, \Big( d_{i+1} \z_i \bar{\z}_i + \frac{d_{i+2}}{2} \big( \z_{i-1} \bar{\z}_i + \z_i \bar{\z}_{i-1} \big) \Big), \quad i=1,...,m+2\,,  \\
l_{3m+11}=l_{3m+10} + \frac{d_{m+5}}{5} \, \z_{m+2} \bar{\z}_{m+2},  \\
l_{3m+13}=l_{3m+12}=l_{3m+11}.
\end{array}
$$
Over the knot partition $\btau$ the offset curve $\r_h(t)$
has the rational B-Spline form \be \label{offsetn2closed} \r_h(t) =
\frac{\sum_{k=0}^{8m+47} \q_k N_{k,\btau}^{9}(t)}{\sum_{k=0}^{8m+47}
\gamma_k N_{k,\btau}^{9}(t)}\,, \quad t \in [t_2, t_{m+3}]\ee where,
by exploiting the explicit expressions of the coefficients
$\{\zeta_k^{i,j}\}^{0 \leq i \leq 3m+13, \, 0 \leq j \leq 3m+12}_{0
\leq k \leq 8m+47}$ from (\ref{zetasn2closed}), weights and control
points can easily be obtained; their explicit formulae are reported
in the Appendix.

Some examples of closed PH B-Spline curves of degree $5$ are shown
together with offsets in Figure \ref{fig:closed_n2_m2}.


\section{A practical interpolation problem}
\label{sec6}

We are concerned with $C^2$ continuous quintic PH B-Spline curves defined by setting $n=2$ and $m=3$.
According to Corollary \ref{clampedcase}, starting from
$\mathbf{z}(t)=\sum_{i=0}^{3}\mathbf{z}_{i}N_{i,\bmu}^{2}(t)$ over
the partition $\bmu=\{0,0,0,a,1,1,1\}$, we obtain the PH B-Spline curve \be
\label{phquintic}
\mathbf{r}(t)=\sum_{i=0}^{8}\mathbf{r}_{i}N_{i,\brho}^{5}(t)\, ,
\;\; t\in [0,1]\, , \ee over the knot vector
$\brho=\{0,0,0,0,0,0,a,a,a,1,1,1,1,1,1\}$ from (\ref{partitionsn2clamped}).

We will now use these curves in order to solve the following
practical interpolation problem, several variants of which have been
of interest to the PH community up to date, see, e.g.,
\cite{juettler2001, kongal, waltonmeek, phtrig13, jaklical2,
jaklical}.
\begin{prb} \label{hermiteprb1} Given arbitrary points $\p^*_0$, $\p^*_1$, tangents $\d_0$,
$\d_1$ and curvature values $\kappa_0$, $\kappa_1$ to be
interpolated by a PH quintic B-Spline curve from (\ref{phquintic})
such that
\[
\r(0) = \p^*_0\, ,\; \; \r(1) = \p^*_1\,  ,\; \; \r'(0) = \d_0\, ,\;
\; \r'(1) = \d_1\,  ,\; \; \kappa(0) = \kappa_0\,  ,\; \; \kappa(1)
= \kappa_1,
\]
we look for the control points $\r_0, \r_1, \ldots, \r_8$ of the
curve (\ref{phquintic}).
\end{prb}

According to (\ref{rs_clampedn2}) this means that we have to
determine $\z_0, \ldots, \z_3$. The positional interpolation
constraints clearly imply $\r_0 = \p^*_0$ and $\r_8 = \p^*_1$.
According to (\ref{xyprimecomplex}), (\ref{zsquare}), (\ref{pks})
the tangential interpolation constraints yield the following
conditions. \be \label{tangconst} \r'(0) = \z^2(0) = \p_0 = \z_0^2 =
\d_0\, ,\; \; \r'(1) = \z^2(1) = \p_7 = \z_3^2 = \d_1. \ee
By applying de Moivre's theorem to the two equations in \eqref{tangconst} we obtain the
following solutions for $\z_0$ and $\z_3$: \ba \label{solz0z3} \z_0
& = &\, \pm |\d_0|^\frac{1}{2} \exp\left(\ti
\frac{\omega_0}{2}\right) = \pm
|\d_0|^\frac{1}{2}\left(\cos\left(\frac{\omega_0}{2}\right) + \ti
\sin\left(\frac{\omega_0}{2}\right)\right)  , \nonumber\\
\z_3 & = & \, \pm |\d_1|^\frac{1}{2} \exp\left(\ti
\frac{\omega_1}{2}\right) = \pm
|\d_1|^\frac{1}{2}\left(\cos\left(\frac{\omega_1}{2}\right) + \ti
\sin\left(\frac{\omega_1}{2}\right)\right)\, ,  \ea where
$\omega_k=\arg(\d_k)$ for $k=0,1$.

Note that we can limit ourselves to considering the combinations of
$\z_0$, $\z_3$ with signs $+,+$ and $+,-$, since the others will
give rise to the same solutions. Next, recalling the general formula
for the curvature of a PH curve (see, e.g.,
\cite{phtrig13})
\begin{equation}\label{curvature_ATPH}
\kappa(t)=2 \, \frac{{\rm Im} \Big(\overline{\z}(t) \, \z'(t)
\Big)}{\vert \z(t) \vert^4},
\end{equation}
we can express the curvature constraints at $t=0$ and $t=1$ as
\be\label{curvconst}
\kappa(0) = \frac{4}{a} \cdot
\frac{\Im(\bar{\z}_0 \z_1)}{\vert \z_0 \vert^4} = \frac{4}{a} \cdot
\frac{u_0 v_1 - u_1 v_0}{(u_0^2+v_0^2)^2} = \kappa_0\,, \qquad
\kappa(1) = \frac{4}{1-a} \cdot \frac{\Im(\bar{\z}_3
\z_2)}{\vert \z_3 \vert^4} = \frac{4}{1-a} \cdot \frac{u_2 v_3 - u_3
v_2}{(u_3^2+v_3^2)^2} = \kappa_1\,.
\ee
If $u_0\neq0$ and
$u_3\neq0$, we can derive $v_1$ and $v_2$ from
(\ref{curvconst}), obtaining \be \label{vs} v_1 = \frac{1}{u_0}
(u_1 v_0 +\frac{a}{4} \kappa_0 (u_0^2+v_0^2)^2)\, , \; \; v_2 =
\frac{1}{u_3} (u_2 v_3 -\frac{(1-a)}{4} \kappa_1 (u_3^2+v_3^2)^2)\,.
\ee If \be \label{u0u3_0} u_0=0\,, \;\, \mbox{respectively,}\;\;
u_3=0\, , \ee i.e., if $\d_0$, respectively, $\d_1$ has the form
$(-const., 0)$, we can not proceed as in (\ref{vs}). In this case,
by exploiting the affine invariance of B-Spline curves, we propose
to rotate the initial data in order to avoid the case
(\ref{u0u3_0}). We can thus always assume $u_0\neq0$ and $u_3\neq0$.
To determine the remaining unknowns $u_1$ and $u_2$ in (\ref{vs}) we
consider the equation \be \label{compateq} \r_7 -\r_1 =
\sum_{i=1}^{6} \Delta \r_i\, , \; \; \mbox{with}\; \; \; \Delta \r_i
= \r_{i+1} - \r_i. \ee
Since \be \r'(0) = \frac{5}{a} (\r_1 - \r_0) =
\d_0\, , \; \; \r'(1) = \frac{5}{1-a} (\r_8 - \r_7) = \d_1 \ee the
left hand side of equation (\ref{compateq}) is completely determined
by the given Hermite data as: \be \label{leftside} \r_7 - \r_1 =
\r_8 -\r_0 - \frac{(1-a)}{5} \,\d_1 - \frac{a}{5}\, \d_0 = \p^*_1
-\p^*_0 - \frac{(1-a)}{5}\, \d_1 - \frac{a}{5}\, \d_0. \ee
By (\ref{rs_clampedn2}) the right hand side of (\ref{compateq}) reads
\be \label{rightside} \frac{1}{5} (1 - \frac{a}{3}) \z_1^2 +
\frac{1}{15} (2 + a) \z_2^2 + \frac{1}{5} \z_1 \z_2 + \frac{1}{15}
(1-a)^2 \z_1 \z_3 + \frac{1}{15} a (4-a) \z_0 \z_1 + \frac{1}{15} (3
- 2 a -a^2) \z_2 \z_3 + \frac{1}{15} a^2 \z_0 \z_2. \ee
By separating the real and imaginary parts of this equation we obtain the
following system of two real quadratic equations in the two real
unknowns $u_1$ and $u_2$
\be \label{conics} (1,u_1, u_2) A \left(\begin{array}{c} 1 \\
u_1 \\ u_2 \end{array} \right) = 0\, , \; \; (1,u_1, u_2) B
\left(\begin{array}{c} 1 \\ u_1 \\ u_2
\end{array} \right) = 0\, , \ee
with $3\times 3$ real symmetric matrices $A = (a_{i,j})_{0 \le i,j
\le 2}$ and $B=(b_{i,j})_{0 \le i,j \le 2}$, whose coefficients are
reported in the Appendix.
The two equations in \eqref{conics} represent two conic sections in the real
Euclidean plane. Finding the solutions $u_1$ and $u_2$ of these
equations is equivalent to determining the intersection points of
the two conic sections. In order to obtain them we consider the
pencil of conic sections defined by the given conics as \be
\label{pencil} (1,u_1, u_2) (A - \lambda B) \left(\begin{array}{c} 1 \\ u_1 \\
u_2
\end{array} \right) = 0\, , \; \; \lambda \in \R. \ee
Among the one--parameter set of conics there are three degenerate
conics given by the $\lambda$-values obtained as solutions of the
cubic equation in $\lambda$:
\be \label{det}
\det(A - \lambda B) = -\lambda^3 \det(B)+\lambda^2 (\det(B_3)+\det(B_2)+\det(B_1)) -\lambda (\det(A_3)+\det(A_2)+\det(A_1))+\det(A)
= 0\, ,
\ee
where the matrices $A_k$ for $k=1,2,3$ are obtained by
replacing the $k$-th column of $A$ by the $k$-th column of $B$, and
the matrices  $B_k$ for $k=1,2,3$ are obtained by replacing the
$k$-th column of $B$ by the $k$-th column of $A$. These degenerate
conics are pairs of intersecting lines.
The intersection points of the conics of the pencil (\ref{pencil})
are then easily obtained in the following way. For a solution of the
cubic equation (\ref{det}), by inserting the corresponding
$\lambda$-value into the equation (\ref{pencil}) a quadratic
equation easily decomposable into two linear factors is obtained.
These two linear equations in $u_1$ and $u_2$ can be solved for,
e.g., $u_1$ in dependency of $u_2$, which are then inserted into
another conic of the pencil, for example one of the given conics or
another degenerate one, yielding a quadratic equation in $u_2$. We
thus obtain the coordinates of the intersection points of the pencil
conics.
The control points of the corresponding PH B-Spline curves are
obtained by inserting the obtained $u_i$, $v_i$, $i=0,\dots,3$ into
the expression for $\r_i$ in (\ref{rs_clampedn2}). Let $\delta^{++}$
respectively $\delta^{+-}$ be the number of real intersection points
of the conics from (\ref{conics}) for the sign choice $++$
respectively $+-$ in (\ref{solz0z3}). Thus, the total number of
solutions of our interpolation problem is
$\delta=\delta^{++}+\delta^{+-}$. Among all solutions, as it is
classical, see,e.g., \cite{farouki95, farouki96}, the "good" curve
is represented by the curve with minimum absolute rotation index and
bending energy.

The following figures show the cubic curve interpolating the assigned values
and derivatives and the different quintic PH B-spline curves
determined as the solution of the Hermite interpolation problem,
where the endpoint curvatures $\kappa_0$ and $\kappa_1$ are sampled
from the cubic curve. The corresponding conics with coefficient
matrices $A$ and $B$ from \eqref{conics} and a degenerate conic of
the pencil corresponding to a pair of lines are also displayed.

\subsection{Existence of solutions}

In the following we discuss the problem of studying the existence of
a solution for Problem \ref{hermiteprb1}. Assigned a set of values,
tangents and curvatures, there are two situations where a solution
may not be found. The first corresponds to the case where either $A$
or $B$ (or both) in \eqref{conics} yield imaginary conics. The
second is the case where the two conics in \eqref{conics} are real,
but have no intersection. The latter situation has never been
encountered in our experiments. Thus we will focus on discussing the
former problematic case. In particular we will consider the
following problem. Suppose that points $\p^*_0$, $\p^*_1$ and
tangents $\d_0$, $\d_1$ are given, how shall we prescribe the
curvatures $\kappa_0$, $\kappa_1$ in order to guarantee that $A$ and
$B$ represent real conics?

\begin{table}[h!]
\centering
\begin{tabular}{|c||c|c|c|}
\hline
 Type of conic & $I_3^{[M]}$ & $I_2^{[M]}$ & $I_1^{[M]}$ \\
\hline \hline Ellipse & $\ne 0$ & $> 0$ & $I_1^{[M]} I_3^{[M]} < 0$\\
\hline Hyperbola & $\ne 0$ & $< 0$ & --\\
\hline Parabola & $\ne 0$ & $= 0$ & -- \\
\hline Non-degenerate imaginary conic & $\ne 0$ & $> 0$ & $I_1^{[M]} I_3^{[M]} > 0$\\
\hline Pair of intersecting real lines & $= 0$ & $< 0$ & -- \\
\hline Pair of intersecting imaginary lines & $= 0$ & $> 0$ & -- \\
\hline Pair of parallel real lines & $= 0$, rank$(I_3^{[M]})=2$ & $= 0$ & $< 0$ \\
\hline Pair of parallel imaginary lines & $= 0$, rank$(I_3^{[M]})=2$ & $= 0$ & $> 0$ \\
\hline Double line (pair of coinciding real lines) & $= 0$, rank$(I_3^{[M]})=1$ & $= 0$ & -- \\
\hline
\end{tabular}
 \caption{Classification of conic sections.} \label{table_class_conic}
\end{table}

For a generic conic $\mathcal{C}_M: \;  \ m_{1,1} u_1^2 + 2 m_{1,2}
u_1 u_2 + m_{2,2} u_2^2 + 2 m_{0,1} u_1 + 2 m_{0,2} u_2 + m_{0,0} =
0$ with symmetric matrix $M:=(m_{i,j})_{0\leq i,j \leq 2},
m_{i,j}=m_{j,i}$, we suppose $m_{0,0} \ge 0$ and we denote by
$\tilde{M}:=(m_{i,j})_{1\leq i,j \leq 2}$ the $2\times 2$ matrix of
the associated quadratic form. We then consider the following
invariants: \be \label{invariantsconic}
I_1^{[M]}:=trace(\tilde{M})\,, \; I_2^{[M]}:=det(\tilde{M})\,, \;
I_3^{[M]}:=det(M)\,. \ee  Then, according to Table
\ref{table_class_conic} $C_M$ represents an imaginary conic in the
following cases:
\begin{enumerate}
\item[P1] \label{p1} $I_3^{[M]}\neq 0$, $I_2^{[M]}>0$, $I_1^{[M]} I_3^{[M]}>0$ ($\mathcal{C}_M$ is a non-degenerate imaginary conic);
\item[P2] \label{p2} $I_3^{[M]} = 0$,   $I_2^{[M]}>0$ ($\mathcal{C}_M$ is a pair of intersecting imaginary lines);
\item[P3] \label{p3} $I_3^{[M]} = 0$, $\mathrm{rank} (I_3^{[M]})=2$,  $I_2^{[M]}=0$, $I_1^{[M]}>0$. ($\mathcal{C}_M$ is a pair of parallel imaginary lines).
\end{enumerate}
When choosing $\kappa_0$, $\kappa_1$ we shall then avoid that
$\mathcal{C}_A$ or $\mathcal{C}_B$ fall into one of the points
above. To this aim, it is useful to observe that for $M \in
\{A,B\}$, $I_1^{[M]}$ and $I_2^{[M]}$ depend on the location of knot
$a$ only, whereas $I_3^{[M]}$ depends on $a$, $\kappa_0$,
$\kappa_1$, see the formulae of the coefficients of $A$ and $B$ in
the Appendix. For a fixed value of $a$, the sign of $I_1^{[M]}$ and
$I_2^{[M]}$ is thus determined, whereas $I_3^{[M]}=0$ is a quadratic
equation in the two unknowns $\kappa_0$, $\kappa_1$. The general
idea would thus be to first choose the value of $a$ and based on the
corresponding signs of $I_1^{[M]}$ and $I_2^{[M]}$, fix $\kappa_0$,
$\kappa_1$ in order to prevent that $M$ falls into one of the items
above. We will illustrate the idea by means of the four test cases
contained in Examples \ref{exm1} to \ref{exm4}. This is not an
exhaustive discussion of all the possible forms of the two conics
$\mathcal{C}_A$ and $\mathcal{C}_B$, but it suffices to illustrate
the general criterion for choosing suitable values of $a$,
$\kappa_0$ and $\kappa_1$. The general procedure is summarized in
Algorithm \ref{tab:algor}.

\begin{algorithm}[h!]
\begin{flushleft}
{\bf Input:} interpolation points $\p^*_0$, $\p^*_1$ and tangents at these points $\d_0$, $\d_1$.\\
\begin{itemize}
\item[] From \eqref{tangconst} and \eqref{solz0z3} determine all possible values $\z_0$ and $\z_3$, i.e. $u_0$, $v_0$, $u_3$, $v_3$,
and for each combination with signs $+,+$ or $+,-$ in
\eqref{solz0z3} perform the following steps:
\begin{enumerate}
\item  compute the matrices $A$ and $B$ of the two conics in \eqref{conics}. These matrices will depend on the parameters $a$, $\kappa_0$ and $\kappa_1$;
\item choose the value of the knot $a\in (0,1)$ and compute $I_1^{[M]}$, $I_2^{[M]}$, for $M=A,B$. Since these quantities depend on $a$ only, their sign is determined;
\item Choose $\kappa_0$ and $\kappa_1$ in such a way that both $\mathcal{C}_A$ and $\mathcal{C}_B$ are real.
In particular, $\mathcal{C}_A$ or $\mathcal{C}_B$ (or both) may
represent an imaginary conic when $I_2^{[A]}>0$ or $I_2^{[B]}>0$ (or
both). $I_3^{[M]}=0$, $M=A,B$ is a quadratic function of $\kappa_0$
and $\kappa_1$ and represents a conic in the Euclidean plane.
Plotting the graph of $I_3^{[M]}=0$, we can choose $\kappa_0$ and
$\kappa_1$ in the region where the relative signs of $I_1^{[M]}$,
$I_2^{[M]}$ and $I_3^{[M]}$ yield a real conic.
    \item Solve the system \eqref{conics} for $u_1$ and $u_2$.
    \item For each solution of the system \eqref{conics}, compute $v_1$ and $v_2$ from  \eqref{vs}.
    The obtained values $u_i$, $v_i$, $i=0,\dots,3$ completely determine
    a set of complex coefficients $\z_i$, $i=1,\dots,3$ and thus identify one PH B-spline curve.
      \end{enumerate}
    \end{itemize}
{\bf Output:} PH B-spline curves corresponding to the given initial
data.
\end{flushleft}
\caption{Algorithm for the solution of an Hermite interpolation
problem with given interpolation points $\p^*_0$, $\p^*_1$ and
tangents at these points $\d_0$, $\d_1$.} \label{tab:algor}
\end{algorithm}

\begin{exm}
\label{exm1} Let us consider the initial data $\p^*_0=(1,0)$,
$\p^*_1=(3,0.5)$, $\d_0=(1,-1)$, $\d_1=(0.2,3)$. Using these data,
from \eqref{solz0z3} we can compute $\z_0$ and $\z_3$ (namely $u_0$,
$v_0$, $u_3$, $v_3$).  For any combination of $\z_0$ and $\z_3$ to
be considered, i.e. taking signs $+,+$ and $+,-$ in \eqref{solz0z3},
we should proceed to computing the intersection of the two conics in
equation \eqref{conics}. Each intersection point of $\mathcal{C}_A$
and $\mathcal{C}_B$ will give rise to a solution for $u_1$, $v_1$,
$u_2$, $v_2$.

The first couple of values corresponds to signs $+,+$. In this case,
any choice of $a$ in $(0,1)$ yields $I_1^{[A]}>0$, $I_2^{[A]}<0$,
$I_1^{[B]}>0$, $I_2^{[B]}<0$. Hence, both $\mathcal{C}_A$ and
$\mathcal{C}_B$ are real conics, independently on the choice of $a$,
$\kappa_0$ and $\kappa_1$. In particular, we will set $a=0.5$, and
sample the curvature values from the cubic polynomial interpolating
$\p^*_0$, $\p^*_1$ and $\d_0$, $\d_1$, which results in $\kappa_0 =
3.040559$ and  $\kappa_1 = 1.066953$. The two conics $\mathcal{C}_A$
and $\mathcal{C}_B$ are displayed in the left part of Figure
\ref{fig:exm1}, while the center and right figures show the PH
B-spline curves corresponding to the two intersection points of the
conics.

The second couple of values corresponds to signs $+,-$. In this
setting $I_1^{[M]}$, $I_2^{[M]}$, $M \in \{A,B\}$, have the same
signs as for the previous case. Choosing the same values for $a$,
$\kappa_0$ and $\kappa_1$, $\mathcal{C}_A$ and $\mathcal{C}_B$ turn
out to be the two hyperbolas plotted in the left part of Figure
\ref{fig:exm1_2}, whereas the PH B-spline curves corresponding to
each of their intersection points are shown in the center and right
part of the figure.

Based on the absolute rotation index and bending energy, displayed
in Figures \ref{fig:exm1} and \ref{fig:exm1_2}, the best obtained PH
B-spline curve turns out to be the one in the center of Figure
\ref{fig:exm1}.

\begin{figure}[h!]
\centering
\includegraphics[width=0.3\textwidth,trim = 0 0cm 0 0cm, clip, valign=t]{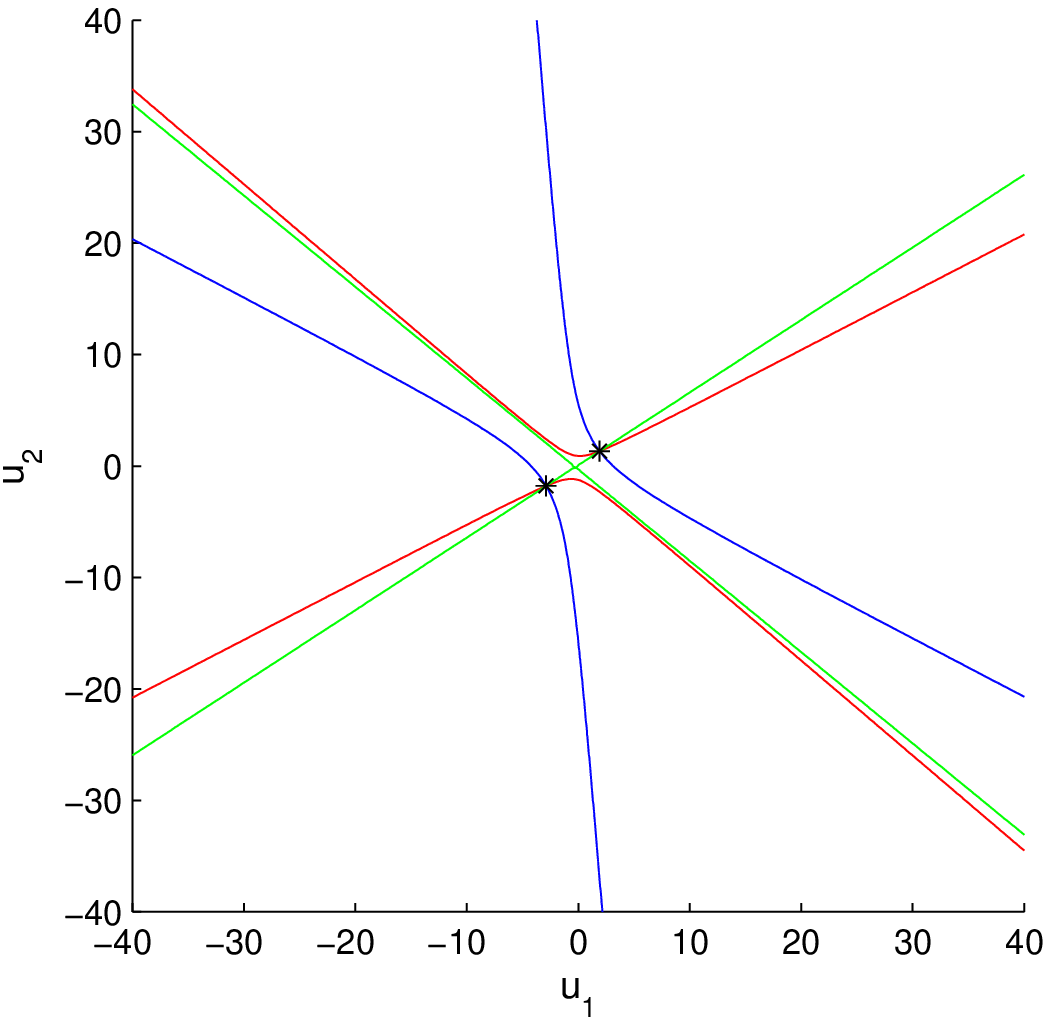}\hfill
\includegraphics[width=0.3\textwidth,valign=t]{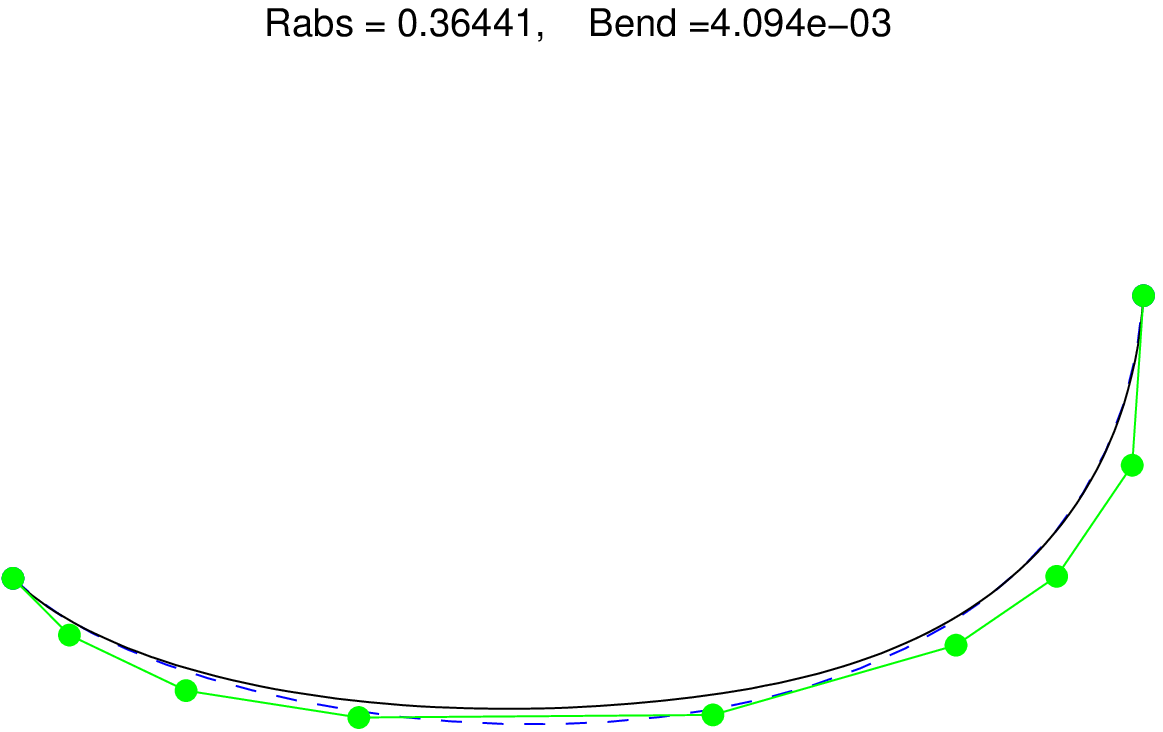}\hfill
\includegraphics[width=0.3\textwidth,valign=t]{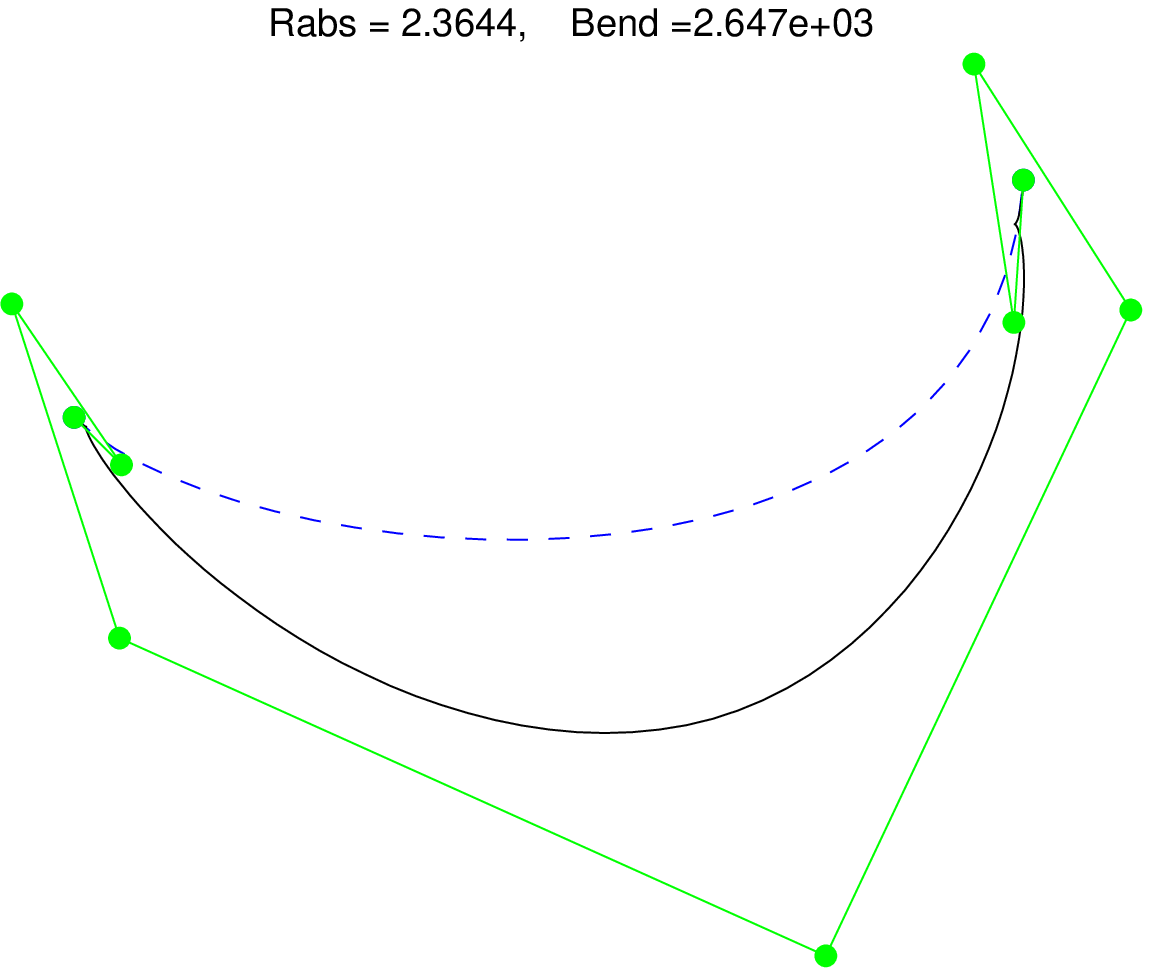}\\
\caption{Illustration of Example \ref{exm1}, case $++$. Left: The
two conics $\mathcal{C}_A$ (blue) and $\mathcal{C}_B$ (red)  and a
degenerate conic (green). Center and right: The PH B-spline curves
(black) corresponding to the two intersection points of the conics
marked with an asterisk. The dashed curve is the cubic polynomial
interpolating $\p^*_0$, $\p^*_1$, $\d_0$, $\d_1$. The values of the
absolute rotation index $Rabs$ and the bending energy $Bend$ of the
PH B-Spline curves are also displayed.} \label{fig:exm1}

\vspace{1cm} \centering
\includegraphics[width=0.3\textwidth,trim = 0 0cm 0 0cm, clip,valign=t]{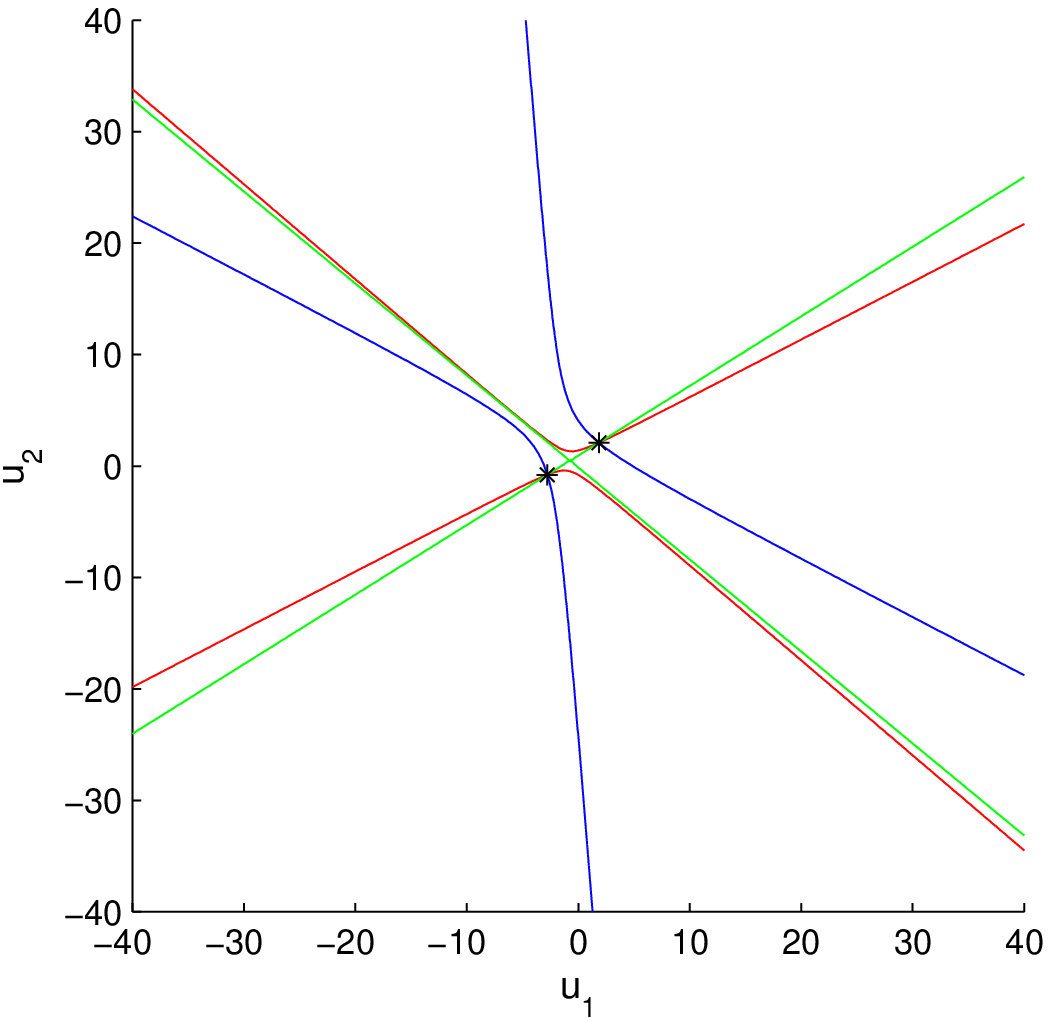}\hfill
\includegraphics[width=0.3\textwidth,valign=t]{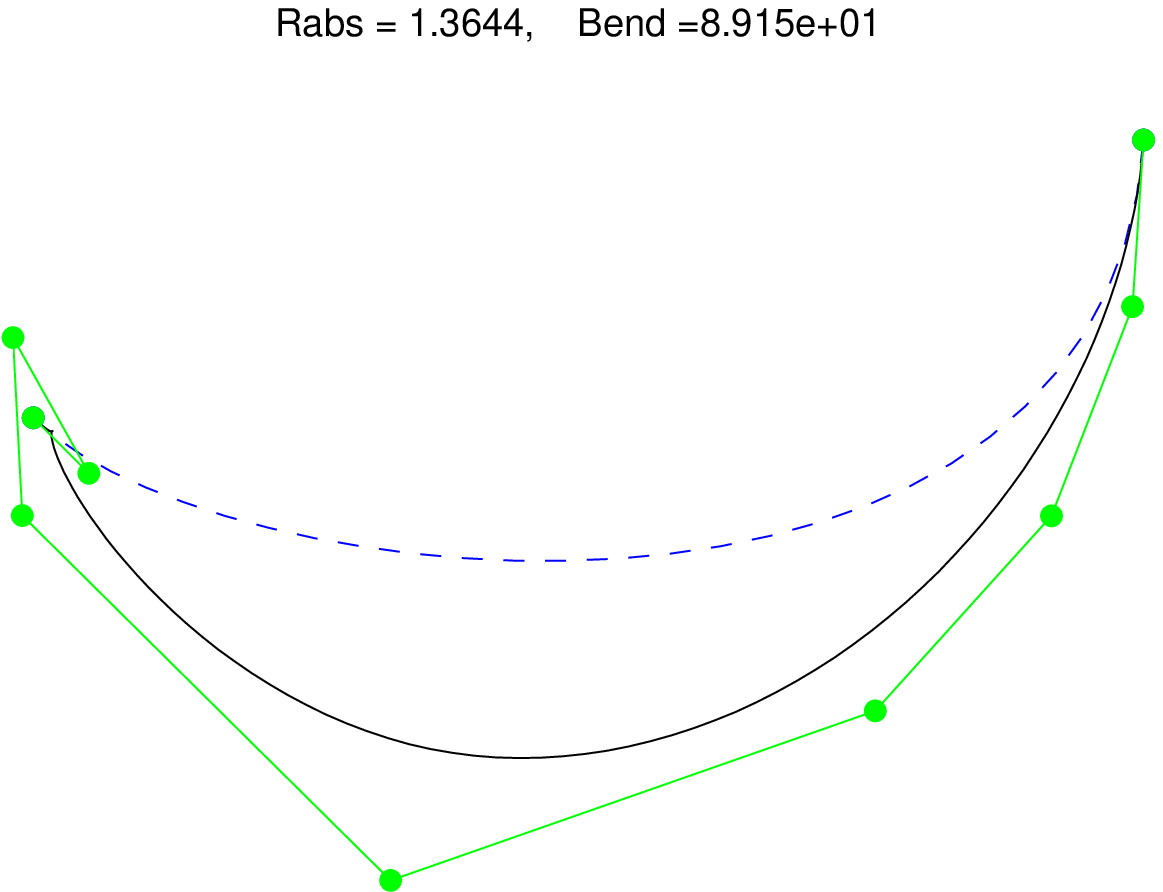}\hfill
\includegraphics[width=0.3\textwidth,valign=t]{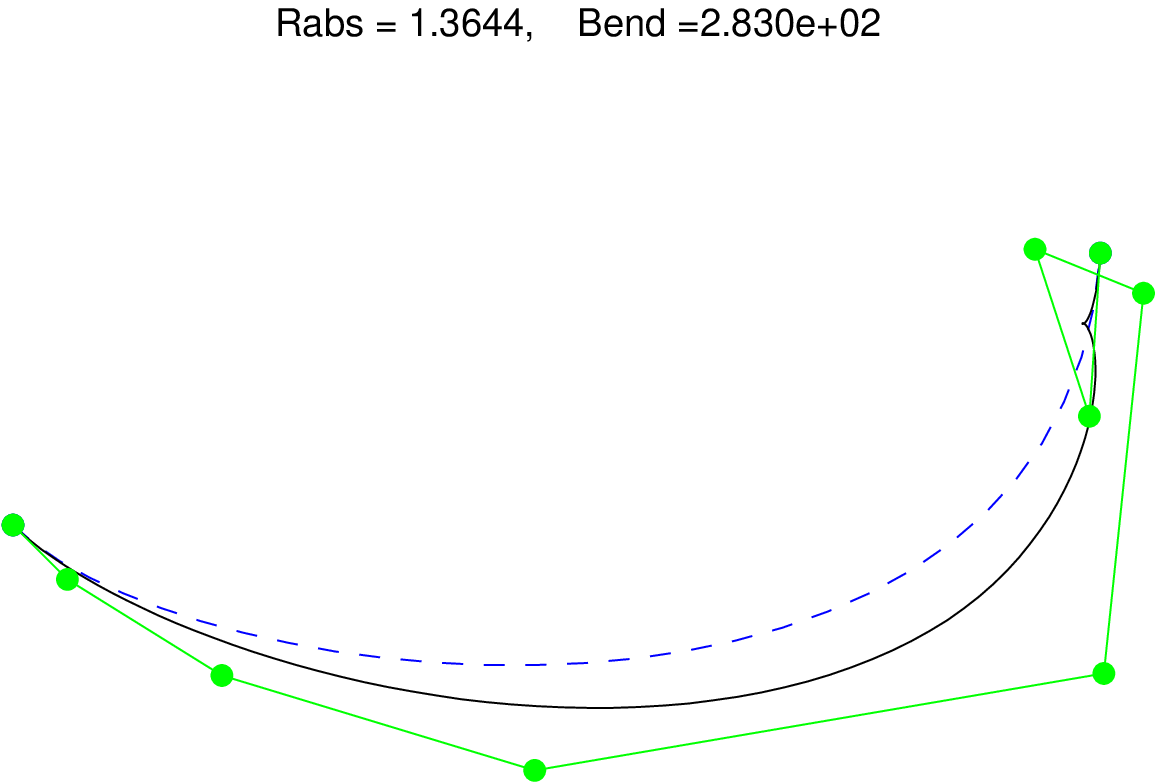}
\caption{Illustration of Example \ref{exm1}, case $+-$. Left: The
two conics $\mathcal{C}_A$ (blue) and $\mathcal{C}_B$ (red)  and a
degenerate conic (green). Center and right: The PH B-spline curves
(black) corresponding to the two intersection points of the conics
marked with an asterisk. The dashed curve is the cubic polynomial
interpolating $\p^*_0$, $\p^*_1$, $\d_0$, $\d_1$. The values of the
absolute rotation index $Rabs$ and the bending energy $Bend$ of the
PH B-Spline curves are also displayed.} \label{fig:exm1_2}
\end{figure}

\end{exm}

\begin{exm}\label{exm2}
Let us consider the initial data $\p^*_0=(-6,-1)$, $\p^*_1=(1,0)$,
$\d_0=(30,25)$, $\d_1=(25,-30)$. Using these data, from
\eqref{solz0z3} we can compute $\z_0$ and $\z_3$ (namely $u_0$,
$v_0$, $u_3$, $v_3$).  For any combination of $\z_0$ and $\z_3$ to
be considered, i.e. taking signs $+,+$ and $+,-$ in \eqref{solz0z3},
we should proceed to computing the intersection of the two conics in
equation \eqref{conics}.

The first couple of values corresponds to signs $+,+$. In this case,
for any $a\in (0,1)$, $I_1^{[A]}$ and $I_2^{[A]}$ are both positive,
whereas $I_1^{[B]}$ and $I_2^{[B]}$ are both negative. Hence the
choice of $a$ does not influence the signs of these quantities. To
proceed we set $a=0.5$. According to the signs of  $I_1^{[B]}$ and
$I_2^{[B]}$, the conic with matrix $B$ is an hyperbola, and thus it
is a real conic for any choice of $\kappa_0$ and $\kappa_1$.

We shall now investigate whether the conic $\mathcal{C}_A$ may fall
into cases \emph{P1} or \emph{P2} (case \emph{P3} is excluded given
that $I_2^{[A]}>0$.) To this aim, let us consider the conic
$I_3^{[A]}=0$. This is an imaginary conic and in particular
$I_3^{[A]}<0$ for any $\kappa_0$ and $\kappa_1$. The conic
$\mathcal{C}_A$ is thus always a real ellipse.

Since we have no constraint for choosing $\kappa_0$ and $\kappa_1$,
we can set them to the curvatures of the cubic polynomial that
interpolates $\p^*_0$, $\p^*_1$, $\d_0$ and $\d_1$, which yields
$\kappa_0=0.0366$ and $\kappa_1=0.0275$.

Accordingly, the two conics $\mathcal{C}_A$ and $\mathcal{C}_B$ are
represented in Figure \ref{fig:exmp2}, left. The green lines
correspond to a degenerate conic of the pencil of conic sections
\eqref{pencil} and the asterisk markers identify the solutions of
system \eqref{conics}. There are thus two solutions which correspond
to the PH B-spline curves depicted in Figure \ref{fig:exmp2} (solid
line), whereas the dashed line corresponds to the cubic polynomial
from which the curvature values were sampled.

Concerning the second couple of values, corresponding to signs $+,-$
in \eqref{solz0z3}, the signs of $I_1^{[A]}$, $I_2^{[A]}$ and
$I_1^{[B]}$, $I_2^{[B]}$ are the same as for the previous case and
similarly, by setting $a=0.5$, $I_2^{[A]}$ is always negative. We
can thus choose the same curvature values, obtaining the two conics
displayed in Figure \ref{fig:exmp2_2}. This time we have four
intersection points, each of which yields one of the curves in the
figure.

Among all the solutions shown in Figures \ref{fig:exmp2} and Figures
\ref{fig:exmp2_2}, which correspond to the same set of initial data
$\p^*_0$, $\p^*_1$, $\d_0$, $\d_1$, $\kappa_0$, $\kappa_1$, the best
curve can be identified from a study of the absolute rotation index
and bending energy and corresponds to the rightmost curve in Figure
\ref{fig:exmp2_2}.
\begin{figure}[h!]
\centering
\includegraphics[height=0.29\textwidth,trim = 0 0cm 0 0cm, clip,valign=t]{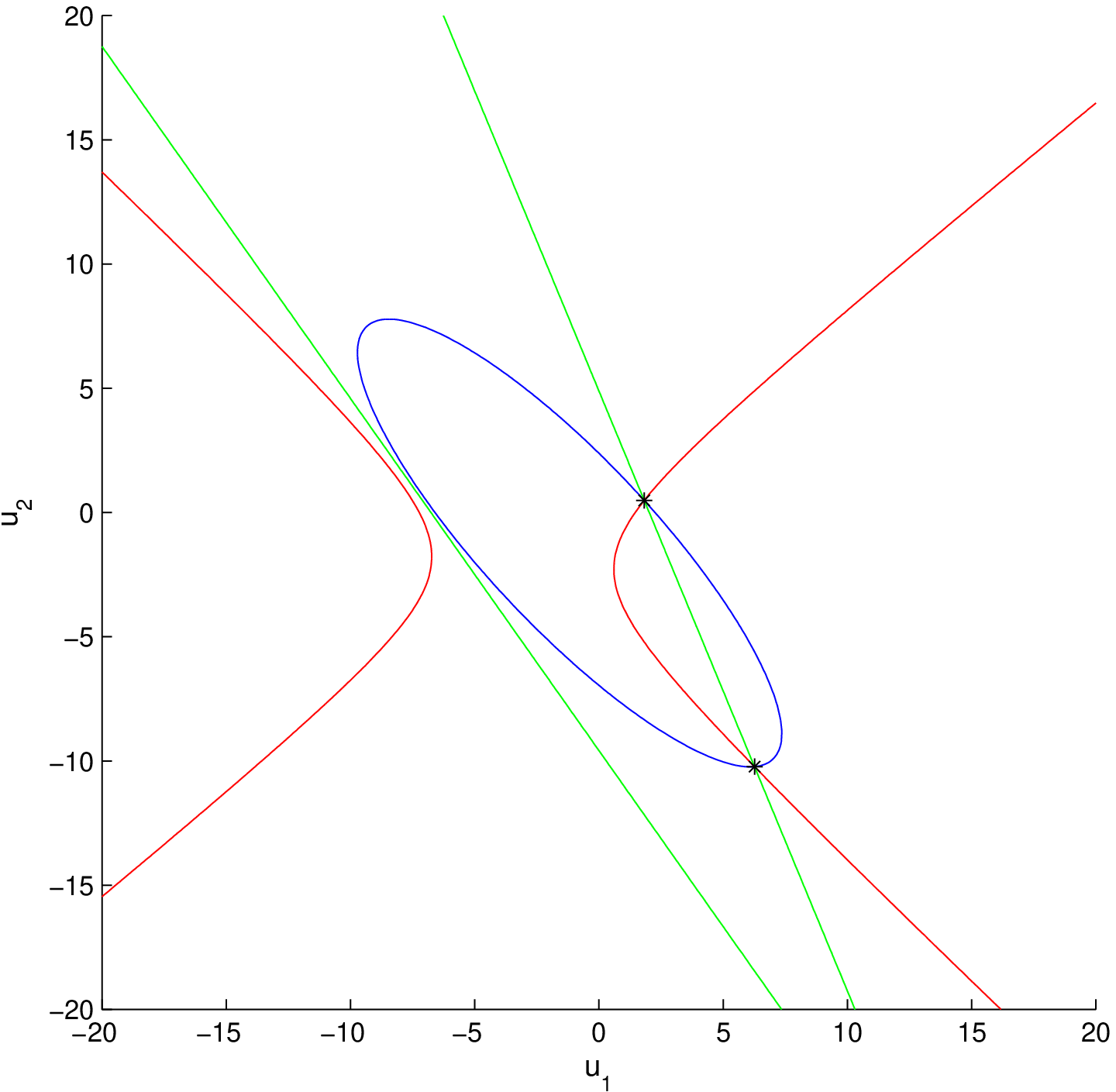}\hfill
\includegraphics[height=0.29\textwidth,valign=t]{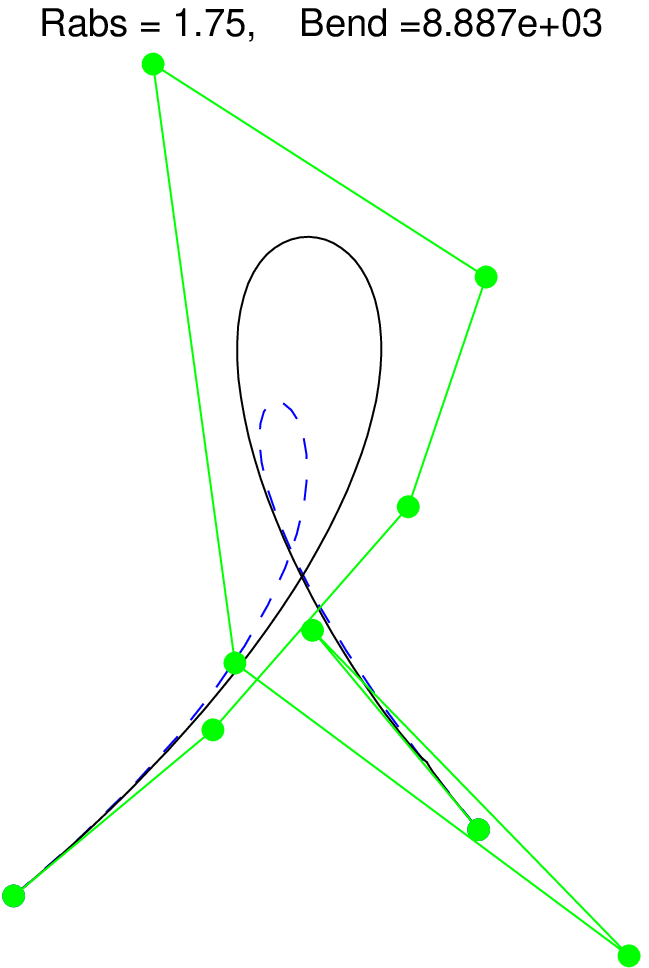}\hfill
\includegraphics[height=0.29\textwidth,valign=t]{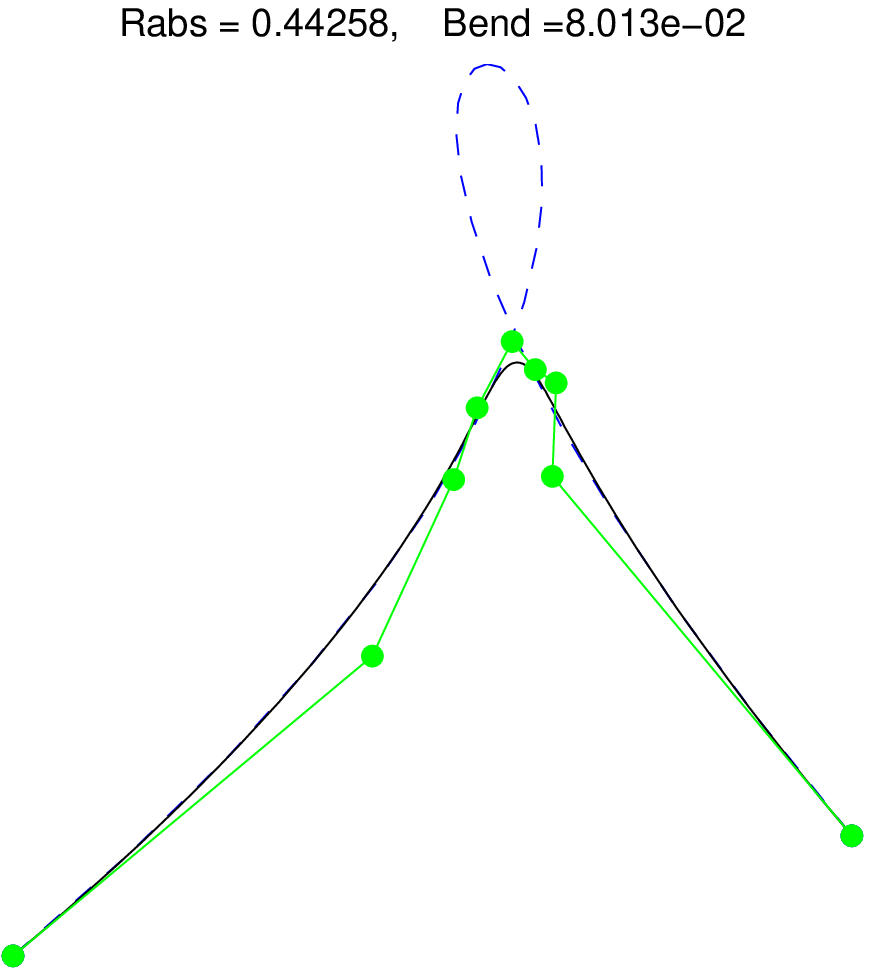}\\
\caption{Illustration of Example \ref{exm2}, case $++$. Left: The
two conics $\mathcal{C}_A$ (blue) and $\mathcal{C}_B$ (red) , a
degenerate conic of the pencil (green); Center and right: PH
B-spline curves (black) corresponding to the two intersection points
and cubic polynomial (dashed, blue) interpolating the given
endpoints and tangents, together with the values of the absolute
rotation index and bending energy of the PH B-Spline curves.}
\label{fig:exmp2}

\vspace{1cm} \centering
\includegraphics[height=0.29\textwidth,trim = 0 0cm 0 0cm, clip,valign=t]{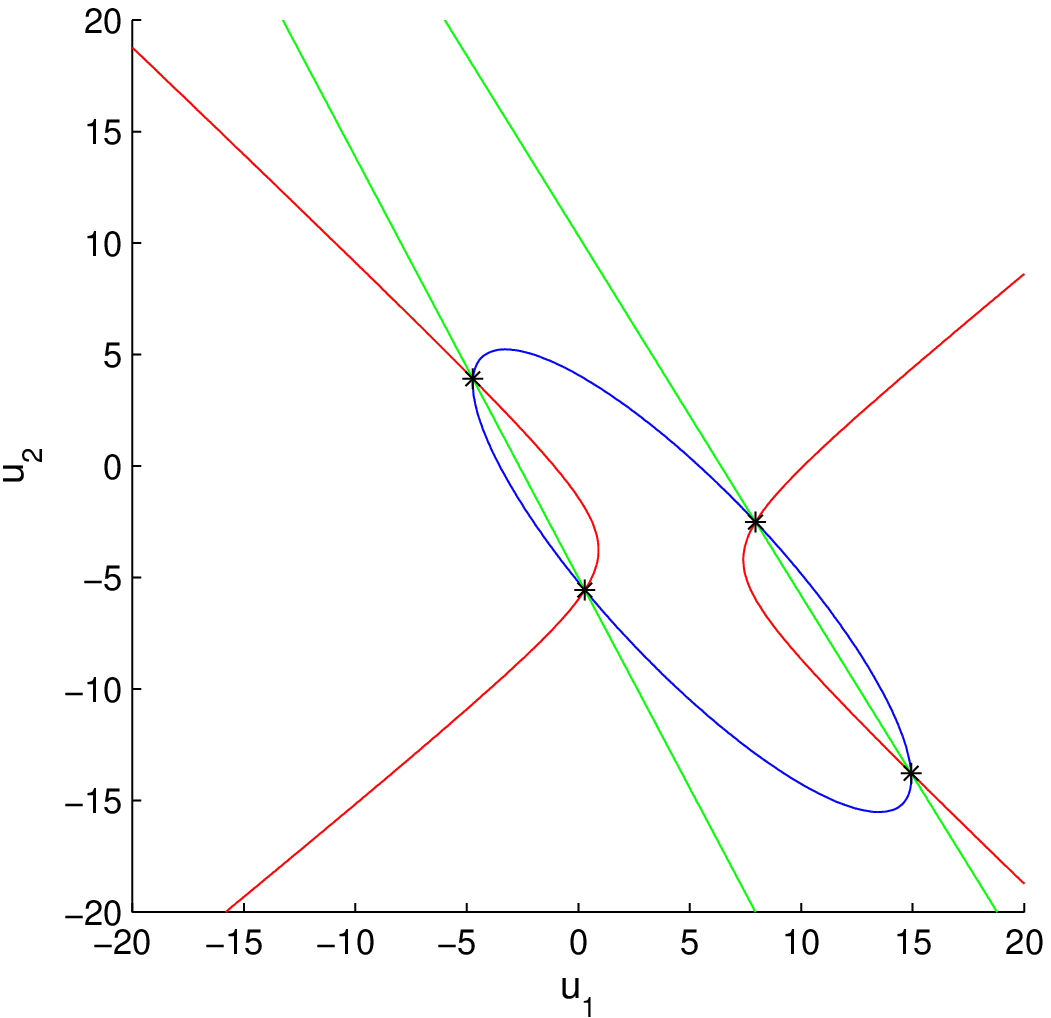}\\[3ex]
\includegraphics[height=0.29\textwidth,valign=t]{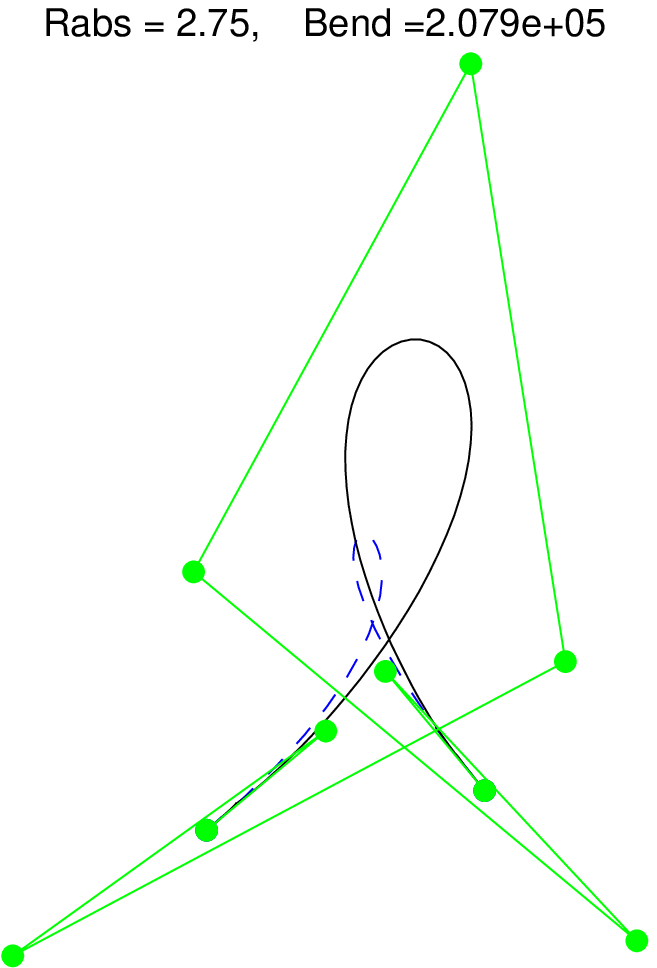}\hfill
\includegraphics[height=0.29\textwidth,valign=t]{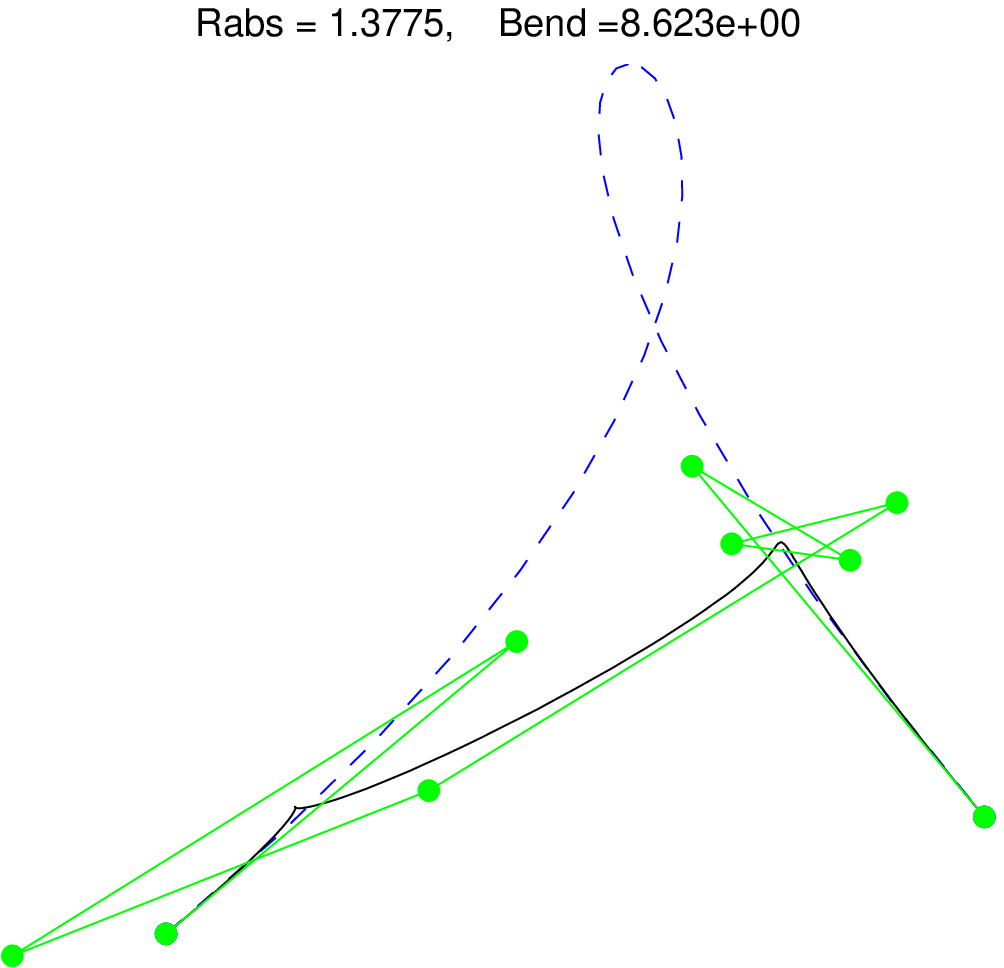}\hfill
\includegraphics[height=0.29\textwidth,valign=t]{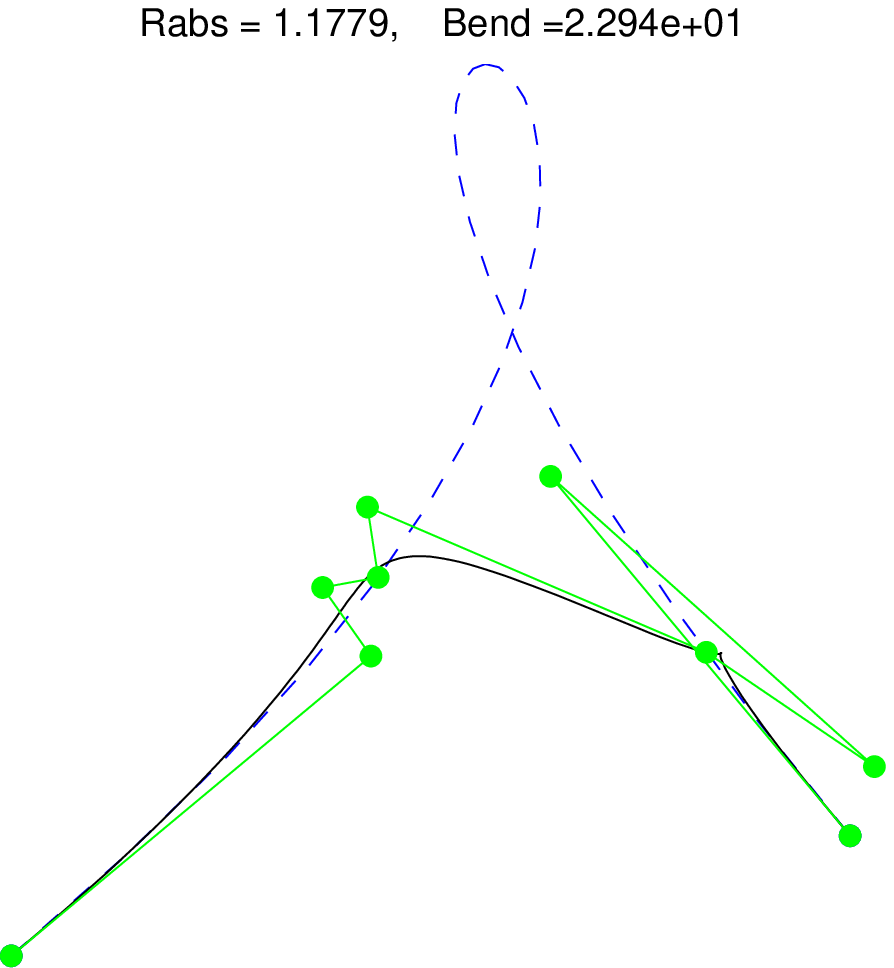}\hfill
\includegraphics[height=0.29\textwidth,valign=t]{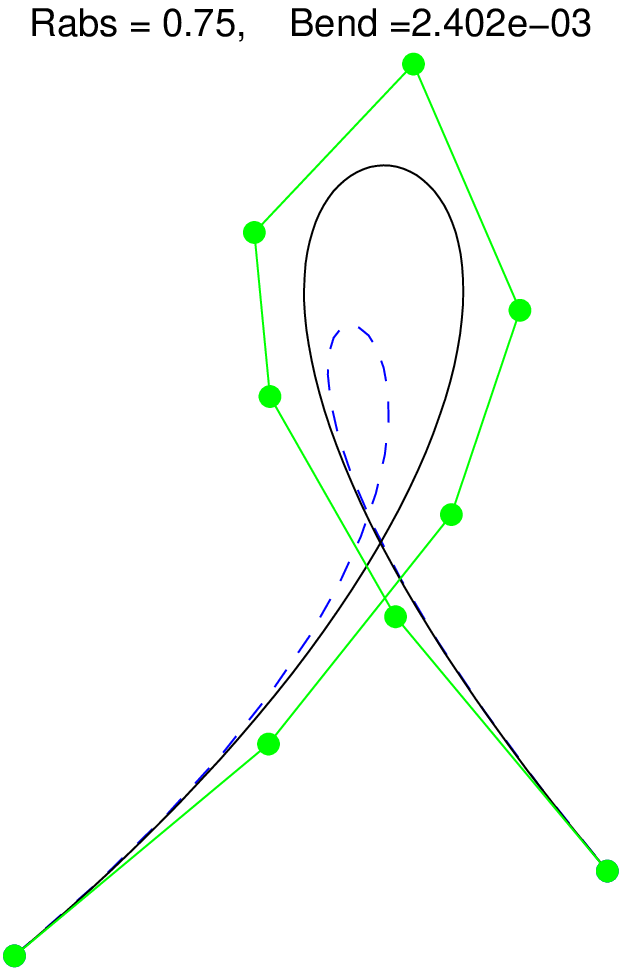}
\caption{Illustration of Example \ref{exm2}, case $+-$. Top: The two
conics $\mathcal{C}_A$ (blue) and $\mathcal{C}_B$ (red), a
degenerate conic of the pencil (green); Bottom: PH B-spline curves
(black) corresponding to the four intersection points and cubic
polynomial (dashed, blue) interpolating the given endpoints and
tangents, together with the values of the absolute rotation index
and bending energy of the PH B-Spline curves.} \label{fig:exmp2_2}
\end{figure}
\end{exm}

\begin{exm}
\label{exm3} Let us consider the initial data $\p^*_0=(0,0)$,
$\p^*_1=(1,0)$, $\d_0=(-3,1)$, $\d_1=(-3,-1)$. Using these data, we
can compute $u_0$, $v_0$, $u_3$, $v_3$ and for any combination of
these values to be considered, i.e. corresponding to signs $+,+$ and
$+,-$ in \eqref{solz0z3}, we should proceed to computing the
intersections of the two conics in equation \eqref{conics}.

The first couple of values corresponds to signs $+,+$. In this case,
any choice of $a\in(0,1)$ yields $I_1^{[A]}<0$, $I_2^{[A]}>0$ and
$I_2^{[B]}< 0$. The conic $\mathcal{C}_B$ is thus never imaginary
and there remains to determine $\kappa_0$ and $\kappa_1$ such that
$I_3^{[A]}>0$ and thus the conic $\mathcal{C}_A$ is a real ellipse.
On account of the symmetry of the initial data it seems logical to
set $a=0.5$. Accordingly, the implicit curve $I_3^{[A]}=0$ is
plotted in Figure \ref{fig:exm3}, from which we can infer that
$I_3^{[A]}>0$ in the region external to the plot. The curvatures of
the cubic polynomial which interpolates the initial data have both
the value $-0.569210$ and do not belong to the feasible region. We
will thus choose $\kappa_0$ and $\kappa_1$ so as to preserve the
symmetry of the data and be as close as possible to the curvatures
of the cubic polynomial. For example, setting
$\kappa_0=\kappa_1=-2.5$ we get the two conics $\mathcal{C}_A$ and
$\mathcal{C}_B$ displayed in Figure \ref{fig:exm3} (note that $B$ is
precisely a degenerate conic of the pencil.)

The conics have two intersection points each of which corresponds to
one of the two curves plotted in the bottom row of the figure.

We perform a similar study for the second couple of values,
corresponding to signs $+,-$ in \eqref{solz0z3}. For $a=0.5$, the
situation is similar as above, i.e. $I_1^{[A]}<0$, $I_2^{[A]}>0$ and
$I_2^{[B]}< 0$. The conic $\mathcal{C}_B$ is never imaginary, thus
we can limit ourselves to investigating when $A$ is an imaginary
conic. Taking into exam the plot of $I_3^{[A]}=0$ in Figure
\ref{fig:exm3_2}, it can be seen that the previously used values
$\kappa_0=\kappa_1=-2.5$ do not belong to the admissible region,
which is where $I_3^{[A]}>0$. A set of feasible curvature values,
e.g., is $\kappa_0=\kappa_1=-5$, which result in the conics
$\mathcal{C}_A$ and $\mathcal{C}_B$ in Figure \ref{fig:exm3_2}
($\mathcal{C}_B$ is a degenerate conic of the pencil). The PH
B-spline curves corresponding to the two intersection points are
displayed in the same figure. Comparing the obtained curves in
Figures \ref{fig:exm3} and \ref{fig:exm3_2}, we choose the first one
in Figure \ref{fig:exm3} according to its absolute rotation index
and bending energy.

 \begin{figure}[h!]
\centering
\includegraphics[height=0.24\textwidth,trim = 0 0cm 0 0cm, clip,valign=t]{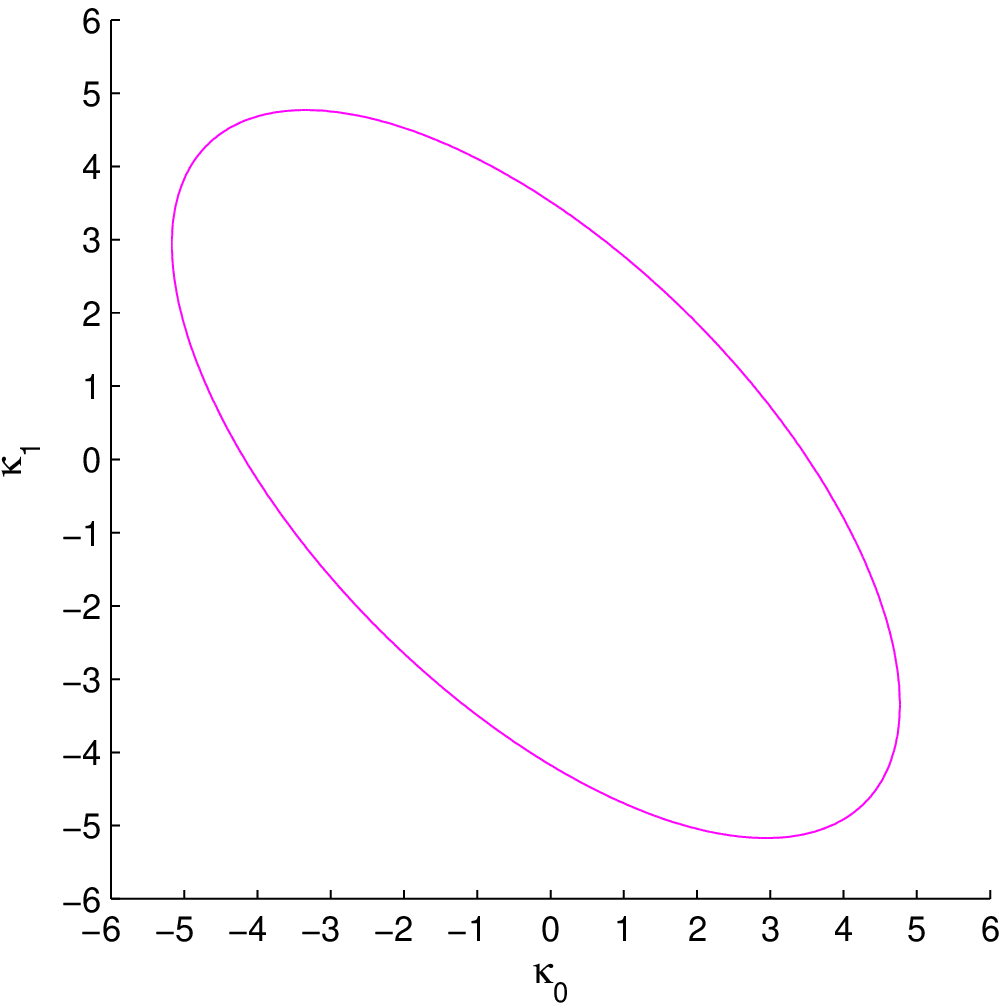}\hfill
\includegraphics[height=0.24\textwidth,trim = 0 0cm 0 0cm, clip,valign=t]{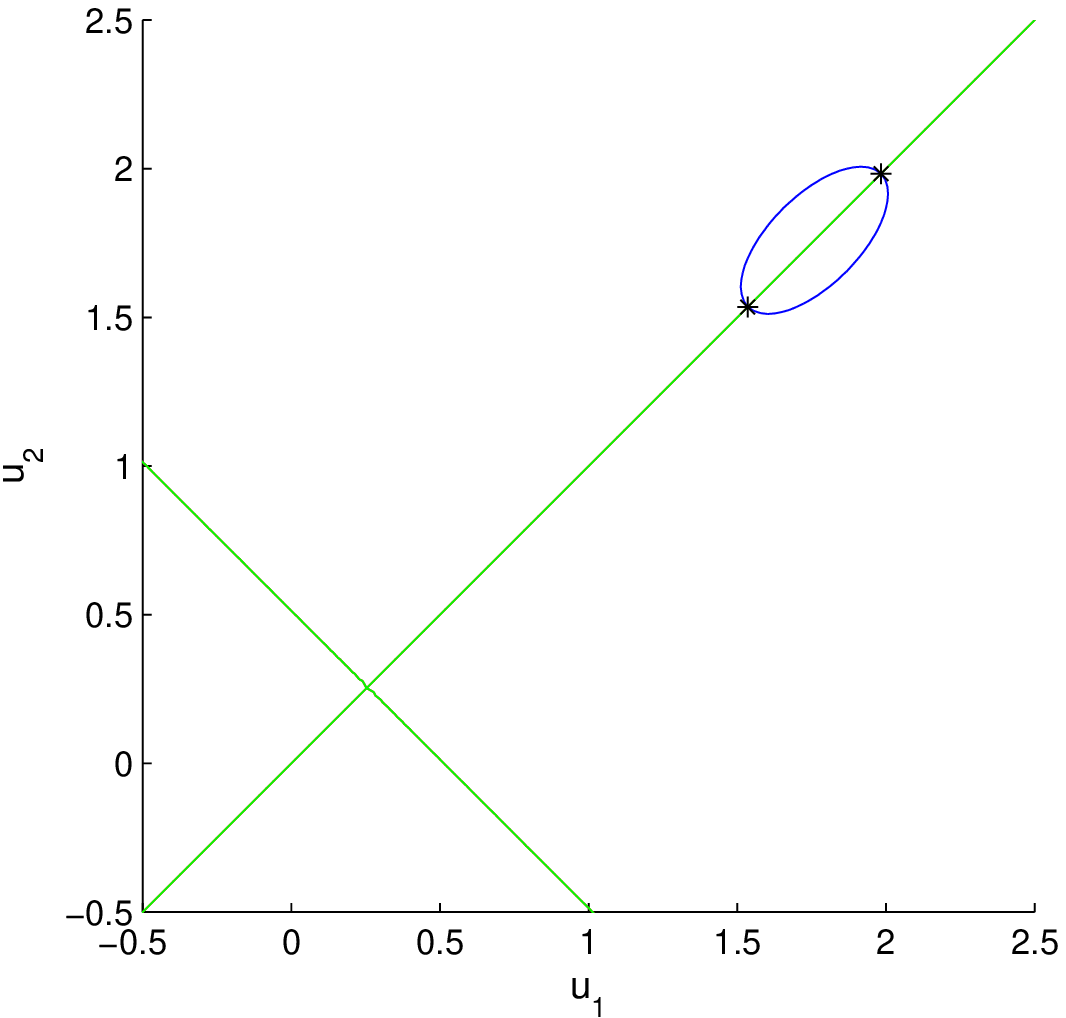}\hfill
\includegraphics[width=0.24\textwidth,valign=t]{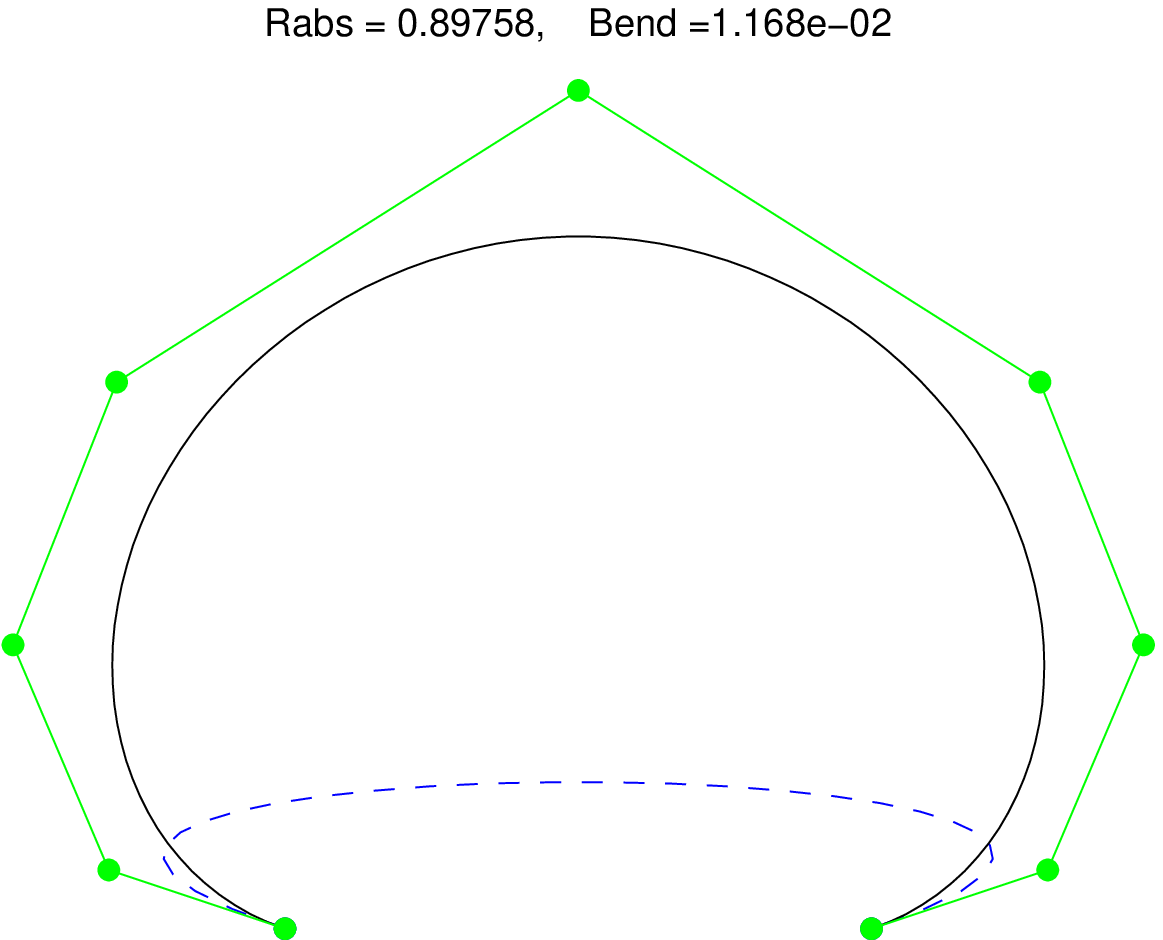}\hfill
\includegraphics[width=0.24\textwidth,valign=t]{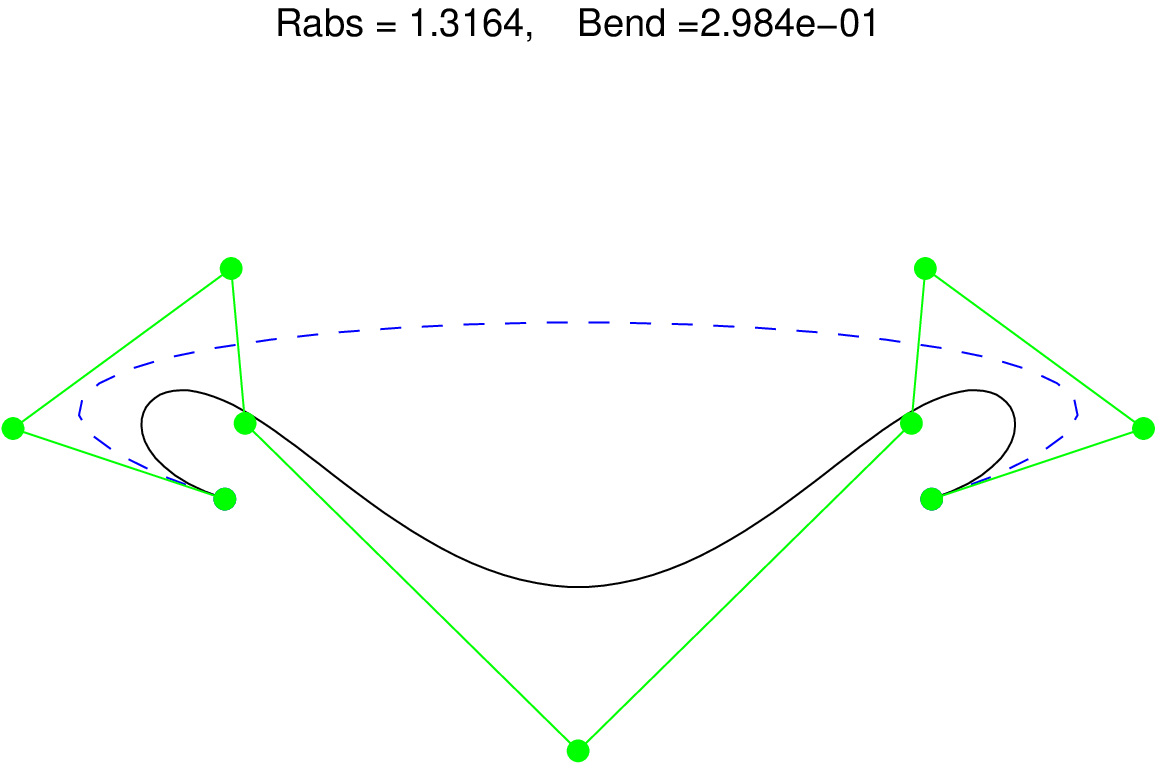}\\
\caption{Illustration of Example \ref{exm3}, case $++$. From left to
right: The conic $I_3^{[A]}=0$; The two conics $\mathcal{C}_A$ and
$\mathcal{C}_B$ (a degenerate conic of the pencil coincides with
$\mathcal{C}_B$); PH B-spline curves (black) corresponding to the
intersection points and cubic polynomial (dashed, blue)
interpolating the given endpoints and tangents, together with the
values of the absolute rotation index and bending energy of the PH
B-Spline curves.} \label{fig:exm3}

\vspace{1cm} \centering
\includegraphics[height=0.24\textwidth,trim = 0 0cm 0 0cm, clip,valign=t]{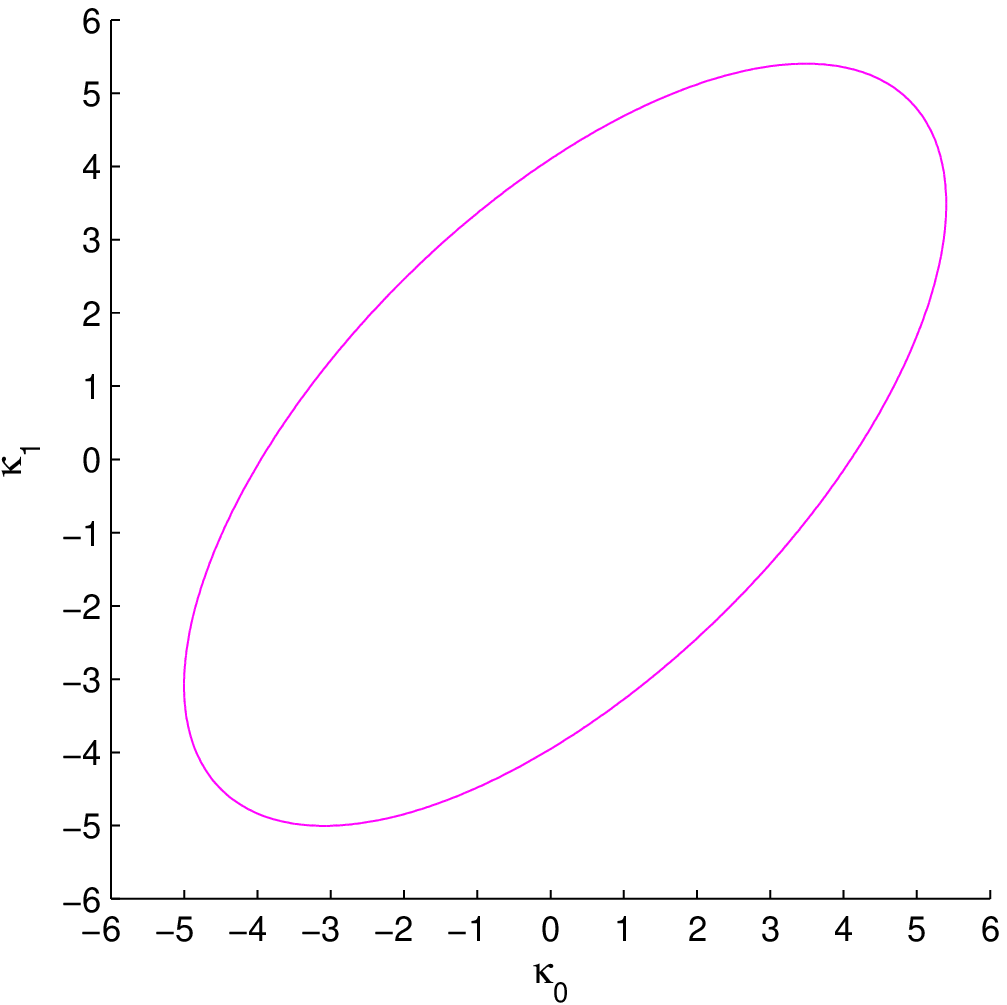}\hfill
\includegraphics[height=0.24\textwidth,trim = 0 0cm 0 0cm, clip,valign=t]{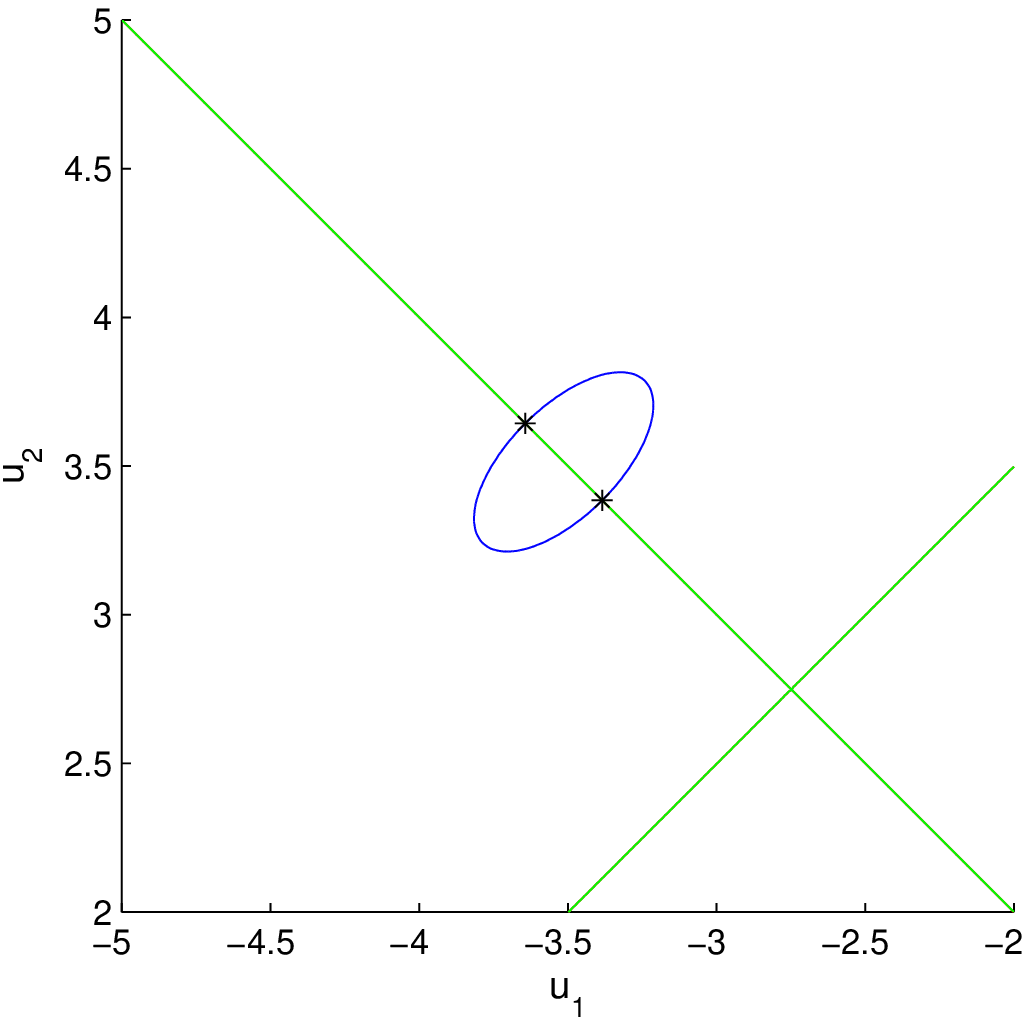}\hfill
\includegraphics[width=0.24\textwidth,valign=t]{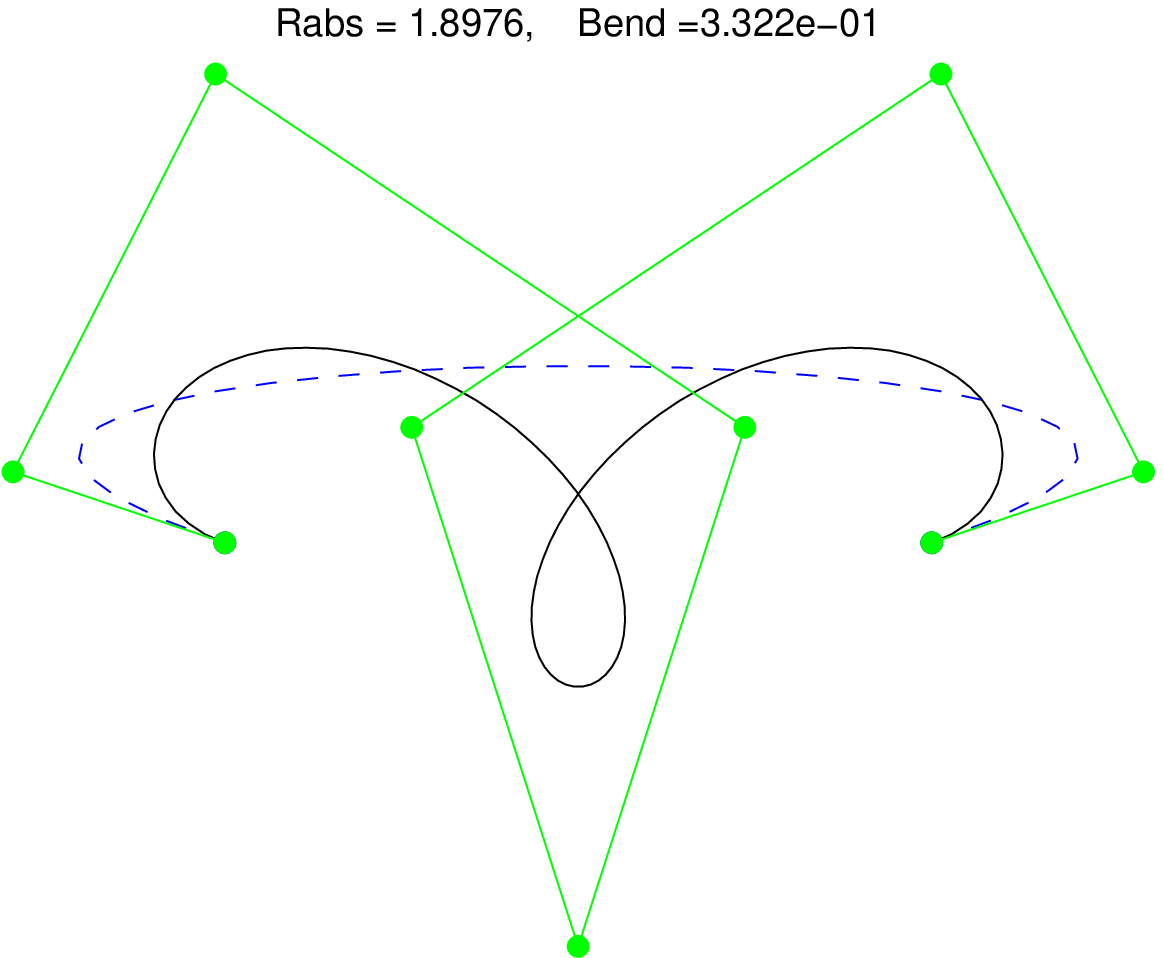}\hfill
\includegraphics[width=0.24\textwidth,valign=t]{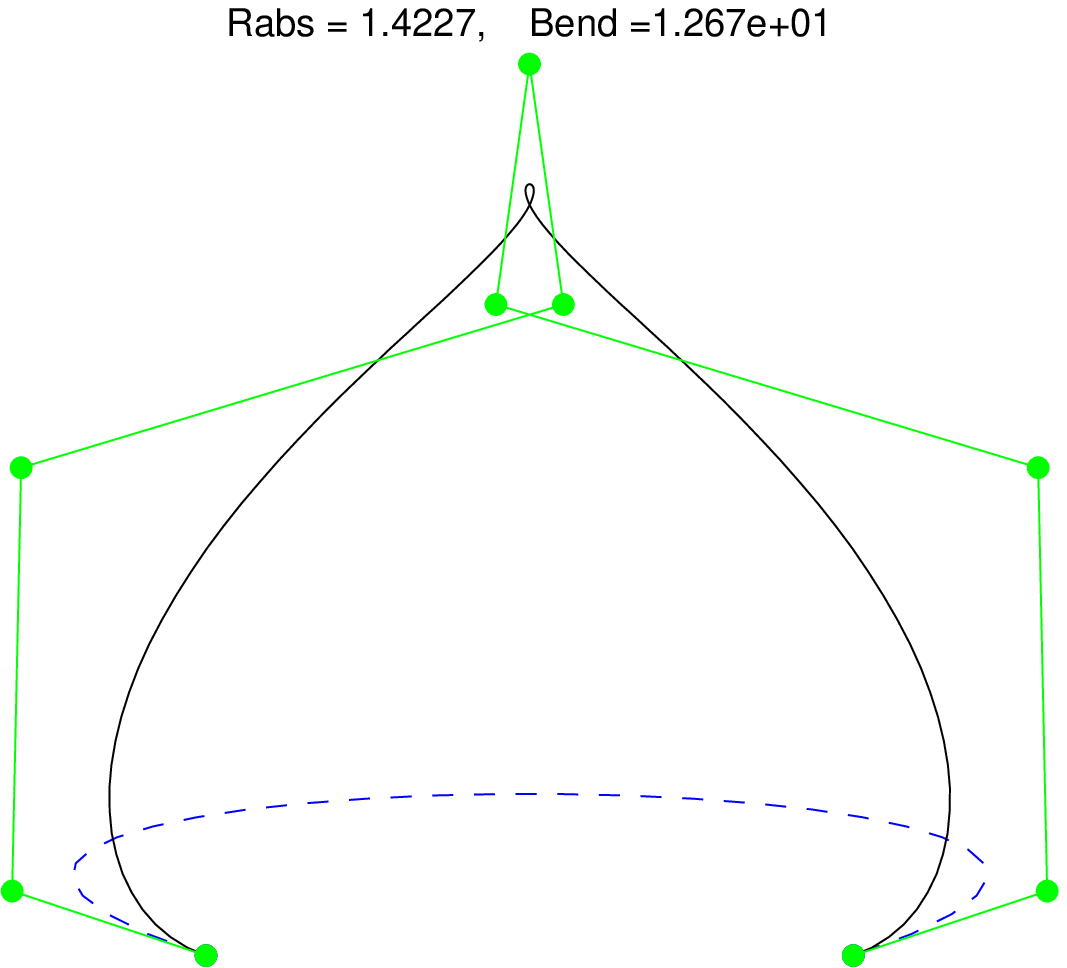}
\caption{Illustration of Example \ref{exm3}, case $+-$. From left to
right: The conic $I_3^{[A]}=0$; The two conics $\mathcal{C}_A$ and
$\mathcal{C}_B$ (a degenerate conic of the pencil coincides with
$\mathcal{C}_B$);  PH B-spline curves (black) corresponding to the
intersection points and cubic polynomial (dashed, blue)
interpolating the given endpoints and tangents, together with the
values of the absolute rotation index and bending energy of the PH
B-Spline curves.} \label{fig:exm3_2}
\end{figure}
\end{exm}

\begin{exm}
\label{exm4} Let us consider the initial data $\p^*_0=(0,5)$,
$\p^*_1=(-3,4)$, $\d_0=(25,-15)$, $\d_1=(25,-15)$. Using these data,
we can compute $u_0$, $v_0$, $u_3$, $v_3$ and for any combination of
these values to be considered, i.e. corresponding to signs $+,+$ and
$+,-$ in \eqref{solz0z3}, we should proceed to computing the
intersection of the two conics in equation (\ref{conics}).

The first couple of values corresponds to signs $+,+$. In this case,
any choice of $a$ in $(0,1)$ yields $I_1^{[A]}>0$, $I_2^{[A]}>0$,
$I_1^{[B]}<0$, $I_2^{[B]}>0$. Hence we set $a=0.5$ (this is a
natural choice given the symmetry of the data) and accordingly we
study the sign of $I_3^{[A]}$ and $I_3^{[B]}$. The equation
$I_3^{[B]}=0$ describes an imaginary conic and in particular
$I_3^{[B]}>0$ for any $\kappa_0$ and $\kappa_1$. This condition
guarantees that $B$ is a real ellipse. The implicit curve defined by
$I_3^{[A]}=0$ is the ellipse plotted in Figure \ref{fig:exm4}, left.
Considering that $I_1^{[A]}>0$, we shall then choose $\kappa_0$ and
$\kappa_1$ in such a way that $I_3^{[A]}<0$, which corresponds to
all values external to the ellipse displayed in Figure
\ref{fig:exm4}, left.

The curvatures of the cubic polynomial which interpolates the
initial data are equal to $-0.065371$ at $\p^*_0$ and $0.065371$ at
$\p^*_1$ and thus  they do not belong to the admissible region
$I_3^{[A]}<0$. By analogy with the curvatures of the cubic
polynomial, which have the same absolute value and opposite sign, we
will set $\kappa_0=-0.2$ and $\kappa_1=0.2$. This choice results in
the two conics $\mathcal{C}_A$ and $\mathcal{C}_B$ displayed in the
Figure \ref{fig:exm4}. The conics have two intersection points each
of which corresponds to one of the two curves plotted in the figure.

We perform a similar study for the second couple of values,
corresponding to signs $+,-$ in \eqref{solz0z3}. For any $a\in
(0,1)$, the situation is similar as above, i.e. $I_1^{[A]}>0$,
$I_2^{[A]}>0$ and $I_1^{[B]}<0$, $I_2^{[B]}>0$. The conic
$\mathcal{C}_B$ is never imaginary, thus we can limit ourselves to
investigating when $A$ is an imaginary conic. Setting $a=0.5$ and
taking into exam the plot of $I_3^{[A]}=0$ in Figure
\ref{fig:exm4_2}, left, it can be seen that the previously used
values $\kappa_0=-0.2$ and $\kappa_1=0.2$ do not belong to the
admissible region, which is where $I_3^{[A]}<0$. By similarity with
the curvatures of the cubic polynomial, we choose $\kappa_0$ and
$\kappa_1$ having equal absolute value and opposite sign, e.g.,
$\kappa_0=-0.4$ and $\kappa_1=0.4$. This results in the conics
$\mathcal{C}_A$ and $\mathcal{C}_B$ in Figure \ref{fig:exm4_2}. The
PH B-spline curves corresponding to the two intersection points are
depicted in the same figure. Overall, the four curves obtained in
this example have similar absolute rotation number and bending
energy.

\begin{figure}[h!]
\centering
\includegraphics[height=0.24\textwidth,trim = 0 0cm 0 0cm, clip,valign=t]{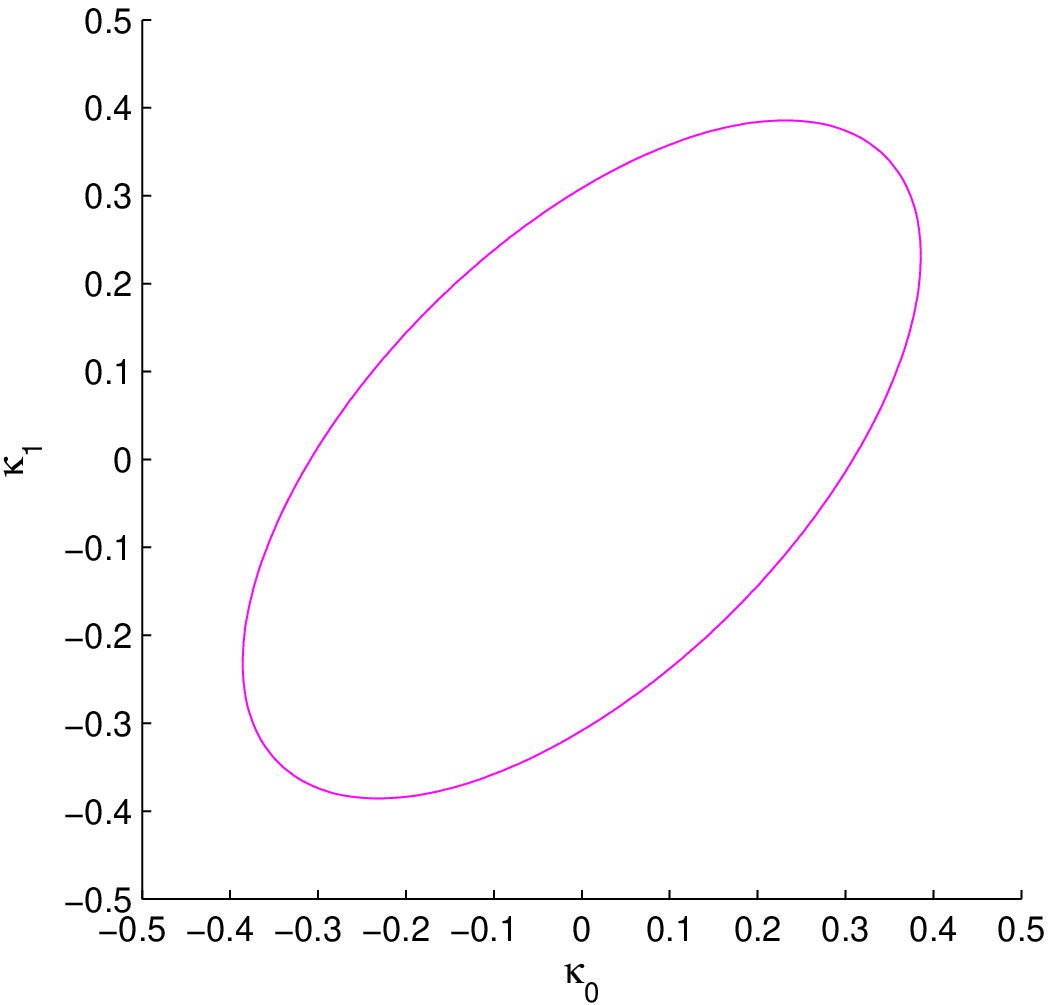}\hfill
\includegraphics[height=0.24\textwidth,trim = 0 0cm 0 0cm, clip,valign=t]{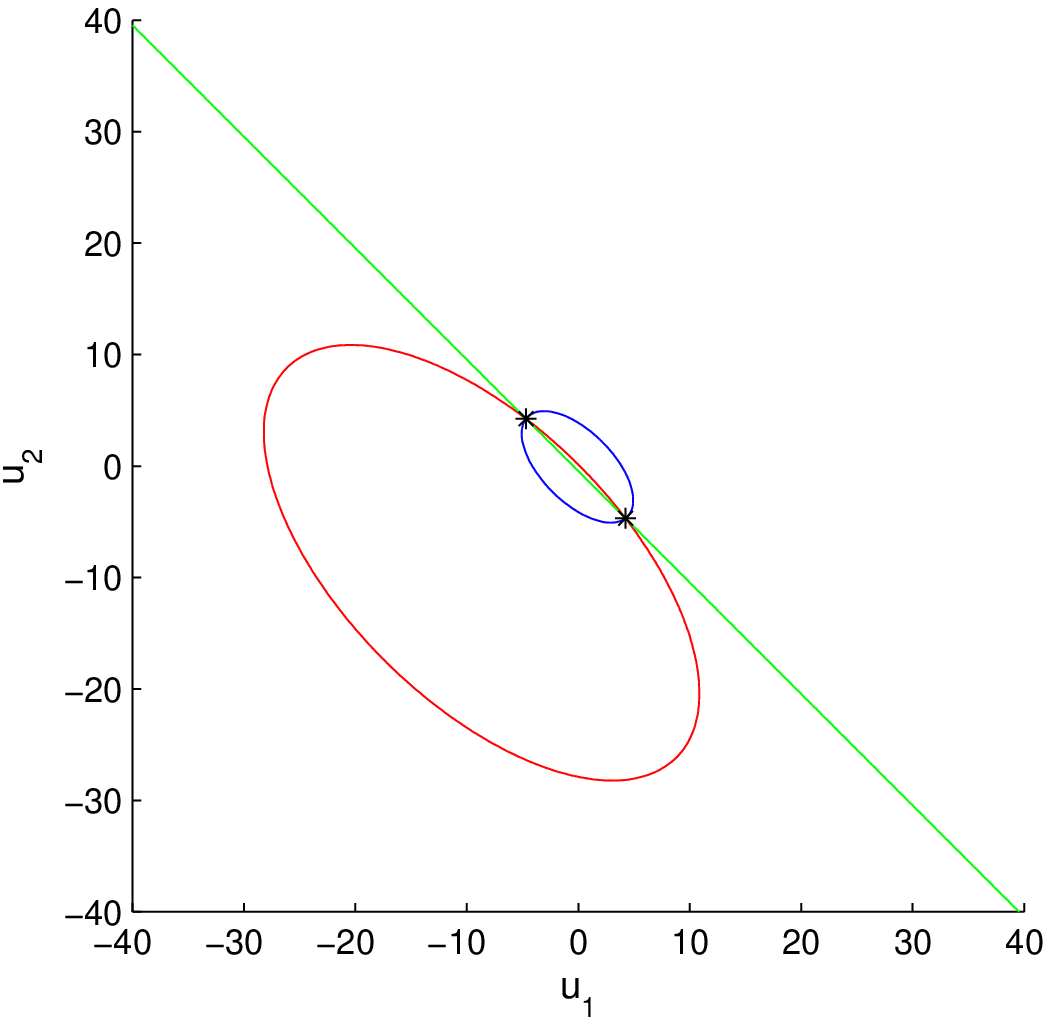}\hfill
\includegraphics[height=0.24\textwidth,valign=t]{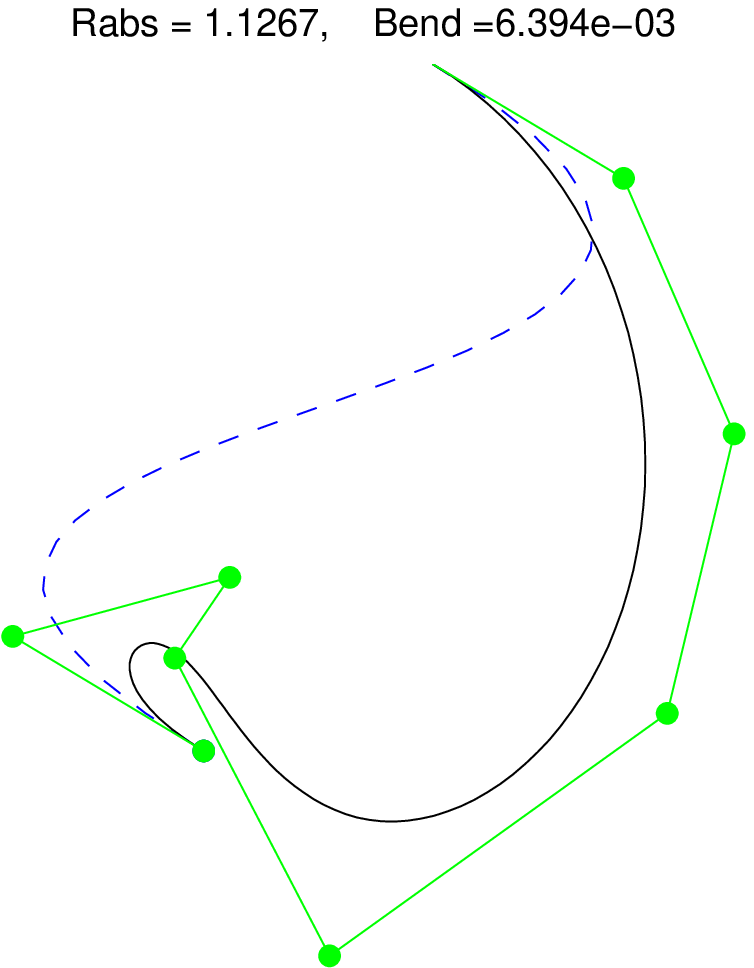}\hfill
\includegraphics[height=0.24\textwidth,valign=t]{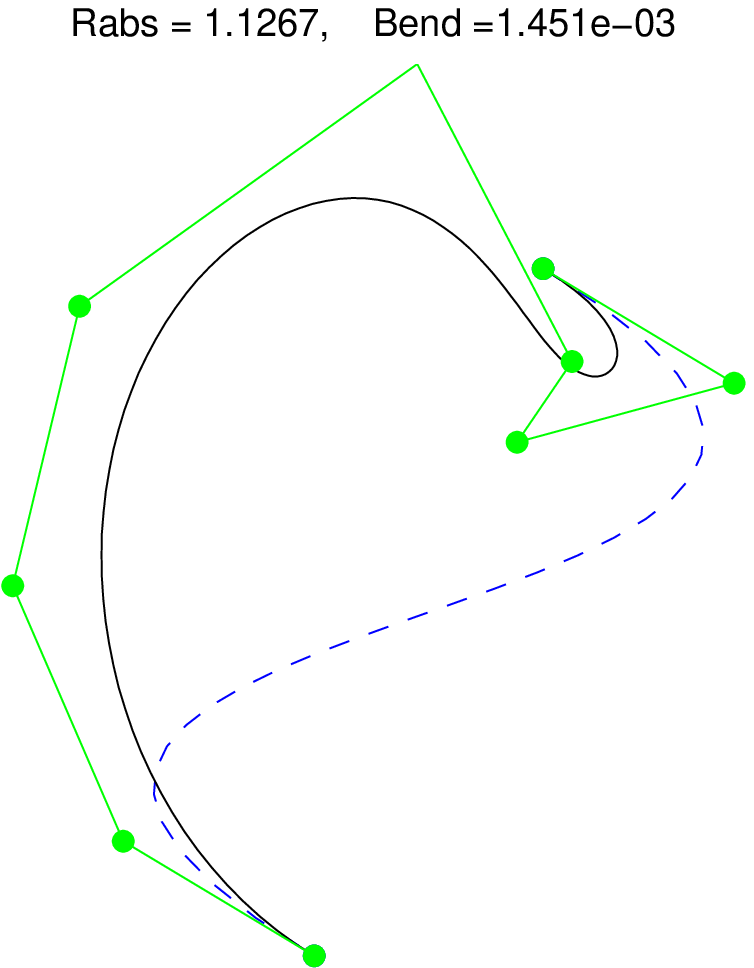}\\
\caption{Illustration of Example \ref{exm4}, case $++$. From left to
right: The conic $I_3^{[A]}=0$; The two conics $\mathcal{C}_A$ and
$\mathcal{C}_B$ (a degenerate conic of the pencil is formed by two
coincident lines); PH B-spline curves (black) corresponding to the
intersection points and cubic polynomial (dashed, blue)
interpolating the given endpoints and tangents, together with the
values of the absolute rotation index and bending energy of the PH
B-Spline curves.} \label{fig:exm4}

\vspace{1cm} \centering
\includegraphics[height=0.24\textwidth,trim = 0 0cm 0 0cm, clip,valign=t]{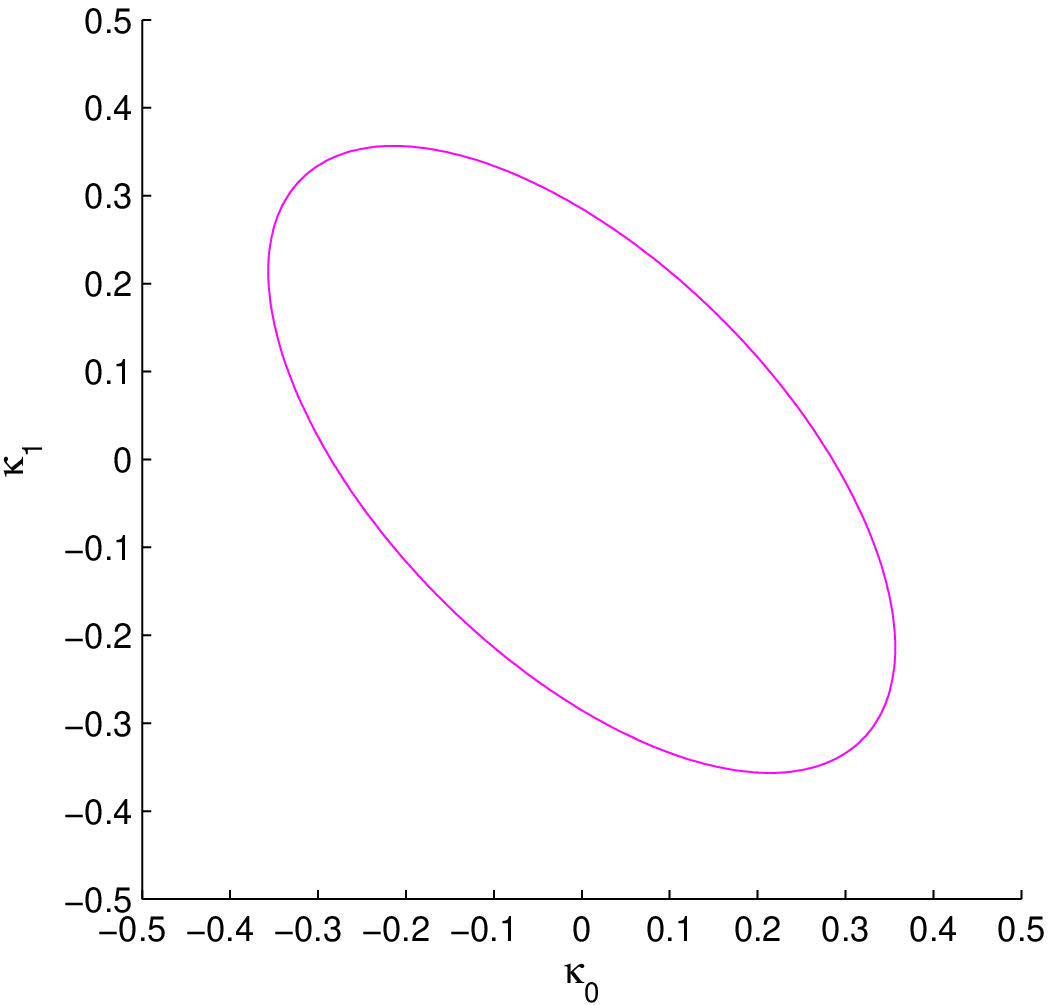}\hfill
\includegraphics[height=0.24\textwidth,trim = 0 0cm 0 0cm, clip,valign=t]{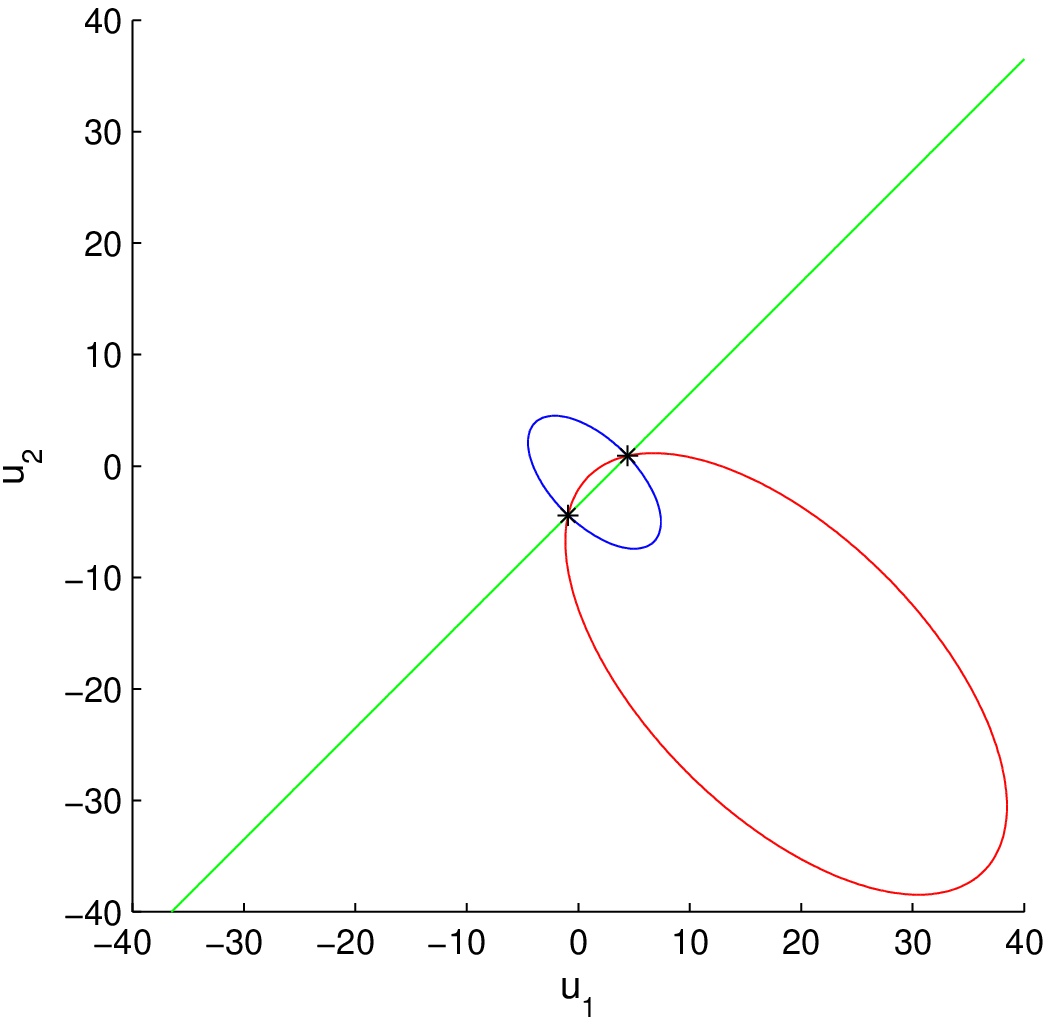}\hfill
\includegraphics[height=0.24\textwidth,valign=t]{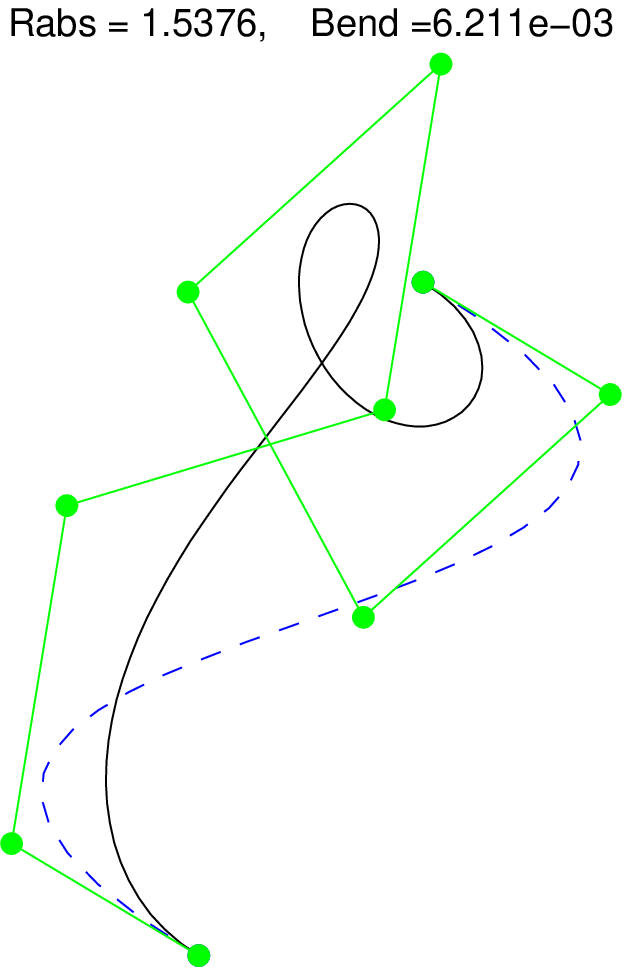}\hfill
\includegraphics[height=0.24\textwidth,valign=t]{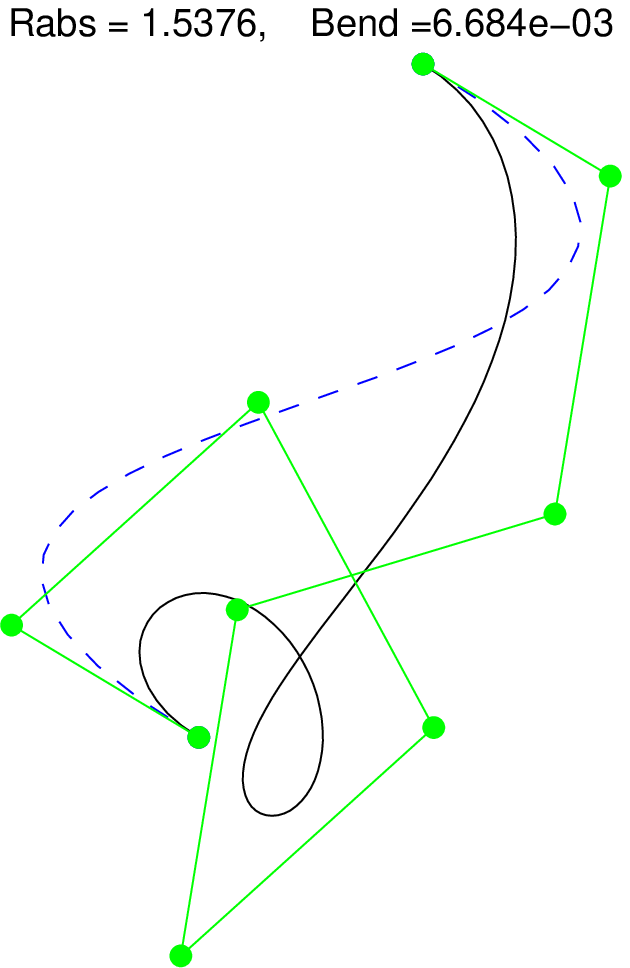}
\caption{Illustration of Example \ref{exm4}, case $+-$. From left to
right: The conic $I_3^{[A]}=0$; The two conics $\mathcal{C}_A$ and
$\mathcal{C}_B$ (a degenerate conic of the pencil is formed by two
coincident lines); PH B-spline curves (black) corresponding to the
intersection points and cubic polynomial (dashed, blue)
interpolating the given endpoints and tangents, together with the
values of the absolute rotation index and bending energy of the PH
B-Spline curves.} \label{fig:exm4_2}
\end{figure}
\end{exm}

\section{Conclusions and future work}
\label{sec7}

We have presented the construction of the very general
class of Pythagorean-Hodograph (PH) B-Spline curves.
 A computational strategy for efficiently calculating their control points as well as their
arc-length and offset curves has been proposed. Moreover, for the
cases of cubic and quintic clamped and closed PH B-Spline curves, an
explicit representation of their control points, their arc-length
and their offsets has been provided. Finally, clamped quintic PH
B-Spline curves have been exploited to solve a second order Hermite
interpolation problem, in order to show an example of practical
application of this new class of curves. This new class of curves
has a great potential for applications in computer-aided design and
manufacturing, robotics, motion control, path planning, computer
graphics, animation, and related fields.\\
The generalization of these planar PH B-Spline
curves to $3$-space is currently under investigation.
In virtue of their high generality and their unique advantages we
hope that these curves will be widely used in practice.
Among other things, in the future we envisage using these curves for
solving different interpolation and approximation problems in the
context of reverse engineering applications.
It might also be interesting to generalize the idea of the paper
\cite{faroukial2015} to understand if a given B-Spline curve is a
{\it PH} B-Spline curve and to recover its related complex pre-image
spline $\z(t)$.

\bibliographystyle{plain}
{\bibliography{biblio_phbsplines}}

\newpage
\section{Appendix}
In this section we report some voluminous formulae of the preceding
sections.

\noindent {\bf Section \ref{sec5}}

 The weights and control points of the offset
curves $\r_h(t)$ of the clamped cubic PH B-Spline curves ($n=1$)
from (\ref{offsetn1clamped}) in section \ref{sec5n1clamped} are
given by:

$$
\begin{array}{l}
\gamma_{5k}=\sigma_{2k}, \quad k=0,1,...,m\,, \smallskip \\
\gamma_{5k+1}=\frac{3}{5} \sigma_{2k} + \frac{2}{5} \sigma_{2k+1}, \quad k=0,1,...,m-1\,,  \smallskip \\
\gamma_{5k+2}=\frac{3}{10} \sigma_{2k} +\frac{3}{5} \sigma_{2k+1} +\frac{1}{10} \sigma_{2k+2}, \quad k=0,1,...,m-1\,,  \smallskip \\
\gamma_{5k+3}=\frac{1}{10} \sigma_{2k} +\frac{3}{5} \sigma_{2k+1} +\frac{3}{10} \sigma_{2k+2}, \quad k=0,1,...,m-1\,,  \smallskip \\
\gamma_{5k+4}=\frac{2}{5} \sigma_{2k+1} +\frac{3}{5} \sigma_{2k+2},
\quad k=0,1,...,m-1\,,
\end{array}
$$
and
$$
\begin{array}{l}
\q_{5k}= \Big(\frac{d_{k+1}}{d_k+d_{k+1}} \r_{2k} + \frac{d_{k}}{d_k+d_{k+1}} \r_{2k+1}\Big) \sigma_{2k} -\ti \, h \p_{2k}, \quad k=0,1,...,m\,,  \medskip \medskip \\
\q_{5k+1}=\frac{3}{5} \, \r_{2k+1} \sigma_{2k} +\frac{2}{5} \, \left ( \frac{d_{k+1}}{d_k+d_{k+1}} \, \r_{2k} + \frac{d_k}{d_k + d_{k+1}} \, \r_{2k+1} \right ) \, \sigma_{2k+1}  \\
\quad \quad  -\ti \, h \big(\frac{3}{5} \p_{2k} + \frac{2}{5} \p_{2k+1} \big), \quad k=0,1,...,m-1\,,  \medskip \medskip\\
\q_{5k+2}=\frac{3}{10} \, \r_{2k+2} \sigma_{2k} + \frac{3}{5} \,
\r_{2k+1} \sigma_{2k+1} + \frac{1}{10} \, \left( \frac{d_{k+1}}{d_k
+ d_{k+1}} \, \r_{2k} + \frac{d_k}{d_k + d_{k+1}} \, \r_{2k+1}
\right) \sigma_{2k+2}
\\
\quad \quad -\ti \, h \big(\frac{3}{10} \p_{2k} + \frac{3}{5} \p_{2k+1} + \frac{1}{10} \p_{2k+2} \big), \quad k=0,1,...,m-1\,,  \medskip \medskip\\
\q_{5k+3}= \frac{1}{10} \, \left( \frac{d_{k+2}}{d_{k+1}+d_{k+2}}
\r_{2k+2} + \frac{d_{k+1}}{d_{k+1}+d_{k+2}} \r_{2k+3} \right)
\sigma_{2k}
+ \frac{3}{5} \, \r_{2k+2} \sigma_{2k+1} + \frac{3}{10} \, \r_{2k+1} \sigma_{2k+2} \\
\quad  -\ti \, h \big(\frac{1}{10} \p_{2k}  + \frac{3}{5} \p_{2k+1} + \frac{3}{10} \p_{2k+2} \big), \quad k=0,1,...,m-1\,,  \medskip \medskip\\
\q_{5k+4}=\frac{2}{5} \, \left ( \frac{d_{k+2}}{d_{k+1}+d_{k+2}} \r_{2k+2} + \frac{d_{k+1}}{d_{k+1} + d_{k+2}} \r_{2k+3} \right ) \sigma_{2k+1} + \frac{3}{5} \r_{2k+2} \sigma_{2k+2}  \\
\quad \quad -\ti \, h \big( \frac{2}{5} \p_{2k+1} + \frac{3}{5}
\p_{2k+2} \big), \quad k=0,1,...,m-1\,,
\end{array}
$$
with $d_0:=0$ and $d_{m+1}:=0$.

The weights and control points of the offset curves $\r_h(t)$ of the
clamped quintic PH B-Spline curves ($n=2$) from
(\ref{offsetn2clamped}) in section \ref{sec5n2clamped} are given by:

$$
\begin{array}{l}
\gamma_{8k}= \frac{4}{9} \sigma_{3k} + \frac{5}{9} \frac{\sigma_{3k+1} d_k + \sigma_{3k} d_{k+1}}{d_k+d_{k+1}}, \quad k=0,1,...,m-1\,, \smallskip \\
\gamma_{8k+1}= \frac{5}{9} \frac{\sigma_{3k+1} d_k + \sigma_{3k} d_{k+1}}{d_k+d_{k+1}} +\frac{4}{9} \sigma_{3k+1}, \quad k=0,1,...,m-1\,,  \smallskip \\
\gamma_{8k+2}= \frac{5}{18} \frac{\sigma_{3k+1} d_k + \sigma_{3k} d_{k+1}}{d_k + d_{k+1}} + \frac{5}{9} \sigma_{3k+1} + \frac16 \sigma_{3k+2}, \quad k=0,1,...,m-2\,,  \smallskip \\
\gamma_{8k+3}= \frac{5}{42} \, \frac{\sigma_{3k+1} d_k + \sigma_{3k}
d_{k+1}}{d_k+d_{k+1}} +\frac{10}{21} \sigma_{3k+1}
+ \frac{5}{14} \sigma_{3k+2} + \frac{1}{21} \sigma_{3k+3}, \quad k=0,1,...,m-2\,,  \smallskip \\
\gamma_{8k+4}=\frac{5}{126} \frac{\sigma_{3k+1} d_k + \sigma_{3k}
d_{k+1}}{d_k + d_{k+1}} + \frac{20}{63} \sigma_{3k+1} +
\frac{10}{21} \sigma_{3k+2}
+\frac{10}{63} \sigma_{3k+3} +\frac{1}{126} \frac{\sigma_{3k+4} d_{k+1} + \sigma_{3k+3} d_{k+2}}{d_{k+1}+d_{k+2}}, \quad k=0,1,...,m-2\,, \smallskip \\
\gamma_{8k+5}= \frac{1}{126} \frac{\sigma_{3k+1} d_k+ \sigma_{3k}
d_{k+1}}{d_k+d_{k+1}} + \frac{10}{63} \sigma_{3k+1} + \frac{10}{21}
\, \sigma_{3k+2}
+\frac{20}{63} \sigma_{3k+3} + \frac{5}{126} \frac{\sigma_{3k+4} d_{k+1} + \sigma_{3k+3} d_{k+2}}{d_{k+1} + d_{k+2}}, \quad k=0,1,...,m-2\,, \smallskip \\
\gamma_{8k+6}=\frac{1}{21} \sigma_{3k+1} + \frac{5}{14} \sigma_{3k+2} +\frac{10}{21} \sigma_{3k+3} + \frac{5}{42} \, \frac{\sigma_{3k+4} d_{k+1} + \sigma_{3k+3} d_{k+2}}{d_{k+1}+d_{k+2}}, \quad k=0,1,...,m-2\,, \smallskip \\
\gamma_{8k+7}=\frac{1}{6} \sigma_{3k+2} + \frac{5}{9} \sigma_{3k+3}
+ \frac{5}{18} \, \frac{\sigma_{3k+4} d_{k+1} + \sigma_{3k+3}
d_{k+2}}{d_{k+1}+d_{k+2}}, \quad k=0,1,...,m-2\,,
\end{array}
$$
and
$$
\begin{array}{lll}
\q_{8k}&=& \frac{4}{9} \, \left( \frac{\r_{3k+2} d_k^2+2 \r_{3k+1}
d_k d_{k+1}+ \r_{3k} d_{k+1}^2}{(d_k+d_{k+1})^2} \right) \,
\sigma_{3k} + \frac{5}{9} \, \left( \frac{\r_{3k+1} d_k+ \r_{3k}
d_{k+1}}{d_k+d_{k+1}} \right) \, \left( \frac{\sigma_{3k+1} d_k+
\sigma_{3k} d_{k+1}}{d_k+d_{k+1}} \right)
\smallskip \\
&-&\ti \, h \, \Big( \frac{4}{9} \p_{3k} + \frac{5}{9} \, \left(
\frac{\p_{3k+1} d_k+ \p_{3k} d_{k+1}}{d_k+d_{k+1}} \right)
\Big), \quad k=0,1,...,m-1\,, \smallskip \\
\q_{8k+1}&=&\frac{5}{9} \, \left( \frac{\r_{3k+2} d_k + \r_{3k+1}
d_{k+1}}{d_k+d_{k+1}} \right) \, \left( \frac{\sigma_{3k+1} d_k+
\sigma_{3k} d_{k+1}}{d_k+d_{k+1}} \right)
+\frac{4}{9} \, \left( \frac{\r_{3k+2} d_k^2 +2 \r_{3k+1} d_k d_{k+1}+ \r_{3k} d_{k+1}^2}{(d_k+d_{k+1})^2} \right) \, \sigma_{3k+1} \smallskip \\
&-&\ti \, h \, \Big( \frac{4}{9} \p_{3k+1} + \frac{5}{9} \, \left( \frac{\p_{3k+1} d_k+\p_{3k} d_{k+1}}{d_k+d_{k+1}} \right) \Big), \quad k=0,1,...,m-1\,,  \smallskip \\
\end{array}
$$
$$
\begin{array}{lll}
\q_{8k+2} &=& \frac{5}{18} \, \r_{3k+2} \, \left( \frac{
\sigma_{3k+1} d_k + \sigma_{3k} d_{k+1}}{d_k+d_{k+1}} \right)
+\frac{5}{9} \, \left( \frac{\r_{3k+2} d_k + \r_{3k+1} d_{k+1}}{d_k+d_{k+1}} \right) \, \sigma_{3k+1} \smallskip \\
&+& \frac{1}{6} \, \left( \frac{\r_{3k+2} d_k^2 + 2\r_{3k+1} d_k d_{k+1} + \r_{3k} d_{k+1}^2}{(d_k+d_{k+1})^2} \right) \, \sigma_{3k+2} \smallskip \\
&-& \ti \, h \, \Big( \frac16 \p_{3k+2} + \frac{5}{9} \p_{3k+1}
+\frac{5}{18} \, \left(\frac{\p_{3k+1} d_k + \p_{3k} d_{k+1}}{d_k+d_{k+1}} \right) \Big), \quad k=0,1,...,m-2\,,  \smallskip \\
\q_{8k+3}&=&\frac{5}{42} \, \r_{3k+3} \, \left(\frac{\sigma_{3k+1}
d_k + \sigma_{3k} d_{k+1}}{d_k+d_{k+1}}\right) +\frac{10}{21} \,
\r_{3k+2} \sigma_{3k+1}
+\frac{5}{14} \, \left( \frac{\r_{3k+2} d_k+ \r_{3k+1} d_{k+1}}{d_k+d_{k+1}}\right) \, \sigma_{3k+2} \smallskip \\
&+& \frac{1}{21} \, \left( \frac{\r_{3k+2} d_k^2+2 \r_{3k+1} d_k
d_{k+1}+ \r_{3k} d_{k+1}^2}{(d_k+d_{k+1})^2} \right) \,
\sigma_{3k+3}
\smallskip \\
&-& \ti h \Big(\frac{1}{21} \p_{3k+3} + \frac{5}{14} \p_{3k+2}
+\frac{10}{21} \p_{3k+1}
+\frac{5}{42} \, \left( \frac{\p_{3k+1} d_k+ \p_{3k} d_{k+1}}{d_k+d_{k+1}} \right) \Big), \quad k=0,1,...,m-2\,,  \smallskip \\
\q_{8k+4}&=& \frac{5}{126} \, \left( \frac{\r_{3k+4}
d_{k+1}+\r_{3k+3} d_{k+2}}{d_{k+1}+d_{k+2}} \right)
\, \left( \frac{\sigma_{3k+1} d_k+\sigma_{3k} d_{k+1}}{d_k+d_{k+1}} \right) \smallskip \\
&+& \frac{20}{63} \r_{3k+3} \sigma_{3k+1} + \frac{10}{21} \r_{3k+2}
\sigma_{3k+2} +
\frac{10}{63} \, \left( \frac{\r_{3k+2} d_k+ \r_{3k+1} d_{k+1}}{d_k+d_{k+1}} \right) \, \sigma_{3k+3} \smallskip \\
&+& \frac{1}{126} \, \left( \frac{\r_{3k+2} d_k^2 +2 \r_{3k+1} d_k
d_{k+1} +\r_{3k} d_{k+1}^2}{(d_k+d_{k+1})^2} \right )
\, \left( \frac{\sigma_{3k+4} d_{k+1}+\sigma_{3k+3} d_{k+2}}{d_{k+1}+d_{k+2}} \right) \smallskip \\
&-& \ti h \Big( \frac{1}{126} \, \left( \frac{\p_{3k+4} d_{k+1}+
\p_{3k+3} d_{k+2}}{d_{k+1}+d_{k+2}} \right) +\frac{10}{63} \p_{3k+3}
+ \frac{10}{21} \p_{3k+2} + \frac{20}{63} \p_{3k+1}
+\frac{5}{126} \, \left( \frac{\p_{3k+1} d_k+\p_{3k} d_{k+1}}{d_k+d_{k+1}} \right)  \Big),\smallskip \\
&& k=0,1,...,m-2\,, \smallskip \\
\q_{8k+5}&=& \frac{1}{126} \, \left( \frac{\r_{3k+5} d_{k+1}^2+2
\r_{3k+4} d_{k+1} d_{k+2}+\r_{3k+3} d_{k+2}^2}{(d_{k+1}+d_{k+2})^2}
\right)
\, \left( \frac{\sigma_{3k+1} d_k+ \sigma_{3k} d_{k+1}}{d_k+d_{k+1}} \right) \smallskip \\
&+& \frac{10}{63}  \, \left( \frac{\r_{3k+4} d_{k+1}+ \r_{3k+3}
d_{k+2}}{d_{k+1}+d_{k+2}} \right) \, \sigma_{3k+1}
+ \frac{10}{21} \r_{3k+3} \sigma_{3k+2} + \frac{20}{63} \r_{3k+2} \sigma_{3k+3} \smallskip \\
&+& \frac{5}{126} \, \left( \frac{\r_{3k+2} d_{k}+\r_{3k+1}
d_{k+1}}{d_k+d_{k+1}} \right)
\, \left( \frac{\sigma_{3k+4} d_{k+1}+ \sigma_{3k+3} d_{k+2}}{d_{k+1}+d_{k+2}} \right) \smallskip \\
&-&\ti h \Big( \frac{5}{126} \, \left( \frac{\p_{3k+4}
d_{k+1}+\p_{3k+3} d_{k+2}}{d_{k+1}+d_{k+2}} \right) + \frac{20}{63}
\p_{3k+3} + \frac{10}{21} \p_{3k+2} + \frac{10}{63} \p_{3k+1} +
\frac{1}{126} \, \left(\frac{\p_{3k+1} d_{k} + \p_{3k}
d_{k+1}}{d_k+d_{k+1}} \right)
 \Big), \smallskip\\
&& k=0,1,...,m-2\,,
\end{array}
$$
$$
\begin{array}{lll}
\q_{8k+6}&=&\frac{1}{21} \, \left(\frac{\r_{3k+5} d_{k+1}^2+2
\r_{3k+4} d_{k+1} d_{k+2}+ \r_{3k+3}
d_{k+2}^2}{(d_{k+1}+d_{k+2})^2}\right) \, \sigma_{3k+1}
+ \frac{5}{14} \, \left( \frac{\r_{3k+4} d_{k+1} + \r_{3k+3} d_{k+2}}{d_{k+1}+d_{k+2}} \right) \,  \sigma_{3k+2} \smallskip \\
&+&\frac{10}{21} \r_{3k+3} \sigma_{3k+3} + \frac{5}{42} \, \r_{3k+2}
\, \left( \frac{\sigma_{3k+4} d_{k+1} + \sigma_{3k+3}
d_{k+2}}{d_{k+1}+d_{k+2}} \right)
\smallskip \\
&-& \ti h \Big( \frac{5}{42} \, \left( \frac{\p_{3k+4} d_{k+1}+
\p_{3k+3} d_{k+2}}{d_{k+1}+d_{k+2}} \right)
+ \frac{10}{21} \p_{3k+3} + \frac{5}{14} \p_{3k+2} + \frac{1}{21} \p_{3k+1} \Big), \quad k=0,1,...,m-2\,, \smallskip \\
\q_{8k+7}&=& \frac{1}{6} \, \left( \frac{\r_{3k+5} d_{k+1}^2 +2
\r_{3k+4} d_{k+1} d_{k+2}+ \r_{3k+3} d_{k+2}^2}{(d_{k+1}+d_{k+2})^2}
\right) \,  \sigma_{3k+2}
+ \frac{5}{9} \, \left( \frac{ \r_{3k+4} d_{k+1} + \r_{3k+3} d_{k+2}}{d_{k+1}+d_{k+2}}\right) \, \sigma_{3k+3} \smallskip \\
&+& \frac{5}{18} \, \r_{3k+3} \, \left( \frac{\sigma_{3k+4} d_{k+1}
+ \sigma_{3k+3} d_{k+2}}{d_{k+1}+d_{k+2}} \right) -\ti h
\Big(\frac{5}{18} \, \left( \frac{\p_{3k+4} d_{k+1}+\p_{3k+3}
d_{k+2}}{d_{k+1}+d_{k+2}} \right)
+ \frac{5}{9} \p_{3k+3} + \frac{1}{6} \p_{3k+2} \Big), \smallskip \\
&& k=0,1,...,m-2\,.
\end{array}
$$

The weights and control points of the offset curves $\r_h(t)$ of the
closed cubic PH B-Spline curves ($n=1$) from (\ref{offsetn1closed})
in section \ref{sec5n1closed} are given by:

$$
\begin{array}{l}
\gamma_0=\gamma_1=\gamma_2=\gamma_3=0, \smallskip\\
\gamma_4=\frac{d_0}{10(d_0 + d_1)} \, \sigma_1, \smallskip\\
\gamma_5= \frac{3}{10} \, \sigma_1, \smallskip\\
\gamma_6= \frac{3}{5} \, \sigma_1, \smallskip\\
\gamma_{5k+7}=\sigma_{2k+1}, \quad k=0,1,...,m+1\,, \smallskip\\
\gamma_{5k+8}= \frac{3}{5} \, \sigma_{2k+1} + \frac{2}{5} \, \sigma_{2k+2}, \quad k=0,1,...,m\,, \smallskip\\
\gamma_{5k+9}= \frac{3}{10} \, \sigma_{2k+1} + \frac{3}{5} \, \sigma_{2k+2} + \frac{1}{10} \, \sigma_{2k+3}, \quad k=0,1,...,m\,, \smallskip\\
\gamma_{5k+10}=\frac{1}{10} \, \sigma_{2k+1} + \frac{3}{5} \sigma_{2k+2} + \frac{3}{10} \, \sigma_{2k+3}, \quad k=0,1,...,m\,, \smallskip\\
\gamma_{5k+11}=\frac{2}{5} \, \sigma_{2k+2} + \frac{3}{5} \, \sigma_{2k+3}, \quad k=0,1,...,m\,, \smallskip\\
\gamma_{5m+13}= \frac{3}{5} \, \sigma_{2m+3}, \smallskip\\
\gamma_{5m+14}= \frac{3}{10} \, \sigma_{2m+3}, \smallskip\\
\end{array}
$$
$$
\begin{array}{l}
\gamma_{5m+15}= \frac{d_{m+4}}{10 (d_{m+3}+d_{m+4})} \, \sigma_{2m+3},  \smallskip\\
\gamma_{5m+16}=\gamma_{5m+17}=\gamma_{5m+18}=\gamma_{5m+19}=0\,,
\end{array}
$$
and
$$
\begin{array}{l}
\q_0=\q_1=\q_2=\q_3=0, \smallskip \\
\q_4= \frac{d_0}{10(d_0 + d_1)} \, ( \r_0 \sigma_1 - \ti h \p_1), \smallskip  \\
\q_5= \frac{3}{10}  \, (\r_0 \sigma_1   - \ti h \p_1), \smallskip \\
\q_6= \frac{3}{5} \, (\r_1 \sigma_1   - \ti h \p_1), \smallskip \\
\q_{5k+7}= \Big( \frac{d_{k+2}}{d_{k+1}+d_{k+2}} \r_{2k+1}  +
\frac{d_{k+1}}{d_{k+1}+d_{k+2}} \r_{2k+2} \Big)
\sigma_{2k+1} - \ti h \p_{2k+1}, \quad k=0,1,...,m+1 \smallskip \\
\q_{5k+8}= \frac{3}{5} \, \r_{2k+2} \sigma_{2k+1}  + \frac{2}{5} \,
\Big( \frac{d_{k+2}}{d_{k+1}+d_{k+2}} \r_{2k+1}
+ \frac{d_{k+1}}{d_{k+1}+d_{k+2}} \r_{2k+2} \Big) \sigma_{2k+2} \smallskip \\
\qquad - \ti h \big( \frac{3}{5} \p_{2k+1} + \frac{2}{5} \p_{2k+2} \big), \quad k=0,1,...,m\,,  \smallskip \\
\q_{5k+9}= \frac{3}{10} \r_{2k+3}  \sigma_{2k+1} + \frac{3}{5}
\r_{2k+2} \sigma_{2k+2} + \frac{1}{10} \, \Big(
\frac{d_{k+2}}{d_{k+1}+d_{k+2}} \r_{2k+1} +
\frac{d_{k+1}}{d_{k+1}+d_{k+2}} \r_{2k+2} \Big)
\sigma_{2k+3} \smallskip \\
\qquad - \ti h \big(\frac{3}{10} \p_{2k+1} + \frac{3}{5} \p_{2k+2} + \frac{1}{10} \p_{2k+3} \big), \quad k=0,1,...,m\,, \smallskip \\
\q_{5k+10}= \frac{1}{10} \, \Big( \frac{d_{k+3}}{d_{k+2}+d_{k+3}}
\r_{2k+3} + \frac{d_{k+2}}{d_{k+2}+d_{k+3}} \r_{2k+4} \Big) \,
\sigma_{2k+1}
+ \frac{3}{5} \r_{2k+3} \, \sigma_{2k+2} +\frac{3}{10} \r_{2k+2} \, \sigma_{2k+3} \smallskip \\
\qquad  -\ti h \big( \frac{1}{10} \p_{2k+1}  + \frac{3}{5} \p_{2k+2} +  \frac{3}{10} \p_{2k+3} \big), \quad k=0,1,...,m\,,  \smallskip \\
\q_{5k+11}= \frac{2}{5} \, \Big( \frac{d_{k+3}}{d_{k+2}+d_{k+3}} \r_{2k+3} + \frac{d_{k+2}}{d_{k+2}+d_{k+3}} \r_{2k+4} \Big) \, \sigma_{2k+2} + \frac{3}{5} \r_{2k+3} \, \sigma_{2k+3} \smallskip\\
\qquad  -\ti h \big( \frac{2}{5} \p_{2k+2} + \frac{3}{5} \p_{2k+3} \big), \quad k=0,1,...,m\,, \smallskip \\
\q_{5m+13}= \frac{3}{5} \, (\r_{2m+4} \sigma_{2m+3} - \ti h \p_{2m+3}), \smallskip \\
\q_{5m+14}= \frac{3}{10}  \, (\r_{2m+5} \sigma_{2m+3} - \ti h \p_{2m+3}), \smallskip \\
\q_{5m+15}= \frac{d_{m+4}}{10 (d_{m+3}+d_{m+4})} \, (\r_{2m+5} \sigma_{2m+3}- \ti h \p_{2m+3} ), \smallskip \\
\q_{5m+16}=\q_{5m+17}=\q_{5m+18}=\q_{5m+19}=0.
\end{array}
$$

\begin{rmk}
Since $\sigma_0=\sigma_{2m+4}=0$ as well as $\p_0=\p_{2m+4}=0$, the
values of  $\zeta_{3}^{0,0}$, $\zeta_{4}^{0,0}$, $\zeta_{5}^{1,0}$,
$\zeta_{6}^{1,0}$, $\zeta_{6}^{2,0}$ and
$\zeta_{5m+13}^{2m+3,2m+4}$, $\zeta_{5m+13}^{2m+4,2m+4}$,
$\zeta_{5m+14}^{2m+4,2m+4}$, $\zeta_{5m+15}^{2m+5,2m+4}$,
$\zeta_{5m+16}^{2m+5,2m+4}$ are indeed not used to compute
$\gamma_k$ and $\q_k$, $k=0,...,5m+19$.
\end{rmk}

The weights and control points of the offset curves $\r_h(t)$ of the
closed quintic PH B-Spline curves ($n=2$) from
(\ref{offsetn2closed}) in section \ref{sec5n2closed} are given by:

$$
\begin{array}{l}
\gamma_0=\gamma_1=\gamma_2=\gamma_3=\gamma_4=\gamma_5=\gamma_6=0,\smallskip\\
\gamma_7=\frac{1}{126} \, \left(\frac{d_{0}}{d_{0}+d_{1}} \right )^2
\, \left( \frac{d_{1}}{d_{1}+d_{2}} \right) \, \sigma_{2}, \smallskip \\
\gamma_8=\frac{5}{126} \, \left( \frac{d_{0}}{d_{0}+d_{1}} \right)
\, \left( \frac{d_{1}}{d_{1}+d_{2}} \right) \, \sigma_{2}, \smallskip\\
\gamma_9=\frac{5}{42} \, \left( \frac{d_{1}}{d_{1}+d_{2}}\right) \, \sigma_{2}, \smallskip\\
\gamma_{10}=\frac{5}{18} \, \left(\frac{d_{1}}{d_{1}+d_{2}} \right) \, \sigma_{2}, \smallskip\\
\gamma_{8k+11}=\frac{4}{9} \sigma_{3k+1} + \frac{5}{9} \frac{\sigma_{3k+2} d_{k+1} + \sigma_{3k+1} d_{k+2}}{d_{k+1}+d_{k+2}}, \quad k=0,...,m+3\,, \smallskip\\
\gamma_{8k+12}=\frac{5}{9} \frac{\sigma_{3k+2} d_{k+1} + \sigma_{3k+1} d_{k+2}}{d_{k+1}+d_{k+2}} +\frac{4}{9} \sigma_{3k+2}, \quad k=0,...,m+3\,, \smallskip\\
\gamma_{8k+13}=\frac{5}{18} \frac{\sigma_{3k+2} d_{k+1} + \sigma_{3k+1} d_{k+2}}{d_{k+1} + d_{k+2}} + \frac{5}{9} \sigma_{3k+2} + \frac16 \sigma_{3k+3}, \quad k=0,...,m+2\,, \smallskip\\
\gamma_{8k+14}=\frac{5}{42} \, \frac{\sigma_{3k+2} d_{k+1} +
\sigma_{3k+1} d_{k+2}}{d_{k+1}+d_{k+2}} +\frac{10}{21} \sigma_{3k+2}
+ \frac{5}{14} \sigma_{3k+3} + \frac{1}{21} \sigma_{3k+4}, \quad k=0,...,m+2\,, \smallskip\\
\gamma_{8k+15}=\frac{5}{126} \frac{\sigma_{3k+2} d_{k+1} +
\sigma_{3k+1} d_{k+2}}{d_{k+1} + d_{k+2}} + \frac{20}{63}
\sigma_{3k+2} +
\frac{10}{21} \sigma_{3k+3} +\frac{10}{63} \sigma_{3k+4} +\frac{1}{126} \frac{\sigma_{3k+5} d_{k+2} + \sigma_{3k+4} d_{k+3}}{d_{k+2}+d_{k+3}}, \quad k=0,...,m+2\,, \smallskip\\
\gamma_{8k+16}=\frac{1}{126} \frac{\sigma_{3k+2} d_{k+1}+
\sigma_{3k+1} d_{k+2}}{d_{k+1}+d_{k+2}} + \frac{10}{63}
\sigma_{3k+2} + \frac{10}{21} \, \sigma_{3k+3}
+\frac{20}{63} \sigma_{3k+4} + \frac{5}{126} \frac{\sigma_{3k+5} d_{k+2} + \sigma_{3k+4} d_{k+3}}{d_{k+2} + d_{k+3}}, \quad k=0,...,m+2\,, \smallskip\\
\gamma_{8k+17}=\frac{1}{21} \sigma_{3k+2} + \frac{5}{14} \sigma_{3k+3} +\frac{10}{21} \sigma_{3k+4} + \frac{5}{42} \, \frac{\sigma_{3k+5} d_{k+2} + \sigma_{3k+4} d_{k+3}}{d_{k+2}+d_{k+3}}, \quad k=0,...,m+2\,, \smallskip\\
\gamma_{8k+18}=\frac{1}{6} \sigma_{3k+3} + \frac{5}{9} \sigma_{3k+4} + \frac{5}{18} \, \frac{\sigma_{3k+5} d_{k+2} + \sigma_{3k+4} d_{k+3}}{d_{k+2}+d_{k+3}}, \quad k=0,...,m+2\,, \smallskip\\
\end{array}
$$
$$
\begin{array}{l}
\gamma_{8m+37}=\frac{5}{18} \, \left(\frac{d_{m+5}}{d_{m+4} + d_{m+5}}\right) \, \sigma_{3m+10}, \smallskip\\
\gamma_{8m+38}=\frac{5}{42} \, \left( \frac{d_{m+5}}{d_{m+4}+d_{m+5}}\right) \, \sigma_{3m+10}, \smallskip\\
\gamma_{8m+39}=\frac{5}{126} \, \left(
\frac{d_{m+6}}{d_{m+5}+d_{m+6}} \right)
\, \left( \frac{d_{m+5}}{d_{m+4}+d_{m+5}} \right) \, \sigma_{3m+10},\smallskip\\
\gamma_{8m+40}=\frac{1}{126} \, \left(
\frac{d_{m+6}}{d_{m+5}+d_{m+6}} \right )^2
\, \left( \frac{d_{m+5}}{d_{m+4}+d_{m+5}} \right) \, \sigma_{3m+10}, \smallskip\\
\gamma_{8m+41}=\gamma_{8m+42}=\gamma_{8m+43}=\gamma_{8m+44}=\gamma_{8m+45}=\gamma_{8m+46}=\gamma_{8m+47}=0
\end{array}
$$
and
$$
\begin{array}{lll}
\q_0&=&\q_1 \quad = \quad \q_2 \quad = \quad \q_3 \quad = \quad \q_4 \quad = \quad \q_5 \quad = \quad \q_6 \quad = \quad 0,\smallskip\\
\q_7&=& \frac{1}{126} \, \left( \frac{d_{0}}{d_{0}+d_{1}} \right )^2
\, \left( \frac{d_{1}}{d_{1}+d_{2}} \right) \, \Big(\r_{0} \sigma_{2} - \ti h \p_{2} \Big), \smallskip\\
\q_8&=& \frac{5}{126} \, \left( \frac{d_{0}}{d_{0}+d_{1}} \right) \,
\left( \frac{d_{1}}{d_{1}+d_{2}} \right)
\, \Big( \r_{0} \sigma_{2} -\ti h \p_{2} \Big), \smallskip\\
\q_9&=& \frac{5}{42} \, \left( \frac{ d_{1} }{d_{1}+d_{2}} \right) \, \Big( \r_{0} \sigma_{2} - \ti h \p_{2} \Big), \smallskip\\
\q_{10}&=&\frac{5}{18} \, \left( \frac{d_{1}}{d_{1}+d_{2}} \right) \, \Big( \r_{1} \sigma_{2} - \ti h \p_{2}  \Big), \smallskip\\
\q_{8k+11}&=&\frac{4}{9} \, \left( \frac{\r_{3k+3} d_{k+1}^2+2
\r_{3k+2} d_{k+1} d_{k+2}+ \r_{3k+1} d_{k+2}^2}{(d_{k+1}+d_{k+2})^2}
\right) \, \sigma_{3k+1} + \frac{5}{9} \, \left( \frac{\r_{3k+2}
d_{k+1} + \r_{3k+1} d_{k+2}}{d_{k+1} + d_{k+2}} \right) \, \left(
\frac{\sigma_{3k+2} d_{k+1} + \sigma_{3k+1} d_{k+2}}{d_{k+1} +
d_{k+2}} \right)
\smallskip \\
&-&\ti \, h \, \Big( \frac{4}{9} \p_{3k+1} + \frac{5}{9} \, \left(
\frac{\p_{3k+2} d_{k+1}+ \p_{3k+1} d_{k+2}}{d_{k+1}+d_{k+2}} \right)
\Big), \quad k=0,...,m+3\,, \smallskip\\
\q_{8k+12}&=&\frac{5}{9} \, \left( \frac{\r_{3k+3} d_{k+1} +
\r_{3k+2} d_{k+2}}{d_{k+1}+d_{k+2}} \right) \, \left(
\frac{\sigma_{3k+2} d_{k+1} + \sigma_{3k+1}
d_{k+2}}{d_{k+1}+d_{k+2}} \right)
+\frac{4}{9} \, \left( \frac{\r_{3k+3} d_{k+1}^2 +2 \r_{3k+2} d_{k+1} d_{k+2}+ \r_{3k+1} d_{k+2}^2}{(d_{k+1}+d_{k+2})^2} \right) \, \sigma_{3k+2} \smallskip \\
&-&\ti \, h \, \Big( \frac{4}{9} \p_{3k+2} + \frac{5}{9} \, \left( \frac{\p_{3k+2} d_{k+1}+\p_{3k+1} d_{k+2}}{d_{k+1}+d_{k+2}} \right) \Big), \quad k=0,...,m+3\,, \smallskip\\
\q_{8k+13}&=&\frac{5}{18} \, \r_{3k+3} \, \left( \frac{
\sigma_{3k+2} d_{k+1} + \sigma_{3k+1} d_{k+2}}{d_{k+1}+d_{k+2}}
\right)
+\frac{5}{9} \, \left( \frac{\r_{3k+3} d_{k+1} + \r_{3k+2} d_{k+2}}{d_{k+1}+d_{k+2}} \right) \, \sigma_{3k+2} \smallskip \\
&+& \frac{1}{6} \, \left( \frac{\r_{3k+3} d_{k+1}^2 + 2\r_{3k+2} d_{k+1} d_{k+2} + \r_{3k+1} d_{k+2}^2}{(d_{k+1}+d_{k+2})^2} \right) \, \sigma_{3k+3} \smallskip \\
&-& \ti \, h \, \Big( \frac16 \p_{3k+3} + \frac{5}{9} \p_{3k+2}
+\frac{5}{18} \, \left(\frac{\p_{3k+2} d_{k+1} + \p_{3k+1} d_{k+2}}{d_{k+1}+d_{k+2}} \right) \Big), \quad k=0,...,m+2\,, \smallskip \\
\q_{8k+14}&=&\frac{5}{42} \, \r_{3k+4} \, \left(\frac{\sigma_{3k+2}
d_{k+1} + \sigma_{3k+1} d_{k+2}}{d_{k+1}+d_{k+2}}\right)
+\frac{10}{21} \, \r_{3k+3} \sigma_{3k+2}
+\frac{5}{14} \, \left( \frac{\r_{3k+3} d_{k+1} + \r_{3k+2} d_{k+2}}{d_{k+1}+d_{k+2}}\right) \, \sigma_{3k+3} \smallskip \\
&+& \frac{1}{21} \, \left( \frac{\r_{3k+3} d_{k+1}^2+2 \r_{3k+2}
d_{k+1} d_{k+2}+ \r_{3k+1} d_{k+2}^2}{(d_{k+1}+d_{k+2})^2} \right)
\, \sigma_{3k+4}
\smallskip \\
&-& \ti h \Big(\frac{1}{21} \p_{3k+4} + \frac{5}{14} \p_{3k+3}
+\frac{10}{21} \p_{3k+2}
+\frac{5}{42} \, \left( \frac{\p_{3k+2} d_{k+1} + \p_{3k+1} d_{k+2}}{d_{k+1}+d_{k+2}} \right) \Big), \quad k=0,...,m+2\,, \smallskip\\
\q_{8k+15}&=&\frac{5}{126} \, \left( \frac{\r_{3k+5}
d_{k+2}+\r_{3k+4} d_{k+3}}{d_{k+2}+d_{k+3}} \right)
\, \left( \frac{\sigma_{3k+2} d_{k+1}+\sigma_{3k+1} d_{k+2}}{d_{k+1}+d_{k+2}} \right) \smallskip \\
&+& \frac{20}{63} \r_{3k+4} \sigma_{3k+2} + \frac{10}{21} \r_{3k+3}
\sigma_{3k+3} +
\frac{10}{63} \, \left( \frac{\r_{3k+3} d_{k+1}+ \r_{3k+2} d_{k+2}}{d_{k+1}+d_{k+2}} \right) \, \sigma_{3k+4} \smallskip \\
&+& \frac{1}{126} \, \left( \frac{\r_{3k+3} d_{k+1}^2 +2 \r_{3k+2}
d_{k+1} d_{k+2} +\r_{3k+1} d_{k+2}^2}{(d_{k+1}+d_{k+2})^2} \right )
\, \left( \frac{\sigma_{3k+5} d_{k+2}+\sigma_{3k+4} d_{k+3}}{d_{k+2}+d_{k+3}} \right) \smallskip \\
&-& \ti h \Big( \frac{1}{126} \, \left( \frac{\p_{3k+5} d_{k+2}+
\p_{3k+4} d_{k+3}}{d_{k+2}+d_{k+3}} \right) +\frac{10}{63} \p_{3k+4}
+ \frac{10}{21} \p_{3k+3} + \frac{20}{63} \p_{3k+2}
+\frac{5}{126} \, \left( \frac{\p_{3k+2} d_{k+1} + \p_{3k+1} d_{k+2}}{d_{k+1}+d_{k+2}} \right)  \Big), \smallskip \\
&& k=0,...,m+2\,, \smallskip \\
\q_{8k+16}&=&\frac{1}{126} \, \left( \frac{\r_{3k+6} d_{k+2}^2+2
\r_{3k+5} d_{k+2} d_{k+3}+\r_{3k+4} d_{k+3}^2}{(d_{k+2}+d_{k+3})^2}
\right)
\, \left( \frac{\sigma_{3k+2} d_{k+1} + \sigma_{3k+1} d_{k+2}}{d_{k+1}+d_{k+2}} \right) \smallskip \\
&+& \frac{10}{63}  \, \left( \frac{\r_{3k+5} d_{k+2}+ \r_{3k+4}
d_{k+3}}{d_{k+2}+d_{k+3}} \right) \, \sigma_{3k+2}
+ \frac{10}{21} \r_{3k+4} \sigma_{3k+3} + \frac{20}{63} \r_{3k+3} \sigma_{3k+4} \smallskip \\
&+& \frac{5}{126} \, \left( \frac{\r_{3k+3} d_{k+1}+\r_{3k+2}
d_{k+2}}{d_{k+1}+d_{k+2}} \right)
\, \left( \frac{\sigma_{3k+5} d_{k+2}+ \sigma_{3k+4} d_{k+3}}{d_{k+2}+d_{k+3}} \right) \smallskip \\
&-&\ti h \Big( \frac{5}{126} \, \left( \frac{\p_{3k+5}
d_{k+2}+\p_{3k+4} d_{k+3}}{d_{k+2}+d_{k+3}} \right) + \frac{20}{63}
\p_{3k+4} + \frac{10}{21} \p_{3k+3} + \frac{10}{63} \p_{3k+2} +
\frac{1}{126} \, \left(\frac{\p_{3k+2} d_{k+1} + \p_{3k+1}
d_{k+2}}{d_{k+1}+d_{k+2}} \right)
 \Big), \smallskip \\
&& k=0,...,m+2\,, \smallskip \\
\q_{8k+17}&=&\frac{1}{21} \, \left(\frac{\r_{3k+6} d_{k+2}^2+2
\r_{3k+5} d_{k+2} d_{k+3}+ \r_{3k+4}
d_{k+3}^2}{(d_{k+2}+d_{k+3})^2}\right) \, \sigma_{3k+2}
+ \frac{5}{14} \, \left( \frac{\r_{3k+5} d_{k+2} + \r_{3k+4} d_{k+3}}{d_{k+2}+d_{k+3}} \right) \,  \sigma_{3k+3} \smallskip \\
&+&\frac{10}{21} \r_{3k+4} \sigma_{3k+4} + \frac{5}{42} \, \r_{3k+3}
\, \left( \frac{\sigma_{3k+5} d_{k+2} + \sigma_{3k+4}
d_{k+3}}{d_{k+2}+d_{k+3}} \right)
\smallskip \\
&-& \ti h \Big( \frac{5}{42} \, \left( \frac{\p_{3k+5} d_{k+2}+
\p_{3k+4} d_{k+3}}{d_{k+2}+d_{k+3}} \right)
+ \frac{10}{21} \p_{3k+4} + \frac{5}{14} \p_{3k+3} + \frac{1}{21} \p_{3k+2} \Big), \quad k=0,...,m+2\,, \smallskip\\
\end{array}
$$
$$
\begin{array}{lll}
\q_{8k+18}&=&\frac{1}{6} \, \left( \frac{\r_{3k+6} d_{k+2}^2 +2
\r_{3k+5} d_{k+2} d_{k+3}+ \r_{3k+4} d_{k+3}^2}{(d_{k+2}+d_{k+3})^2}
\right) \,  \sigma_{3k+3}
+ \frac{5}{9} \, \left( \frac{ \r_{3k+5} d_{k+2} + \r_{3k+4} d_{k+3}}{d_{k+2}+d_{k+3}}\right) \, \sigma_{3k+4} \smallskip \\
&+& \frac{5}{18} \, \r_{3k+4} \, \left( \frac{\sigma_{3k+5} d_{k+2}
+ \sigma_{3k+4} d_{k+3}}{d_{k+2}+d_{k+3}} \right) -\ti h
\Big(\frac{5}{18} \, \left( \frac{\p_{3k+5} d_{k+2}+\p_{3k+4}
d_{k+3}}{d_{k+2}+d_{k+3}} \right)
+ \frac{5}{9} \p_{3k+4} + \frac{1}{6} \p_{3k+3} \Big), \smallskip \\
&& k=0,...,m+2\,, \smallskip \\
\q_{8m+37}&=&\frac{5}{18} \, \left( \frac{ d_{m+5}}{d_{m+4}+d_{m+5}}
\right)
\, \Big( \r_{3m+12} \sigma_{3m+10} - \ti \, h \p_{3m+10} \Big), \smallskip\\
\q_{8m+38}&=& \frac{5}{42} \,
\left(\frac{d_{m+5}}{d_{m+4}+d_{m+5}}\right) \,
\Big( \r_{3m+13}  \sigma_{3m+10} - \ti h  \p_{3m+10} \Big), \smallskip\\
\q_{8m+39}&=& \frac{5}{126} \, \left(
\frac{d_{m+6}}{d_{m+5}+d_{m+6}} \right) \, \left(
\frac{d_{m+5}}{d_{m+4}+d_{m+5}} \right) \Big( \r_{3m+13}
\sigma_{3m+10}
- \ti h \p_{3m+10} \Big), \smallskip\\
\q_{8m+40}&=& \frac{1}{126} \, \left(
\frac{d_{m+6}}{d_{m+5}+d_{m+6}} \right )^2 \, \left(
\frac{d_{m+5}}{d_{m+4}+d_{m+5}} \right) \,
\Big( \r_{3m+13} \sigma_{3m+10} - \ti h \p_{3m+10} \Big),  \smallskip\\
\q_{8m+41}&=& \q_{8m+42} \quad = \quad \q_{8m+43} \quad = \quad
\q_{8m+44} \quad = \quad \q_{8m+45} \quad = \quad \q_{8m+46} \quad =
\quad \q_{8m+47}=0.
\end{array}
$$

\begin{rmk}
Since $\sigma_0=\sigma_1=\sigma_{3m+11}=\sigma_{3m+12}=0$ as well as
$\p_0=\p_1=\p_{3m+11}=\p_{3m+12}=0$, the values of
$\zeta_{5}^{0,0}$, $\zeta_{6}^{0,0}$, $\zeta_{6}^{0,1}$,
$\zeta_{7}^{0,0}$, $\zeta_{7}^{0,1}$, $\zeta_{8}^{0,1}$,
$\zeta_{8}^{1,0}$, $\zeta_{9}^{0,1}$, $\zeta_{9}^{1,0}$,
$\zeta_{9}^{1,1}$, $\zeta_{9}^{2,0}$, $\zeta_{10}^{1,0}$,
$\zeta_{10}^{1,1}$, $\zeta_{10}^{2,0}$, $\zeta_{10}^{2,1}$,
$\zeta_{10}^{3,0}$ and $\zeta_{8m+37}^{3m+10,3m+12}$,
$\zeta_{8m+37}^{3m+11,3m+11}$, $\zeta_{8m+37}^{3m+11,3m+12}$,
$\zeta_{8m+37}^{3m+12,3m+11}$, $\zeta_{8m+37}^{3m+12,3m+12}$,
$\zeta_{8m+38}^{3m+11,3m+12}$, $\zeta_{8m+38}^{3m+12,3m+11}$,
$\zeta_{8m+38}^{3m+12,3m+12}$, $\zeta_{8m+38}^{3m+13,3m+11}$,
$\zeta_{8m+39}^{3m+12,3m+12}$, $\zeta_{8m+39}^{3m+13,3m+11}$,
$\zeta_{8m+40}^{3m+13,3m+11}$, $\zeta_{8m+40}^{3m+13,3m+12}$,
$\zeta_{8m+41}^{3m+13,3m+11}$, $\zeta_{8m+41}^{3m+13,3m+12}$,
$\zeta_{8m+42}^{3m+13,3m+12}$ are indeed not used to compute
$\gamma_k$ and $\q_k$, $k=0,...,8m+47$.
\end{rmk}

\bigskip
\noindent {\bf Section \ref{sec6}}

 In section \ref{sec6} the coefficients of
the symmetric matrices $A=\left(a_{i,j}\right)_{i,j=0,1,2}$ and
$B=\left(b_{i,j}\right)_{i,j=0,1,2}$ in equation \eqref{conics} read
as follows:

$$
\begin{array}{lll}
a_{0,0} &=& \p^*_{0,x} - \p^*_{1,x} + \frac{1}{5} (a
\d_{0,x}+(1-a)\d_{1,x}) - \frac{1}{240 u_0^2} \kappa_0^2  a^2 (3-a)
(u_0^2+v_0^2)^4 - \frac{1}{240u_3^2}
\kappa_1^2(1-a)^2(2+a)(u_3^2+v_3^2)^4 \smallskip\\
&& + \frac{1} {80u_0u_3} \kappa_0\kappa_1
a(1-a)(u_0^2u_3^2+v_0^2v_3^2+u_3^2v_0^2+u_0^2v_3^2)^2
 - \frac{1}{60u_0} \kappa_0 a
(u_0^2+v_0^2)^2 (a(4-a)v_0+(1-a)^2v_3) \smallskip\\
&& + \frac{1}{60u_3} \kappa_1 (1-a)
(u_3^2+v_3^2)^2(a^2v_0+(1-a)(3+a)v_3),
\end{array}
$$

$$
\begin{array}{lll}
a_{0,1} &=& \frac{1}{10} \left[ \frac{1}{3 u_0}
(a(4-a)(u_0^2-v_0^2)+(1-a)^2(u_0u_3-v_0v_3)) - \frac{1}{6 u_0^2}
\kappa_0 v_0 a(3-a)(u_0^2+v_0^2)^2 + \frac{1}{4 u_0u_3} \kappa_1
v_0(1-a)(u_3^2+v_3^2)^2 \right],
\end{array}
$$

$$
\begin{array}{lll}
a_{0,2} &=& \frac{1}{10} \left[  \frac{1}{6 u_3^2} \kappa_1 v_3
(a+2)(1-a)(u_3^2+v_3^2)^2 - \frac{1}{4 u_0 u_3} \kappa_0 a v_3
(u_0^2+v_0^2)^2
 + \frac{1}{3 u_3} (a^2 (u_0u_3-v_0 v_3)+(a+3)(1-a)(u_3^2-v_3^2))
\right],
\end{array}
$$

\[
a_{1,1} = \frac{(u_0^2-v_0^2)(3-a)}{15u_0^2}\, , \;\; a_{1,2} =
\frac{u_0 u_3-v_0 v_3}{10 u_0 u_3}\,, \;\; a_{2,2} = \frac{(u_3^2-
v_3^2)(2+a)}{15 u_3^2},
\]

$$
\begin{array}{lll}
b_{0,0}& =& \p^*_{0,y} - \p^*_{1,y} + \frac{1}{5} (a
\d_{0,y}+(1-a)\d_{1,y}) + \frac{1}{60 u_0}  \kappa_0 a
(u_0^2+v_0^2)^2\left[a(4-a)u_0+(1-a)^2u_3\right]  \smallskip\\
&& - \frac{1}{60u_3} \kappa_1 (1-a)
(u_3^2+v_3^2)^2\left[u_3(1-a)(3+a)+u_0a^2\right]\,,
\end{array}
$$

$$
\begin{array}{lll}
b_{0,1} &= & \frac12\left[ \frac{1}{30 u_0}  \kappa_0 a
(3-a)(u_0^2+v_0^2)^2+ \frac{1}{15 u_0} (1-a)^2 (u_0v_3+u_3v_0) +
\frac{1}{15} 2av_0(4-a) - \frac{1}{20u_3}
\kappa_1(1-a)(u_3^2+v_3^2)^2 \right]\,,
\end{array}
$$

$$
\begin{array}{lll}
b_{0,2} &=& \frac{1}{20} \left[ \frac{1}{3 u_3} (2
a^2(u_3v_0+u_0v_3)+4 u_3
v_3(3+a)(1-a)-\kappa_1(2+a)(1-a)(u_3^2+v_3^2)^2)
 + \frac{1}{2u_0}  \kappa_0 a (u_0^2+v_0^2)^2 \right],
\end{array}
$$

\[
b_{1,1} = \frac{2 v_0 (3-a)}{15 u_0}\, , \;\; b_{1,2} =
\frac{v_0u_3+u_0v_3}{10 u_0u_3} \, , \;\; b_{2,2} = \frac{2
v_3(2+a)}{15 u_3}\,.
\]

\end{document}